 \setlist[itemize]{noitemsep,nolistsep}
\def\Z{{\bf Z}}
\def\C{{\bf C}}
\def\Q{{\bf Q}}
\def\P{{\bf P}}
\def\PP{{\bf P}}
\def\cI{\mathscr{I}}
\def\cD{\mathscr{D}}
\def\cA{\mathscr{A}}
\def\cL{\mathscr{L}}
\def\cO{\mathscr{O}}
\def\cP{\mathscr{P}}
\def\cE{\mathscr{E}}
\def\cC{\mathscr{C}}
\def\cN{\mathscr{N}}
\def\cQ{\mathscr{Q}}
\def\cU{\mathscr{U}}
\def\cV{\mathscr{V}}
\def\cX{\mathscr{X}}
\def\cY{\mathscr{Y}}
\def\bp{\mathbf p}
\def\lra{\longrightarrow}
\def\llra{\hbox to 10mm{\rightarrowfill}}
\def\lllra{\hbox to 15mm{\rightarrowfill}}
\def\llla{\hbox to 10mm{\leftarrowfill}}
\def\lllla{\hbox to 15mm{\leftarrowfill}}
\def\thra{\twoheadrightarrow}
\def\hra{\hookrightarrow}
\def\lhra{\ensuremath{\lhook\joinrel\relbar\joinrel\to}}
\def\isom{\simeq}
\def\ie{\hbox{i.e.}}
 \def\vide{\varnothing}
  \def\emptyset{\varnothing}
\DeclareMathOperator{\isomto}{\stackrel{{}_{\scriptstyle\sim}}{\to}}
\DeclareMathOperator{\isomlra}{\stackrel{{}_{\scriptstyle\sim}}{\lra}}
\DeclareMathOperator{\Aut}{Aut}
\DeclareMathOperator{\codim}{codim}
\DeclareMathOperator{\Coker}{Coker}
\DeclareMathOperator{\Ext}{Ext}
\DeclareMathOperator{\GL}{GL}
\DeclareMathOperator{\Gr}{\mathsf{Gr}}
\DeclareMathOperator{\CGr}{\mathsf{CGr}}
\DeclareMathOperator{\Fl}{\mathsf{Fl}}
\DeclareMathOperator{\Hilb}{Hilb}
\DeclareMathOperator{\Hom}{Hom}
\DeclareMathOperator{\Ker}{Ker}
\DeclareMathOperator{\lin}{\underset{\mathrm lin}{\equiv}}
\DeclareMathOperator{\PGL}{PGL}
\DeclareMathOperator{\Pic}{Pic}
\DeclareMathOperator{\Spec}{Spec}
\DeclareMathOperator{\Sing}{Sing}
\DeclareMathOperator{\Sym}{\mathsf S}
\def\bw#1#2{\textstyle{\bigwedge\hskip-0.9mm^{#1}}\hskip0.2mm{#2}}
\def\sbw#1#2{{\bigwedge\hskip-0.9mm^{#1}}\hskip0.1mm{#2}}
\def\symv{\VV}
\newtheorem{lemm}{Lemma}[section]
\newtheorem{theo}[lemm]{Theorem}
\newtheorem{coro}[lemm]{Corollary}
\newtheorem{prop}[lemm]{Proposition}
\theoremstyle{remark}
\newtheorem{rema}[lemm]{Remark}
\def\ot{\otimes}
\def\VV{\mathbb{V}}
\def\ord{\mathrm{ord}}
\def\spe{\mathrm{spe}}
\newcommand{\cone}[1]{\mathsf{C}_{#1}}
\newcommand{\hY}{\widehat{Y}}
\newcommand{\tY}{\widetilde{Y}}
\newcommand{\tZ}{\widetilde{Z}}
\newcommand{\pr}{\mathrm{pr}}
\newcommand{\wwp}{\wp^{\mathrm{epw}}}
\newcommand{\tsi}{{\tilde{\sigma}}}
\newcommand{\bsi}{\bm{\sigma}}
\def\setminus{\smallsetminus}
\def\cong{\isom}
\subjclass[2010]{14J45,14J35, 14J40, 14M15}
\begin{document}
\title 
{Gushel--Mukai varieties: linear spaces and periods}

\author[O.~Debarre]{Olivier Debarre}
\address{\parbox{0.9\textwidth}{Univ  Paris Diderot, \'Ecole normale su\-p\'e\-rieu\-re, PSL Research University,
\\[1pt] 
CNRS, D\'epar\-te\-ment Math\'ematiques et Applications
\\[1pt]
45 rue d'Ulm, 75230 Paris cedex 05, France}}
\email{{olivier.debarre@ens.fr}}
 
 \author[A. Kuznetsov]{Alexander Kuznetsov}
\address{\parbox{0.9\textwidth}{Algebraic Geometry Section, Steklov Mathematical Institute,
\\[1pt]
8 Gubkin str., Moscow 119991, Russia
\\[5pt]
The Poncelet Laboratory, Independent University of Moscow
\\[5pt]
Laboratory of Algebraic Geometry, National Research University Higher School of Economics, Russian Federation}}
\email{{\tt  akuznet@mi.ras.ru}}

\thanks{A.K. was partially supported by the Russian Academic Excellence Project ``5-100'', by RFBR grants 15-01-02164, 15-51-50045, and by the Simons foundation.}

\begin{abstract}
Beauville and Donagi proved in 1985 that the primitive middle cohomology of a smooth complex cubic fourfold and the primitive second cohomology of its variety of lines, 
a smooth hyperk\"ahler fourfold, are isomorphic as  {polarized} integral Hodge structures.\ 
We prove analogous statements for smooth complex Gushel--Mukai varieties of dimension  4 (resp.\ 6), \ie, smooth dimensionally transverse intersections 
of the cone over the Grassmannian $\Gr(2,5)$, a quadric, and two hyperplanes (resp.\ of the cone over  $\Gr(2,5)$ and a quadric).\ The associated hyperk\"ahler fourfold is in both cases a smooth double cover of a hypersurface in $\P^5$ called an EPW sextic.
\end{abstract}

\maketitle

\section{Introduction}

We continue in this article our investigation of Gushel--Mukai (GM) varieties started in~\cite{DK}.\ We discuss   linear subspaces contained in smooth complex GM varieties and their relation to Eisenbud--Popescu--Walter (EPW) stratifications.\ These results are applied to the computation of the period map for GM varieties of   dimension 4 or 6.

We work over the field of complex numbers.\  A smooth {\sf Gushel--Mukai variety} is (\cite[Definition~2.1]{DK})   a smooth dimensionally transverse intersection
\begin{equation*}
   \CGr(2,V_5) \cap \P(W) \cap Q
\end{equation*}
of the  cone over the Grassmannian $\Gr(2,V_5)$ of 2-dimensional subspaces in a fixed 5-dimensional vector space $V_5$,
with a linear subspace $\P(W)$ and a quadric $Q$.\ This class of varieties includes all smooth prime Fano varieties $X$ of dimension $n \ge 3$, coindex~3, and degree~10 (\ie, such that there is an ample class $H$ {with} $\Pic(X)=\Z H$,   $K_X = -(n-2)H$, and $H^n = 10$; see \cite[{Theorem~2.16}]{DK}).

One can naturally associate  with any smooth GM variety of dimension $n $ a triple $(V_6,V_5,A)$, called a {\sf Lagrangian data}, 
where $V_6$ is a 6-dimensional vector space  containing $V_5$ as a hyperplane and the subspace $A \subset \bw3V_6$ is Lagrangian with respect to the   symplectic structure on~$\bw3V_6$ given by wedge product.\ Moreover, $\P(A ) \cap \Gr(3,V_6) = \emptyset$ in~$\P(\bw3V_6)$ when $n\ge 3$ (we   say that~{\sf $A$ has no decomposable vectors}).

Conversely, given a  Lagrangian data $(V_6,V_5,A)$ with no decomposable vectors in $A$, one can construct two smooth GM varieties of respective dimensions $n = 5 - \ell$ and $n = 6 - \ell$  (where $\ell := \dim(A \cap \bw3V_5)\le3$), with associated Lagrangian data $(V_6,V_5,A)$  (\cite[{Theorem~3.10 and Proposition~3.13}]{DK}; see Section~\ref{reminder} for more details).

Given a Lagrangian subspace $A \subset \bw3V_6$, we  define three   chains of subschemes
\begin{equation*}
Y^{\ge 3}_A \subset Y^{\ge 2}_A \subset Y^{\ge 1}_A \subset \P(V_6),
\qquad
Y^{\ge 3}_{A^\perp} \subset Y^{\ge 2}_{A^\perp} \subset Y^{\ge 1}_{A^\perp} \subset \P(V_6^\vee),
\end{equation*}
\begin{equation*}
Z^{\ge 4}_A \subset Z^{\ge 3}_A \subset Z^{\ge 2}_A \subset Z^{\ge 1}_A \subset \Gr(3,V_6),
\end{equation*}
called {\sf Eisenbud--Popescu--Walter (EPW) stratifications} (see Section~\ref{se22}).\ The first two were extensively studied by O'Grady (\cite{og1,og2,og4,og6,og7,og8})  and the third in~\cite{ikkr}.\  If~$A$ has no decomposable vectors, the strata  
\begin{equation*}
Y_A := Y^{\ge 1}_A \subset \P(V_6),
\qquad
Y_{A^\perp} := Y^{\ge 1}_{A^\perp} \subset \P(V_6^\vee),
\qquad\text{and}\qquad
Z_A := Z^{\ge 1}_A \subset \Gr(3,V_6),
\end{equation*}
are hypersurfaces of respective degrees 6, 6, and 4, called the {\sf EPW sextic}, the {\sf dual EPW sextic}, and the {\sf EPW quartic} associated with $A$.\ Moreover, there are canonical double coverings
\begin{equation*}
\tY_A \to Y_A,
\qquad 
\tY_{A^\perp} \to Y_{A^\perp},
\quad\text{and}\quad 
\tZ_A^{\ge 2} \to Z_A^{\ge 2},
\end{equation*}
called the {\sf double EPW sextic}, the {\sf double dual EPW sextic}, and the {\sf EPW cube} associated with~$A$, respectively.\ In general (more precisely, when $Y_A^{\ge 3} = \emptyset$, $Y_{A^\perp}^{\ge 3} = \emptyset$, and $Z_A^{\ge 4} = \emptyset$), these are  hyperk\"ahler manifolds which are  deformation equivalent to the Hilbert square or cube of a K3 surface.

We showed in~\cite{DK} that these EPW stratifications control many geometrical properties of GM varieties.\  For instance,   smooth GM varieties of dimension 3 or 4 are birationally isomorphic if their associated EPW sextics are isomorphic (\cite[Theorems~4.7 and~4.15]{DK}).\ In this article, we  describe the Hilbert schemes of linear spaces contained in smooth GM varieties in terms of their EPW stratifications  and relate the Hodge structures of smooth  GM varieties of dimension 4 or 6  to those of their associated {double} EPW sextics.

Let $X$ be a smooth GM variety.\  We denote by $F_k(X)$ the Hilbert scheme of linearly embedded projective $k$-spaces in $X$.\ The scheme $F_2(X)$ has two connected components $ F_2^\sigma(X)$ and $F_2^\tau(X)$  corresponding to the two types of projective planes in $\Gr(2,V_5)$.\ We construct maps
\begin{equation*}
F_3(X) \to \P(V_5),
\qquad
F_2^\sigma(X) \to \P(V_5),
\qquad
F_2^\tau(X) \to \Gr(3,V_5),
\qquad
F_1(X) \to \P(V_5)
\end{equation*}
and describe them in terms of the  EPW varieties defined by the Lagrangian $A$ associated with~$X$ (Theorems~\ref{p32}, \ref{theorem:f2}, \ref{th34}, and~\ref{theorem:f1}).\ We prove in particular the following results.

If $X$ is a smooth GM sixfold with associated Lagrangian $A$ such that $Y^{\ge 3}_{A} = \emptyset$, the scheme $ F_2^\sigma(X)$ has dimension 4 and the above map $F_2^\sigma(X) \to \P(V_5)$ factors as  
\begin{equation*}
F_2^\sigma(X) \to \tY_A \times_{\P(V_6)} \P(V_5) \to \P(V_5),
\end{equation*}
where the first map is a locally trivial (in the \'etale topology) $\P^1$-bundle  {(Theorem~\ref{theorem:f2}(a))}.

If $X$ is a smooth general (with explicit generality assumptions) GM fourfold, $ F_1(X)$ has dimension 3 and the map $F_1(X) \to \P(V_5)$   factors as  
\begin{equation*}
F_1(X) \to \tY_A \times_{\P(V_6)} \P(V_5) \to \P(V_5),
\end{equation*}
where the first map is a small resolution of singularities (a contraction of two rational curves; see Theorem~\ref{theorem:f1}(c)).

Consequently, the universal plane $\cL_2^\sigma(X)$ in the sixfold case and the universal line $\cL_1(X)$ in the fourfold case give correspondences\begin{equation*}
\vcenter{\xymatrix@R=5mm{
& \cL_2^\sigma(X) \ar[dl] \ar[dr] \\
X && \tY_A
}}
\qquad\qquad \text{and} \qquad\qquad
\vcenter{\xymatrix@R=5mm{
& \cL_1(X) \ar[dl] \ar[dr] \\
X && \tY_A
}}
\end{equation*}
 between  $X$ and its associated double EPW sextic $\tY_A$.\ We use them to construct, in dimensions $n=4$ or 6, isomorphisms 
\begin{equation*}
H^n(X;\Z)_{00} \cong H^2(\tY_A;\Z)_0
\end{equation*}
of  {polarized} integral Hodge structures  (up to Tate twists; see Theorem~\ref{th32} for precise statements) between the vanishing middle cohomology of $X$ (defined in~\eqref{defh00}) and the primitive second cohomology of~$\tY_A$ (defined in~\eqref{defprime}).

This isomorphism is the main result of this article.\ It implies that the period point of a  smooth GM variety of dimension~4 or~6 (defined as the class of its vanishing cohomology Hodge structure in the appropriate period space) with associated Lagrangian data $(V_6,V_5,A)$ depends only on the $\PGL(V_6)$-orbit of $A$ and not on $V_5$.

More precisely, the period maps from the moduli stacks of GM varieties of dimension~4 or~6 factor  through the period map from the moduli stack of double EPW sextics---the period spaces being the same.\ The first map in this factorization is a fibration with well understood fibers (see~\cite[{Theorem~3.25}]{DK}).\ Since double EPW sextics, when smooth,  are hyperk\"ahler manifolds,  the second map is an open embedding by Verbitsky's Torelli Theorem.

The article is organized as follows.

In Section~\ref{section:gm}, we recall some of the results from~\cite{DK} about the geometry of smooth GM varieties and their relation to EPW varieties.\ In Section~\ref{section:cohomology}, we discuss the singular cohomology of GM varieties.\ In Section~\ref{section:linear-spaces}, we describe the Hilbert schemes $F_k(X)$ for smooth GM varieties~$X$.\ In Section~\ref{section:periods}, we prove an isomorphism between the vanishing Hodge structure of a general GM variety of dimension~4 or~6 and the primitive Hodge structure of the associated double EPW sextic.\ We also define the period point of a GM variety and show that it coincides with the period point of the associated double EPW sextic.\ In Appendix~\ref{section:linear-spaces-quadrics}, we discuss the natural double coverings arising from the Stein factorizations of relative Hilbert schemes of quadric fibrations.\ In  Appendix~\ref{section:epw-surface}, we discuss a resolution of the structure sheaf of an EPW surface $Y_A^{\ge 2}$ in~$\P(V_6)$ and compute some cohomology spaces related to its ideal sheaf.
 
We are grateful to Grzegorz and Micha\l{} Kapustka, Giovanni Mongardi, Kieran O'Grady, and Kristian Ranestad for interesting exchanges.\ We would also like to thank the referee for her/his careful reading of our article.

\section{Geometry of Gushel--Mukai varieties}
\label{section:gm}

\subsection{Gushel--Mukai varieties}\label{reminder}

We work over the field of complex numbers.\ 
A  smooth Gushel--Mukai (GM) variety of dimension $n$  is (\cite[Definition~2.1 and {Proposition~2.28}]{DK}) a smooth dimensionally transverse intersection
 \begin{equation}\label{defgm}
X = \CGr(2,V_5) \cap \P(W) \cap Q, 
\end{equation}
where $V_5$ is a  vector space of dimension 5, $\CGr(2,V_5) \subset \P(\C \oplus \bw2V_5)$ is the cone (with vertex $\nu:=[\C]$) over the Grassmannian of 2-dimensional subspaces in $V_5$, $W \subset \C \oplus \bw2V_5$ is a vector subspace of dimension $n+5$, and $Q \subset \P(W)$ is a quadratic hypersurface. 

Being smooth, $X$ does not contain the vertex $\nu$, hence the linear projection from $\nu$ defines a regular map 
\begin{equation*}
\gamma_X\colon X \to \Gr(2,V_5 )
\end{equation*}
called the {\sf Gushel map} of $X$.\ We denote by $\cU_X$ the pullback to $X$ of the tautological rank-2 subbundle on the Grassmannian.\ It comes with an embedding  $\cU_X \hookrightarrow V_5 \otimes \cO_X$.

Following~\cite{DK}, we associate with every smooth GM variety $X$ as in \eqref{defgm} the intersection
\begin{equation*}
M_X := \CGr(2,V_5) \cap \P(W).
\end{equation*}
This is a variety of dimension $n+1$ with finite singular locus {(\cite[{Proposition~2.22}]{DK})}. 

If the linear space $\P(W)$ does not contain the vertex $\nu$,  the variety $M_X$ is itself a dimensionally transverse section of $\Gr(2,V_5)$ by the image of the linear projection $\P(W)\to \P(\bw2V_5)$ from~$\nu$.\ It is smooth if $n \ge 3$ 
 and  $X$ is its intersection with a quadratic hypersurface.\ These GM varieties are called {\sf ordinary}.

If $\P(W) $ contains  $\nu$,  the variety $M_X$ is itself a cone with vertex $\nu$ over the smooth dimensionally transverse linear section
\begin{equation*}
M'_X = \Gr(2,V_5) \cap \P(W')\subset \P(\bw2V_5),
\end{equation*}
where $W' = W/\C \subset \bw2V_5$, and $X$ is a double cover of $M'_X$ branched along the smooth GM variety $X' = M'_X \cap Q$ of dimension $n-1$.\ These GM varieties are called {\sf special}.

A GM variety $X \subset \P(W)$ is an intersection of quadrics.\ Following~\cite{DK}, we denote by~$V_6$ the 6-dimensional space of quadratic equations of $X$.\ The space $V_5$ can be naturally identified with the space of Pl\"ucker quadrics cutting out $\CGr(2,V_5)$ {in $\P(\C \oplus \bw2V_5)$}, hence also with the space of quadrics in $\P(W)$ cutting out the subvariety $M_X$.\ This gives a canonical embedding
$V_5 \subset V_6 $ which identifies $V_5$ with a hyperplane in $V_6$  called the {\sf Pl\"ucker hyperplane}.\ The corresponding point $\bp_X \in \P(V_6^\vee)$ {in the dual projective space} is called the {\sf Pl\"ucker point}.

\subsection{EPW sextics and quartics}\label{se22}

Let $X$ be a smooth GM variety  of dimension $n$.\ As explained in~\cite[Theorem~3.10]{DK}, one can associate with $X$ a  subspace $A \subset \bw3V_6$ which is Lagrangian for the $\det(V_6)$-valued symplectic form given by wedge product.\ Together with the pair $V_5\subset V_6$ defined above, it forms a triple $(V_6,V_5,A)$ called the {\sf Lagrangian data} of $X$.

The Lagrangian space $A$ has no decomposable vectors (\ie, $\P(A) \cap \Gr(3,V_6) = \emptyset$) when~$n \ge 3$  {(\cite[{Theorem~3.16}]{DK})} and the vector space $ A \cap \bw3V_5 $ has dimension $5 - n$ if~$X$ is ordinary, $6-n$ if $X$ is special (\cite[{Proposition~3.13}]{DK}).

Conversely, given a Lagrangian data $(V_6,V_5,A)$ such that $A$ has no decomposable vectors, we have $\ell := \dim(A \cap \bw3V_5)\le3$  {and} there are
\begin{itemize} 
\item an ordinary {smooth} GM variety $  X_\ord(V_6,V_5,A)$  of dimension $5 - \ell$,  
\item a special {smooth} GM variety $  X_\spe(V_6,V_5,A)$ of dimension $6 - \ell$,
\end{itemize}
unique up to isomorphism, whose  associated Lagrangian data is $(V_6,V_5,A)$.

Given a Lagrangian subspace $A \subset \bw3V_6$, one can construct interesting varieties that play an important role for the geometry of the associated GM varieties.\ Following O'Grady, one defines for all integers $\ell\ge 0$ closed subschemes
 \begin{alignat*}{5}
& Y^{\ge\ell}_A 		&& =  \{[ {U_1}]\in\P(V_6) 	&& \mid  \dim(A\cap ( {U_1} \wedge\bw{2}{V_6})) \ge \ell \}  && \subset  \P(V_6), \\
& Y^{\ge\ell}_{A^\bot} 	&& =  \{[U_5]\in\P(V^\vee_6)  && \mid \dim(A \cap \bw3U_5) \ge \ell \}  			&& \subset \PP(V_6^\vee),
\end{alignat*}
 and set
\begin{equation*} 
Y_A^\ell := Y_A^{\ge \ell} \setminus Y_A^{\ge \ell + 1}\qquad\text{and}\qquad  Y_{A^\bot}^\ell := Y_{A^\bot}^{\ge \ell} \setminus Y_{A^\bot}^{\ge \ell + 1}.
\end{equation*}

 Assume that $A$ has no decomposable vectors.\ Then, 
\begin{equation*}
Y_A := Y^{\ge 1}_A \subset \P(V_6)
\qquad\text{and}\qquad 
Y_{A^\perp} := Y^{\ge 1}_{A^\perp} \subset \P(V_6^\vee)
\end{equation*}
are normal  integral sextic hypersurfaces, called {\sf EPW sextics}; the singular locus of $Y_A$ is the integral surface $Y_A^{\ge 2}$, 
the singular locus of $Y_A^{\ge 2}$ is the finite set $Y_A^{\ge 3}$  {(empty for $A$ general)},  $Y_A^{\ge 4}=\emptyset$ (\cite[Proposition~B.2]{DK}),  and analogous properties hold for $Y_{A^\perp}^{\ge \ell}$.\ One can rewrite the dimensions of the GM varieties  $X$ associated with a Lagrangian data $(V_6,V_5,A)$ as follows: if the Pl\"ucker point $\bp_X $ is in $ Y^\ell_{A^\perp}$, we have
\begin{equation*}
\dim( X_\ord(V_6,V_5,A) )= 5 - \ell 
\qquad\text{and}\qquad 
\dim (X_\spe(V_6,V_5,A)) = 6 - \ell.
\end{equation*}

Still under the assumption that  $A$ contains no decomposable vectors,
O'Grady constructs in \cite[Section~1.2]{og4}  a canonical double cover
\begin{equation}\label{defyt}
f_A\colon \tY_A \lra Y_A 
 \end{equation}
branched over the integral surface $Y_A^{\ge 2} $.\ When the finite set $Y_A^{\ge 3} $ is empty, $\tY_A$ is a smooth hyperk\"ahler fourfold (\cite[Theorem~1.1(2)]{og1}).

The hypersurfaces $Y_A$ and $Y_{A^\perp}$ are mutually projectively dual and the duality is realized, 
inside the flag variety $\Fl(1,5;V_6):=\{(U_1,U_5)\in \P(V_6)\times \P(V_6^\vee)\mid U_1\subset U_5\subset V_6\}$, by the correspondence
 \begin{equation*}
\widehat{Y}_A := \{ ( U_1,U_5) \in \Fl(1,5;V_6) \mid  A \cap (U_1 \wedge \bw2U_5) \ne 0 \}
\end{equation*}
 (\cite[Proposition~B.3]{DK}) with its  {birational} projections 
 \begin{equation*}
\xymatrix@R=5mm{
& \widehat{Y}_A \ar@{->>}[dl]_-{\pr_{Y,1}} \ar@{->>}[dr]^-{\pr_{Y,2}} \\
 \ \ \P(V_6)\supset Y_A\ \ \quad&&\qquad\ \ Y_{A^\perp}\subset \P(V_6^\vee)
} 
\end{equation*}
{(these projections were denoted by $p$ and $q$ in \cite[Proposition~B.3]{DK}; we change the notation to $\pr_{Y,1}$ and $\pr_{Y,2}$ in this article, but we will switch back to $p$ and $q$ in Appendix~\ref{section:epw-surface})}.

We will need the following result.

\begin{lemm}\label{lemma:dual-y2}
Assume that $A$ has no decomposable vectors.\ If $E \subset \widehat{Y}_A$ 
is the exceptional divisor of the map $\pr_{Y,1}$, we have  two inclusions
\begin{equation*}
Y_{A^\perp}^{\ge 2 } \subset \pr_{Y,2}(E) \subset (Y^{\ge 2}_A)^\vee \cap Y_{A^\perp},
\end{equation*}
where $(Y^{\ge 2}_A)^\vee \subset \P(V_6^\vee)$ is the projective dual of $Y^{\ge 2}_A$.
\end{lemm}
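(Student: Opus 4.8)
The plan is to reduce both inclusions to statements about the fibres of the two projections. Writing $F_{U_1}:=U_1\wedge\bw2{V_6}$ and $\Lambda_{U_5}:=\bw3{U_5}$ (both Lagrangian in $\bw3{V_6}$), one has $U_1\wedge\bw2{U_5}=F_{U_1}\cap\Lambda_{U_5}$, so
\[
\widehat{Y}_A=\{(U_1,U_5)\mid A\cap F_{U_1}\cap\Lambda_{U_5}\neq0\}.
\]
Hence the fibre of $\pr_{Y,1}$ over $[U_1]$ is $\{U_5\supset U_1\mid A\cap F_{U_1}\cap\Lambda_{U_5}\neq0\}$: a single point for $[U_1]\in Y_A^1$ (the unique tangent hyperplane, since $\widehat{Y}_A$ realises the duality of $Y_A$ and $Y_{A^\perp}$) and positive-dimensional for $[U_1]\in Y_A^{\ge2}$. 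Thus $E=\pr_{Y,1}^{-1}(Y_A^{\ge2})$, and as $E$ is closed in the projective variety $\widehat{Y}_A$, the set $\pr_{Y,2}(E)$ is closed. The inclusion $\pr_{Y,2}(E)\subset Y_{A^\perp}$ is automatic, so it remains to prove (II) $\pr_{Y,2}(E)\subset(Y_A^{\ge2})^\vee$ and (I) $Y_{A^\perp}^{\ge2}\subset\pr_{Y,2}(E)$.

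For (II) I would compute the embedded tangent space of $Y_A^{\ge2}$. At a smooth point $[v_1]\in Y_A^2$ (with $U_1=\langle v_1\rangle$) write $A\cap F_{U_1}=\langle\omega_1,\omega_2\rangle$, $\omega_i=v_1\wedge\eta_i$. Presenting $Y_A$ as a Lagrangian degeneracy locus and identifying its cokernel with the dual of its kernel via the symplectic form, the second fundamental form of $Y_A^{\ge2}$ is the symmetric map $(\omega_i,\omega_j)\mapsto v_1\wedge w\wedge\eta_i\wedge\eta_j\in\bw6{V_6}$. Now take $(v_1,U_5)\in E$; after a change of basis I may assume $\omega_1\in U_1\wedge\bw2{U_5}$ and pick the representative $\eta_1=v_1\wedge a+\eta_1'$ with $\eta_1'\in\bw2{U_5}$ free of $v_1$. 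Since $A$ has no decomposable vectors, $\omega_1$ is indecomposable, so $\eta_1'$ has maximal rank and $\eta_1'\wedge\eta_1'\neq0$. The $(1,1)$--entry of the second fundamental form is then $v_1\wedge w\wedge\eta_1'\wedge\eta_1'$; as $v_1$ and $\eta_1'$ lie in $U_5$, this six-form is a nonzero multiple of $\xi(w)$ times a volume form, where $\xi\in V_6^\vee$ cuts out $U_5$. Hence its zero locus is exactly $\P(U_5)$, and since $\mathbb T_{[v_1]}Y_A^{\ge2}$ is contained in it, $\mathbb T_{[v_1]}Y_A^{\ge2}\subset\P(U_5)$, i.e.\ $[U_5]\in(Y_A^{\ge2})^\vee$. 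Passing from the smooth locus to all of $E$ by density and closedness of $(Y_A^{\ge2})^\vee$ gives (II).

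For (I) I would analyse the fibre $C:=\pr_{Y,2}^{-1}([U_5])$ for $[U_5]\in Y_{A^\perp}^2$. Put $K:=A\cap\Lambda_{U_5}$, so $\dim K=2$; a point of $C$ is $[\ker\beta_\gamma]$ for $\gamma\in\P(K)$, where $\beta_\gamma\in\bw2{U_5^\vee}$ is the skew form attached to $\gamma\in\bw3{U_5}$. Thus $C$ is the image of the Pfaffian kernel map $\kappa=[\Pf_1:\cdots:\Pf_5]\colon\P(K)\to\P(U_5)$ given by the $4\times4$ sub-Pfaffians, which are quadrics on $\P(K)\simeq\P^1$. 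The crux is $C\subset Y_A^{\ge2}$. Suppose not; then $C\cap Y_A^1$ is dense in $C$, on which $\pr_{Y,1}$ is an isomorphism and $\pr_{Y,2}$ is the Gauss map of $Y_A$, so $C=\overline{\gamma^{-1}([U_5])}$ is the closure of a Gauss fibre and hence, by the characteristic-zero linearity of the fibres of the Gauss map, a line. But no member of the pencil $K$ is decomposable, so the $\Pf_i$ have no common zero and $\kappa$ has degree two; being two-to-one onto the line $C$, a general $[u_1]\in C$ is the common kernel of two independent $\gamma,\gamma'\in K$, whence $\dim(A\cap F_{U_1})\ge2$ and $[u_1]\in Y_A^{\ge2}$, contradicting density of $C\cap Y_A^1$. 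Therefore $C\subset Y_A^{\ge2}$, so $C\subset E$ and $[U_5]=\pr_{Y,2}(C)\in\pr_{Y,2}(E)$; as $\pr_{Y,2}(E)$ is closed and $Y_{A^\perp}^{\ge2}$ is integral with dense stratum $Y_{A^\perp}^2$, this yields (I).

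The main obstacle is (I): a naive count suggests that the kernel locus $C$ is a conic which misses the surface $Y_A^{\ge2}$, and the whole point is that this cannot occur. Identifying $C$ with a Gauss fibre forces it (in characteristic zero) to be a \emph{line}, and the absence of decomposable vectors then makes the Pfaffian map two-to-one, pushing the entire line into the singular locus $Y_A^{\ge2}$. Justifying the linearity input in this singular, stratified setting—rather than only for a general tangent hyperplane—is the delicate step; it can alternatively be read off from the explicit local model of the EPW singularities developed in Appendix~\ref{section:epw-surface}.
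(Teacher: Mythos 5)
Your proof of the second inclusion is essentially correct and is, in substance, the same computation as the paper's: the paper derives the tangent-space condition $v' \wedge (v \wedge \xi_i \wedge \xi_j) = 0$ at a point $v \in Y^2_A$ from a first-order deformation and the Lagrangian condition, concludes that $(Y^{\ge 2}_A)^\vee$ is the closure of the spans $\langle v \wedge \xi_i \wedge \xi_j\rangle$, and then quotes \cite[Proposition~B.3]{DK} for the description $\pr_{Y,2}(E) = \bigcup_v \bigcup_\xi v\wedge\xi\wedge\xi$; your pointwise argument via the $(1,1)$-entry of the second fundamental form, with nonvanishing of $v_1\wedge\eta_1'\wedge\eta_1'$ coming from indecomposability, is the same calculation in different packaging (note that you assert the second-fundamental-form formula where the paper actually derives it). The density/closure step over $Y^2_A$ is fine given the fiber dimensions of $\pr_{Y,1}$ from \cite[Proposition~B.3]{DK}.

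The first inclusion, however, is genuinely broken, and not merely at the step you flag as delicate: the intermediate claim $C \subset Y^{\ge 2}_A$ (equivalently $C \subset E$) is \emph{false} for general $[U_5] \in Y^2_{A^\perp}$. Linearity of Gauss-map fibers in characteristic zero is a statement about the \emph{general} fiber, whereas the fibers over the surface $Y^2_{A^\perp}$ are exactly the special, positive-dimensional ones; indeed \cite[Proposition~B.3]{DK} (quoted in the paper's proof) says $\pr_{Y,1}$ maps these fibers to \emph{conics}, and your own Pfaffian dichotomy shows that the $2{:}1$-onto-a-line case is self-contradictory, so whenever $C \not\subset Y^{\ge 2}_A$ the closure of the Gauss fiber is a smooth conic --- i.e.\ the linearity input is untenable here, not just unproved. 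Moreover, if $C \subset Y^{\ge 2}_A$ held for all $[U_5]$ in the surface $Y^2_{A^\perp}$, the degree-$40$ surface $Y^{\ge 2}_A$ would carry a two-parameter family of irreducible conics, which is impossible (surfaces with $\infty^2$ irreducible conics are planes, quadrics, or projected Veronese surfaces). What is true, and all the lemma needs, is that $E$ \emph{meets} each general fiber $C$; the paper obtains this by pure intersection theory, which is the ingredient your proposal is missing: $E \lin 5H - H'$ (\cite[proof of Lemma~B.5]{DK}), $H \cdot C = 2$ (the image is a conic), $H' \cdot C = 0$ ($C$ is a fiber of $\pr_{Y,2}$), hence $E \cdot C = 10 \ne 0$, and one concludes by integrality of $Y^{\ge 2}_{A^\perp}$ and properness of $\pr_{Y,2}(E)$. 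Finally, your fallback does not exist: Appendix~\ref{section:epw-surface} provides a resolution of $\cO_{Y_A^{\ge 2}}$ and cohomology of its ideal sheaf, not a local model from which any linearity of these Gauss fibers could be read off.
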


\begin{proof}
Since $Y^{\ge 2}_A$  is smooth at points of $Y^2_A$, its projective dual is
\begin{equation*}
\left(Y^{\ge 2}_A\right)^\vee = \overline{\bigcup_{v \in Y^2_A} \langle v \wedge \xi_1 \wedge \xi_1, v \wedge \xi_1 \wedge \xi_2, v \wedge \xi_2 \wedge \xi_2 \rangle}, 
\end{equation*}
where we write $A \cap (v \wedge\bw2V_6) = \langle v\wedge\xi_1,v\wedge\xi_2 \rangle$  {for some $\xi_1,\xi_2 \in \bw2V_6$}, and identify $\bw5V_6 $ with~$V_6^\vee$.\  
Indeed, a vector $v'\in V_6$ is tangent to $Y^2_A$ at $v$ if one has
\begin{equation*}
(v + tv') \wedge (\xi_i + t \xi'_i) = a_i + t a'_i \pmod {t^2}
\end{equation*}
for some $\xi'_i \in \bw2V_6$ and $a'_i \in A$, for $i \in\{ 1,2\}$.\ Since $A$ is Lagrangian, this implies, 
{for $i,j \in \{1,2\}$,}
\begin{equation*}
0 = (v \wedge \xi_i) \wedge (v \wedge \xi'_j + v' \wedge \xi_j) = -v' \wedge (v \wedge \xi_i \wedge \xi_j).
\end{equation*}
This means that the  {embedded} tangent space to $Y^2_A$ at $v$ is contained in the orthogonal to the subspace of $V_6^\vee$ generated by $v \wedge \xi_i \wedge \xi_j$.\ Since the former, modulo $v$, is 2-dimensional  and the latter is 3-dimensional,  the tangent space coincides with this orthogonal, hence the above description of the dual variety.

On the other hand, by the argument in the proof of \cite[Proposition~B.3]{DK}, one has
\begin{equation*}
\pr_{Y,2}(E) = \bigcup_{v \in Y^{\ge 2}_A} 
 \bigcup_\xi \ v \wedge \xi \wedge \xi, 
\end{equation*}
where the second union is taken over all $v\wedge \xi \in A \cap (v \wedge \bw2V_6)$.\ In particular, we obtain the inclusion $\pr_{Y,2}(E) \subset \left(Y^2_A\right)^\vee$.  

For the second inclusion, since $Y_{A^\perp}^{\ge 2}$ is an integral surface (\cite[Theorem~B.2]{DK}),  
it is enough to show that $E$ intersects the general fiber $C$ of the map $E' = \pr_{Y,2}^{-1}(Y_{A^\perp}^{\ge 2}) \to Y_{A^\perp}^{\ge 2}$.
This fiber is mapped by $\pr_{Y,1}$ to a conic in $Y_A$ (\cite[Proposition~B.3]{DK}), hence $H\cdot C = 2$, where $H$ is the pullback of the hyperplane class of $Y_A$.
On the other hand, if $H'$ is the hyperplane class of~$Y_{A^\perp}$, then $H' \cdot C = 0$.
But $E$ is linearly equivalent to $5H - H'$ (\cite[proof of Lemma~B.5]{DK}), hence $E \cdot C = 10$, hence $E$ intersects $C$ non-trivially.
This finishes the proof of the lemma.
\end{proof}

Given a Lagrangian subspace $A \subset \bw3V_6$, one can also define the  closed subschemes
\begin{equation*}
Z^{\ge \ell}_A := \{ U_3 \subset V_6 \mid \dim (A \cap (\bw2U_3 \wedge V_6)) \ge \ell \} \subset \Gr(3,V_6). 
\end{equation*}
The {complements} $Z_A^\ell := Z_A^{\ge \ell} \setminus Z_A^{\ge \ell + 1}$ form a stratification of $\Gr(3,V_6)$.\ If $A$ has no decomposable vectors,  $Z_A := Z_A^{\ge 1}$ is a normal integral hypersurface in $\Gr(3,V_6)$ cut out by a quartic hypersurface in $\P(\bw3V_6)$.\ We call $Z_A$ an {\sf EPW quartic}.\ The singular locus of $Z_A$ is then the integral  {variety} $Z_A^{\ge 2}$ of dimension~6, the singular locus of $Z_A^{\ge 2}$ is the integral variety $Z_A^{\ge 3}$ of dimension~3, the singular locus of $Z_A^{\ge 3}$ is the finite set $Z_A^{\ge 4}$ (empty for~$A$ general), and $Z_A^{\ge 5} = \emptyset$  (\cite[Proposition~2.6]{ikkr}).

Moreover, there is a canonical double cover $ \widetilde{Z}_A^{\ge 2} \to Z_A^{\ge 2}$ branched over $Z_A^{\ge 3}$, and when~$Z_A^{\ge 4}$ is empty, $\widetilde{Z}_A^{\ge 2}$ is a smooth hyperk\"ahler sixfold (\cite[Theorem~1.1]{ikkr}).  
 
 The hypersurfaces $Z_A \subset \Gr(3,V_6)$ and $Z_{A^\perp} \subset \Gr(3,V_6^\vee)$ coincide under the natural identification $\Gr(3,V_6) \cong \Gr(3,V_6^\vee)$.\ They are related to the EPW sextics via the correspondence 
 \begin{equation*}
\widehat{Z}_A := \{ ( U_3, U_5) \in  {\Fl(3,5;V_6)} \mid  A \cap (\bw2U_3 \wedge U_5)\ne 0 \}
\end{equation*}
with  its projections
\begin{equation*} 
\vcenter{\xymatrix@R=5mm{
& \widehat{Z}_A \ar[dl]_{\pr_{Z,1}} \ar[dr]^{\pr_{Z,2}} \\
Z_A && Y_{A^\perp}.
}} 
\end{equation*}

\begin{lemm}
Assume that the Lagrangian $A$ contains no decomposable vectors.\ The map $\pr_{Z,2}$ is dominant; over $Y_{A^\perp}^1$, it is smooth and its fibers are $3$-di\-men\-si\-onal quadrics.\ The map $\pr_{Z,1}$ is birational onto a divisor in $Z_A$ containing $Z_A^{\ge 2}$.
\end{lemm}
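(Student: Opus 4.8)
The plan is to deduce all three assertions from a fiberwise study of the two projections, beginning with $\pr_{Z,2}$, whose fibers turn out to be Lagrangian Grassmannians. First I would fix $[U_5]\in Y_{A^\perp}^1$, so that $A\cap\bw3{U_5}=\langle a\rangle$ is a line; since $A$ has no decomposable vectors, $a$ is a nondecomposable trivector in $\bw3{U_5}$. As $U_3\subset U_5$ forces $\bw{2}{U_3}\wedge U_5\subset\bw3{U_5}$, the defining condition $A\cap(\bw{2}{U_3}\wedge U_5)\neq0$ of the fiber over $[U_5]$ reduces to $a\in\bw{2}{U_3}\wedge U_5$. The first key step is to reinterpret this: letting $\omega_a$ be the $\bw5{U_5}$-valued skew form $\omega_a(u,v)=a\wedge u\wedge v$ on $U_5$, nondecomposability of $a$ is equivalent to $\omega_a$ having rank $4$, hence a $1$-dimensional radical $R$. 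Using the perfect pairing $\bw3{U_5}\times\bw{2}{U_5}\to\bw5{U_5}$ one computes that the annihilator of $\bw{2}{U_3}\wedge U_5$ is $\bw{2}{U_3}$; therefore $a\in\bw{2}{U_3}\wedge U_5$ if and only if $a\wedge u\wedge v=0$ for all $u,v\in U_3$, i.e.\ if and only if $U_3$ is isotropic for $\omega_a$.

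A short computation with $\dim U_3^{\perp}=2+\dim(U_3\cap R)$ shows that every $3$-dimensional isotropic subspace must contain $R$, so the fiber is $\{U_3\supset R : U_3/R\text{ Lagrangian in }U_5/R\}\cong\LGr(2,U_5/R)$, a smooth $3$-dimensional quadric. This already gives dominance, since nonemptiness of these fibers forces $U_5\in Y_{A^\perp}$, so the image of $\pr_{Z,2}$ is the integral sextic $Y_{A^\perp}$. For smoothness I would globalize: over the smooth locus $Y_{A^\perp}^1$ the lines $A\cap\bw3{\mathcal U_5}$ form a line subbundle of $\bw3{\mathcal U_5}$ (with $\mathcal U_5$ the tautological rank-$5$ bundle), their radicals form a line subbundle $\mathcal R\subset\mathcal U_5$, and $\pr_{Z,2}^{-1}(Y_{A^\perp}^1)$ is identified with the relative Lagrangian Grassmannian $\LGr(2,\mathcal U_5/\mathcal R)\to Y_{A^\perp}^1$, a smooth fibration with fiber $Q^3$. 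Bounding the fiber dimensions over $Y_{A^\perp}^{\geq2}$ then shows $\widehat Z_A$ is irreducible of dimension $4+3=7$.

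For $\pr_{Z,1}$, I would fix $U_3\in Z_A^1$, so that $A\cap(\bw{2}{U_3}\wedge V_6)=\langle b\rangle$, while the $U_5\supset U_3$ range over $\Gr(2,V_6/U_3)=\P^2$. The image $\bar b$ of $b$ in $\bw{2}{U_3}\otimes(V_6/U_3)$ is nonzero (as $A\cap\bw3{U_3}=0$), and $b\in\bw{2}{U_3}\wedge U_5$ exactly when $U_5/U_3$ contains the image of $\bar b$, viewed as a $3\times3$ matrix. Hence the fiber is empty if $\rank\bar b=3$, a single reduced point if $\rank\bar b=2$, and a $\P^1$ if $\rank\bar b=1$. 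A point of $\widehat Z_A$ over general $[U_5]\in Y_{A^\perp}^1$ has $b=a\in\bw3{U_5}$, so $\bar b$ is supported on the $2$-dimensional $U_5/U_3$; an explicit check in a basis adapted to $\omega_a$ gives $\rank\bar b=2$. Thus the generic fiber of $\pr_{Z,1}$ is one reduced point, so $\pr_{Z,1}$ is birational onto its image, which, since $\dim\widehat Z_A=7=\dim Z_A-1$, is a divisor in $Z_A$. Finally, for $U_3\in Z_A^{\geq2}$ the space $B:=A\cap(\bw{2}{U_3}\wedge V_6)$ has dimension $\geq2$ and injects (again because $A\cap\bw3{U_3}=0$) into $\bw{2}{U_3}\otimes(V_6/U_3)$, so $\P(B)$ is a linear subspace of positive dimension in $\P^8$ and must meet the cubic hypersurface $\{\det=0\}$; a corresponding $b$ has $\rank\bar b\leq2$ and yields $U_5\supset U_3$ with $(U_3,U_5)\in\widehat Z_A$, proving $Z_A^{\geq2}$ lies in the image.

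The hardest part will be the two places where genericity must be pinned down precisely: upgrading the computation of the fibers of $\pr_{Z,2}$ to genuine smoothness of the morphism, which is exactly why the relative Lagrangian Grassmannian description (rather than mere constancy of the fibers) is needed; and verifying that the generic fiber of $\pr_{Z,1}$ is a point rather than a curve, i.e.\ that $\rank\bar b=2$ generically. This last rank computation is the crux, since $\rank\bar b=1$ would collapse the image below divisorial dimension, so it is precisely what makes $\pr_{Z,1}$ birational onto a divisor.
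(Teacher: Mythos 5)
Your proposal is correct and follows essentially the same route as the paper: you identify the fiber of $\pr_{Z,2}$ with the $3$-dimensional isotropic subspaces of the rank-$4$ form $\omega_a$ (the paper phrases this via $\bw3U_5 \cong \bw2U_5^\vee$, you via annihilators --- the same computation), and for $\pr_{Z,1}$ you analyze the rank of $\bar b \in \bw2U_3 \otimes (V_6/U_3)$, which is just the transpose of the paper's bundle map $U_3^\perp \to \bw2U_3$ whose degeneracy divisor is the image, with the same pencil argument giving $Z_A^{\ge 2} \subset \pr_{Z,1}(\widehat{Z}_A)$. The points where you add detail --- the relative Lagrangian Grassmannian for smoothness of $\pr_{Z,2}$, the dimension count $\dim \widehat{Z}_A = 7$, and the explicit generic rank-$2$ verification --- are refinements of steps the paper leaves implicit, not departures from its argument.
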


\begin{proof}
Let $[U_5]$ be a point of $Y^1_{A^\perp}$ and let $a$ be a generator of the one-dimensional space~\mbox{$A \cap \bw3U_5$}.\ The 2-form on $U_5$ corresponding to $a \in \bw3U_5$ via the isomorphism $\bw3U_5 \cong \bw2U_5^\vee$ has rank~4 (because $a$ is not decomposable).\ The fiber $\pr_{Z,2}^{-1}([U_5])$ parameterizes all 3-dimensional subspaces $U_3 $ of $ U_5$ which are isotropic for the 2-form $a$.\ Since $a$ has rank 4, it is a smooth 3-dimensional quadric.

Analogously, let $[U_3]$ be a point of $Z_A^1$ and let $a$ be a generator of the one-dimensional space~\mbox{$A \cap (\bw2U_3 \wedge V_6)$}.\ Let $\bar{a}$ be the image of $a$ in the space $\bw2U_3 \otimes (V_6/U_3) \cong \Hom(U_3^\perp,\bw2U_3)$.\ Over $Z^1_A$, this defines a morphism of rank-3 vector bundles with fibers $U_3^\perp$ and $\bw2U_3$.\ Over its degeneracy locus (which a divisor in $Z_A$), the projection $\pr_{Z,1}$ is an isomorphism 
(and $\pr_{Z,1}^{-1}([U_3])$ is the unique hyperplane $U_5 \subset V_6$ such that $U_5^\perp \subset U_3^\perp$ is the kernel of $\bar{a}$).\ To prove  $Z_A^{\ge 2} \subset \pr_{Z,1}(\widehat{Z}_A)$, note that if $\dim(A \cap (\bw2U_3 \wedge V_6)) \ge 2$,  we have a pencil of maps $U_3^\perp \to \bw2U_3$;
some maps in this pencil are degenerate and their kernels give points in the preimage of $[U_3]$.
\end{proof}

For any hyperplane $V_5 \subset V_6$, we set
\begin{equation*}
\begin{aligned}
Y^\ell_{A,V_5} 	&:= Y^\ell_A \times_{\P(V_6)}\P(V_5) 		&&= Y^\ell_A \cap \P(V_5),\\
Z^\ell_{A,V_5} 	&:= Z^\ell_A \times_{\Gr(3,V_6)}\Gr(3,V_5) 	&&= Z^\ell_A \cap \Gr(3,V_5),\\
 \tY_{A,V_5} 	&:= \tY_A \times_{\P(V_6)}\P(V_5)		&&= f_A^{-1}(Y_{A,V_5}),
\end{aligned}
\end{equation*}
 and similarly for $Y^{\ge \ell}_{A,V_5}$ and others.\ These varieties will play an important role for the geometry of the associated GM varieties.\ We let
\begin{equation*}
f_{A,V_5} \colon  \tY_{A,V_5} \to Y_{A,V_5}
\end{equation*}
be the morphism induced by restriction of the double cover $f_A \colon \tY_A \to Y_A$.

 We will need the following simple observation.

\begin{lemm}\label{lemma:zav4-empty}
Let $(V_6,V_5,A)$ be a Lagrangian data with no decomposable vectors in $A$.\ If \mbox{$[U_3] \in Z_{A,V_5}^4$}, then $(U_3,V_5) \in \widehat{Z}_A$.\ In particular, if $A \cap \bw3V_5 = 0$, we have $Z_{A,V_5}^{\ge4} = \emptyset$.
\end{lemm}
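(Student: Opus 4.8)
The plan is to reduce both assertions to a single dimension count carried out inside the ambient space $\bw2U_3\wedge V_6$. First I would choose a basis $e_1,e_2,e_3$ of $U_3$, extend it to a basis $e_1,\dots,e_5$ of $V_5$, and then to a basis $e_1,\dots,e_6$ of $V_6$; this is possible precisely because $U_3\subset V_5\subset V_6$. Writing out the products $e_i\wedge e_j\wedge e_k$ with $\{i,j\}\subset\{1,2,3\}$, one finds that $\bw2U_3\wedge V_6$ is spanned by $e_1\wedge e_2\wedge e_3$ together with the nine vectors $e_i\wedge e_j\wedge e_k$ (with $\{i,j\}\subset\{1,2,3\}$ and $k\in\{4,5,6\}$), so $\dim(\bw2U_3\wedge V_6)=10$, while the same computation restricted to $V_5$ gives $\dim(\bw2U_3\wedge V_5)=7$. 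Hence $\bw2U_3\wedge V_5$ is a codimension-$3$ subspace of $\bw2U_3\wedge V_6$.

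For the first assertion, let $[U_3]\in Z_{A,V_5}^4$, so that $U_3\subset V_5$ and $\dim(A\cap(\bw2U_3\wedge V_6))=4$. Intersecting this $4$-dimensional space with the codimension-$3$ subspace $\bw2U_3\wedge V_5$ gives
\[
\dim\bigl(A\cap(\bw2U_3\wedge V_5)\bigr)=\dim\bigl((A\cap(\bw2U_3\wedge V_6))\cap(\bw2U_3\wedge V_5)\bigr)\ge 4+7-10=1,
\]
where I use that $\bw2U_3\wedge V_5\subset\bw2U_3\wedge V_6$. In particular $A\cap(\bw2U_3\wedge V_5)\ne 0$, which is exactly the defining condition for $(U_3,V_5)\in\widehat{Z}_A$.

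For the ``in particular'' clause, I would first recall that $Z_A^{\ge 5}=\emptyset$, so that $Z_{A,V_5}^{\ge 4}=Z_{A,V_5}^4$ and it is enough to show the latter is empty. The key point is that, since $U_3\subset V_5$, one has the inclusion $\bw2U_3\wedge V_5\subset\bw3V_5$. Thus if $A\cap\bw3V_5=0$, then a fortiori $A\cap(\bw2U_3\wedge V_5)=0$; combined with the first assertion this is a contradiction, so no point $[U_3]$ can lie in $Z_{A,V_5}^4$, i.e.\ $Z_{A,V_5}^{\ge 4}=\emptyset$.

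The argument is purely linear-algebraic and presents no serious obstacle. The only step that needs care is the codimension computation: although $V_5$ has codimension $1$ in $V_6$, the space $\bw2U_3\wedge V_5$ has codimension $3$ (not $1$) in $\bw2U_3\wedge V_6$, the lost generators being exactly the three vectors $e_i\wedge e_j\wedge e_6$ with $\{i,j\}\subset\{1,2,3\}$. Once this codimension is correctly identified as $3$, both parts of the lemma follow at once from the intersection inequality.
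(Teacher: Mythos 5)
Your proof is correct and follows essentially the same route as the paper's: both hinge on the observation that $\bw2U_3 \wedge V_5$ has codimension $3$ in $\bw2U_3 \wedge V_6$, so a ($\ge4$)-dimensional intersection with $A$ must meet it nontrivially, and then on the inclusion $\bw2U_3 \wedge V_5 \subset \bw3V_5$ for the second clause. The only difference is cosmetic: you verify the codimension-$3$ claim by an explicit basis count (and invoke $Z_A^{\ge 5} = \emptyset$ to identify $Z^{\ge 4}_{A,V_5}$ with $Z^4_{A,V_5}$), whereas the paper asserts these points without elaboration.
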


\begin{proof}
Assume that $U_3 \subset V_6$ defines a point of $Z_{A,V_5}^4$.\ In other words, $\dim(A \cap (\bw2U_3 \wedge V_6)) \ge 4$ and $U_3 \subset V_5$.\ Since $\bw2U_3 \wedge V_5 $ has codimension 3 in $ \bw2U_3 \wedge V_6$, we have $A \cap  (\bw2U_3 \wedge V_5) \ne 0$.\ This means $(U_3,V_5) \in \widehat{Z}_A$.\ Since $\bw2U_3 \wedge V_5 \subset \bw3V_5$, this contradicts $A \cap \bw3V_5 = 0$.
\end{proof}

\subsection{The quadric fibrations}\label{sec24}

In~\cite[{Section~4}]{DK}, we defined two quadric fibrations associated with a  smooth  GM variety $X$  {of dimension $n$}.\ The first quadric fibration is the map 
\begin{equation*}
\rho_1 \colon  \PP_X(\cU_X) \to \P(V_5)
\end{equation*}
induced by the tautological embedding $\cU_X \hookrightarrow V_5 \otimes \cO_X$.\ It is flat over the complement of {the union $Y_{A,V_5}^{\ge n-1} \cup \Sigma_1(X)$, where $\Sigma_1(X)$ is} the {\sf kernel locus}
\begin{equation}\label{eq:kernel-locus}
\Sigma_1(X) := \pr_{Y,1}(\pr_{Y,2}^{-1}(\bp_X)) \subset Y_{A,V_5} \subset \P(V_5).
\end{equation}
If $A$ has no decomposable vectors, the map $\pr_{Y,1}\colon \pr_{Y,2}^{-1}(\bp_X) \to Y_{A,V_5}$ is a closed embedding {(\cite[Proposition~B.3]{DK})}.\  So, if $\bp_X \in Y^\ell_{A^\perp}$, the variety $\Sigma_1(X) $ is isomorphic to $ \P^{\ell - 1}$ embedded via the second Veronese embedding.

The fibers of $\rho_1$ can be described as follows.

\begin{lemm}[\protect{\cite[Proposition~4.5]{DK}}]
\label{lemma:fibers-quadric1}
Let $X$ be a smooth GM variety of dimension  $n\ge3$, with associated Lagrangian data $(V_6,V_5,A)$.\ For every $v \in \P(V_5)$, we have
\begin{enumerate}[noitemsep,nolistsep]
\item[\rm(a)] if $v \in Y^\ell_{A,V_5} \setminus \Sigma_1(X)$, the fiber $\rho_1^{-1}(v)$ is a quadric {in $\P^{n-2}$ of} corank $\ell$;
\item[\rm(b)] if $v \in Y^\ell_{A,V_5} \cap \Sigma_1(X)$, the fiber $\rho_1^{-1}(v)$ is a quadric {in $\P^{n-1}$ of} corank $\ell-1$.
\end{enumerate}
\end{lemm}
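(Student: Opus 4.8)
The plan is to reduce each fiber to a quadric section of a linearly embedded projective space, and then to read off its corank from the Lagrangian $A$.

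First I would describe the fiber concretely. A point of $\P_X(\cU_X)$ lying over $v=[v_0]\in\P(V_5)$ is a pair $(x,[v_0])$ with $x\in X$ and $v_0\in\cU_{X,x}=\gamma_X(x)$, so that $\rho_1^{-1}(v)$ is isomorphic to $\{x\in X:v_0\in\gamma_X(x)\}$. The $2$-planes in $\Gr(2,V_5)$ containing $v_0$ form a Schubert variety $G_v\cong\P(V_5/\langle v_0\rangle)\cong\P^3$, linearly embedded in $\P(\bw2V_5)$, and its cone $C_v=\P(\C\oplus(v_0\wedge V_5))\cong\P^4$ lies inside $\CGr(2,V_5)$. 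Since $C_v\subset\CGr(2,V_5)$, every Pl\"ucker quadric vanishes identically on $C_v$, and therefore $\rho_1^{-1}(v)=C_v\cap\P(W)\cap Q$ is cut out, inside the linear space $\P(W_v)$ with $W_v:=(\C\oplus(v_0\wedge V_5))\cap W$, by the single quadric $Q|_{W_v}$. This already exhibits the fiber as a quadric.

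Next I would pin down the ambient projective space. A dimension count in $\P(\C\oplus\bw2V_5)=\P^{10}$ shows that $\dim W_v=n-1$, so that the fiber lies in $\P^{n-2}$, whenever $C_v$ meets $\P(W)$ transversally, the dimension jumping to $n$ (ambient $\P^{n-1}$) exactly when this transversality fails. I would then show that this failure is detected by the kernel locus: the intersection is non-transverse if and only if $A\cap(v_0\wedge\bw2V_5)\neq0$, which by~\eqref{eq:kernel-locus} is precisely the condition $v\in\Sigma_1(X)$. This reproduces the dichotomy between~(a) and~(b) and is consistent with the jump being impossible when $\Sigma_1(X)=\emptyset$.

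The heart of the matter, and where I expect the main difficulty, is the corank computation. Using the explicit construction of $A$ out of the quadrics defining $X$ (\cite[Theorem~3.10]{DK}), together with the splitting $V_6=V_5\oplus\langle Q\rangle$ and the induced isomorphism $\bw3V_6\cong\bw2V_5^\vee\oplus\bw2V_5$, I would produce a canonical identification under which the kernel of the symmetric form $Q|_{W_v}$ matches the Lagrangian intersection $A\cap(v_0\wedge\bw2V_6)$. Granting this: in case~(a) one has $A\cap(v_0\wedge\bw2V_5)=0$, the identification is an isomorphism, and the corank of $Q|_{W_v}$ equals $\dim\bigl(A\cap(v_0\wedge\bw2V_6)\bigr)=\ell$; in case~(b) the extra direction of $W_v$---the one carrying the cone vertex $\nu$---absorbs the nonzero subspace $A\cap(v_0\wedge\bw2V_5)$, turning one would-be kernel vector into a genuine coordinate of $\P^{n-1}$, so the corank drops to $\ell-1$. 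The delicate point throughout is exactly this bookkeeping of the vertex (equivalently, of the chosen splitting of $V_6$), which must simultaneously account for the passage from $\P^{n-2}$ to $\P^{n-1}$ and for the drop from $\ell$ to $\ell-1$; making the identification of $\ker(Q|_{W_v})$ with $A\cap(v_0\wedge\bw2V_6)$ precise is the real work.
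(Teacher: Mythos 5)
First, a point of calibration: this paper does not prove the lemma at all --- it is quoted from \cite[Proposition~4.5]{DK} --- so your proposal has to stand on its own rather than be matched line by line against a proof in the text. Its first two steps do stand: since the cone $C_v = \P(\C\oplus(v_0\wedge V_5))$ lies in $\CGr(2,V_5)$, the fiber $\rho_1^{-1}(v)$ is indeed the quadric $Q\cap\P(W_v)$ with $W_v = (\C\oplus(v_0\wedge V_5))\cap W$, and the span jumps exactly where $A\cap(v_0\wedge\bw2V_5)\ne0$, which is $\Sigma_1(X)$ by~\eqref{eq:kernel-locus}. Note, however, that even this step silently uses $\dim\bigl(A\cap(v_0\wedge\bw2V_5)\bigr)\le1$, so that the jump is from $\P^{n-2}$ to $\P^{n-1}$ and no further, and so that ``corank $\ell-1$'' in (b) is the right count. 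This is where the hypothesis that $A$ has no decomposable vectors enters: a pencil of $2$-forms on the $4$-dimensional space $V_5/\langle v_0\rangle$ always contains a degenerate, hence decomposable, member (the Pfaffian is a quadratic form on the pencil), so a $2$-dimensional intersection would produce a decomposable vector in $A$. Your sketch never records this.

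The genuine gap is the corank computation, which you explicitly defer (``granting this''), and the one structural claim you do commit to there is incorrect. What is true --- and what one extracts from the construction of $A$ in \cite[Theorem~3.10]{DK}, where in the ordinary case $A$ sits in an extension $0\to A\cap\bw3V_5\to A\to W\to0$ with $A\cap\bw3V_5$ the annihilator of $W$ under the perfect pairing $\bw3V_5\otimes\bw2V_5\to\bw5V_5$ --- is not an isomorphism $\ker(Q|_{W_v})\cong A\cap(v_0\wedge\bw2V_6)$ but a short exact sequence, valid in both cases (a) and (b):
\begin{equation*}
0\lra A\cap(v_0\wedge\bw2V_5)\lra A\cap(v_0\wedge\bw2V_6)\lra \ker\bigl(Q|_{W_v}\bigr)\lra 0.
\end{equation*}
Concretely, fixing $v_6\in V_6\setminus V_5$ and writing $a = v_0\wedge\eta + v_6\wedge w$ with $\eta\in\bw2V_5$ and $w\in v_0\wedge V_5$, membership in $A$ forces $w\in W_v$ and says that $v_0\wedge\eta$ represents the linear form $Q(w,\cdot)\vert_W$; since $v_0\wedge\eta$ automatically annihilates $W_v\subset v_0\wedge V_5$, the component $w$ lies in $\ker(Q|_{W_v})$, and conversely every $w\in\ker(Q|_{W_v})$ arises this way, because a functional on $W$ killing $W_v$ extends to one on $\bw2V_5$ killing $v_0\wedge V_5$, i.e., is realized by some $\xi\in v_0\wedge\bw2V_5$. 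The kernel of $a\mapsto w$ is visibly $A\cap(v_0\wedge\bw2V_5)$. So in case (a) that subspace vanishes and the corank is $\ell$, while in case (b) it is $1$-dimensional and the corank is $\ell-1$: one and the same space produces the extra dimension of $W_v$ and eats one dimension of $A\cap(v_0\wedge\bw2V_6)$. Your alternative mechanism --- that the extra direction of $W_v$, ``the one carrying the cone vertex $\nu$,'' absorbs a would-be kernel vector --- cannot be right: for ordinary $X$ one has $\nu\notin\P(W)$, so no direction of $W_v$ carries the vertex, yet $\Sigma_1(X)\ne\emptyset$ does occur (for every ordinary fourfold $\Sigma_1(X)$ is the single point $v_0$ of Lemma~\ref{lemma:class-q0}); and for special $X$ the vertex direction lies in $W_v$ for \emph{every} $v$, so it cannot distinguish (b) from (a). Finally, the identification above must be set up separately for ordinary and special $X$, since the construction of $A$ from $(W,Q)$ differs in the two cases; your sketch addresses only a single splitting and so does not cover the special case.
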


 {Since the corank of a quadric does not exceed the linear dimension of its span, we have $\ell \le n-1$ for $v \notin \Sigma_1(X)$, and $\ell \le n+1$ for $v \in \Sigma_1(X)$.\ This implies $Y^3_{A,V_5} \subset \Sigma_1(X)$ for $n=3$.}

The second quadric fibration is the map 
\begin{equation*}
\rho_2 \colon  \PP_X(V_5/\cU_X) \to \Gr(3,V_5)
\end{equation*}
induced by the natural embedding $(V_5/\cU_X) \otimes \bw2\cU_X \hookrightarrow \bw3V_5\otimes \cO_X$.\ 
It is flat over the complement of {the union $Z_{A,V_5}^{\ge n-2} \cup \Sigma_2(X)$, where $\Sigma_2(X)$ is} the {\sf isotropic locus}
\begin{equation}\label{eq:isotropic-locus}
\Sigma_2(X) := \pr_{Z,1}(\pr_{Z,2}^{-1}(\bp_X)) \subset Z_{A,V_5} \subset \Gr(3,V_5).
\end{equation}
By Lemma~\ref{lemma:zav4-empty}, we have
$
Z_{A,V_5}^4 \subset \Sigma_2(X)$.

In contrast with the case of the kernel locus, the map $\pr_{Z,1}\colon \pr_{Z,2}^{-1}(\bp_X) \to Z_{A,V_5}$ is no longer an embedding: its fiber over a point $U_3$ is   the projective space $\P(A \cap (\bw2U_3 \wedge V_5))$ and we set
\begin{equation*}
{\Sigma^{\ge k}_2(X) := \{ U_3 \in \Sigma_2(X) \mid \dim(A \cap (\bw2U_3 \wedge V_5)) \ge k \}
\quad\textnormal{and}\quad
\Sigma^k_2(X) := \Sigma^{\ge k}_2(X) \setminus \Sigma^{\ge k+1}_2(X),}
\end{equation*}
so that $\Sigma_2(X) = \Sigma^{\ge 1}_2(X)$.\ Note that $\Sigma_2^{\ge 3}(X) $ is empty if~$A$ has no decomposable vectors.

The fibers of $\rho_2$ can be described as follows.

\begin{lemm}[\protect{\cite[Proposition~4.10]{DK}}]
\label{lemma:fibers-quadric2}
Let $X$ be a smooth GM variety of dimension {$n \ge 3$}, with associated Lagrangian data $(V_6,V_5,A)$.\ For every $U_3 \in \Gr(3,V_5)$, we have
\begin{itemize} 
\item[{\rm (a)}] if $U_3 \in Z^\ell_{A,V_5} \setminus \Sigma_2(X)$, the fiber $\rho_2^{-1}(U_3)$ is a quadric {in $\P^{n-3}$ of} corank $\ell$;
\item[{\rm (b)}] if $U_3 \in Z^\ell_{A,V_5} \cap \Sigma^1_2(X)$, the fiber $\rho_2^{-1}(U_3)$ is a quadric {in $\P^{n-2}$ of} corank $\ell-1$;
\item[{\rm (c)}] if $U_3 \in Z^\ell_{A,V_5} \cap \Sigma^2_2(X)$, the fiber $\rho_2^{-1}(U_3)$ is a quadric {in $\P^{n-1}$ of} corank $\ell-2$.
\end{itemize}
\end{lemm}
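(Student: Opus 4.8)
The plan is to identify each fibre of $\rho_2$ as a linear section of $X$ cut out by a single quadric, and then to read off its dimension and corank from the Lagrangian $A$, just as for the first quadric fibration in Lemma~\ref{lemma:fibers-quadric1}. A point of $\PP_X(V_5/\cU_X)$ over $x\in X$ is a line in $V_5/\cU_{X,x}$, that is, a subspace $U_3$ with $\cU_{X,x}\subset U_3\subset V_5$, and $\rho_2$ sends it to $U_3\in\Gr(3,V_5)$; since $\cU_{X,x}\subset U_3$ makes $U_3/\cU_{X,x}$ one-dimensional, this line is determined by $x$, so
\[
\rho_2^{-1}(U_3)\ \cong\ \{x\in X\mid \cU_{X,x}\subset U_3\}\ =\ \gamma_X^{-1}(\Gr(2,U_3)).
\]
Every $2$-vector in the three-space $U_3$ is decomposable, so $\Gr(2,U_3)=\P(\bw2U_3)\cong\P^2$ is a linearly embedded plane and its cone $\CGr(2,U_3)=\P(\C\oplus\bw2U_3)$ is a linear $\P^3$ entirely contained in $\CGr(2,V_5)$. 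Writing $W_3:=W\cap(\C\oplus\bw2U_3)$, intersecting with this linear space therefore removes the Grassmannian condition and leaves only the quadric $Q$:
\[
\rho_2^{-1}(U_3)\ =\ \CGr(2,V_5)\cap\P(W)\cap Q\cap\P(\C\oplus\bw2U_3)\ =\ Q\cap\P(W_3),
\]
a quadric in $\P(W_3)$ of corank $\dim\ker(q|_{W_3})$, where $q$ is the quadratic form of $X$. (All Plücker quadrics, parameterised by $V_5$, vanish on $\C\oplus\bw2U_3$, so $q|_{W_3}$ depends only on the class of $Q$ in $V_6/V_5$; this is what makes $V_6$, rather than $V_5$, appear below.)

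It remains to compute $\dim W_3$ and $\operatorname{corank}(q|_{W_3})$ in terms of $A$. The dimension is pure linear algebra: in the $11$-dimensional space $\C\oplus\bw2V_5$ one has $\dim W_3=(n-2)+\dim\bigl(W^\perp\cap(\bw2U_3\wedge V_5)\bigr)$, using the identification $\bw2V_5^\vee\cong\bw3V_5$ under which $(\bw2U_3)^\perp=\bw2U_3\wedge V_5$. The construction of $A$ from the quadratic forms defining $X$ (\cite[Theorem~3.10]{DK}) identifies $A\cap\bw3V_5$ with the $\bw3V_5$-part of $W^\perp$ and the form $q$ with the component of $A$ along the line $V_6/V_5$; granting this, one obtains the two identities
\[
\dim W_3=(n-2)+\dim\bigl(A\cap(\bw2U_3\wedge V_5)\bigr),\qquad
\operatorname{corank}(q|_{W_3})=\dim\bigl(A\cap(\bw2U_3\wedge V_6)\bigr)-\dim\bigl(A\cap(\bw2U_3\wedge V_5)\bigr).
\]
Since $U_3\in Z^\ell_{A,V_5}$ means $\dim(A\cap(\bw2U_3\wedge V_6))=\ell$, while $\Sigma^{\ge k}_2(X)=\{U_3\mid\dim(A\cap(\bw2U_3\wedge V_5))\ge k\}$, substituting the values $0$, $1$, $2$ of $\dim(A\cap(\bw2U_3\wedge V_5))$ in the three regimes $U_3\notin\Sigma_2(X)$, $U_3\in\Sigma^1_2(X)$, $U_3\in\Sigma^2_2(X)$ yields the ambient spaces $\P^{n-3}$, $\P^{n-2}$, $\P^{n-1}$ and the coranks $\ell$, $\ell-1$, $\ell-2$ of parts (a), (b), (c).

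The main obstacle is precisely the dictionary of the second paragraph: one must prove, from the explicit shape of $A$, that the radical of $q|_{W_3}$ is canonically identified with the image of $A\cap(\bw2U_3\wedge V_6)$ in $\bw2U_3\wedge(V_6/V_5)\cong\bw2U_3$, and that the jump of $W_3$ above its generic dimension is measured exactly by $A\cap(\bw2U_3\wedge V_5)=(A\cap\bw3V_5)\cap(\bw2U_3\wedge V_5)$. This is the same computation that underlies Lemma~\ref{lemma:fibers-quadric1} for $\rho_1$ (with $v\wedge\bw2V_6$ replaced by $\bw2U_3\wedge V_6$), and it is cleanest to carry out in a basis adapted to the flag $U_3\subset V_5\subset V_6$ together with the graph description of $A$, tracking the two intersection numbers directly.
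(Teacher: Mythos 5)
Your proposal is correct, and you should be aware that this paper does not prove the lemma at all --- it is imported verbatim from \cite[Proposition~4.10]{DK} --- so the relevant comparison is with that source, whose argument runs along exactly the route you take: identify $\rho_2^{-1}(U_3)$ with $X \cap \P(\C\oplus\bw2U_3) = Q\cap\P(W_3)$ and translate the dimension and corank of $q|_{W_3}$ through the Lagrangian data. The one step you defer does close precisely as you indicate: in the graph description coming from \cite[Theorem~3.10]{DK} one writes, in the ordinary case (to which one reduces by a shear of the cone, so that $W\subset\bw2V_5$; the special case carries the extra summand $\C\subset W$, on which, as you observe, all Pl\"ucker quadrics vanish), $A = W^\perp \oplus \{\xi\wedge x_6 + \mathbf{q}(\xi)\mid \xi\in W\}$ with $x_6\in V_6\setminus V_5$, $W^\perp = A\cap\bw3V_5$ the annihilator of $W$ under $\bw3V_5\cong\bw2V_5^\vee$, and $\mathbf{q}\colon W\to \bw3V_5$ the polarization of $Q$. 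Then $a = \xi\wedge x_6+\mathbf{q}(\xi)+w^\perp$ lies in $\bw2U_3\wedge V_6$ if and only if $\xi\in W\cap\bw2U_3 = W_3$ and $\mathbf{q}(\xi)\in W^\perp+\bw2U_3\wedge V_5 = W_3^\perp$, so the projection $A\cap(\bw2U_3\wedge V_6)\to \bw2U_3\wedge(V_6/V_5)\cong\bw2U_3$ has kernel $A\cap(\bw2U_3\wedge V_5)$ and image exactly the radical of $q|_{W_3}$; this yields your corank identity, while the annihilator count you already performed gives $\dim W_3 = (n-2)+\dim(A\cap(\bw2U_3\wedge V_5))$, and substituting the three regimes for $\dim(A\cap(\bw2U_3\wedge V_5))\in\{0,1,2\}$ (the value $\ge 3$ being excluded when $A$ has no decomposable vectors) produces parts (a), (b), (c).
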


Lemmas \ref{lemma:fibers-quadric2} and \ref{lemma:singularities} will be essential for the descriptions of the schemes of linear spaces contained in GM varieties.

\begin{lemm}\label{lemma:singularities}
Let $A\subset \bw3V_6$ be a Lagrangian subspace with no decomposable vectors,  {let $V_5 \subset V_6$ be a hyperplane},
and let $X = X_\ord(V_6,V_5,A)$ be the corresponding ordinary GM variety, of dimension $n:= 5-\dim(A \cap \bw3V_5)$.\ If $ n\ge 3$,
\begin{itemize} 
\item[{\rm (a)}] $Y^{\ge 2}_{A,V_5}$ is a curve which is smooth if and only if $Y^3_{A,V_5} = \emptyset$ and the Pl\"ucker point $\bp_X$ does not lie on the projective dual variety of $Y^{\ge 2}_A$;
\item[{\rm (b)}] $Y_{A,V_5}$ is a normal integral threefold and
\begin{equation*}
\Sing(Y_{A,V_5}) = Y^{\ge 2}_{A,V_5} \cup \Sigma_1(X),
\qquad
\Sing(\tY_{A,V_5}) = \Sing(Y^{\ge 2}_{A,V_5}) \cup f_{{A,V_5}}^{-1}(\Sigma_1(X)).
\end{equation*}
\end{itemize}
\end{lemm}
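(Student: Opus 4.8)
The plan is to deduce everything from the observation that $Y_{A,V_5}$ (resp.\ $Y^{\ge 2}_{A,V_5}$) is a hyperplane section of the integral normal fourfold $Y_A$ (resp.\ of the integral surface $S:=Y^{\ge 2}_A$), and then to analyze the double cover one stratum of $Y_A$ at a time.

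For (a), I regard $Y^{\ge 2}_{A,V_5}=S\cap\P(V_5)$ as a hyperplane section of $S$. Since $S$ is an integral surface, nondegenerate in $\P(V_6)$, this section has pure dimension $1$, i.e.\ it is a curve. A point of $S\cap\P(V_5)$ is singular on the section precisely when either it lies in $\Sing(S)=Y^{\ge 3}_A$---contributing $Y^{3}_{A,V_5}$---or it is a smooth point of $S$ at which $\P(V_5)$ is tangent to $S$. By the computation of the projective dual $(Y^{\ge 2}_A)^\vee$ in Lemma~\ref{lemma:dual-y2}, the latter occurs at some point of $\P(V_5)$ exactly when $\bp_X\in(Y^{\ge 2}_A)^\vee$, and such a tangency point necessarily lies on $\P(V_5)$. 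At a point of $Y^{3}_{A,V_5}$ the surface $S$ is already singular, so $\dim T_v(S\cap\P(V_5))\ge\dim T_vS-1\ge 2>1$ and the section is singular there. Hence $S\cap\P(V_5)$ is smooth if and only if $Y^{3}_{A,V_5}=\emptyset$ and $\bp_X\notin(Y^{\ge 2}_A)^\vee$, which is the asserted criterion.

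For the first assertion of (b), I use the standard description of the singular locus of a hyperplane section: a point $v\in Y_{A,V_5}=Y_A\cap\P(V_5)$ is singular either because $v\in\Sing(Y_A)=Y^{\ge 2}_A$, giving $Y^{\ge 2}_{A,V_5}$, or because $v\in Y^1_A$ is a smooth point of $Y_A$ at which $\P(V_5)$ is tangent. Over the smooth locus $Y^1_A$ the projection $\pr_{Y,2}$ of the duality correspondence $\hY_A$ is the Gauss map, sending $v$ to its tangent hyperplane (\cite[Proposition~B.3]{DK}); thus $\P(V_5)$ is tangent to $Y_A$ at $v\in Y^1_A$ if and only if $\pr_{Y,2}(v)=\bp_X$, i.e.\ $v\in\Sigma_1(X)$. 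Since $\Sigma_1(X)\subset Y_{A,V_5}$ and $\Sigma_1(X)\setminus Y^1_A\subset Y^{\ge 2}_{A,V_5}$, this yields $\Sing(Y_{A,V_5})=Y^{\ge 2}_{A,V_5}\cup\Sigma_1(X)$. By (a) the first set is a curve, and $\Sigma_1(X)\isom\P^{\ell-1}$ with $\ell\le 2$ has dimension $\le 1$; hence $\dim\Sing(Y_{A,V_5})\le 1$. As $Y_{A,V_5}$ is a sextic hypersurface in $\P(V_5)$ it is Cohen--Macaulay, so Serre's criterion gives normality, and connectedness of the ample divisor $Y_{A,V_5}\subset Y_A$ gives integrality.

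For the second assertion of (b), I analyze $\tY_{A,V_5}=f_A^{-1}(Y_{A,V_5})$ over the three strata of $Y_A$. Over $Y^1_A$ the cover $f_A$ is \'etale, so $\tY_{A,V_5}$ is locally isomorphic to $Y_{A,V_5}$ and is singular exactly over $\Sing(Y_{A,V_5})\cap Y^1_A=\Sigma_1(X)\cap Y^1_A$. Over $Y^2_A$ the cover is branched; here $Y_A$ has a transverse ordinary double point along $S$ and $\tY_A$ is smooth (\cite{og1,og4}), with local model $f_A\colon(x_1,x_2,s,t)\mapsto(x_1,x_2,st,s^2,t^2)$ onto $\{z^2=w_1w_2\}$. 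Then $\tY_{A,V_5}$ is the Cartier divisor $\{\tilde L=0\}$ in the smooth fourfold $\tY_A$, where $\tilde L$ is the pullback of a linear form $L$ cutting out $\P(V_5)$; a direct differentiation shows that $\tilde L$ is singular at the ramification point over $v$ if and only if the component of $L$ along the tangent plane of $S$ at $v$ vanishes, i.e.\ if and only if $Y^{\ge 2}_{A,V_5}$ is singular at $v$; the same computation shows $\Sigma_1(X)\cap Y^2_A\subset\Sing(Y^{\ge 2}_{A,V_5})$, so no extra points arise here. Finally, over the finite set $Y^3_A$ the cover $\tY_A$ is singular, so $\dim T(\tY_A)\ge 5$; as $\tY_{A,V_5}$ is a Cartier divisor through such a point, $\dim T(\tY_{A,V_5})\ge 4>3$ and it is singular there, and these points lie in $\Sing(Y^{\ge 2}_{A,V_5})$. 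Assembling the strata and identifying the ramification locus with $Y^{\ge 2}_{A,V_5}$ gives $\Sing(\tY_{A,V_5})=\Sing(Y^{\ge 2}_{A,V_5})\cup f_{A,V_5}^{-1}(\Sigma_1(X))$.

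The main obstacle is the middle stratum: setting up O'Grady's local double-cover model along $Y^2_A$ and carrying out the differentiation carefully, and then matching the bookkeeping so that the contribution of $\Sigma_1(X)$ over the branch surface is absorbed into $\Sing(Y^{\ge 2}_{A,V_5})$ rather than producing new singular points. The two auxiliary inputs I would still need to pin down are that $S=Y^{\ge 2}_A$ is nondegenerate (so the section in (a) is genuinely a curve) and that $\tY_A$ is singular over $Y^3_A$.
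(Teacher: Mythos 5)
Your argument is correct and follows the paper's proof essentially step for step: part (a) via the hyperplane section of the nondegenerate surface $Y^{\ge 2}_A$ plus projective duality, the first half of (b) via identifying tangency points of $\P(V_5)$ with $\Sigma_1(X)$ through the correspondence $\hY_A$ (whose second projection over $Y^1_A$ is the Gauss map), and the second half by stratifying the double cover over $Y^1_A$, $Y^2_A$, $Y^3_A$, where your explicit local model $(x_1,x_2,s,t)\mapsto(x_1,x_2,st,s^2,t^2)$ just spells out what the paper compresses into ``one checks.'' The two inputs you flag are indeed available---nondegeneracy of $Y^{\ge 2}_A$ is \cite[Lemma~B.6]{DK} (which is exactly what the paper cites), and the singularity of $\tY_A$ over $Y^{\ge 3}_A$ is in O'Grady \cite{og4}---and your auxiliary claim $\Sigma_1(X)\cap Y^2_{A,V_5}\subset \Sing(Y^{\ge 2}_{A,V_5})$ does hold, being the pointwise content of the inclusion $\pr_{Y,2}(E)\subset (Y^{\ge 2}_A)^\vee$ established in Lemma~\ref{lemma:dual-y2} (absence of decomposable vectors forces the relevant $\xi$ to have rank $4$ modulo $v$, so $v\wedge\xi\wedge\xi$ is a nonzero multiple of $\bp_X$ and $\P(V_5)$ contains $T_vY^{\ge 2}_A$).
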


\begin{proof}
(a) 
The integral surface $Y^{\ge 2}_A$ is  not contained in a hyperplane (\cite[Lemma~B.6]{DK}),
hence its hyperplane section $Y^{\ge 2}_{A,V_5}$ is a curve.\ The statement about smoothness follows from the definition of projective duality.

(b)
Since $Y_A$ is an integral sextic, we have $\dim (Y_{A,V_5}) = 3$.\ If a point $P \in Y_{A,V_5} \setminus Y_{A,V_5}^{\ge 2}$
is singular, the tangent space to $Y_A$ at $P$ coincides with $\P(V_5)$.\ Therefore, $(P, \bp_X) \in \hY_A$,
hence~$P \in \Sigma_1(X) = \pr_{Y,1}(\pr_{Y,2}^{-1}(\bp_X))$.\ On the other hand, all points of $Y^{\ge 2}_{A,V_5}$
are singular on~$Y_{A,V_5}$, since $Y^{\ge 2}_A = \Sing(Y_A)$.\ This gives the required description of $\Sing(Y_{A,V_5})$.

Over $Y^1_{A,V_5}$, the map $f_{A,V_5}$ is \'etale, hence the singular locus of $\tY_{A,V_5}$  {over $Y_{A,V_5}^1$ is equal to} $f_{A,V_5}^{-1}(\Sigma_1(X))$.\ On the other hand, one checks that along the ramification locus $f_{{A,V_5}}^{-1}(Y^{\ge 2}_{A,V_5})  $, the double sextic $\tY_{A,V_5}$ is smooth if and only if $Y^{\ge 2}_{A,V_5}$ is.\ This gives the required description of~$\Sing(\tY_{A,V_5})$.

Finally, $Y_{A,V_5}$ is normal and integral because it is a hypersurface in~$\P^4$ 
with 1-dimensional singular locus ($Y^{\ge 2}_{A,V_5}$ has dimension 1 by part (a) 
and $\dim(\Sigma_1(X) )= (5-n)-1 = 4-n \le 1$ when $n \ge 3$  {by~\eqref{eq:kernel-locus} and the discussion after it}).
\end{proof}

\section{Cohomology of smooth GM varieties}
\label{section:cohomology}

\subsection{Hodge numbers}

Recall that the Hodge diamond of $\Gr(2,V_5)$ is
\begin{equation*}
\begin{smallmatrix}
&& &&&& 1 \\
&& &&& 0 && 0  \\
& &&&0&& 1 &&0 \\
&& &0&& 0 && 0 &&0  \\
&&0 &&0&& 2 &&0&&0 \\
& 0&& 0 && 0 &&0&&0&&0\\
  0&& 0 && 0 &&2&&0&&0&&0  \\
& 0&& 0 && 0 &&0&&0&&0\\
&&0 &&0&& 2 &&0&&0 \\
&& &0&& 0 && 0 &&0  \\
& &&&0&& 1 &&0 \\
&& &&& 0 && 0  \\
&& &&&& 1 
\end{smallmatrix} 
\end{equation*}
The  abelian group  $H^\bullet(\Gr(2,V_5);\Z)$ is  free  with basis the Schubert classes 
\begin{equation*}
\bsi_{i,j}  \in H^{2(i+j)}(\Gr(2,V_5),\Z), \qquad 3\ge i\ge j\ge 0.
\end{equation*}
We write $\bsi_i$ for $\bsi_{i,0}$; thus $\bsi_1$ is the hyperplane class,
$\bsi_{1,1} = c_2(\cU)$, 
and $\bsi_i = c_i(V_5/\cU)$,
where $\cU$ is the tautological rank-2 subbundle and $V_5/\cU$ the tautological rank-3 quotient bundle.

We compute the Hodge numbers of smooth GM varieties.

\begin{prop}\label{hn}
The Hodge diamond  of a smooth complex GM variety  of dimension $n$ is  
\begin{equation*} 
\begin{array}{cccccc}
(n=1) & (n=2) & (n = 3) & (n=4) & (n=5) & (n = 6) \\[1ex]  
\begin{smallmatrix}
 &1 \\
6 && 6 \\
& 1
\end{smallmatrix} &
\begin{smallmatrix}
&& 1 \\
 & 0&&0 \\
1&& 20&&1  \\
 & 0&&0 \\
&& 1 
\end{smallmatrix} &
\begin{smallmatrix}
 &&& 1 \\
  && 0 && 0  \\
&0&& 1 &&0 \\
 0&& 10 && 10 &&0  \\
&0&& 1 &&0 \\
  && 0 && 0  \\
 &&& 1 
\end{smallmatrix} &
\begin{smallmatrix}
&& &&&& 1 \\
&& &&& 0 && 0  \\
& &&&0&& 1 &&0 \\
&& &0&& 0 && 0 &&0  \\
&&0 &&1&& 22 &&1&&0 	\\
&& &0&& 0 && 0 &&0  \\
& &&&0&& 1 &&0 \\
&& &&& 0 && 0  \\
&& &&&& 1 
\end{smallmatrix} &
\begin{smallmatrix}
&& &&&& 1 \\
&& &&& 0 && 0  \\
& &&&0&& 1 &&0 \\
&& &0&& 0 && 0 &&0  \\
&&0 &&0&& 2 &&0&&0 \\
& 0&& 0 && 10 &&10&&0&&0  \\
&&0 &&0&& 2 &&0&&0 \\
&& &0&& 0 && 0 &&0  \\
& &&&0&& 1 &&0 \\
&& &&& 0 && 0  \\
&& &&&& 1 
\end{smallmatrix} &
\begin{smallmatrix}
&& &&&& 1 \\
&& &&& 0 && 0  \\
& &&&0&& 1 &&0 \\
&& &0&& 0 && 0 &&0  \\
&&0 &&0&& 2 &&0&&0 \\
& 0&& 0 && 0 &&0&&0&&0\\
  0&& 0 && 1 &&22&&1&&0&&0  \\
& 0&& 0 && 0 &&0&&0&&0\\
&&0 &&0&& 2 &&0&&0 \\
&& &0&& 0 && 0 &&0  \\
& &&&0&& 1 &&0 \\
&& &&& 0 && 0  \\
&& &&&& 1 
\end{smallmatrix} 
\end{array}
\end{equation*}
 \end{prop}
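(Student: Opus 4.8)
The plan is to split each Hodge diamond into its \emph{outer shell} (the entries $h^{p,q}(X)$ with $p+q\ne n$) and its \emph{middle row} ($p+q=n$), and to treat the two by different tools. Since the smooth GM varieties of a fixed dimension $n$ are deformation equivalent (\cite{DK}), their Hodge numbers depend on $n$ alone, so I am free to compute with a convenient representative: an \emph{ordinary} one when $n\le 5$, and a \emph{special} one when $n=6$ (where no ordinary model exists). For ordinary $X$ I would realize it as an ample quadric divisor $X\in|\cO_{M_X}(2)|$ in the smooth linear section $M_X=\Gr(2,V_5)\cap\P(W')$, which is itself cut from $\Gr(2,V_5)$ by successive hyperplanes. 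Repeated use of the Lefschetz hyperplane theorem (for $X\subset M_X$ and for the sections cutting out $M_X$) then gives $H^k(X;\Z)\cong H^k(M_X;\Z)\cong H^k(\Gr(2,V_5);\Z)$ for $k<n$, while Poincar\'e duality and the hard Lefschetz theorem determine $H^k(X)$ for $k>n$ from the same data. Because $H^\bullet(\Gr(2,V_5))$ is spanned by Schubert classes, all of Hodge type $(p,p)$, this shows at once that $h^{p,q}(X)=0$ for $p\ne q$ off the middle row and that the diagonal entries there are read off the Hodge diamond of $\Gr(2,V_5)$: this is precisely the outer shell displayed in each column.

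For $n=6$ the Gushel map presents $X$ as a double cover $\pi\colon X\to\Gr(2,V_5)$ branched along the smooth GM fivefold $X'=\Gr(2,V_5)\cap Q'$, with $L:=\cO_{\Gr}(1)$ satisfying $L^{\otimes 2}=\cO(X')$. The deck involution splits $H^\bullet(X;\C)=H^\bullet(\Gr(2,V_5);\C)\oplus H^\bullet(X)^-$, and Hodge theory identifies the anti-invariant pieces as $h^{p,q}(X)^-=\dim H^q(\Gr(2,V_5),\Omega^p_{\Gr}\otimes L^{-1})$. Since $L$ is ample, the Akizuki--Nakano vanishing theorem kills these for $p+q<6$, so once more $H^k(X)\cong H^k(\Gr(2,V_5))$ for $k<6$ and the shell is the same Schubert shell.

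It then remains to compute the middle row $p+q=n$. With the shell in hand, the holomorphic Euler characteristics $\chi(X,\Omega^p_X)=\sum_q(-1)^q h^{p,q}(X)$ isolate a single unknown for each $p$, since $q=n-p$ is the only value with $p+q=n$; thus $\chi(X,\Omega^p_X)=(-1)^{n-p}h^{p,n-p}(X)+(\text{known shell terms})$ recovers $h^{p,n-p}(X)$, and Hodge symmetry $h^{p,q}=h^{q,p}$ together with duality $h^{p,q}=h^{n-p,n-q}$ halves the work. I would evaluate the $\chi(X,\Omega^p_X)$ by Hirzebruch--Riemann--Roch, the Chern classes of $X$ coming from the normal-bundle sequences of $X\subset M_X\subset\Gr(2,V_5)$ (with $N_{M_X/\Gr}=\cO(1)^{\oplus\codim}$ and $N_{X/M_X}=\cO_X(2)$) and the known Chern classes of $\Gr(2,V_5)$; for $n=6$ one uses instead the Chern classes of the double cover $\pi$. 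The canonical bundle $\omega_X=\cO_X(2-n)$ provides immediate checks on the extreme entries: $h^{n,0}=h^0(\omega_X)=0$ for $n\ge 3$, while $h^{2,0}=1$ for the degree-$10$ K3 surface ($n=2$) and $h^{1,0}=6$ for the genus-$6$ GM curve ($n=1$).

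The main obstacle is exactly this middle row: one must check that $h^{p,n-p}(X)$ vanishes outside a narrow central band (only $h^{n/2,n/2}$ and $h^{n/2\pm 1,\,n/2\mp 1}$ for $n$ even, only $h^{(n+1)/2,(n-1)/2}=h^{(n-1)/2,(n+1)/2}$ for $n$ odd survive) and compute the surviving values $10$, $20$, $22$, $1$ (and $6$ in the curve case). Either route---evaluating every $\chi(X,\Omega^p_X)$ by Hirzebruch--Riemann--Roch, or resolving the conormal filtration of $\Omega^p_X$ and computing the groups $H^q(\Gr(2,V_5),\Omega^a_{\Gr}\otimes\cO(b))$ by Bott's theorem---requires a separate, if mechanical, computation for each $n$, with the Akizuki--Nakano vanishing theorem forcing the off-band entries to vanish. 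No idea beyond these standard tools (Lefschetz, Hirzebruch--Riemann--Roch, and Bott and Akizuki--Nakano vanishing) is needed.
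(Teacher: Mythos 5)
Your proposal is correct in outline but follows a genuinely different route from the paper's. The paper also reduces to an ordinary model via deformation invariance and gets the outer shell from the Lefschetz theorem (its Lemma~\ref{lef}) plus $h^{n,0}=0$ for Fano $X$, but for the middle row it mostly quotes the literature: $h^{1,2}=10$ for $n=3$ from Logachev, the $n=4$ diamond from Iliev--Manivel, the $n=6$ diamond from \cite{DK-6fold}, and only for $n=5$ does it compute: $h^{1,4}=0$ via the conormal sequences $0\to\cO_X(-2)\to\Omega^1_G\vert_X\to\Omega^1_X\to0$ and $0\to\Omega^1_G(-2)\to\Omega^1_G\to\Omega^1_G\vert_X\to0$ with Bott and Kodaira vanishing, and then $h^{2,3}=10$ by the purely topological count $\chi_{\rm top}(X)=2\chi_{\rm top}(M'_X)-\chi_{\rm top}(X')$ on a \emph{special} fivefold --- a trick that avoids Riemann--Roch entirely. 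Your plan is instead uniform and self-contained: since the shell leaves exactly one unknown $h^{p,n-p}$ in each $\chi(X,\Omega^p_X)$, Hirzebruch--Riemann--Roch (or Koszul resolutions plus Borel--Bott--Weil on $\Gr(2,V_5)$) determines the whole middle row, including the off-band zeros, so you do not actually need a vanishing theorem there. What your route buys is independence from the cited computations; what it costs is substantial mechanical work (e.g.\ the Chern character of $\Omega^3_X$ on a sixfold), where the paper's citations and Euler-characteristic trick are much lighter.

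Two small technical points. First, your phrase that Akizuki--Nakano ``forces the off-band entries to vanish'' is not right as stated: Akizuki--Nakano concerns $H^q(X,\Omega^p_X\otimes L^{-1})$ with $p+q<\dim X$ and says nothing directly about middle-row Hodge numbers; the off-band vanishing comes from the HRR bookkeeping or the conormal-filtration computation you also name, so this is a misattribution rather than a gap. Second, for the $n=6$ double cover the anti-invariant part is $h^{p,q}(X)^-=\dim H^q\bigl(\Gr(2,V_5),\Omega^p(\log X')\otimes L^{-1}\bigr)$ with \emph{logarithmic} poles along the branch divisor, not $\Omega^p\otimes L^{-1}$; the vanishing you want below the middle degree still holds by Esnault--Viehweg's logarithmic version, and the paper sidesteps this entirely by invoking Cornalba's topological theorem on cyclic covers (its Lemma~\ref{lef2}). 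With these two corrections your scheme goes through.
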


 \begin{proof}
When $n=1$, the Hodge numbers are those of a curve of genus 6.\ When $n=2$, the Hodge numbers are those of a K3 surface.

Assume $3\le n\le 5$.\  Since the Hodge numbers of smooth complex varieties are deformation invariant, we may assume that the GM variety $X$ is ordinary.\ It is then a smooth dimensionally transverse intersection of (ample) hypersurfaces in  $ G:=\Gr(2,V_5)$ and the Lefschetz Hyperplane Theorem (see Lemma \ref{lef}) implies that the Hodge numbers of $X$ of degree $<n$ are those of $G$.\ Moreover, $h^{n,0}(X) = 0$ because $X$ is a Fano variety.
 
When $n=3$, the missing Hodge number $h^{1,2}(X)$ was computed in \cite{lo}.\ 
When $n=4$, the   Hodge diamond  was computed in \cite[Lemma 4.1]{im}.\ 
When $n=6$, it was computed in \cite[Corollary~4.4]{DK-6fold}.\ When $n=5$, the missing Hodge numbers $h^{1,4}(X)$ and $h^{2,3}(X)$ were obtained by Nagel using a computer (see the introduction of \cite{nag}).\  We now present our own computation.

To compute $h^{1,4}(X)$, we assume that $X$ is an ordinary fivefold.\  Consider the exact sequences
\begin{equation*} 
0\to \cO_X(-2) \to \Omega^1_G \vert_X\to \Omega^1_X\to 0\quad{\rm and}\quad 0\to \Omega^1_G(-2) \to \Omega^1_G  \to \Omega^1_G \vert_X\to 0. 
\end{equation*}
The sheaf $\Omega^1_G(-2)$ is acyclic (by Bott's theorem) and so is $ \cO_X(-2)$ (by Kodaira vanishing), hence
$h^i(X,\Omega^1_X) = h^i(X,\Omega^1_G\vert_X) = h^i(G,\Omega^1_G)$ and $h^{1,i}(X) = h^{1,i}(G)$.\  In particular, we obtain $h^{1,4}(X)=0$. 

To compute  $h^{2,3}(X) $, we assume that $X$ is a special fivefold, \ie, is a double covering  of a smooth hyperplane section $M'_X$ of $G$ branched along a smooth GM fourfold $X'$.\ Using this double covering, we compute Euler characteristics 
\begin{equation*}
\chi_{\rm top}(X)=2\chi_{\rm top}(M'_X)-\chi_{\rm top}(X').
\end{equation*}
Since $X'$ is a GM fourfold, we have $\chi_{\rm top}(X')=1+1+24+1+1=28 $.\ On the other hand, the inclusion $M'_X\subset G$ induces isomorphisms $H^k(G;\Z)\isom H^k(M'_X;\Z)$ for all $k\in\{0,\dots,5\}$.\ In particular, $\chi_{\rm top}(M'_X)=8$, hence  $\chi_{\rm top}(X)=-12$.\  Since $\chi_{\rm top}(X)=1+1+2-2h^{2,3}(X)+2+1+1$, we obtain $h^{2,3}(X) = 10$.\ This finishes the proof of the proposition. 
 \end{proof}

\subsection{Integral cohomology}

We now prove that the integral cohomology groups of smooth GM varieties are  torsion-free.\ We start with a classical lemma. 

\begin{lemm}[Lefschetz]\label{lef}
Let 
$X$ be a dimensionally transverse intersection of dimension $n$ of ample hypersurfaces  in a smooth projective variety $M$. 

\noindent{\rm (a)} The induced map  $H^k(M;\Z)\isomto H^k(X;\Z)$ is bijective for $k< n$, injective for $k=n$.

\noindent{\rm (b)} The induced map  $H_k(X;\Z)\isomto H_k(M;\Z)$ is bijective for $k< n$, surjective for $k=n$.

\noindent{\rm (c)} If $X$ is moreover smooth and $H^\bullet(M;\Z)$ is torsion-free, so is $H^\bullet(X;\Z)$.
\end{lemm}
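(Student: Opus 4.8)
The plan is to deduce all three parts from the topology of the open complement $U := M \setminus X$, which---crucially---is a \emph{smooth} oriented manifold of real dimension $2\dim M = 2(n+r)$, where $r$ is the number of ample hypersurfaces (so that dimensional transversality gives $\dim M = n+r$). This smoothness holds regardless of the singularities of $X$, since $U$ is an open subset of the smooth variety $M$. First I would reformulate (a) and (b): writing out the long exact sequences of the pair $(M,X)$, statement (a) is equivalent to $H^k(M,X;\Z)=0$ for all $k \le n$, and statement (b) to $H_k(M,X;\Z)=0$ for all $k \le n$. Since $M$ is compact and $X$ closed, excision (passage to the one-point compactification $M/X$) gives $H^k(M,X;\Z) \isom H^k_c(U;\Z)$ and $H_k(M,X;\Z) \isom H^{\mathrm{BM}}_k(U;\Z)$, Borel--Moore homology. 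Poincar\'e duality on the smooth oriented manifold $U$ then converts these into ordinary invariants of $U$:
\begin{equation*}
H^k(M,X;\Z) \isom H_{2(n+r)-k}(U;\Z), \qquad H_k(M,X;\Z) \isom H^{2(n+r)-k}(U;\Z).
\end{equation*}
Thus (a) and (b) both reduce to the single vanishing statement $H_j(U;\Z) = H^j(U;\Z) = 0$ for all $j \ge n+2r$.

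To prove this vanishing, I would cover $U$ by the affine open sets $U_i := M \setminus X_i$, $i=1,\dots,r$. Each $U_i$, and more generally each finite intersection $U_{i_0} \cap \cdots \cap U_{i_p} = M \setminus (X_{i_0} \cup \cdots \cup X_{i_p})$, is the complement of the support of an effective ample divisor, hence is a \emph{smooth} affine variety of complex dimension $n+r$. By the Andreotti--Frankel theorem such a variety has the homotopy type of a CW complex of real dimension $\le n+r$, so its (co)homology vanishes in degrees $> n+r$. Feeding this into the Mayer--Vietoris (\v{C}ech) spectral sequence $E_1^{p,q} = \bigoplus_{i_0 < \cdots < i_p} H^q(U_{i_0} \cap \cdots \cap U_{i_p}) \Rightarrow H^{p+q}(U)$ (and its homological analogue), in which $0 \le p \le r-1$ and every column vanishes for $q > n+r$, yields $H^j(U;\Z) = H_j(U;\Z) = 0$ for $j > (r-1)+(n+r) = n+2r-1$, i.e.\ for $j \ge n+2r$. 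Combined with the duality isomorphisms above, this proves (a) and (b). The only place smoothness enters is to guarantee that $U$ and all the sets $U_{i_0}\cap\cdots\cap U_{i_p}$ are smooth, so that Andreotti--Frankel and Poincar\'e duality apply; the singularities of $X$ play no role.

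For (c), assume $X$ is smooth, hence a compact oriented manifold of real dimension $2n$, and assume $H^\bullet(M;\Z)$ is torsion-free; by the universal coefficient theorem the latter forces $H_\bullet(M;\Z)$ to be torsion-free as well. I would check the three ranges separately. For $k<n$, part (a) gives $H^k(X;\Z)\isom H^k(M;\Z)$, which is torsion-free. For $k>n$, Poincar\'e duality on $X$ gives $H^k(X;\Z)\isom H_{2n-k}(X;\Z)$ with $2n-k<n$, and part (b) identifies this with $H_{2n-k}(M;\Z)$, again torsion-free. Finally, for $k=n$, the universal coefficient theorem identifies the torsion of $H^n(X;\Z)$ with the torsion of $H_{n-1}(X;\Z)$, which by part (b) equals $H_{n-1}(M;\Z)$ (as $n-1<n$) and is torsion-free; hence $H^n(X;\Z)$ is torsion-free too. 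This completes (c).

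The main conceptual point---and the step most likely to cause trouble if handled naively---is that $X$ may be singular, so one cannot run the usual inductive Lefschetz argument through the singular partial intersections $X_1\cap\cdots\cap X_i$, nor apply Poincar\'e duality to them. The device that sidesteps this is to work entirely with the smooth complement $U$; the remaining difficulties are then routine bookkeeping with excision, Borel--Moore homology, and the degree count in the Mayer--Vietoris spectral sequence.
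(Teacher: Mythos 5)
Your proposal is correct and follows essentially the same route as the paper: the paper deduces (a) and (b) from the single fact that $M\setminus X$ is a union of $\dim(M)-n$ smooth affine open subsets, citing \cite[Chapter 5, Theorem~(2.6)]{di}, and your excision/Borel--Moore duality computation on $U=M\setminus X$ combined with Andreotti--Frankel and the Mayer--Vietoris spectral sequence is precisely a self-contained proof of that cited black box. Your argument for (c) --- Poincar\'e duality on the smooth $X$ together with (a) and (b) to handle $k\ne n$, then the Universal Coefficient Theorem and the vanishing of the torsion of $H_{n-1}(X;\Z)$ via (b) for $k=n$ --- coincides with the paper's.
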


\begin{proof} 
Parts (a) and (b) are the Lefschetz Hyperplane Theorem 
and follow  from the fact that $M\setminus X$ is the union of $\dim(M)-n$ smooth affine open subsets (\cite[Chapter 5, Theorem~(2.6)]{di}).  

For (c),
{since} $X$ is smooth, the Poincar\'e duality isomorphisms $H^k(X;\Z)\isom H_{2n-k}(X;\Z)$ and (a)  {together with (b)} imply  that the integral homology and  cohomology groups of $X$ are torsion-free in all degrees except perhaps $n$.\ By the Universal Coefficient Theorem, the torsion subgroup of $H^n(X;\Z)$ is isomorphic to the torsion subgroup of $H_{n-1}(X;\Z)$, which is 0 by (b), hence all integral cohomology groups of $X$ are torsion-free.
\end{proof}

A similar result holds for cyclic covers (this is the main theorem of \cite{cor}; see also the remarks at the very end of the article).

\begin{lemm}\label{lef2}
Let $\gamma\colon X\to M$ be a cyclic cover between
smooth projective varieties of  dimension~$n$ whose branch locus is a smooth ample divisor on $M$. 

\noindent{\rm (a)} The induced map  $\gamma^*\colon H^k(M;\Z)\isomto H^k(X;\Z)$ is bijective for $k< n$, injective for $k=n$.

\noindent{\rm (b)} The induced map  $\gamma_*\colon H_k(X;\Z)\isomto H_k(M;\Z)$ is bijective for $k< n$, surjective for $k=n$.

\noindent{\rm (c)} If $H^\bullet(M;\Z)$ is torsion-free, so is $H^\bullet(X;\Z)$.
\end{lemm}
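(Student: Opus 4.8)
The plan is to deduce part (c) formally from (a) and (b), exactly as in the proof of Lemma~\ref{lef}(c). Granting (a), the groups $H^k(X;\Z)$ are torsion-free for $k<n$ (being isomorphic to the torsion-free $H^k(M;\Z)$); granting (b) together with Poincar\'e duality $H^k(X;\Z)\isom H_{2n-k}(X;\Z)$, they are torsion-free for $k>n$ (being isomorphic to $H_{2n-k}(M;\Z)$); and the Universal Coefficient Theorem identifies the torsion of $H^n(X;\Z)$ with that of $H_{n-1}(X;\Z)\isom H_{n-1}(M;\Z)$, which is $0$. So it suffices to set up the geometric input for (a) and its homology counterpart (b).

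Since $\cO_M(D)=L^{\ot d}$ is ample, $L$ is ample. I would realize the cover concretely: inside the total space $\VV=\mathrm{Tot}(L)\xrightarrow{\pi}M$, with tautological section $t$ of $\pi^*L$, the cyclic cover is the smooth divisor $X=\{t^d=\pi^*s\}$, where $s$ cuts out $D$, and $\pi|_X=\gamma$. As $\VV$ deformation-retracts onto its zero-section $M$, the map $\gamma^*$ is simply the restriction $H^\bullet(\VV)=H^\bullet(M)\to H^\bullet(X)$ (and dually for $\gamma_*$). To turn $X$ into an ample divisor I would pass to the projective cone $\hat\VV$ over $(M,L)$, obtained by contracting the section at infinity of $\PP(\cO_M\op L)$ to a point $P$; then $X$ maps isomorphically onto a smooth ample Cartier divisor of $\hat\VV$ disjoint from the vertex, and $\hat\VV\setminus P=\VV\simeq M$.

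Because $X$ is ample, its complement $\hat\VV\setminus X$ is affine, and the Lefschetz theorem for an ample divisor (Andreotti--Frankel/Goresky--MacPherson Morse theory, valid even though $\hat\VV$ is singular at $P\notin X$) gives, integrally, that the restriction $\iota^*\colon H^k(\hat\VV;\Z)\to H^k(X;\Z)$ is bijective for $k<n$ and injective for $k=n$, with the dual homology statement feeding (b). Writing $\rho\colon H^k(\hat\VV)\to H^k(\VV)=H^k(M)$ for the restriction to the smooth locus, the inclusions $X\subset\VV\subset\hat\VV$ give $\iota^*=\gamma^*\circ\rho$; since $\iota^*$ is onto for $k<n$, this already shows $\gamma^*$ is \emph{surjective} for $k<n$. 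For injectivity I would bring in a cross-check: the reduced ramification $R=\gamma^{-1}(D)\isom D$ is a smooth ample divisor of $X$ with $\cO_X(R)=\gamma^*L$, so $[R]=\gamma^*c_1(L)$; applying Lemma~\ref{lef} to $R\subset X$ and to $D\subset M$ and using $\gamma|_R\colon R\isom D$ shows $\gamma^*$ is bijective for $k<n-1$ and injective at $k=n-1$. Combining the two inputs yields that $\gamma^*$ is an isomorphism for all $k<n$.

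The one genuinely delicate point that remains is injectivity of $\gamma^*$ at the top degree $k=n$, and I expect this to be the main obstacle. The transfer for the finite flat map $\gamma$ gives only $\gamma_*\gamma^*=d\cdot\mathrm{id}$, so $\ker\gamma^*$ is contained in the $d$-torsion of $H^n(M;\Z)$, and ruling out such $d$-torsion \emph{integrally}---rather than merely after inverting $d$---is exactly where the ambient Gysin corrections reappear: in the localization sequence of $(\hat\VV,\hat\VV\setminus P)$ the local cohomology at the vertex is the reduced cohomology of the link $\Lambda=S(L)\to M$, whose own Gysin sequence is governed by cup product with $c_1(L)$. Controlling this comparison at degree $n$ is precisely the content of the theorem of \cite{cor}, and I would either invoke it directly or reprove it by pushing the Morse-theoretic (homotopy-level) connectivity of the pair $(\hat\VV,X)$ far enough to see that no $d$-torsion survives.
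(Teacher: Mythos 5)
The paper does not actually prove this lemma: it states it with a pointer to Cornalba's theorem (\cite{cor}), so your sketch has to stand on its own, and it contains a genuinely false step. The assertion that the Lefschetz hyperplane theorem for the ample divisor $X\subset\widehat{\VV}$ holds in the full range ``even though $\widehat{\VV}$ is singular at $P\notin X$'' is wrong, already with $\Q$-coefficients. Topologically $\widehat{\VV}$ is the one-point compactification of $\mathrm{Tot}(L)$, i.e.\ the Thom space of $L$, so $\widetilde{H}^k(\widehat{\VV};\Z)\cong H^{k-2}(M;\Z)$ and the restriction $\rho\colon H^k(\widehat{\VV})\to H^k(\widehat{\VV}\setminus P)\cong H^k(M)$ is cup product with $c_1(L)$, which is in general far from surjective below the middle degree. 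Concretely, take $M=\P^1\times\P^2$, $L=\cO_M(1,1)$, $d=2$, and $D\in|L^{\otimes 2}|$ smooth: then $H^2(\widehat{\VV};\Q)\cong H^0(M;\Q)=\Q$, while the transfer ($\gamma_*\gamma^*=2\cdot\mathrm{id}$) makes $\gamma^*$ injective on $\Q$-cohomology, so $H^2(X;\Q)\supseteq\gamma^*H^2(M;\Q)\cong\Q^2$; hence $\iota^*=\gamma^*\circ\rho$ cannot be onto in degree $2<n=3$. The point is that Andreotti--Frankel requires the affine complement $\widehat{\VV}\setminus X$ to be \emph{smooth}, and the Goresky--MacPherson stratified version carries a defect term penalizing singular strata of the complement: a cone point in the complement that is not a local complete intersection cuts the range down exactly by the failure of $\cup\,c_1(L)$ on $M$ to be an isomorphism---the very mechanism you identify in your last paragraph, except that it intervenes in all degrees up to $n$, not only in the top degree.

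With that input removed, your argument establishes neither surjectivity of $\gamma^*$ in degree $n-1$ nor the corresponding part of (b): the (correct) ladder comparing $R\subset X$ with $D\subset M$ via Lemma~\ref{lef} gives bijectivity only for $k\le n-2$ and injectivity at $k=n-1$, and the transfer only bounds kernel and cokernel by $d$-torsion. So what remains unproven is surjectivity at $k=n-1$, injectivity at $k=n$, and the homology statements---essentially the whole content beyond the easy range---and at that point your plan defers to \cite{cor}, which is precisely what the paper does for the entire lemma. Your formal deduction of (c) from (a) and (b), mirroring Lemma~\ref{lef}(c), is fine, and the cone compactification is indeed the right geometric frame, close to Cornalba's; but as written the central Lefschetz step would fail, and the proof collapses onto the citation it was meant to replace.
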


We now describe the integral cohomology groups of smooth GM varieties.

\begin{prop}\label{intcoh}
Let $X$ be a smooth GM variety of dimension $n$. 

\noindent{\rm (a)}
The group $H^\bullet(X;\Z)$ is torsion-free.
 
\noindent{\rm (b)} The map $\gamma_X^{*,k}\colon H^k(\Gr(2,V_5);\Z)\to H^k(X;\Z)$ is bijective for $k<n$ and injective for $k=n$.
 \end{prop}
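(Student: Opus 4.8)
The plan is to exploit the dichotomy between ordinary and special GM varieties recalled in Section~\ref{reminder}, handling each case by one of the two Lefschetz-type results already at our disposal (Lemmas~\ref{lef} and~\ref{lef2}). The only arithmetic input is that $H^\bullet(\Gr(2,V_5);\Z)$ is free, with basis the Schubert classes $\bsi_{i,j}$, as recorded above; everything else is bookkeeping with the Gushel map $\gamma_X$. Both assertions (a) and (b) drop out simultaneously in each case, because the torsion-freeness statement (c) and the cohomology-comparison statement (a) of those lemmas go together.

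First I would dispose of the ordinary case. Here $\nu\notin\P(W)$, so $\gamma_X$ is a closed embedding realizing $X$ as a dimensionally transverse intersection, inside $M:=\Gr(2,V_5)$, of the $5-n$ hyperplane sections cutting out $\P(W')$ together with one quadric section. These are all ample divisors and $X$ is smooth of dimension $n$, so Lemma~\ref{lef}(c) yields the torsion-freeness in~(a), while Lemma~\ref{lef}(a) — applied to the restriction map, which is exactly $\gamma_X^{*,k}$ — yields~(b). The freeness of $H^\bullet(\Gr(2,V_5);\Z)$ enters precisely as the hypothesis of Lemma~\ref{lef}(c).

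For the special case I would first factor the Gushel map as $X\xrightarrow{\ \gamma\ }M'_X\hookrightarrow\Gr(2,V_5)$, where the second arrow $\iota$ is the inclusion of the smooth dimensionally transverse linear section $M'_X=\Gr(2,V_5)\cap\P(W')$ of dimension $n$, and $\gamma$ is the double cover branched along the smooth GM variety $X'=M'_X\cap Q$ of dimension $n-1$. (Smoothness of $M'_X$ is part of the setup of special varieties; alternatively it follows from smoothness of $X$, a double cover of a singular base being singular.) The divisor $X'$ is a quadric section of $M'_X$, hence smooth and ample, so $\gamma$ is a cyclic degree-$2$ cover to which Lemma~\ref{lef2} applies. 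Lemma~\ref{lef}(c) gives that $H^\bullet(M'_X;\Z)$ is torsion-free, whence Lemma~\ref{lef2}(c) gives~(a). For~(b), Lemma~\ref{lef}(a) shows that $\iota^{*,k}\colon H^k(\Gr(2,V_5);\Z)\to H^k(M'_X;\Z)$ is bijective for $k<n$ and injective for $k=n$, while Lemma~\ref{lef2}(a) shows the same for $\gamma^{*,k}$; composing and using $\gamma_X^{*,k}=\gamma^{*,k}\circ\iota^{*,k}$ gives~(b).

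I do not anticipate a genuine obstacle: the content is carried entirely by Lemmas~\ref{lef} and~\ref{lef2}, and the only thing to watch is the verification of their hypotheses — that $X$ (resp.\ $M'_X$) is a dimensionally transverse intersection of ample hypersurfaces in $\Gr(2,V_5)$, that the branch divisor $X'$ is smooth and ample, and that $\gamma_X$ factors through $M'_X$ in the special case. The low-dimensional cases $n\le 2$ need no special care in this scheme, since the ordinary/special dichotomy and the hypotheses of both lemmas remain valid; for $n\le 2$ one may also simply observe that $X$ is a genus-$6$ curve or a K3 surface, for which~(a) is classical and~(b) concerns only $H^{\le n}$.
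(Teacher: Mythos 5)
Your proposal is correct and follows essentially the same route as the paper's proof: the ordinary case is handled by Lemma~\ref{lef} applied to $X$ as a dimensionally transverse intersection of ample hypersurfaces in $\Gr(2,V_5)$, and the special case by factoring $\gamma_X$ through the smooth linear section $M'_X$ and combining Lemma~\ref{lef} (for $M'_X\subset\Gr(2,V_5)$) with Lemma~\ref{lef2} (for the double cover branched along the smooth ample divisor $X'$). Your verification of the hypotheses and composition of the pullback maps matches the paper's argument.
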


\begin{proof}
When $X$ is ordinary, it is a dimensionally transverse intersection of (ample) hypersurfaces in  $\Gr(2,V_5)$,
hence Lemma~\ref{lef} implies both parts (a) and (b) of the proposition.

When $X$ is special, its Gushel map factors as $\gamma_X\colon X\xrightarrow{\ \gamma\ } M'_X\hra \Gr(2,V_5)$, 
where $\gamma$ is a double cover branched along an ample divisor, and $M'_X$ is a dimensionally transverse intersection of (ample) hypersurfaces in  $\Gr(2,V_5)$.\ Both parts (a) and (b) are then consequences of Lemma~\ref{lef} (applied to $M'_X\subset \Gr(2,V_5)$) and Lemma~\ref{lef2} (applied to the double cover $\gamma$).
\end{proof}

 \begin{coro}\label{deg}
Let $X$ be a smooth GM variety of dimension $n$.\  If $n \ge 3$, the degree of any hypersurface in $X$ is divisible by $10$.\  If $n \ge 5$, the degree of any subvariety of codimension $2$ in~$X$ is even.
  \end{coro}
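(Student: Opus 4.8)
The plan is to compute degrees as intersection numbers against powers of the Pl\"ucker polarization $H := \gamma_X^*\bsi_1$ on $X$ and to use Proposition~\ref{intcoh}(b) to pin down the relevant low-degree cohomology.

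For hypersurfaces, let $n\ge 3$. Then $2<n$, so Proposition~\ref{intcoh}(b) gives that $\gamma_X^{*,2}\colon H^2(\Gr(2,V_5);\Z)\to H^2(X;\Z)$ is an isomorphism; since $H^2(\Gr(2,V_5);\Z)=\Z\,\bsi_1$, we get $H^2(X;\Z)=\Z H$. Hence any hypersurface $D\subset X$ has class $[D]=dH$ for some $d\in\Z$, and
\[
\deg D=\int_X[D]\cdot H^{n-1}=d\int_X H^n=10\,d,
\]
because $\deg X=10$. This proves divisibility by $10$.

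For codimension-two subvarieties, let $n\ge 5$, so $4<n$ and Proposition~\ref{intcoh}(b) gives that $H^4(X;\Z)$ is free of rank $2$ with basis $\gamma_X^*\bsi_2$ and $\gamma_X^*\bsi_{1,1}$. A codimension-two subvariety $Z$ thus has $[Z]=a\,\gamma_X^*\bsi_2+b\,\gamma_X^*\bsi_{1,1}$ with $a,b\in\Z$, and
\[
\deg Z=a\int_X\gamma_X^*\bsi_2\cdot H^{n-2}+b\int_X\gamma_X^*\bsi_{1,1}\cdot H^{n-2}.
\]
It remains to check that both intersection numbers are even. I would reduce them to Schubert calculus on $G:=\Gr(2,V_5)$, where a direct computation gives $\int_G\bsi_2\bsi_1^4=3$, $\int_G\bsi_{1,1}\bsi_1^4=2$, and $\int_G\bsi_1^6=5$. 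Since these numbers are deformation invariant, for $n=5$ I would use the ordinary model $X=G\cap Q'$, whose class in $G$ is $2\bsi_1$, so the projection formula yields $\int_X\gamma_X^*\bsi_2\cdot H^3=2\int_G\bsi_2\bsi_1^4=6$ and $\int_X\gamma_X^*\bsi_{1,1}\cdot H^3=2\int_G\bsi_{1,1}\bsi_1^4=4$; for $n=6$ the Gushel map $\gamma_X\colon X\to G$ is a double cover, so $\int_X\gamma_X^*\omega=2\int_G\omega$ for any top class $\omega$, giving the same even values $6$ and $4$. In either case $\deg Z=6a+4b$ is even.

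The main obstacle is isolating, uniformly in $n$, the factor of $2$ in these intersection numbers: it arises from the quadratic equation cutting out $X$ inside $G$ when $n=5$ (class $2\bsi_1$) and from the degree of the double cover $\gamma_X$ when $n=6$. Once this factor is identified the evenness is automatic, and the residual Schubert-calculus numbers $3$ and $2$ are routine; the same factor of $2$ also recovers $\int_X H^n=10$, consistent with $\deg X=10$.
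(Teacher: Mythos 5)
Your proposal is correct and follows essentially the same route as the paper: both use Proposition~\ref{intcoh}(b) to write the class of the subvariety as an integral combination of $\gamma_X^*\bsi_1$ (degree $10$) in codimension~$1$, or of $\gamma_X^*\bsi_2$ (degree $6$) and $\gamma_X^*\bsi_{1,1}$ (degree $4$) in codimension~$2$, and conclude from these degrees. The only difference is that you spell out the Schubert-calculus evaluation of the numbers $10$, $6$, and $4$ via the ordinary model (class $2\bsi_1$ in $\Gr(2,V_5)$) and the double Gushel cover, which the paper simply asserts.
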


\begin{proof} We use Proposition \ref{intcoh}(b).\  Let $Y\subset X$ be a subvariety of codimension $c$.\  If $c=1$,  the class of $Y$ in $H^2(X;\Z)$  is a multiple of the class $\gamma_X^*\bsi_1$, which has degree 10.

If $c=2$ (and $n\ge5$), the class of $Y$  in $H^4(X;\Z)$ is an integral combination  of $\gamma_X^*\bsi_2$, which has degree 6, and $\gamma_X^*\bsi_{1,1}$, which has degree 4.\  The degree of $Y$ is therefore even.
\end{proof}

We will need the following computation, which was already used in~\cite{dims}.

\begin{lemm}\label{lemma:class-q0}
Let $X$ be a smooth ordinary GM fourfold and let $Q_0 \subset X$ be its $\sigma$-quadric, \ie, the intersection of $X$ with the 3-space $\Pi := \P(v_0 \wedge V_5) \subset M_X$, 
where $v_0 \in \P(V_5)$ is the unique point in the kernel locus $\Sigma_1(X)$ defined by~\eqref{eq:kernel-locus}.\  Then $[Q_0] = \gamma_X^*(\bsi_2 - \bsi_{1,1}) \in H^4(X;\Z)$.
\end{lemm}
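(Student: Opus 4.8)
The plan is to compute $[Q_0]$ by routing it through the intermediate fivefold $M_X$. Since $X$ is ordinary, its Gushel map factors as a composition of closed embeddings $\gamma_X\colon X \overset{j}{\hra} M_X \overset{\iota}{\hra} \Gr(2,V_5)$, where $M_X=\Gr(2,V_5)\cap\P(W')$ is the smooth dimensionally transverse hyperplane section of dimension $5$ (so $[M_X]=\bsi_1$ in $\Gr(2,V_5)$), and $X\subset M_X$ is cut out by the ample quadric divisor $Q$. As $\Pi\subset M_X$ is a $3$-space not contained in $X$, the intersection $Q_0=\Pi\cap X$ is proper (a quadric surface in $\Pi\cong\P^3$), so I expect the divisor-restriction formula $[Q_0]=j^*[\Pi]_{M_X}$ in $H^4(X;\Z)$, where $[\Pi]_{M_X}\in H^4(M_X;\Z)$ is the class of $\Pi$ in $M_X$. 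The whole problem then reduces to identifying $[\Pi]_{M_X}$, after which $[Q_0]=j^*\iota^*[\Pi]$-type bookkeeping finishes the argument.

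To identify $[\Pi]_{M_X}$, I would first note that $\Pi=\P(v_0\wedge V_5)=\{[U]\in\Gr(2,V_5)\mid v_0\in U\}$ is the zero locus of the section of the tautological quotient $V_5/\cU$ determined by $v_0$, so its class in $\Gr(2,V_5)$ is the top Chern class $c_3(V_5/\cU)=\bsi_3$. Next, by the Lefschetz theorem (Lemma~\ref{lef}(a)) applied to the $5$-dimensional $M_X\subset\Gr(2,V_5)$, the restriction $\iota^*\colon H^4(\Gr(2,V_5);\Z)\to H^4(M_X;\Z)$ is an isomorphism, so I may write $[\Pi]_{M_X}=\iota^*\beta$ with $\beta=a\bsi_2+b\bsi_{1,1}$. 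Applying the Gysin map $\iota_*$ and the projection formula, and using $\iota_*(1)=[M_X]=\bsi_1$ together with $\iota_*[\Pi]_{M_X}=[\Pi]_{\Gr(2,V_5)}=\bsi_3$, yields the single identity $\beta\cdot\bsi_1=\bsi_3$ in $H^6(\Gr(2,V_5);\Z)$.

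The Pieri rule gives $\bsi_2\cdot\bsi_1=\bsi_3+\bsi_{2,1}$ and $\bsi_{1,1}\cdot\bsi_1=\bsi_{2,1}$, so $\beta\cdot\bsi_1=a\bsi_3+(a+b)\bsi_{2,1}$; comparing with $\bsi_3$ forces $a=1$ and $b=-1$ (multiplication by $\bsi_1$ is injective from $H^4$ to $H^6$), hence $[\Pi]_{M_X}=\iota^*(\bsi_2-\bsi_{1,1})$. Combining with the first paragraph and $\gamma_X=\iota\circ j$, I obtain $[Q_0]=j^*[\Pi]_{M_X}=j^*\iota^*(\bsi_2-\bsi_{1,1})=\gamma_X^*(\bsi_2-\bsi_{1,1})$, as claimed.

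The main obstacle is that $[Q_0]$ a priori lives in the rank-$24$ group $H^4(X;\Z)$, whereas the asserted value lies in the rank-$2$ image of $\gamma_X^*$; a naive computation directly in $\Gr(2,V_5)$ only determines $\gamma_{X,*}[Q_0]=2\bsi_{3,1}$, which matches $\gamma_{X,*}\gamma_X^*(\bsi_2-\bsi_{1,1})=(\bsi_2-\bsi_{1,1})\cdot 2(\bsi_2+\bsi_{1,1})=2\bsi_{3,1}$ only after pushforward, and $\gamma_{X,*}$ is far from injective. What upgrades this to an integral equality in $H^4(X;\Z)$ is precisely the detour through $M_X$, where $\iota^*$ is a Lefschetz isomorphism on $H^4$ and the equality $[Q_0]=j^*[\Pi]_{M_X}$ holds on the nose. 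The one point requiring care is justifying this last equality, i.e.\ checking that $\Pi$ meets the smooth divisor $X$ properly so that $j^*[\Pi]_{M_X}$ equals the fundamental class of the reduced quadric $Q_0$ with no excess-intersection correction.
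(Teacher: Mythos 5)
Your proof is correct and is essentially the paper's own argument: both identify $[\Pi]_{M_X}$ via the Lefschetz isomorphism $H^4(\Gr(2,V_5);\Z)\isomto H^4(M_X;\Z)$, determine the coefficients by pushing forward to $\Gr(2,V_5)$ with the projection formula and Pieri's rule ($\bsi_3 = a(\bsi_3+\bsi_{2,1})+b\,\bsi_{2,1}$, forcing $a=1$, $b=-1$), and then restrict along $X\hra M_X$ to get $[Q_0]=\gamma_X^*(\bsi_2-\bsi_{1,1})$. Your additional attention to the properness of $\Pi\cap X$ (valid because $X$ is a Cartier divisor in $M_X$ not containing $\Pi$, so no excess intersection occurs) is a point the paper passes over silently.
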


\begin{proof}
Let $\gamma_{M_X}$ be the inclusion $M_X\hra \Gr(2,V_5)$.\ By the Lefschetz Theorem (Lemma \ref{lef}), the map $\gamma_{M_X}^*:H^4(\Gr(2,V_5);\Z)\to H^4(M_X;\Z)$ is an isomorphism.\ Therefore, there exist integers $a$ and $b$ such that $[\Pi]=\gamma_{M_X}^*(a\bsi_2 +b \bsi_{1,1}) $, hence $[Q_0] = \gamma_X^*(a\bsi_2 +b \bsi_{1,1})$.\  Since the class of $\Pi$ in $H^6(\Gr(2,V_5);\Z)$ is $\bsi_3$, Gysin's formula and Schubert calculus give
\begin{equation*}
\bsi_3=\gamma_{M_X*}([\Pi])=\gamma_{M_X*}\gamma_{M_X}^*(a\bsi_2 +b \bsi_{1,1})=(a\bsi_2 +b \bsi_{1,1})\cdot \bsi_1=a(\bsi_3+\bsi_{2,1})+ b \bsi_{2,1}
\end{equation*}
in $H^6(\Gr(2,V_5);\Z)$.\ This implies $a=1$ and $a+b=0$, hence the lemma.
\end{proof}

 The following lemma  is also useful; we keep the notation of Lemma~\ref{lemma:class-q0}.

\begin{lemm}\label{lemma:ux-q0-splits}
Let $X$ be a smooth ordinary GM fourfold.\ The restriction of the bundle $\cU_X$ to the quadric~$Q_0$ splits as $ \cO_{Q_0} \oplus \cO_{Q_0}(-1)$.
\end{lemm}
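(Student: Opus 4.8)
The plan is to analyze the geometry of the restriction $\cU_X\vert_{Q_0}$ explicitly, using the description of $Q_0$ and the kernel locus point $v_0$. Recall that $Q_0 = X \cap \Pi$ with $\Pi = \P(v_0 \wedge V_5)$, where $v_0$ is the unique point of $\Sigma_1(X)$. The projective plane $\Pi$ parametrizes (via the Plücker embedding) the 2-planes in $V_5$ containing the line $v_0$; this is a $\sigma$-plane in $\Gr(2,V_5)$. Over such a $\sigma$-plane, the tautological subbundle restricts in a well-understood way: every subspace in the family contains the fixed line $\langle v_0\rangle$, so there is a canonical constant sub-line-bundle $\langle v_0 \rangle \otimes \cO \hookrightarrow \cU\vert_\Pi$, giving a sub-bundle $\cO_\Pi \hookrightarrow \cU\vert_\Pi$. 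The quotient $\cU\vert_\Pi / \cO_\Pi$ is then the tautological quotient parametrizing the varying line $U/\langle v_0\rangle \subset V_5/\langle v_0\rangle$, which on $\Pi \cong \P^2$ is $\cO_\Pi(-1)$.

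First I would make the above splitting on $\Pi$ precise: restricting the tautological sequence on $\Gr(2,V_5)$ to the $\sigma$-plane $\Pi$, the inclusion $\cU\vert_\Pi \hookrightarrow V_5\otimes\cO_\Pi$ factors as $\cO_\Pi \oplus \cO_\Pi(-1)$, where $\cO_\Pi$ is the trivial sub-bundle spanned by $v_0$ and $\cO_\Pi(-1)$ is the tautological bundle on $\Pi\cong\P(V_5/\langle v_0\rangle)$ pulled back appropriately. Then I would simply restrict this splitting further from $\Pi$ to $Q_0 = X\cap\Pi \subset \Pi$. Since $\cU_X = \gamma_X^*\cU$ and $Q_0\subset\Pi$, we obtain $\cU_X\vert_{Q_0} \cong \cO_{Q_0}\oplus\cO_{\Pi}(-1)\vert_{Q_0}$. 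It remains to identify $\cO_\Pi(-1)\vert_{Q_0}$ with $\cO_{Q_0}(-1)$, i.e.\ to check that the restriction of the hyperplane class from $\Pi$ agrees with the polarization $\cO_{Q_0}(1)$ coming from the Plücker embedding; this is immediate because $Q_0$ is cut out inside $\Pi$ by a single quadric and the polarization is the restriction of $\cO_{\Pi}(1)$.

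The main point requiring care is the identification of the sub-line-bundle: one must verify that the constant line $\langle v_0\rangle$ genuinely gives a trivial sub-bundle of $\cU_X\vert_{Q_0}$ rather than merely a sub-sheaf, i.e.\ that the splitting is a splitting of vector bundles. This follows because the inclusion $\langle v_0\rangle\otimes\cO_\Pi\hookrightarrow\cU\vert_\Pi\hookrightarrow V_5\otimes\cO_\Pi$ is everywhere a sub-bundle inclusion (the fiber $\langle v_0\rangle$ sits inside every 2-plane $U$ in the family, hence is a rank-1 sub-bundle with locally free quotient), and sub-bundle inclusions restrict to sub-bundle inclusions. I expect the only genuine obstacle to be bookkeeping the twist: confirming that the quotient $\cU\vert_\Pi/\cO_\Pi$, as the tautological line bundle of $\Pi$ viewed as the projectivization of $V_5/\langle v_0\rangle$, carries degree $-1$ with respect to the Plücker polarization, and hence restricts to $\cO_{Q_0}(-1)$. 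This twist computation is consistent with the class computation $[Q_0]=\gamma_X^*(\bsi_2-\bsi_{1,1})$ of Lemma~\ref{lemma:class-q0}, which I would cite as a sanity check since $\bsi_{1,1}=c_2(\cU)$ and the splitting forces $c_2(\cU_X\vert_{Q_0})=0$.
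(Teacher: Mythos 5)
Your route is essentially the paper's: both reduce to the linear span $\Pi$, use the constant vector $v_0$ to produce an inclusion $\cO_\Pi \hookrightarrow \cU_X\vert_\Pi$, identify the quotient as $\cO_\Pi(-1)$, and restrict to $Q_0$. But there is a genuine gap at the decisive step: you pass from the short exact sequence $0 \to \cO_\Pi \to \cU_X\vert_\Pi \to \cO_\Pi(-1) \to 0$ to a direct sum decomposition without justification. Knowing that $\cO_\Pi \hookrightarrow \cU_X\vert_\Pi$ is a genuine sub-bundle inclusion with locally free quotient is exactly what produces the short exact sequence of vector bundles; it does not by itself yield a splitting---the Euler sequence $0 \to \cO_{\P^1}(-1) \to \cO_{\P^1}^{\oplus 2} \to \cO_{\P^1}(1) \to 0$ is a sub-bundle inclusion that does not split. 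The paragraph you flag as ``the main point requiring care'' only re-proves the sub-bundle property, which is not the issue. The missing step is easy to supply and is precisely how the paper concludes: $\Ext^1(\cO_\Pi(-1),\cO_\Pi) \cong H^1(\Pi,\cO_\Pi(1)) = 0$, so every such extension splits. (Alternatively, and in the spirit of your explicit approach: choose a hyperplane $H \subset V_5$ with $v_0 \notin H$; then $\cU_X\vert_\Pi \cap (H \otimes \cO_\Pi)$ is a line sub-bundle complementary to $\langle v_0 \rangle \otimes \cO_\Pi$, mapping isomorphically onto the tautological bundle of $\P(V_5/\langle v_0 \rangle)$.)

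A smaller but real slip: $\Pi = \P(v_0 \wedge V_5)$ is a $\P^3$ (a maximal $\sigma$-space), since $v_0 \wedge V_5 \cong V_5/\langle v_0\rangle$ has dimension $4$; it is not a ``projective plane'' or $\sigma$-plane, and $Q_0$ is a quadric surface in this $\P^3$, not a conic (you write both $\Pi \cong \P^2$ and $\Pi \cong \P(V_5/\langle v_0\rangle)$, which are inconsistent). Fortunately nothing else breaks: your identification of the quotient with the tautological line bundle of $\P(V_5/\langle v_0\rangle)$, and of its $\cO(1)$ with the restricted Pl\"ucker polarization (the map $[u] \mapsto [v_0 \wedge u]$ is linear), goes through verbatim in the correct dimension, the Ext vanishing above holds on $\P^3$, and the final restriction from $\Pi$ to $Q_0$ with matching polarizations is exactly as in the paper. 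The Chern-class sanity check via Lemma~\ref{lemma:class-q0} is a reasonable consistency remark but carries no logical weight in the proof.
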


\begin{proof}
Since $Q_0 \subset \Pi$, it is enough to show $\cU_X\vert_\Pi \cong \cO_\Pi \oplus \cO_\Pi(-1)$.\ Recall that $\Pi = \P(v_0 \wedge V_5)$ parameterizes all two-dimensional subspaces in $V_5$ that contain $v_0$.\ Consequently, we have an injection of vector bundles $\cO_\Pi \hookrightarrow \cU_X\vert_\Pi$ given by the vector $v_0$.\ Its cokernel is a line bundle isomorphic to $\det(\cU_X\vert_\Pi) \cong \cO_\Pi(-1)$, hence we have an exact sequence
\begin{equation*}
0 \to \cO_\Pi \to \cU_X\vert_\Pi \to \cO_\Pi(-1) \to 0.
\end{equation*}
It remains to note that $\Ext^1(\cO_\Pi(-1),\cO_\Pi) = H^1(\Pi,\cO_\Pi(1)) = 0$ since $\Pi \cong \P^3$.
\end{proof}

\subsection{Middle cohomology lattices of smooth GM varieties of dimension 4 or 6}\label{mcl}

Let~$X$ be a smooth GM variety of even dimension $n$ with Gushel map $ \gamma_X\colon X\to \Gr(2,V_5)$.\  The abelian group  $H^n(X;\Z)$ is torsion-free (Proposition~\ref{intcoh}) and, endowed with the intersection form, it is, by Poincar\'e duality, a unimodular lattice.\  We set $h:=\gamma_X^*\bsi_1\in H^2(X;\Z)$ and 
\begin{equation}\label{defh00}
\begin{aligned}
H^n(X;\Z)_0  & := \{x \in H^n(X;\Z) \mid x\cdot h = 0 \}, 
\\
H^n(X;\Z)_{00} & := \{ x \in H^n(X;\Z) \mid x\cdot \gamma_X^*(H^n(\Gr(2,V_5);\Z)) = 0 \}. 
\end{aligned}
\end{equation}  
These sublattices of $H^n(X;\Z)$ are called the {\sf primitive} and the {\sf vanishing} lattices of $X$.

\begin{lemm}\label{lemma:primitive-vanishing}
For every $n$, we have an injection $H^n(X;\Z)_{00} \subset H^n(X;\Z)_{0}$, and for $n = 2$ and $n = 6$, we have an equality $H^n(X;\Z)_{00} = H^n(X;\Z)_{0}$.
\end{lemm}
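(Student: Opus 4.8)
The plan is to reinterpret both sublattices as orthogonal complements inside $H^n(X;\Z)$ for the intersection form, and then compare the two subgroups being annihilated. Since $H^\bullet(X;\Z)$ is torsion-free by Proposition~\ref{intcoh}(a), Poincar\'e duality makes the intersection pairing perfect between complementary degrees; hence for $x\in H^n(X;\Z)$ one has $x\cup h=0$ in $H^{n+2}(X;\Z)$ if and only if $x$ is orthogonal to every class $h\cup y$ with $y\in H^{n-2}(X;\Z)$. This rewrites $H^n(X;\Z)_0=(h\cup H^{n-2}(X;\Z))^\perp$, while by definition $H^n(X;\Z)_{00}=(\gamma_X^*H^n(\Gr(2,V_5);\Z))^\perp$. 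As passing to orthogonals reverses inclusions, the whole lemma reduces to one comparison of subgroups of $H^n(X;\Z)$: the injection is equivalent to $h\cup H^{n-2}(X;\Z)\subseteq\gamma_X^*H^n(\Gr(2,V_5);\Z)$, and the claimed equality is equivalent to equality here.

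For the inclusion I would invoke Proposition~\ref{intcoh}(b): since $n-2<n$, the pullback $\gamma_X^*\colon H^{n-2}(\Gr(2,V_5);\Z)\to H^{n-2}(X;\Z)$ is bijective, so $H^{n-2}(X;\Z)=\gamma_X^*H^{n-2}(\Gr(2,V_5);\Z)$. Because $h=\gamma_X^*\bsi_1$ and $\gamma_X^*$ is a ring homomorphism, this identifies $h\cup H^{n-2}(X;\Z)$ with $\gamma_X^*\bigl(\bsi_1\cup H^{n-2}(\Gr(2,V_5);\Z)\bigr)$, which is visibly contained in $\gamma_X^*H^n(\Gr(2,V_5);\Z)$. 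This settles the injection for every $n$ at once (including odd $n$, where both sides vanish).

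For the equality when $n\in\{2,6\}$ the same identification shows it suffices to prove $\bsi_1\cup H^{n-2}(\Gr(2,V_5);\Z)=H^n(\Gr(2,V_5);\Z)$, using the injectivity of $\gamma_X^*$ on $H^n(\Gr(2,V_5);\Z)$ (again Proposition~\ref{intcoh}(b)) to pass from $X$ back to the Grassmannian. I must therefore check surjectivity of the integral Lefschetz operator $\bsi_1\cup-\colon H^{n-2}(\Gr(2,V_5);\Z)\to H^n(\Gr(2,V_5);\Z)$. For $n=2$ this is immediate, as $\bsi_1$ generates $H^2(\Gr(2,V_5);\Z)$. For $n=6$ I would use Pieri's rule: $\bsi_1\cup\bsi_2=\bsi_3+\bsi_{2,1}$ and $\bsi_1\cup\bsi_{1,1}=\bsi_{2,1}$, so the image contains $\bsi_{2,1}$ and hence $\bsi_3=(\bsi_1\cup\bsi_2)-\bsi_{2,1}$, and thus equals $H^6(\Gr(2,V_5);\Z)=\langle\bsi_3,\bsi_{2,1}\rangle$.

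The step needing genuine care is this last surjectivity, and it is precisely where the hypothesis $n\in\{2,6\}$ enters. For $n-2<\dim\Gr(2,V_5)=6$, Hard Lefschetz makes $\bsi_1\cup-$ injective rationally, hence a rational isomorphism exactly when the relevant Betti numbers agree; reading the Hodge diamond of $\Gr(2,V_5)$ gives $b_0=b_2=1$ and $b_4=b_6=2$, so this holds for $n=2$ and $n=6$ but fails for $n=4$, where $b_2=1<2=b_4$ and the image $\Z(\bsi_2+\bsi_{1,1})$ is a strict rank-one subgroup of $H^4(\Gr(2,V_5);\Z)$. The real content for $n=6$ is thus to upgrade the rational isomorphism to an integral one, which the explicit Pieri coefficients accomplish; torsion-freeness (Proposition~\ref{intcoh}(a)) is what lets this integral equality of subgroups translate, through perfect Poincar\'e duality, into the asserted equality of the two sublattices.
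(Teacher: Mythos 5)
Your proof is correct and is essentially the paper's argument repackaged: both rest on the perfect Poincar\'e pairing (via torsion-freeness), the bijectivity of $\gamma_X^*$ on $H^{n-2}$, and for $n=6$ the Schubert-calculus fact that $\bsi_1\cup-\colon H^4(\Gr(2,V_5);\Z)\to H^6(\Gr(2,V_5);\Z)$ is an integral isomorphism, which is exactly your Pieri computation. Your reformulation of both sublattices as orthogonal complements is a clean way of organizing the same steps (just note that you only need the one implication from inclusion/equality of subgroups to inclusion/equality of their perps, so calling them ``equivalent'' is slightly stronger than what you use or need).
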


\begin{proof}
Since $h$ is pulled back from $\Gr(2,V_5)$, we have $(x \cdot h) \cdot \gamma_X^*(H^{n-2}(\Gr(2,V_5);\Z)) = 0$ for every $x \in H^n(X;\Z)_{00}$.\ By Lemma~\ref{lef}, the map $\gamma_X^* \colon H^{n-2}(\Gr(2,V_5);\Z) \to H^{n-2}(X;\Z)$ is an isomorphism, hence $(x\cdot h)\cdot H^{n-2}(X;\Z) = 0$.\ We conclude $x \cdot h = 0$ by Poincar\'e duality.

Since $H^2(\Gr(2,V_5);\Z)=\Z\bsi_1$, the definitions of $H^2(X;\Z)_0$ and $H^2(X;\Z)_{00}$ are the same for $n=2$.\ Furthermore, the product $H^4(\Gr(2,V_5);\Z) \xrightarrow{\ {}\cdot\bsi_1\ } H^6(\Gr(2,V_5);\Z)$ is an isomorphism by Schubert calculus,
hence for $n = 6$, the definitions are equivalent.
\end{proof}

Given a Lagrangian subspace $A \subset \bw3V_6$ with no decomposable vectors and such that $Y_A^{\ge3}=\vide$, the fourfold $\tY_A$ introduced in \eqref{defyt} is a  hyperk\"ahler manifold which is a deformation of the symmetric square of a K3 surface (\cite[Theorem~1.1(2)]{og1}).\  In particular, the group  $H^\bullet (\tY_A;\Z)$ is torsion-free (\cite[Theorem~1]{mark}).

We denote by $\tilde h\in H^2(\tY_A;\Z)$ the pullback by $f_A$ of the hyperplane class on $Y_A\subset \P(V_6)$ and
define the {\sf primitive} cohomology  
\begin{equation}\label{defprime}
H^2(\tY_A;\Z)_0 := \{ y \in H^2(\tY_A;\Z) \mid y\cdot \tilde h^3 = 0 \}.
\end{equation}
We consider $H^n(X,\Z)_{00}$ and $H^2(\tY_A,\Z)_0$ as polarized Hodge structures via the intersection pairing on the first and the Beauville--Bogomolov quadratic form $q_B$ on the second.\  Recall that~$q_B$
can be defined by  (\cite[{Theorem}~5(c)]{bea})
\begin{equation}\label{eq:bb-form}
\forall y \in H^2(\tY_A;\Z)_0\qquad q_B(y) = \tfrac12 y^2\cdot \tilde h^2.
\end{equation}
This form makes $H^2(\tY_A;\Z)_0$ into a lattice of rank~22.

Given a lattice $L$ and a non-zero integer $m$, we denote by $L(m)$ the lattice $L$ with the bilinear form multiplied by $m$.\ The {\sf  discriminant} of  $L$ is the finite abelian group 
\begin{equation*}
D(L) := L^\vee/L.
\end{equation*}
As usual, we denote by
\begin{itemize}
\item 
 $I_1$ the odd lattice $\Z$ with intersection form  $(1)$, 
\item 
 $I_{r,s}$ the odd lattice $I_1^{\oplus r}\oplus  I_1(-1)^{\oplus s}$, 
\item 
 $U$ the even hyperbolic lattice $\Z^2$ with intersection form  $\bigl(\begin{smallmatrix}0&1\\1&0\end{smallmatrix}\bigr)$, 
\item 
 $E_8$ the unique positive definite, even, unimodular lattice of rank 8. 
\end{itemize}

The following three lattices are important in this article
\begin{equation}\label{eq:gamma-lattices} 
\Gamma_4 := I_{22,2},
\qquad
\Gamma_6 := E_8(-1)^{\oplus 2} \oplus U^{\oplus 4},\qquad \Lambda:=  E_8^{\oplus 2}\oplus  U^{\oplus 2}\oplus I_{2,0}(2).
\end{equation}

\begin{prop}\label{ltf}
Let $X$ be a smooth GM variety of dimension $n=4$ 
or $6$.\ There are isomorphisms of lattices $H^n(X;\Z) \cong \Gamma_n$ and $H^n(X;\Z)_{00} \cong \Lambda((-1)^{n/2})$.
\end{prop}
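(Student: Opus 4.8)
The plan is to prove the two lattice isomorphisms in Proposition~\ref{ltf} separately, treating the unimodular lattice structure first and the vanishing sublattice second, and exploiting the deformation invariance of the topological type together with the known Hodge numbers from Proposition~\ref{hn}.

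\textbf{Identifying the unimodular lattice $H^n(X;\Z)\cong\Gamma_n$.} By Proposition~\ref{intcoh}(a) the group $H^n(X;\Z)$ is torsion-free, and by Poincar\'e duality it is a unimodular lattice under the intersection pairing. Its rank equals the middle Betti number, which I read off from the Hodge diamond in Proposition~\ref{hn}: for $n=4$ the middle cohomology has rank $1+1+22+1+1=24$, and for $n=6$ it has rank $1+22+1=24$. The signature is determined by the Hodge numbers via the formula $\mathrm{sign}=\sum_{p,q}(-1)^q h^{p,q}$; for $n=4$ the middle Hodge numbers give signature $(22,2)$ (the two classes of type $(1,3)$ and $(3,1)$ together with the $h^{2,2}=22$ diagonal, counted with signs, yield this), while for $n=6$ one obtains signature $(2,22)$. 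The parity of the form must be settled: for $n=4$ the intersection form is odd (it contains the self-intersection of a class coming from $H^4(\Gr(2,V_5);\Z)$, e.g.\ a power of $h$ paired with itself gives an odd value by Schubert calculus), so by Milnor's classification of indefinite odd unimodular lattices $H^4(X;\Z)\cong I_{22,2}=\Gamma_4$. For $n=6$ I claim the form is even: since $n\ge 5$, Corollary~\ref{deg} shows every codimension-two class has even degree, and more to the point the self-intersection pairing on $H^6(X;\Z)$ takes even values; then by the classification of indefinite even unimodular lattices (Milnor--Husemoller) the lattice is $E_8(-1)^{\oplus 2}\oplus U^{\oplus 4}=\Gamma_6$, this being the unique even unimodular lattice of signature $(2,22)$.

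\textbf{Identifying the vanishing sublattice $H^n(X;\Z)_{00}\cong\Lambda((-1)^{n/2})$.} The vanishing lattice is the orthogonal complement of $\gamma_X^*H^n(\Gr(2,V_5);\Z)$ inside $H^n(X;\Z)$. By Proposition~\ref{intcoh}(b) and Lemma~\ref{lef} this pulled-back subspace is a primitive sublattice, and I would compute its Gram matrix directly using Schubert calculus on $\Gr(2,V_5)$ together with the projection formula $\gamma_{X*}\gamma_X^* = (\cdot\, \gamma_{X*}[X])$: the relevant intersection numbers of the Schubert classes $\bsi_2,\bsi_{1,1}$ (for $n=4$, where $H^4(\Gr;\Z)$ has rank $2$) and $\bsi_{3,1},\bsi_{2,2}$ (for $n=6$, where $H^6(\Gr;\Z)$ has rank $2$) are determined by the degree-$10$ fundamental class, as in the computation in Lemma~\ref{lemma:class-q0}. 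This shows the algebraic sublattice $\gamma_X^*H^n(\Gr(2,V_5);\Z)$ is a rank-$2$ lattice whose discriminant form I can compute explicitly, giving $H^n(X;\Z)_{00}$ rank $24-2=22$. The sign twist by $(-1)^{n/2}$ appears because the Beauville--Bogomolov–compatible convention multiplies the $n=6$ form by $-1$, matching the opposite signatures; concretely $\Lambda$ has signature $(20,2)$, so $\Lambda$ matches the $n=4$ vanishing lattice and $\Lambda(-1)$ matches $n=6$. To pin down the isomorphism type I would use the discriminant-form criterion of Nikulin: a primitive sublattice of an indefinite unimodular lattice of given signature and rank is determined up to isometry by its discriminant form, and I compute $D(H^n(X;\Z)_{00})\cong (\Z/2)^{\oplus 2}$ with the discriminant form induced by the $I_{2,0}(2)$ summand, matching $\Lambda$.

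\textbf{Main obstacle.} The delicate step is controlling the discriminant form and parity of $H^n(X;\Z)_{00}$ precisely enough to invoke Nikulin's uniqueness theorem, rather than merely getting the rank and signature. Computing the Gram matrix of $\gamma_X^*H^n(\Gr(2,V_5);\Z)$ is routine Schubert calculus, but deducing from it that the orthogonal complement is \emph{exactly} $\Lambda((-1)^{n/2})$—with the correct $I_{2,0}(2)$ factor producing the $2$-torsion discriminant—requires the interplay between the unimodularity of the ambient lattice and the primitivity of the embedded algebraic sublattice, via the fact that for a primitive sublattice $S\subset L$ of a unimodular $L$ one has $D(S)\cong D(S^\perp)$ (up to sign of the form). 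I expect the cleanest route is: first establish $H^n(X;\Z)\cong\Gamma_n$ unconditionally, then compute $\gamma_X^*H^n(\Gr;\Z)$ as an explicit rank-$2$ primitive sublattice, and finally apply $D(S^\perp)\cong D(S)(-1)$ to read off the discriminant form of $H^n(X;\Z)_{00}$, matching it to that of $\Lambda((-1)^{n/2})$ before concluding by Nikulin's classification.
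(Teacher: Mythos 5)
Your overall strategy---torsion-freeness plus Poincar\'e duality, rank and signature from Proposition~\ref{hn}, parity, then the classification of indefinite unimodular lattices, followed by primitivity of the Schubert sublattice and Nikulin's discriminant relation for the orthogonal complement---is exactly the paper's (for $n=4$ the paper simply cites \cite[Proposition~5.1]{dims}).\ But several of your computations are wrong, and two of them would sink the argument.\ First, the signature of $H^6(X;\Z)$ is $(4,20)$, not $(2,22)$: by the Hodge index theorem $\mathrm{sign}(X)=\sum_{p,q}(-1)^q h^{p,q}$, which with the diamond of Proposition~\ref{hn} equals $1-1+2+(1-22+1)+2-1+1=-16$, so $(b^+,b^-)=(4,20)$; note that $\Gamma_6$ indeed has signature $(4,20)$, while your $(2,22)$ is self-refuting, since an even unimodular lattice has signature divisible by $8$ and $2-22=-20$ is not.\ Second, both parity arguments fail.\ For $n=4$, no power of $h$ has odd square: $h^4=\deg X=10$, $(\gamma_X^*\bsi_2)^2=4$, $(\gamma_X^*\bsi_{1,1})^2=2$, so every class pulled back from the Grassmannian has even square; oddness of $H^4(X;\Z)$ instead follows because the signature is $20\not\equiv 0 \pmod 8$, which rules out evenness.\ For $n=6$, Corollary~\ref{deg} concerns degrees of codimension-$2$ cycles, i.e.\ classes in $H^4$, and says nothing about squares of classes in $H^6$; the paper's actual argument is that $c_1(X)=4\gamma_X^*\bsi_1$ is divisible by $2$, hence $w_2(X)=0$, and the middle intersection form of a spin $12$-manifold is even by \cite[p.~115]{hbj}.\ You supply no valid replacement for this topological input.

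In the second half there are three further gaps.\ The Schubert basis of $H^6(\Gr(2,V_5);\Z)$ is $(\bsi_3,\bsi_{2,1})$, not $(\bsi_{3,1},\bsi_{2,2})$, which lie in $H^8$; since $\gamma_X$ has degree $2$ for $n=6$ and $\bsi_3^2=\bsi_{2,1}^2=1$, $\bsi_3\cdot\bsi_{2,1}=0$, the Gram matrix is $\mathrm{diag}(2,2)$.\ Primitivity of $\gamma_X^*(H^n(\Gr(2,V_5);\Z))$ does not follow from Proposition~\ref{intcoh}(b) or Lemma~\ref{lef}: injectivity of $\gamma_X^*$ in degree $n$ does not mean its image is saturated; the paper's argument is lattice-theoretic---a non-trivial saturation of a rank-$2$ lattice with Gram matrix $\mathrm{diag}(2,2)$ would be unimodular, even, positive definite of rank $2$, which does not exist.\ Finally, Nikulin's relation $D(S^\perp)\cong D(S)$ (with the form negated) gives you the discriminant group $(\Z/2\Z)^2$ but not the parity of the complement, and the uniqueness statement you want to invoke applies to even lattices.\ For $n=6$ evenness of $H^6(X;\Z)_{00}$ is inherited from $\Gamma_6$, but for $n=4$ the ambient lattice $I_{22,2}$ is odd, and you need the additional fact (used in \cite{dims} and recorded in Section~\ref{sec41}) that $e_1+e_2$, i.e.\ $\gamma_X^*\bsi_2$, is a characteristic vector of $\Gamma_4$: any class orthogonal to a characteristic vector has even square, and this is what makes $H^4(X;\Z)_{00}$ even.\ Without these repairs the final Nikulin step cannot be completed.
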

 
 \begin{proof}
 When $n=4$, the lattices $H^4(X;\Z)$ and $H^4(X;\Z)_{00}$ are described in \cite[Proposition~5.1]{dims} (although the proof that these groups are torsion-free is missing).

When $n=6$, the class $c_1(X)= 4\gamma_X^*\bsi_1$ is divisible by 2, hence the Stiefel--Whitney class~$w_2(X)$, which is its image in $ H^2(X;\Z/2\Z)$, vanishes.\  Since $w_1(X)=0$  (as for any complex compact manifold) and we are in dimension 6, the (unimodular) lattice $H^6(X;\Z)$ is {\em even} (\cite[p.~115]{hbj}).\ Since its signature is $(4,20)$ by Proposition \ref{hn}, it is therefore isomorphic to~{$\Gamma_6$}.

The intersection form on the sublattice $\gamma_X^*(H^6(\Gr(2,V_5);\Z) )\subset H^6(X;\Z)$ has matrix $\bigl(\begin{smallmatrix}2&0\\0&2\end{smallmatrix}\bigr)$ in the Schubert basis $(\gamma_X^*\bsi_{2,1},\gamma_X^*\bsi_3 )$.\  This sublattice is  moreover primitive: if not, its saturation is unimodular, even, and positive definite of rank 2, which is absurd.\  By \cite[Proposition~1.6.1]{nik}, the discriminant group of its orthogonal $H^6(X;\Z)_{00}$ is therefore isomorphic to the discriminant group of $\gamma_X^*(H^6(\Gr(2,V_5);\Z))$, which is $(\Z/2\Z)^2$.\  The lattice $H^6(X;\Z)_{00}$ is moreover even and has signature $(2,20)$.\  As noted in \cite[Section~5.1]{dims}, there is only one lattice with these characteristics, to wit $\Lambda(-1)$.
\end{proof}
 
As  a lattice, $H^2(\tY_A,\Z)_0$  is also isomorphic to $\Lambda(-1)$ (\cite[(4.1.3)]{og6}).\  In Section~\ref{section:periods}, we will show that the polarized Hodge structures on  $H^n(X,\Z)_{00}$ and $H^2(\tY_A,\Z)_0$ are isomorphic (up to a twist).\ The isomorphism will be given by a correspondence constructed in the next section.

\section{Linear spaces on Gushel--Mukai varieties}
\label{section:linear-spaces}

\subsection{Linear spaces and their types}

Let $X$ be a smooth GM variety with its canonical embedding $X\subset \P(W)$.\ We let $F_k(X)$  be  the Hilbert scheme which parameterizes linearly embedded $\P^k$ in $X$, \ie, the closed subscheme of $\Gr(k+1,W)$ of linear subspaces $W_{k+1} \subset W$ such that  $\P(W_{k+1}) \subset X$.

The composition of the Gushel map $\gamma_X\colon X \to \Gr(2,V_5)$ with the Pl\"ucker embedding $\Gr(2,V_5) \subset \P(\bw2V_5)$ is induced by the linear projection $W\subset \C \oplus \bw2V_5 \to \bw2V_5$
from  the vertex $\nu$ of the cone~$\CGr(2,V_5)$.\ Since $\nu\notin X$, the Gushel map embeds {$\P(W_{k+1})$} linearly into~$\Gr(2,V_5)$.

We recall the description of linear subspaces contained in $\Gr(2,V_5)$.\ Any such subspace sits in a maximal linear subspace  and there are two types of those.\ First, for every {1-dimensional subspace} $U_1\subset  V_5$, there is a projective 3-space
\begin{equation*}
\P(V_5/U_1) \cong \P(U_1 \wedge V_5) \subset \Gr(2,V_5).
\end{equation*}
Second, for every 3-dimensional vector subspace $U_3 \subset V_5$, there is a projective plane
\begin{equation*}
\P(\bw2U_3) \cong \Gr(2,U_3) \subset \Gr(2,V_5).
\end{equation*}
We will say that a linear subspace $P \subset \Gr(2,V_5)$ is 
\begin{itemize} 
\item a {\sf $\sigma$-space} if it is contained in $\P(U_1\wedge V_5)$ for some $U_1 \subset V_5$;
\item a {\sf $\tau$-space} if it is contained in $\P(\bw2U_3)$ for some $U_3 \subset V_5$;
\item a {\sf mixed space} if it is both a $\sigma$- and a $\tau$-space.
\end{itemize}

In $\Gr(2,V_5)$, there are no projective 4-spaces and every projective 3-space  is a $\sigma$-space.\ For any distinct $U_1',U_1'' \subset V_5$, the intersection
$\P(U_1' \wedge V_5) \cap  \P(U_1'' \wedge V_5) 
 $
 is 
the point $ [U_1' \wedge U_1''] $.\ Hence, for every projective 3-space $P\subset\Gr(2,V_5)$,
there is a unique $U_1 \subset V_5$ such that $P = \P(U_1 \wedge V_5)$.\ This defines a map 
\begin{equation*}
\sigma \colon  F_3(X) \to \P(V_5).
\end{equation*}

If $U_1 \not\subset U_3$, we have $\P(U_1 \wedge V_5) \cap \P(\bw2U_3) = \emptyset$.\  
If instead $U_1 \subset U_3$, the intersection $\P(U_1 \wedge V_5) \cap \P(\bw2U_3) = \P(U_1 \wedge U_3)  $ is a line.\ Therefore, projective planes  in~$\Gr(2,V_5)$ are never of mixed type and we have a decomposition  {into connected components}
\begin{equation*}
F_2(X) = F_2^\sigma(X) \sqcup F_2^\tau(X),
\end{equation*}
where $F_2^\sigma(X)  $ is the subscheme of $\sigma$-planes and
$F_2^\tau(X)  $ the subscheme of $\tau$-planes.\ Again, {there is a} map 
\begin{equation*}
\sigma \colon  F_2^\sigma(X) \to \P(V_5)
\end{equation*}
taking a $\sigma$-plane $P$ to the unique $U_1\subset V_5$ such that $P \subset \P(U_1 \wedge V_5)$.\  
Analogously, for any distinct subspaces $U'_3,U''_3 \subset V_5$, the intersection 
$\Gr(2,U'_3) \cap \Gr(2,U''_3) = \Gr(2,U'_3 \cap U''_3)$
is either empty (if $\dim(U'_3 \cap U''_3) = 1$), or a point (if $\dim(U'_3 \cap U''_3) = 2$).\ Therefore, 
 for any $\tau$-plane $P\subset \Gr(2,V_5)$, there is a unique subspace $U_3 \subset V_5 $ such that $P = \P(\bw2U_3)$.\ This defines a map 
\begin{equation*}
\tau \colon  F_2^\tau(X) \to \Gr(3,V_5).
\end{equation*}

Finally, any line on $\Gr(2,V_5)$ is a mixed space  and there are  maps
\begin{equation*}
\sigma\colon F_1(X) \to \P(V_5)
\qquad\text{and}\qquad
\tau\colon F_1(X) \to \Gr(3,V_5).
\end{equation*}

The following proposition is crucial for our study of the schemes $F_k(X)$.\ It describes~$F_k(X)$ in terms of the relative Hilbert schemes $\Hilb^{\P^k}$  which parameterize linearly embedded~$\P^k$ 
in the fibers of  the first and second quadratic fibrations (defined in Section~\ref{sec24}).

\begin{prop}\label{prop:fano-relaitve-hilbert}
Let $X$ be a smooth GM variety of dimension $n\ge3$, with associated Lagrangian data $(V_6,V_5,A)$.\
The maps 
\begin{equation*}
\sigma\colon F_1(X) \to \P(V_5),
\qquad 
\sigma\colon F_2^\sigma(X) \to \P(V_5),
\qquad
\sigma\colon F_3(X) \to \P(V_5)
\end{equation*}
lift to isomorphisms with the following relative Hilbert  schemes for the first quadric fibration
\begin{align*}
F_1(X) &\cong \Hilb^{{\P^1}}(\P_X(\cU_X) / \P(V_5)),\\
F_2^\sigma(X) &\cong \Hilb^{{\P^2}}(\P_X(\cU_X) / \P(V_5)),\\
F_3(X) &\cong \Hilb^{{\P^3}}(\P_X(\cU_X) / \P(V_5)).
\end{align*}
Analogously, the maps
$\tau\colon F_1(X) \to \Gr(3,V_5)$ and $\tau\colon F_2^\tau(X) \to {\Gr(3,V_5)}$
lift to isomorphisms with the following relative Hilbert  schemes for the second quadric fibration
\begin{align*}
F_1(X) &\cong \Hilb^{{\P^1}}(\P_X(V_5/\cU_X) / \Gr(3,V_5)),\\
F_2^\tau(X) &\cong \Hilb^{{\P^2}}(\P_X(V_5/\cU_X) / \Gr(3,V_5)).
\end{align*}
\end{prop}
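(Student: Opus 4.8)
The plan is to establish each isomorphism by producing a natural transformation of functors of points and then verifying it is bijective on $S$-points for every scheme $S$, which by Yoneda gives the claimed isomorphisms of Hilbert schemes. The key geometric input is that the Gushel map $\gamma_X$ embeds any linear $\P(W_{k+1})\subset X$ into $\Gr(2,V_5)$, so that a $\sigma$-space is carried into some $\P(U_1\wedge V_5)$ and a $\tau$-space into some $\P(\bw2U_3)$. I would begin with the $\sigma$-case. Given a $\sigma$-space $P=\P(W_{k+1})\subset X$, the map $\sigma$ records the point $[U_1]\in\P(V_5)$ with $P\subset\P(U_1\wedge V_5)$; the containment $U_1\subset \cU_X$ along $P$ means exactly that the tautological section $U_1\otimes\cO_P\hookrightarrow\cU_X|_P$ defines a lift of $P$ into the projectivization $\P_X(\cU_X)$, landing in the fiber $\rho_1^{-1}([U_1])$. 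Conversely a linearly embedded $\P^k$ inside a fiber of $\rho_1$ over $[U_1]$ projects back to a $\sigma$-space through $[U_1]$. I would check these two constructions are mutually inverse and functorial in $S$.

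The main technical point is the flatness and the interpretation of the relative Hilbert scheme $\Hilb^{\P^k}(\P_X(\cU_X)/\P(V_5))$. Since $\rho_1\colon \P_X(\cU_X)\to\P(V_5)$ is a quadric fibration (by Lemma~\ref{lemma:fibers-quadric1}, with fibers of the stated coranks), its relative Hilbert scheme of linear $\P^k$'s is a well-defined projective scheme, and I must match its $S$-points with families of $\sigma$-spaces. The crucial compatibility is that a linear $\P^k$ contained in a fiber of $\rho_1$ over $[U_1]$, when mapped back to $X$ via the natural map $\P_X(\cU_X)\to X$, produces a linearly embedded $\P^k$ in $X$ whose image in $\Gr(2,V_5)$ is a $\sigma$-space with kernel direction $U_1$; one verifies the composite with the Plücker embedding is linear precisely because $\cU_X\hookrightarrow V_5\otimes\cO_X$ is the pulled-back tautological embedding. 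The same argument handles $F_1^\sigma$, $F_2^\sigma$, and $F_3$ uniformly, since every projective $3$-space in $\Gr(2,V_5)$ is a $\sigma$-space and every line is mixed.

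For the $\tau$-case the structure is parallel but uses the second quadric fibration $\rho_2\colon\P_X(V_5/\cU_X)\to\Gr(3,V_5)$ induced by the embedding $(V_5/\cU_X)\otimes\bw2\cU_X\hookrightarrow\bw3V_5\otimes\cO_X$. A $\tau$-space $P=\P(W_{k+1})\subset X$ lies in some $\Gr(2,U_3)=\P(\bw2U_3)$, and along $P$ one has $\bw2\cU_X\subset\bw2U_3$, equivalently $\cU_X|_P\subset U_3\otimes\cO_P$; dually this says $U_3^\perp\subset(V_5/\cU_X)^\vee$ along $P$, giving a section of $\P_X(V_5/\cU_X)$ over $P$ landing in the fiber $\rho_2^{-1}([U_3])$. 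I would produce the inverse construction exactly as before and check mutual inverseness and functoriality. The lines $F_1(X)$ appear in both pictures because a line is both a $\sigma$- and a $\tau$-space; this is consistent and gives the two stated isomorphisms for $F_1(X)$ simultaneously.

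The hardest part will be the scheme-theoretic bookkeeping: to get an \emph{iso\-mor\-phism of schemes} rather than merely a bijection on closed points, I need to describe both sides functorially and confirm that the correspondence $P\leftrightarrow(\text{lift into the fiber})$ is compatible with arbitrary base change $S$, including non-reduced $S$. Concretely, for a flat family $\{P_s\}_{s\in S}$ of $\sigma$-spaces one must show the kernel directions $\{U_1(s)\}$ vary algebraically (yielding the morphism to $\P(V_5)$) and that the induced subbundle $U_1\otimes\cO\hookrightarrow\cU_X$ globalizes to a section of the relative $\P^k$-scheme; this amounts to the statement that the universal quotient defining the relative Hilbert scheme pulls back correctly. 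I expect the formal verification to reduce to the universal properties of projectivized bundles and of relative Hilbert schemes of linear subspaces, together with the fact from Section~\ref{reminder} that $\cU_X\hookrightarrow V_5\otimes\cO_X$ is the Gushel pullback of the tautological embedding, so no genuinely new geometric input beyond Lemmas~\ref{lemma:fibers-quadric1} and~\ref{lemma:fibers-quadric2} is required.
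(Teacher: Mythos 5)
Your proposal is correct and follows essentially the same route as the paper: the paper's proof also lifts a $\sigma$-space $P$ into the fiber $\rho_1^{-1}(\sigma(P))$ via the containment of the marked subspace in $\cU_X$ (resp.\ lifts a $\tau$-space into a fiber of $\rho_2$), inverts by projecting along $\pi\colon\P_X(\cU_X)\to X$, and handles the scheme-theoretic/functorial bookkeeping by running the construction on the universal family $\cL_k^\sigma(X)\subset X\times F_k^\sigma(X)$, which is exactly the universal-family implementation of your Yoneda argument. The paper then simply notes the two constructions are mutually inverse, matching your plan.
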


\begin{proof}
Let $\cL_k^\sigma(X) \subset X \times F_k^\sigma(X)$ be the universal family of $k$-dimensional $\sigma$-spaces.\  The map $\sigma \colon F_k^\sigma(X) \to \P(V_5)$ induces a map $\cL_k^\sigma(X) \to X \times \P(V_5)$ that takes a pair $(x,{P})$, where ${P} \subset X$ is a $\sigma$-space of dimension $k$ and $x \in {P}$ is a point, to $(x, \sigma({P}))$.\  By definition of a $\sigma$-space, if $\sigma({P}) = U_1\subset V_5$, the space ${P}$ parameterizes 2-dimensional subspaces $U_2 \subset V_5$ such that $U_1\subset U_2$.\  Therefore, $U_1$ is contained in the 2-space corresponding to the point $x$.\ In other words, $(x,\sigma({P})) \in \P_X(\cU_X)$.\  This means that we have a commutative diagram
\begin{equation}\label{eq:lksigma}
\vcenter{\xymatrix@C=3em{
X \ar@{=}[d] & \cL_k^\sigma(X) \ar[l]_-q \ar[r]^-p \ar@{-->}[d] & F_k^\sigma(X) \ar[d]^\sigma \\
X & \PP_X(\cU_X) \ar[l]_-\pi \ar[r]^-{\rho_1} & \P(V_5).
}}
\end{equation}
Thus, the fibers of $\cL_k^\sigma(X)$ over $F_k^\sigma(X)$ embed into the fibers of $\rho_1$ and this {embedding} is linear on the fibers.\ This means that the map $\sigma$ lifts to a map $F_k^\sigma(X) \to \Hilb^{\P^k}(\P_X(\cU_X)/\P(V_5))$.

Similarly, the projection via $\pi:\P_X(\cU_X) \to X$ of any $\P^k$ contained in the fiber of $\rho_1$ is a $\P^k$ contained in $X$.\ 
This defines a map $\Hilb^{\P^k}(\P_X(\cU_X)/\P(V_5)) \to F_k^\sigma(X)$.

It is straightforward to see that the  maps we constructed are mutually inverse  and thus give the isomorphisms of the first part of the proposition.

The second part is proved analogously.
\end{proof}

{In the next sections, we use this proposition and Lemmas~\ref{lemma:fibers-quadric1} and~\ref{lemma:fibers-quadric2} to describe the Hilbert schemes $F_k(X)$.}

\subsection{Projective 3-spaces on GM varieties}

{As we already mentioned,}
smooth GM varieties  contain no linear spaces of dimension 4 and higher.\ The situation with projective 3-spaces is also quite simple.

\begin{theo}\label{p32}
Let $X$ be a smooth GM variety of dimension $n$, with associated Lagrangian data $(V_6,V_5,A)$.\ If $n \le 5$, we have $F_3(X) = \emptyset$.\ If $n = 6$, there is an \'etale double covering $F_3(X) \to Y^3_{A,V_5}$; 
in particular, $F_3(X)$ is finite  and is empty for $X$ general.
\end{theo}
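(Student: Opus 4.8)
The plan is to use Proposition~\ref{prop:fano-relaitve-hilbert}, which identifies $F_3(X)$ with the relative Hilbert scheme $\Hilb^{\P^3}(\P_X(\cU_X)/\P(V_5))$ of projective $3$-spaces in the fibers of the first quadric fibration $\rho_1$. A fiber $\rho_1^{-1}(v)$ over a point $v\in\P(V_5)$ contains a $\P^3$ precisely when the fiber is a quadric whose dimension and corank allow for a $3$-plane of isotropic directions; by Lemma~\ref{lemma:fibers-quadric1}, the fibers live in $\P^{n-2}$ (or $\P^{n-1}$ on $\Sigma_1(X)$) and have corank governed by the stratification index $\ell$. The first step is therefore to record, for each dimension $n$, which fibers of $\rho_1$ are large enough to contain a $\P^3$, and to translate the existence of such a $\P^3$ into a numerical condition on the corank.

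I would next run the quadric geometry. A (possibly degenerate) quadric $Q\subset\P^m$ of corank $c$ contains a linear $\P^k$ if and only if $2k\le m+c-1$, i.e.\ the dimension of a maximal linear subspace is $\lfloor (m+c-1)/2\rfloor$; moreover when $2k = m+c-1$ the family of such maximal $\P^k$'s has exactly two connected components (the two rulings), and for strictly smaller $k$ it is connected. Feeding in $m=n-2$ (off $\Sigma_1(X)$) or $m=n-1$ (on $\Sigma_1(X)$) and the corank values from Lemma~\ref{lemma:fibers-quadric1}, one checks directly that for $n\le 5$ no fiber is big enough to contain a $\P^3$, giving $F_3(X)=\emptyset$. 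For $n=6$, off $\Sigma_1(X)$ the fiber is a quadric in $\P^4$ of corank $\ell$, which contains a $\P^3$ exactly when $6\le 4+\ell-1$, i.e.\ $\ell\ge 3$; by definition this singles out the locus $Y^{\ge3}_{A,V_5}$, and since $Y^{\ge4}_A=\emptyset$ the relevant stratum is $Y^3_{A,V_5}$. In this boundary case $2\cdot 3 = 4+3-1$, so the maximal linear subspaces form exactly two rulings, which produces the étale double covering $F_3(X)\to Y^3_{A,V_5}$.

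Two points require care, and they are where I expect the real work to be. First, I must confirm that on $Y^3_{A,V_5}$ the fiber really lies in $\P^4$ and has corank exactly $3$ (not higher), and that this locus does not meet $\Sigma_1(X)$ in a way that changes the count—here I would invoke the inequalities following Lemma~\ref{lemma:fibers-quadric1} ($\ell\le n-1=5$ off $\Sigma_1$, and $\ell\le n+1$ on it) together with $Y^{\ge 4}_A=\emptyset$ to rule out pathologies, and use that a corank-$3$ quadric in $\P^4$ is a cone over a smooth conic, whose two rulings of $\P^3$'s are genuinely disjoint and swapped by monodromy. Second, and this is the main obstacle, I must show the covering is \emph{étale}: this amounts to checking that the relative Hilbert scheme is smooth of relative dimension $0$ over $Y^3_{A,V_5}$ and that the two rulings never degenerate to a single one, i.e.\ the corank stays constant equal to $3$ along this stratum. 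This constancy is exactly the statement that $Y^3_{A,V_5} = Y^{\ge3}_{A,V_5}$ (using $Y^{\ge4}_A=\emptyset$), so the fiber type is locally constant and the two-to-one structure is a topological covering; finiteness and emptiness for general $X$ then follow since $Y^{\ge3}_A$ is a finite set, empty for $A$ general.
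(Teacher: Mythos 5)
Your skeleton is exactly the paper's proof (Proposition~\ref{prop:fano-relaitve-hilbert} to identify $F_3(X)$ with the relative Hilbert scheme of $3$-spaces in the fibers of $\rho_1$, then corank bookkeeping via Lemma~\ref{lemma:fibers-quadric1}, with $Y^{\ge 4}_A = \emptyset$ killing the cases $n\le 5$ and finiteness of $Y^3_A$ giving the last claims), and your numerical criterion $2k \le m+c-1$ is correct. But one geometric description is wrong: a corank-$3$ quadric in $\P^4$ has rank $2$, so it is a cone with vertex $\P^2$ over \emph{two points}, i.e., a union of two distinct $3$-spaces meeting along a plane --- not a cone over a smooth conic (that is the corank-$2$, rank-$3$ case, whose maximal linear subspaces are planes forming a \emph{single} connected family). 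Taken literally, your description contradicts your own criterion, since odd rank gives one ruling, not two. The correct picture, which is what the paper uses, is that the Hilbert scheme of $3$-spaces in such a fiber consists of two reduced points; this is precisely what makes $F_3(X) \to Y^3_{A,V_5}$ finite of degree $2$ and unramified.

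The more serious gap is your treatment of $\Sigma_1(X)$ for $n=6$. The inequalities $\ell \le n-1$ off $\Sigma_1(X)$ and $\ell \le n+1$ on it, together with $Y^{\ge 4}_A = \emptyset$, do \emph{not} exclude a point $v \in \Sigma_1(X) \cap Y^3_{A,V_5}$: over such a point the fiber would be a corank-$2$ (rank-$4$) quadric in $\P^5$, which contains two one-parameter families of $3$-spaces, so the map to $Y^3_{A,V_5}$ would acquire positive-dimensional fibers there, destroying both finiteness and \'etaleness. What actually closes this case --- and what the paper invokes --- is that $\Sigma_1(X) = \emptyset$ for every smooth GM sixfold: by~\eqref{eq:kernel-locus} and the discussion following it, $\Sigma_1(X) \cong \P^{\ell-1}$ where $\bp_X \in Y^\ell_{A^\perp}$, and for $n = 6$ one has $\ell = \dim(A \cap \bw3V_5) = 0$. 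With these two corrections (union of two $3$-spaces instead of a conic cone, and the vanishing of $\Sigma_1(X)$ instead of the corank inequalities), your argument coincides with the paper's proof.
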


\begin{proof}
By Proposition~\ref{prop:fano-relaitve-hilbert}, $F_3(X)$ is the Hilbert scheme of projective 3-spaces in the fibers of the first quadric fibration $\rho_1\colon \P_X(\cU_X) \to \P(V_5)$.\ 
On the other hand, by Lemma~\ref{lemma:fibers-quadric1}, the fiber $Q_v = \rho_1^{-1}(v)$ over a point $v \in \P(V_5)$ is either
a quadric in $\P^{n-2}$ if $v \notin \Sigma_1(X)$, or a quadric in~$\P^{n-1}$ if $v \in \Sigma_1(X)$.\
Such a quadric contains a~$\P^3$ only in the following cases:
\begin{itemize} 
\item $n = 6$, $v \notin \Sigma_1(X)$, and $Q_v \subset \P^4$ is a quadric of corank $\ge 3$;
\item $n = 6$, $v \in \Sigma_1(X)$, and $Q_v \subset \P^5$ is a quadric of corank $\ge 2$;
\item $n = 5$, $v \notin \Sigma_1(X)$, and $Q_v \subset \P^3$ is a quadric of corank $\ge 4$;
\item $n = 5$, $v \in \Sigma_1(X)$, and $Q_v \subset \P^4$ is a quadric of corank $\ge 3$;
\item $n = 4$, $v \in \Sigma_1(X)$, and $Q_v \subset \P^3$ is a quadric of corank $\ge 4$.
\end{itemize}
The  {last three cases} do not occur by Lemma~\ref{lemma:fibers-quadric1}, because $Y^4_A = \emptyset$ (we could also invoke Corollary \ref{deg}), and neither does the second case, because  $\Sigma_1(X) = \emptyset$ for $n=6$.

In the first case, we have $v \in Y^{\ge 3}_{A,V_5}$.\  Since $Y^4_A = \emptyset$, the quadric $Q_v$ has rank 2, hence  is a union of two distinct 3-spaces, and the Hilbert scheme of 3-spaces in  $Q_v$ consists of two reduced points, hence is smooth.\ Therefore, the map $F_3(X) \to Y^3_{A,V_5}$ is an \'etale  double cover.\  The finiteness of $F_3(X)$  and its  emptyness for general $X$  follow from the same properties of~$Y^3_A$.
\end{proof}

\subsection{Planes on GM varieties}

Similar arguments provide descriptions of the schemes of planes.\ We start with $\sigma$-planes and consider the map $\sigma\colon F_2^\sigma(X) \to \P(V_5)$.

\begin{theo}\label{theorem:f2}
Let $X$ be a smooth GM variety of dimension $n$, with associated Lagrangian data $(V_6,V_5,A)$.

\noindent{\rm (a)} If $n = 6$ and $Y^3_{A,V_5} = \emptyset$, the map $\sigma$ factors as
\begin{equation*}
F_2^\sigma(X) \xrightarrow{\ \tilde\sigma\ } \tY_{A,V_5} \xrightarrow{\ f_{A,V_5}\ } Y_{A,V_5} \lhra \P(V_5), 
\end{equation*}
where $\tilde\sigma$ is a $\P^1$-bundle. 

If $n = 6$ and $Y^3_{A,V_5} \ne\emptyset$, the scheme $F_2^\sigma(X)$ has one component isomorphic to a generically $\P^1$-fibration over $\tY_{A,V_5}$
 and,  for each point of $Y^3_{A,V_5}$, one pair of irreducible components isomorphic to $\P^3$.

In particular, $F_2^\sigma(X)$ is smooth if and only if the Pl\"ucker point $\bp_X$ lies away from the projective dual $(Y^{\ge 2}_A)^\vee \subset \P(V_6^\vee)$ and $Y^3_{A,V_5} = \emptyset$.

\noindent{\rm (b)} If $n = 5$ and $X$ is ordinary, or special with $\bp_X \notin \pr_{Y,2}(E)$, 
 the map $\sigma$ factors as
\begin{equation*}
F_2^\sigma(X) \isomlra \tY^{\ge 2}_{A,V_5} \lra Y^{\ge 2}_{A,V_5} \lhra \P(V_5), 
\end{equation*}
where $\tY^{\ge 2}_{A,V_5} \to Y^{\ge 2}_{A,V_5}$ is a double covering {of the curve $Y^{\ge 2}_{A,V_5}$} branched along $Y^3_{A,V_5}$.
 
If $n = 5$ and $X$ is special with $\bp_X \in \pr_{Y,2}(E)$, the scheme $F_2^\sigma(X)$ is the union of {a double cover} $\tY^{\ge 2}_{A,V_5}$  and one  {double} or two  {reduced} components
{\rm(}depending on whether the kernel point $\Sigma_1(X)$ is in $Y^3_{A,V_5}$ or in~$Y^2_{A,V_5}${\rm)} isomorphic to $\P^1$ and contracted by the map $\sigma$ onto $\Sigma_1(X)$.

\noindent{\rm (c)} If $n = 4$, the map $\sigma$ factors as
\begin{equation*}
F_2^\sigma(X) \xrightarrow{\ \tilde\sigma\ } Y^3_{A,V_5} \lhra \P(V_5), 
\end{equation*}
where $\tilde\sigma$ is 
 an isomorphism over $Y^3_{A,V_5} \setminus \Sigma_1(X)$ and a double cover over $Y^3_{A,V_5} \cap \Sigma_1(X)$.\ 
In particular, $F_2^\sigma(X)$ is finite and is empty if and only if $Y^3_{A,V_5} = \emptyset$.

\noindent{\rm (d)} If $n \le 3$, we have $F_2^\sigma(X) = \emptyset$.
\end{theo}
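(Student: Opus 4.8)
The plan is to reduce the statement, in all four ranges of $n$, to a fibrewise analysis of planes in the fibers of the first quadric fibration. By Proposition~\ref{prop:fano-relaitve-hilbert} we have $F_2^\sigma(X) \cong \Hilb^{\P^2}(\P_X(\cU_X)/\P(V_5))$ and, under this isomorphism, $\sigma$ becomes the projection $\rho_1$; hence it suffices to describe, for each $v \in \P(V_5)$, the Hilbert scheme of planes contained in the quadric $Q_v := \rho_1^{-1}(v)$, and then to globalize. Lemma~\ref{lemma:fibers-quadric1} presents $Q_v$ as a quadric of prescribed corank in $\P^{n-2}$ when $v \notin \Sigma_1(X)$ and in $\P^{n-1}$ when $v \in \Sigma_1(X)$, so the problem becomes pure quadric geometry over the stratification of $\P(V_5)$ by the $Y^\ell_{A,V_5}$.

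First I would record the elementary fact that a quadric of rank $r$ in $\P^m$ contains a plane if and only if $r \le 2m-4$, and that in the extremal cases the scheme of planes is the expected variety of maximal isotropic subspaces: the two rulings of the cone over a smooth quadric surface when $r=4$, a single $\P^1$ (joins of the vertex with a conic) when $r=3$, the two hyperplanes when $r=2$, and a single $\P^k$ when the quadratic form vanishes. Feeding Lemma~\ref{lemma:fibers-quadric1} and the vanishing $Y^{\ge 4}_A = \emptyset$ into this dictionary isolates, in each dimension, the strata carrying planes: for $n=6$ (where $\Sigma_1(X)=\emptyset$) it is all of $Y_{A,V_5}$, with a $\P^1$ of planes over $Y^1_{A,V_5}\cup Y^2_{A,V_5}$ and two $\P^3$'s over each point of $Y^3_{A,V_5}$; for $n=5$ it is $Y^{\ge 2}_{A,V_5}$ together with a possible contribution of the kernel fibre; for $n=4$ it is $Y^3_{A,V_5}$, with one plane off $\Sigma_1(X)$ and two over $\Sigma_1(X)\cap Y^3_{A,V_5}$; and for $n\le 3$ the inequality $r\le 2m-4$ is never satisfied (given $Y^{\ge 4}_A=\emptyset$), which gives~(d) at once.

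The heart of the proof, and the step I expect to be the main obstacle, is to recognise the double-sheeted structure of these families of planes as the restriction of O'Grady's double EPW sextic rather than as the EPW sextic itself. Over the locus of smooth or even-dimensional corank-one fibres the scheme of maximal isotropic planes has two sheets at each point, and its Stein factorization yields a canonical double cover branched over the degeneration locus $Y^{\ge 2}_{A,V_5}$; the task is to identify this cover with $f_{A,V_5}\colon \tY_{A,V_5}\to Y_{A,V_5}$. This is exactly the content of Appendix~\ref{section:linear-spaces-quadrics}, which I would invoke here. Granting it, the two rulings over $Y^1_{A,V_5}$ match the two \'etale sheets of $\tY_{A,V_5}$ while the single ruling over $Y^2_{A,V_5}$ matches the ramification point, so the planes assemble into the asserted $\P^1$-bundle $\tilde\sigma\colon F_2^\sigma(X)\to\tY_{A,V_5}$ in~(a) and into the double cover $\tY^{\ge 2}_{A,V_5}\to Y^{\ge 2}_{A,V_5}$ in~(b).

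It remains to analyse the most degenerate strata and the scheme structure there. Over each point of $Y^3_{A,V_5}$ the corank jump produces, by the dictionary above, two hyperplanes in a $\P^4$ (for $n=6$) or a double plane (for $n=5$), and I would show that these give, respectively, the pair of $\P^3$-components disjoint from the main fibration in~(a) and the single ramification point of $\tY^{\ge 2}_{A,V_5}$ in~(b). For the special fivefolds and fourfolds the extra phenomena are governed by the kernel fibre: using Lemma~\ref{lemma:dual-y2} I would translate the hypothesis $\bp_X\in\pr_{Y,2}(E)$ into the statement that the kernel point lies on $Y^{\ge 2}_{A,V_5}$, and then read off from Lemma~\ref{lemma:fibers-quadric1}(b) whether the kernel fibre is a corank-one quadric in $\P^{n-1}$ (yielding the two reduced $\P^1$'s, resp.\ the two planes over $\Sigma_1(X)\cap Y^3_{A,V_5}$) or a corank-two one (yielding the single non-reduced $\P^1$). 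Finally the smoothness criterion in~(a) follows by combining the $\P^1$-bundle structure with the description of $\Sing(\tY_{A,V_5})$ obtained as in Lemma~\ref{lemma:singularities}: $F_2^\sigma(X)$ is smooth precisely when $\tY_{A,V_5}$ is, that is, when $Y^3_{A,V_5}=\emptyset$ and $\bp_X\notin(Y^{\ge 2}_A)^\vee$.
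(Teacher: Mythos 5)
Your proposal follows the paper's proof essentially step for step: reduction to the relative Hilbert scheme of planes via Proposition~\ref{prop:fano-relaitve-hilbert}, the fibrewise corank analysis of Lemma~\ref{lemma:fibers-quadric1} over the strata $Y^\ell_{A,V_5}$ and the kernel locus, Stein factorization of the main component identified with the double EPW sextic via the pushforward computation of Appendix~\ref{section:linear-spaces-quadrics}, and the smoothness criterion via Lemma~\ref{lemma:singularities}(b). The only deviations are harmless: you deduce (d) from the rank inequality $r \le 2m-4$ (using $Y_A^{\ge 4} = \emptyset$) where the paper quotes Corollary~\ref{deg}, and be aware that Proposition~\ref{proposition:f3q} alone gives the Stein cover only in the form $\Spec(\cO_{D_1} \oplus \cC \otimes \cL^2 \otimes \det(\cE))$ --- the identification with $\tY_{A,V_5}$ still requires computing the cokernel sheaf as $\cC \cong \Coker(\cO(-1) \otimes \bw2{(V_6/\cO(-1))} \to A^\vee \otimes \cO_S)$ via \cite[Proposition~4.5 and Lemma~C.6]{DK} and matching it against O'Grady's construction $\Spec(\cO_{Y_{A,V_5}} \oplus \cC(-3))$ in \cite[Section~4]{og1}, which is exactly the short extra computation the paper inserts at the point you label ``granting it.''
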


\begin{rema}
The double cover $\tY^{\ge 2}_{A,V_5} \to Y^{\ge 2}_{A,V_5}$ appearing in part (b) of the theorem is described in Proposition~\ref{proposition:f3q}.\ We expect it to be the base change to $\P(V_5)$ of a natural double cover of~$Y^{\ge 2}_A$ branched along $Y^3_A$ and analogous to O'Grady's double cover $f_A:\tY_A \to Y_A$.
\end{rema}

\begin{proof}[Proof of the theorem]
In each case, by Proposition~\ref{prop:fano-relaitve-hilbert}, the scheme $F_2^\sigma(X)$ is isomorphic to the relative Hilbert scheme of planes
in the fibers of the first quadric fibration {$\rho_1 \colon \P_X(\cU_X) \to \P(V_5)$}.\ The rest follows from the description of {its fibers $Q_v := \rho_1^{-1}(v)$} in Lemma~\ref{lemma:fibers-quadric1}.

 (a)  We assume $n = 6$.\ The locus $\Sigma_1(X) $ is then empty hence, for any $v \in \P(V_5)$, the fiber $Q_v$ is a quadric in $\P^4$.\ If it is non-degenerate, it contains no planes hence the map $\sigma$ factors through $Y_{A,V_5}$. 

If the corank of $Q_v$ is 1, there are two families of planes on $Q_v$, each parameterized by~$\P^1$.\ Hence, over $Y^1_{A,V_5}$, the map $\sigma$ factors as a $\P^1$-fibration followed by a double covering.

If the corank of $Q_v$ is 2, there is only one family of planes on $Q_v$ parameterized by~$\P^1$.\  Over $Y^2_{A,V_5}$, the map $\sigma$ is therefore a $\P^1$-fibration.

Finally, if the corank of $Q_v$ is 3, we have $Q_v = \P^3 \cup \P^3$ (intersecting along a plane), 
hence planes on $Q_v$ are parameterized by $\P^3 \cup \P^3$ (dual spaces, intersecting in a point).\  It follows that $F_2^\sigma(X)$ has two irreducible components isomorphic to $\P^3$ over each point of $Y^3_{A,V_5}$
and a component that dominates $Y_{A,V_5}$.\ Considering the Stein factorization of the map $\sigma$
restricted to this (main) component of $F_2^\sigma(X)$, we see that it is the composition of a $\P^1$-bundle
(away of the preimage of $Y^3_{A,V_5}$)  and a double cover of $Y_{A,V_5}$ branched along $Y^{\ge 2}_{A,V_5}$.\ This $\P^1$-bundle is \'etale locally trivial and its Brauer class is given by the sheaf of even parts of Clifford algebras (\cite[Lemma~4.2]{K10}).

To show that this double cover is isomorphic to $\tY_{A,V_5}$  (the base change to $\P(V_5)$ of the double EPW sextic), we compute, using Proposition~\ref{proposition:f3q}, the pushforward of $\cO_{F_2^\sigma(X)}$ to~$\P(V_5)$.\  We have
$S = \P(V_5)$, $m = 5$, $\cE = \cO_S \oplus \cO_S(-1) \otimes (V_5/\cO_S(-1)) \cong \cO_S \oplus T_S(-2)$
{(where $T_S $ is the tangent bundle)}, 
and $\cL = \cO_S$.\  By Lemma~\ref{lemma:fibers-quadric1}, the degeneracy loci of the first quadratic fibration are $D_c = Y_{A,V_5}^{\ge c}$.\ Moreover, by~\cite[Proposition~4.5 and Lemma~C.6]{DK}, the cokernel sheaf $\cC$ for the family of quadrics is isomorphic to $  \Coker(\cO(-1) \otimes \bw2(V_6/\cO(-1)) \to A^\vee \ot \cO_S)$, 
hence the double cover can be written as 
 \begin{equation*}
\Spec(\cO_{Y_{A,V_5}} \oplus \cC(-3)).
\end{equation*}
But this is precisely the base change to $\P(V_5)$ of the double EPW sextic (\cite[Section~4]{og1}).
 
The statement about smoothness follows from the above description and Lemma~\ref{lemma:singularities}(b). 

(b) We assume $n = 5$.\ If $v \notin \Sigma_1(X)$, the fiber $Q_v$ is a quadric in $\P^3$.\
It contains a plane if and only if its corank is at least 2.\ Therefore, we have two planes over each point of~$Y^2_{A,V_5} \setminus \Sigma_1(X)$ 
and one {double} plane over each point of $Y^3_{A,V_5} \setminus \Sigma_1(X)$.\ On the other hand, if~$v \in \Sigma_1(X)$, the fiber $Q_v$ is a quadric in $\P^4$.\ It contains  planes if and only if it is degenerate and these planes are then parameterized by $\P^1 \sqcup \P^1$ if the corank is 1 (\ie, if $v \in Y^2_{A,V_5}$) or by a double~$\P^1$ if the corank is 2 (\ie, if $v \in Y^3_{A,V_5}$).

It follows that if $X$ is ordinary (hence $\Sigma_1(X) = \emptyset$)  or if $X$ is special and $\bp_X \notin \pr_{Y,2}(E)$ (so that, by~\eqref{eq:kernel-locus}, the kernel point $\Sigma_1(X)$ is not on $Y^{\ge 2}_{A,V_5}$), there is a double cover $F_2^\sigma(X) \to Y^{\ge 2}_{A,V_5}$ branched along $Y^3_{A,V_5}$, while if $X$ is special and $\bp_X \in \pr_{Y,2}(E)$, we have extra component(s) in~$F_2^\sigma(X)$ as described in the statement of the theorem.

  (c) We assume $n = 4$.\ If $v \notin \Sigma_1(X)$, the fiber $Q_v$ is a quadric in $\P^2$.\ It contains a plane (and is then equal to it) if and only if its corank is 3.\ Hence $F_2^\sigma(X)$ contains $Y^3_{A,V_5} \setminus \Sigma_1(X)$.\ On the other hand, if $v \in \Sigma_1(X)$, the fiber $Q_v$ is a quadric in~$\P^3$.\ It contains a plane if and only if its corank is 2 (and then it contains two planes).\ Hence $F_2^\sigma(X)$ contains two points for each point of $Y^3_{A,V_5} \cap \Sigma_1(X)$.\ We conclude using the fact that $Y^3_A$ is finite (Section~\ref{se22}).

Statement (d)  follows from Corollary \ref{deg}.
\end{proof}

Using the second quadric fibration, we describe the scheme $F_2^\tau(X)$.

\begin{theo}\label{th34}
Let $X$ be a smooth GM variety of dimension $n$, with associated Lagrangian data $(V_6,V_5,A)$.

\noindent{\rm (a)}  If $n = 6$,  
 {the map $\tau$ factors as
\begin{equation*}
F_2^\tau(X) 
\xrightarrow{\ \tilde\tau\ } Z^{\ge 2}_{A,V_5} \lhra \Gr(3,V_5), 
\end{equation*}
where $\tilde\tau$ is a double covering branched along $Z^{\ge 3}_{A,V_5}$.}

\noindent{\rm (b)}  If $n = 5$, the map $\tau\colon F_2^\tau(X) \to \Gr(3,V_5)$ factors as
\begin{equation*}
F_2^\tau(X) \xrightarrow{\ \tilde\tau\ } Z^{\ge 3}_{A,V_5} \lhra \Gr(3,V_5),
\end{equation*}
where $\tilde\tau$ is an isomorphism over $Z^{\ge 3}_{A,V_5} \setminus \Sigma_2(X)$
and a double cover over $Z^{\ge 3}_{A,V_5} \cap \Sigma_2(X)$, branched along $Z^4_{A,V_5}$.

\noindent{\rm (c)}  If $n = 4$, the map $\tau$ factors as
\begin{equation*}
F_2^\tau(X) \xrightarrow{\ \tilde\tau\ } Z^4_{A,V_5} \lhra \Gr(3,V_5), 
\end{equation*}
where $\tilde\tau$ is \'etale, is an isomorphism over $Z^4_{A,V_5} \setminus \Sigma^2_2(X)$, and  is a double cover over $Z^4_{A,V_5} \cap \Sigma^2_2(X)$.\ In particular, $F_2^\tau(X)$ is finite and it is empty if and only if $Z^4_{A,V_5} = \emptyset$.

\noindent{\rm (d)}  If $n \le 3$, we have $F_2^\tau(X) = \emptyset$.
\end{theo}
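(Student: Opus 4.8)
The plan is to reduce the statement to the fiber geometry of the second quadric fibration $\rho_2$, in complete parallel with the proof of Theorem~\ref{theorem:f2} but reading off planes instead of using Lemma~\ref{lemma:fibers-quadric1}. By Proposition~\ref{prop:fano-relaitve-hilbert}, $\tau$ identifies $F_2^\tau(X)$ with the relative Hilbert scheme $\Hilb^{\P^2}(\P_X(V_5/\cU_X)/\Gr(3,V_5))$ of planes contained in the fibers of $\rho_2$, so for each $U_3\in\Gr(3,V_5)$ I must decide whether the fiber $Q_{U_3}:=\rho_2^{-1}(U_3)$ contains a $\P^2$ and, if so, describe the family of such planes. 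By Lemma~\ref{lemma:fibers-quadric2} this depends only on the projective dimension $N$ and corank $c$ of $Q_{U_3}$. The elementary input is that a corank-$c$ quadric in $\P^N$ (a cone with vertex $\P^{c-1}$ over a smooth quadric in $\P^{N-c}$) contains a plane exactly when $c+\lfloor (N-c-1)/2\rfloor\ge2$, and that in the borderline coranks the planes form either two reduced points (two distinct planes), one ramification point (a double plane, i.e.\ a rank-$1$ quadric), or the entire fiber (a rank-$0$ conic equal to $\P^2$, contributing a single plane).

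Before the case analysis I would record the vanishing of the relevant strata, which is what makes each dimension manageable. Since $A$ has no decomposable vectors, $\Sigma^{\ge3}_2(X)=\emptyset$; since $Z^{\ge5}_A=\emptyset$ and $Z^{\ge4}_{A,V_5}\subset\Sigma_2(X)$ by Lemma~\ref{lemma:zav4-empty}, the corank of a fiber never exceeds what its ambient $\P^N$ allows as a genuine quadric. The key additional reduction is that $A\cap(\bw2U_3\wedge V_5)\subset A\cap\bw3V_5$ for every $U_3\subset V_5$, while $\dim(A\cap\bw3V_5)$ equals $5-n$ or $6-n$ according to whether $X$ is ordinary or special. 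Hence $\Sigma_2(X)=\emptyset$ when $n=6$ (the $\pr_{Z,2}$-fiber over $\bp_X$ is empty) and $\Sigma^{\ge2}_2(X)=\emptyset$ when $n=5$; these eliminate case~(c), respectively cases~(b) and~(c), of Lemma~\ref{lemma:fibers-quadric2} in those dimensions and cut the bookkeeping down substantially.

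The corank count is then routine. For $n=6$ only Lemma~\ref{lemma:fibers-quadric2}(a) occurs: $Q_{U_3}$ is a corank-$\ell$ quadric surface in $\P^3$ with $\ell\le3$, containing a plane iff $\ell\ge2$, namely two planes over $Z^2_{A,V_5}$ and a double plane over $Z^3_{A,V_5}$, which yields~(a). For $n=5$, case~(a) gives a plane only when $Q_{U_3}$ is the rank-$0$ conic over $Z^3_{A,V_5}\setminus\Sigma_2(X)$ (a single plane, hence the isomorphism there), whereas case~(b), over $Z^\ell_{A,V_5}\cap\Sigma^1_2(X)$, produces a corank-$(\ell-1)$ quadric in $\P^3$---two planes for $\ell=3$ and a double plane for $\ell=4$---so the map is a double cover over $Z^{\ge3}_{A,V_5}\cap\Sigma_2(X)$ branched along $Z^4_{A,V_5}$, the image being all of $Z^{\ge3}_{A,V_5}$. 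For $n=4$ the image is $Z^4_{A,V_5}\subset\Sigma_2(X)$: over $Z^4_{A,V_5}\cap\Sigma^1_2(X)$ the fiber is the rank-$0$ conic $\P^2$ (one plane, an isomorphism) and over $Z^4_{A,V_5}\cap\Sigma^2_2(X)$ it is a corank-$2$ quadric in $\P^3$ (two distinct planes, an étale double cover), giving~(c). Statement~(d) is then immediate: for $n\le2$ no $\P^2$ fits by dimension, and for $n=3$ a linearly embedded $\P^2\subset X$ would be a hypersurface of degree $1$, contradicting Corollary~\ref{deg}.

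The step I expect to be genuinely delicate is not this combinatorics but the \emph{scheme} structure of the relative Hilbert scheme over the degenerate fibers, i.e.\ upgrading the set-theoretic counts to the assertions that $\tilde\tau$ is a morphism of precisely the stated type: a double cover, étale away from the double-plane locus and branched exactly along $Z^{\ge3}_{A,V_5}$ (for $n=6$), respectively $Z^4_{A,V_5}$ (for $n=5$), and étale (for $n=4$). I would handle this exactly as Theorem~\ref{theorem:f2}(a) was handled, through the Stein factorization of the relative Hilbert scheme of a quadric fibration established in Proposition~\ref{proposition:f3q}. That result realizes the Hilbert scheme of maximal linear subspaces as the $\Spec$ of an explicit rank-$2$ sheaf on the degeneracy locus and describes its branch divisor, which is what is needed to see that $\tilde\tau$ is étale over the corank-$2$ locus and ramifies over the corank-$3$ (double-plane) locus. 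The remaining technical points are to check that this cover is flat and reduced where claimed and, for $n=5$ and $n=4$, to verify that the jump loci where a fiber of $\rho_2$ degenerates to a full $\P^2$ (the rank-$0$ conics) glue correctly with the quadric-in-$\P^3$ fibers, so that the two pieces assemble into the single map $\tilde\tau$ onto $Z^{\ge3}_{A,V_5}$, respectively $Z^4_{A,V_5}$.
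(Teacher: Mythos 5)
Your proposal is correct and follows essentially the same route as the paper's proof: reduce via Proposition~\ref{prop:fano-relaitve-hilbert} to planes in the fibers of $\rho_2$, run the corank case analysis of Lemma~\ref{lemma:fibers-quadric2} (using $\Sigma_2(X)=\emptyset$ for $n=6$ and $Z^4_{A,V_5}\subset\Sigma_2(X)$ from Lemma~\ref{lemma:zav4-empty}), and dispose of part (d) by Corollary~\ref{deg}. Your extra care about the scheme structure of $\tilde\tau$ via the Stein factorization of Proposition~\ref{proposition:f3q} goes slightly beyond the paper's own argument, which contents itself with the fiberwise description and relegates the finer description of the double cover in part (a) to the remark following the theorem.
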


\begin{rema}
The double cover $F_2^\tau(X) \to Z^{\ge 2}_{A,V_5}$ which appears in part (a) of the theorem is described in Proposition~\ref{proposition:f3q}.\ We expect it to be the base change to $\Gr(3,V_5)$ of the double cover $\tZ^{\ge 2}_A \to Z^{\ge 2}_A$ constructed for general $A$ in~\cite{ikkr}.
\end{rema}

\begin{proof}[Proof of the theorem]
In all cases, by Proposition~\ref{prop:fano-relaitve-hilbert}, the scheme $F_2^\tau(X)$ is isomorphic to the relative Hilbert scheme of planes in the fibers of the second quadric fibration {$\rho_2 \colon \P_X(V_5/\cU_X) \to  \Gr(3,V_5)$}.\ The rest follows from the description of {its fibers $Q_{U_3}: = \rho_2^{-1}(U_3)$} in Lemma~\ref{lemma:fibers-quadric2}.

(a) We assume $n = 6$.\ We have $\Sigma_2(X) = \emptyset$ and,  {by Lemma~\ref{lemma:zav4-empty}, $Z_{A,V_5}^4 = \emptyset$}.\ For any point $U_3 \in \Gr(3,V_5)$, the fiber  {$Q_{U_3}$} is a quadric in $\P^3$.\ It contains a plane only if its corank is at least 2; therefore, the map $\tau$ factors through $Z^{\ge 2}_{A,V_5} \subset \Gr(3,V_5)$.\ The quadric $Q_{U_3}$ is the union of two planes if $U_3 \in Z^2_{A,V_5}$ and a double plane if $U_3 \in Z^3_{A,V_5}$, so the map $\tau\colon F_2^\tau(X) \to Z^{\ge 2}_{A,V_5}$ is a double cover branched along ${Z}^3_{A,V_5}$.\

(b) We assume $n = 5$.\ If $U_3 \notin \Sigma_2(X)$, the fiber $Q_{U_3}$ is a quadric in $\P^2$.\ It contains a plane  {(and is then equal to it)} only if its corank is 3, hence the map $\tau$ factors through $Z^{\ge 3}_{A,V_5}$ and is  an isomorphism over $Z^{\ge 3}_{A,V_5} \setminus \Sigma_2(X)$.\ On the other hand, if $U_3 \in \Sigma_2(X)$, the fiber $Q_{U_3}$ is a quadric in $\P^3$ and it contains a plane only when the quadric has corank at least 2,  hence when $U_3 \in Z^{\ge 3}_{A,V_5}$.\ More precisely, if $U_3 \in Z^3_{A,V_5} \cap \Sigma_2(X)$, the quadric $Q_{U_3}$ is the union of two planes and the fiber of $\tau$
is two points; if $U_3 \in Z^4_{A,V_5}$  (by Lemma~\ref{lemma:zav4-empty} and~\eqref{eq:isotropic-locus}, it is then also automatically in $\Sigma_2(X)$), the quadric $Q_{U_3}$ is a double plane and the fiber of $\tau$ is a point.\ This proves the required statement.

 (c) We assume $n = 4$.\ If $U_3 \notin \Sigma_2(X)$, the fiber $Q_{U_3}$ is a quadric in $\P^1$ and so never contains a plane.\ If $U_3 \in \Sigma^1_2(X)$, the fiber $Q_{U_3}$ is a quadric in $\P^2$, so it contains a plane  {(and is then equal to it)} if and only if its corank is 3.\ This gives one point of $F_2^\tau(X)$ over each point of $Z^4_{A,V_5} \cap \Sigma^1_2(X)$.\  If $U_3 \in \Sigma^2_2(X)$, the fiber $Q_{U_3}$ is a quadric in $\P^3$, so it contains a plane if and only if its corank is at least 2.\ This gives two points of $F_2^\tau(X)$ over each point of $Z^4_{A,V_5} \cap \Sigma^2_2(X)$.\ We conclude using the fact that $Z^4_A$ is finite (Section~\ref{se22}).

Statement  (d) follows from Corollary \ref{deg}.
\end{proof}

\subsection{Lines on GM varieties}

We now consider the scheme $F_1(X)$ of lines contained in $X$. 

\begin{theo}\label{theorem:f1}
Let $X$ be a smooth GM variety of dimension $n$, with associated Lagrangian data $(V_6,V_5,A)$.

\noindent{\rm (a)} If $n = 6$, the map $\sigma\colon F_1(X) \to \P(V_5)$ is dominant  with general fiber isomorphic to $\P^3$.

\noindent{\rm (b)} If $n = 5$, the map $\sigma\colon F_1(X) \to \P(V_5)$ factors as
\begin{equation*}
 F_1(X) \xrightarrow{\ \tilde\sigma\ } {\widetilde{\P(V_5)}} \xrightarrow{\ \ } \P(V_5), 
\end{equation*}
where ${\widetilde{\P(V_5)}} \to \P(V_5)$ is the double cover branched along the sextic hypersurface $Y_{A,V_5} \subset \P(V_5)$ and $\tilde\sigma $ is a $\P^1$-bundle over the complement of  {the preimage of} $Y^{\ge 2}_{A,V_5} \cup \Sigma_1(X)$.

\noindent{\rm (c)} If $n = 4$, the map $\sigma$ factors as
\begin{equation*}
F_1(X) \xrightarrow{\ \tilde\sigma\ } \tY_{A,V_5} \xrightarrow{\ f_{A,V_5}\ } Y_{A,V_5} \lhra \P(V_5), 
\end{equation*}
where $\tilde\sigma$ is birational.\ The non-trivial fibers of $\tilde\sigma$ are 
$\P^2$ over each point of $Y^3_{A,V_5} \setminus \Sigma_1(X)$,
$\P^2 \cup \P^2$  over each point of $Y^3_{A,V_5} \cap \Sigma_1(X)$, and
$\P^1$ over each point of $f_{A,V_5}^{-1}(\Sigma_1(X) \setminus Y^3_{A,V_5})$.

\noindent{\rm (d)} If $n = 3$, the map $\sigma\colon F_1(X) \to \P(V_5)$ factors as 
\begin{equation*}
F_1(X) \xrightarrow{\ \tilde\sigma\ } Y^{\ge 2}_{A,V_5} \lhra \P(V_5), 
\end{equation*}
where $\tilde\sigma $ is an isomorphism over $Y^{\ge 2}_{A,V_5} \setminus \Sigma_1(X)$ and a double cover over 
$Y^{\ge 2}_{A,V_5} \cap \Sigma_1(X)$, branched along $Y^3_{A,V_5} \cap \Sigma_1(X)$.
\end{theo}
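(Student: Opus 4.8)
The plan is to run, for $F_1(X)$, exactly the machine already used for $F_2^\sigma(X)$: combine Proposition~\ref{prop:fano-relaitve-hilbert}, which identifies $F_1(X) \cong \Hilb^{\P^1}(\P_X(\cU_X)/\P(V_5))$ with the relative Hilbert scheme of lines in the fibers of the first quadric fibration $\rho_1 \colon \P_X(\cU_X) \to \P(V_5)$, with the fiberwise description of Lemma~\ref{lemma:fibers-quadric1}. The one new input I need is the classification, for a quadric $Q \subset \P^m$ of corank $c$, of its Hilbert scheme of lines: a smooth quadric threefold ($m=4$, $c=0$) carries a $\P^3$ of lines; a smooth quadric surface ($m=3$, $c=0$) gives $\P^1 \sqcup \P^1$ (the two rulings), a quadric cone ($m=3$, $c=1$) a single $\P^1$, and a union of two planes ($m=3$, $c=2$) a $\P^2 \cup \P^2$; a smooth conic ($m=2$, $c=0$) contains no line, a rank-two conic ($c=1$) two lines, a double line ($c=2$) one line, and the degenerate case $Q=\P^2$ ($c=3$) the dual plane $\cong \P^2$; finally a quadric in $\P^1$ contains a line only when it degenerates to all of $\P^1$. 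With this table in hand each part of the theorem reduces to reading off the fibers of $\rho_1$ from Lemma~\ref{lemma:fibers-quadric1} and, in the positive-dimensional cases, gluing the fiberwise schemes into a global family.

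Parts (a) and (d) are the easy ones. For (a), with $n=6$ we have $\Sigma_1(X)=\emptyset$, so every fiber $Q_v$ is a quadric in $\P^4$; over the general $v \notin Y_{A,V_5}$ it is smooth and carries a $\P^3$ of lines, giving a dominant $\sigma$ with general fiber $\P^3$. For (d), with $n=3$ the fibers are quadrics in $\P^1$ off $\Sigma_1(X)$ and conics in $\P^2$ over $\Sigma_1(X)$; a line occurs precisely when the $\P^1$-quadric becomes all of $\P^1$ (corank $2$, i.e. $v \in Y^2_{A,V_5}$) or the conic splits off lines (corank $\ge 1$, i.e. $v \in Y^{\ge 2}_{A,V_5}\cap\Sigma_1(X)$). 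Counting components, and using $Y^3_{A,V_5}\subset\Sigma_1(X)$ from the remark after Lemma~\ref{lemma:fibers-quadric1}, yields the stated isomorphism over $Y^{\ge 2}_{A,V_5}\setminus\Sigma_1(X)$ and the étale double cover over $Y^{\ge 2}_{A,V_5}\cap\Sigma_1(X)$ branched along $Y^3_{A,V_5}\cap\Sigma_1(X)$.

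The substantial work is in (b) and (c). In case (b) the fibers over $\P(V_5)\setminus(\Sigma_1(X)\cup Y^{\ge 2}_{A,V_5})$ are smooth quadric surfaces, each with its two rulings; the relative Hilbert scheme of lines therefore admits a Stein factorization whose intermediate double cover, by Proposition~\ref{proposition:f3q}, is the degree-two cover $\widetilde{\P(V_5)}\to\P(V_5)$ branched along $Y_{A,V_5}$ (where the two rulings coalesce into the single ruling of a quadric cone), with $\tilde\sigma$ a $\P^1$-bundle over the locus of smooth surface fibers. In case (c) the generic fiber is a smooth conic containing no line, so $F_1(X)$ is supported over $Y_{A,V_5}$; over $Y^1_{A,V_5}\setminus\Sigma_1(X)$ the conic becomes two lines, matching the two sheets of $\tY_{A,V_5}$, so $\tilde\sigma$ is an isomorphism there, while over $Y^2_{A,V_5}\setminus\Sigma_1(X)$ the double line matches the single ramification point of $\tY_{A,V_5}$. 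The non-trivial fibers then arise exactly where $Q_v$ jumps in dimension: $Q_v=\P^2$ with a $\P^2$ of lines over $Y^3_{A,V_5}\setminus\Sigma_1(X)$; a union of two planes, hence $\P^2\cup\P^2$, over $Y^3_{A,V_5}\cap\Sigma_1(X)$ (where by Lemma~\ref{lemma:fibers-quadric1}(b) the corank is $2$); and a ruling $\P^1$ over each point of $f_{A,V_5}^{-1}(\Sigma_1(X)\setminus Y^3_{A,V_5})$, where $Q_v$ is a quadric surface. Identifying the target of $\tilde\sigma$ with $\tY_{A,V_5}$ uses Proposition~\ref{proposition:f3q} together with the cokernel-sheaf / even-Clifford-algebra computation already carried out in the proof of Theorem~\ref{theorem:f2}(a), and the singularity description of Lemma~\ref{lemma:singularities}(b) to control the behaviour along $Y^{\ge 2}_{A,V_5}$ and $\Sigma_1(X)$.

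I expect the main obstacle to be precisely this global identification, together with the bookkeeping over the special loci in (c). Proving that the birational morphism $\tilde\sigma$ lands on $\tY_{A,V_5}$ itself, and not merely on some abstract double cover of $Y_{A,V_5}$, requires matching the ``which-component-of-the-degenerate-quadric'' double cover with O'Grady's double EPW sextic through Proposition~\ref{proposition:f3q}; and verifying that the fibers over $Y^3_{A,V_5}$ and over the kernel locus $\Sigma_1(X)$ take exactly the stated shapes---$\P^2$, $\P^2\cup\P^2$, or $\P^1$---hinges on how the two rulings and the two components degenerate as $v$ specializes, and on whether the kernel point lies in $Y^3_{A,V_5}$. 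The remaining checks (dimension counts, reducedness and étaleness of the covers away from their branch loci, and the birationality of $\tilde\sigma$ in (c)) are routine once this identification is secured.
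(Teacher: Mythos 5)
Your proposal is correct and follows essentially the same route as the paper: Proposition~\ref{prop:fano-relaitve-hilbert} plus Lemma~\ref{lemma:fibers-quadric1}, the fiberwise classification of lines on quadrics of each corank, and, in parts (b) and (c), a Stein factorization identified with the EPW double cover via the appendix pushforward computation; your fiber bookkeeping over $Y^3_{A,V_5}$, $\Sigma_1(X)$, and $f_{A,V_5}^{-1}(\Sigma_1(X)\setminus Y^3_{A,V_5})$ matches the paper's exactly. One correction: the appendix result you need for \emph{lines} is Proposition~\ref{proposition:f2q} (with $m=4$ in (b), and with $m=3$ over $S=\P(V_5)\setminus\Sigma_1(X)$ in (c)), not Proposition~\ref{proposition:f3q}, which computes pushforwards for the relative Hilbert scheme of \emph{planes} $F_2(\cQ)$ and does not even cover the case $m=3$; with that substitution, your identification of the double cover with $\tY_{A,V_5}$ through the cokernel sheaf $\cC$ and $\Spec(\cO_{Y_{A,V_5}\setminus\Sigma_1(X)}\oplus\cC(-3))$ is precisely the paper's argument. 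Note also that in (b) the paper dispenses with the appendix altogether, since the double cover of $\P(V_5)$ branched along the fixed sextic $Y_{A,V_5}$ is unique, so the Stein factorization alone suffices there.
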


In Proposition~\ref{proposition:f1-smooth}, we will make part (c) more precise by showing that for an ordinary GM fourfold~$X$, the scheme $F_1(X)$ is a small resolution of $\tY_{A,V_5}$, under some explicit generality assumptions.

\begin{proof}
In all cases, by Proposition~\ref{prop:fano-relaitve-hilbert}, the scheme $F_1(X)$ is isomorphic to the relative Hilbert scheme of lines
 in the fibers of the first quadric fibration {$\rho_1 \colon \P_X(\cU_X) \to \P(V_5)$}.\ We now check that the rest follows from the description of {its fibers $Q_v := \rho_1^{-1}(v)$} in Lemma~\ref{lemma:fibers-quadric1}.
 
  (a) We assume $n = 6$.\ We have $\Sigma_1(X) = \emptyset$ and all fibers of $\rho_1$ are quadrics in $\P^4$.\ Any such quadric contain a line, hence the map $\sigma\colon F_1(X) \to \P(V_5)$ is dominant.\ Moreover, if $v \in \P(V_5) \setminus Y_{A,V_5}$, the quadric $\rho_1^{-1}(v)$ is  smooth, hence lines on it are parameterized by $\P^3$. 

  (b) We assume $n = 5$.\ If $v \notin \Sigma_1(X)$, the fiber $Q_v $ is a quadric in $\P^3$.\ If $v \notin Y_{A,V_5}$, the quadric $Q_v $ is non-degenerate, hence the family of lines on $Q_v $ is parameterized by the union of two $\P^1$.\ If $v \in Y^1_{A,V_5}$ the quadric $Q_v $ has corank 1  and lines on $Q_v $ are parametrized by a single~$\P^1$.\ Therefore, the Stein factorization of the map $\sigma\colon F_1(X) \to \P(V_5)$ is a composition of a generically $\P^1$-bundle with a double cover of $\P(V_5)$ branched along $Y_{A,V_5}$, as  claimed.\ {The Brauer class of this $\P^1$-bundle is again given by the sheaf of even parts of Clifford algebras.}

 (c) We assume $n = 4$.\ If $v \notin \Sigma_1(X)$, the fiber~$Q_v $ is a conic in $\P^2$.\  If $v \notin Y_{A,V_5}$, the conic~$Q_v $ is non-degenerate, hence contains no lines.\ Therefore, the map $\sigma$ factors through~$Y_{A,V_5}$.\  If $v \in Y^1_{A,V_5}$, the conic $Q_v$ is the union of two lines, hence the map $\sigma$ is \'etale of degree 2 over~$Y^1_{A,V_5} \setminus \Sigma_1(X)$.\ If $v \in Y^2_{A,V_5}$, the conic $Q_v $ is  a double line, hence the above double cover is branched along $Y^2_{A,V_5} \setminus \Sigma_1(X)$.\ Finally, if $v \in Y^3_{A,V_5}$, the fiber $Q_v $ is the whole plane, hence lines on $Q_v $ are parameterized by the dual plane.

To show that this double cover is isomorphic to $\tY_{A,V_5}$, we compute, using Proposition~\ref{proposition:f2q}, the pushforward of $\cO_{F_1(X)}$ to $\P(V_5)$.\ We have $S = \P(V_5) \setminus \Sigma_1(X)$, {$m = 3$}, $\cL = \cO_S$, and the bundle $\cE$ fits into exact sequence
\begin{equation*}
0 \to \cE \to \cO_S \oplus T_S(-2) \to  {\cO_S^{\oplus 2}} \to 0.
\end{equation*}
(where $T_S $ is the tangent bundle).\ By Lemma~\ref{lemma:fibers-quadric1}, the degeneracy loci of the first quadratic fibration are~$D_c = Y_{A,V_5}^{\ge c} \setminus \Sigma_1(X)$.\  Moreover, by~\cite[Proposition~4.5 and Lemma~C.6]{DK}, the cokernel sheaf~$\cC$ for the family of quadrics is isomorphic to $ \Coker(\cO(-1) \otimes \bw2(V_6/\cO(-1)) \to A^\vee \otimes \cO_S )$,  hence again the double cover can be written as $\Spec(\cO_{Y_{A,V_5} \setminus \Sigma_1(X)} \oplus \cC(-3))$.\ This is precisely the base change to $\P(V_5) \setminus \Sigma_1(X)$ of the definition of the double EPW sextic (see~\cite[Section~4]{og1}).

If $v \in \Sigma_1(X)$, the fiber $Q_v $ is a quadric in $\P^3$.\ If $v \in Y^1_{A,X}$, the quadric $Q_v $ is non-degenerate and lines on $Q_v $ are parameterized by $\P^1 \sqcup \P^1$; over each of the two points of $f_{A,V_5}^{-1}(v)$, the fiber of $\sigma$ is $\P^1$.\ 
If $v \in Y^2_{A,X}$, the quadric has corank 1 and lines on $Q_v $ are parameterized by~$\P^1$.\  Finally, if $v \in Y^3_{A,X}$, the quadric $Q_v $ has corank 2,  $Q_v  = \P^2 \cup \P^2$, and lines on $Q_v $ are parameterized by $\P^2 \cup \P^2$.

 (d) We assume $n = 3$.\ If $v \notin \Sigma_1(X)$, the fiber $Q_v $ is a quadric in $\P^1$.\
It contains no lines unless its corank is 2 (in which case it is itself a $\P^1$).\ Thus, the map $\sigma$ factors through $Y^{\ge 2}_{A,V_5}$ and is an isomorphism over $Y^2_{A,V_5} \setminus \Sigma_1(X)$.\ If $v \in \Sigma_1(X)$, the fiber $Q_v $ is a conic in $\P^2$.\ If $v \in Y^2_{A,V_5}$, it is a union of two lines, hence the map $\tilde\sigma\colon F_1(X) \to Y^{\ge 2}_{A,V_5}$ is of degree 2 over $Y^2_{A,V_5} \cap \Sigma_1(X)$.\ Finally, if $v \in Y^3_{A,V_5}$
{(it is then automatically in $\Sigma_1(X)$; see the discussion after Lemma~\ref{lemma:fibers-quadric1})},
the fiber $Q_v $ is a double line, hence the map $\tilde\sigma$ is branched over this locus.
\end{proof}

Regarding  {items (a) and (b)} in the theorem, it is possible to describe the fibers of  $\sigma$ over~$Y_{A,V_5}$  {(resp.\ over the preimage of $Y^{\ge 2}_{A,V_5} \cup \Sigma_1(X)$)}.\  We leave this as an exercise for the interested reader.\  It is also possible to  describe the scheme $F_1(X)$  by using the map~$\tau$.\

Finally, one can use a similar approach to describe the Hilbert schemes of quadrics in GM varieties, and even the Hilbert schemes parameterizing cubic subvarieties (twisted cubic curves, cubic scrolls, and so on), {but the description becomes more and more involved}.
 
\section{Periods of GM varieties}\label{sec3}
\label{section:periods}

In this section, we  relate the periods of GM varieties of dimension 4 or 6 to those of their associated EPW sextic.\ The following theorem is the main result of this article.

\begin{theo}\label{th32}
Let $X$ be a smooth GM variety of dimension $n =4$ or $6$, with associated Lagrangian data $(V_6,V_5,A)$.\ Assume that {the  double EPW sextic} $\tY_A$ is smooth {\rm(}\ie, $Y_A^{\ge3}=\vide${\rm)}.\ There is an isomorphism of polarized Hodge structures 
\begin{equation*}
H^n(X;\Z)_{00} \cong H^2(\tY_A;\Z)_0((-1)^{n/2-1}), 
\end{equation*}
where $(-1)$  is the Tate twist.
\end{theo}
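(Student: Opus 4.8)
The plan is to transport the Hodge structure from $X$ to $\tY_A$ via the correspondence coming from the universal family of linear spaces constructed in Section~\ref{section:linear-spaces}. The two dimensions are treated in parallel but rely on different families: for $n=6$ we use the universal $\sigma$-plane $\cL_2^\sigma(X)$ and Theorem~\ref{theorem:f2}(a), and for $n=4$ we use the universal line $\cL_1(X)$ and Theorem~\ref{theorem:f1}(c). In both cases one gets a diagram
\begin{equation*}
\vcenter{\xymatrix@R=5mm{
& \cL \ar[dl]_-q \ar[dr]^-p \\
X && \tY_{A,V_5}
}}
\end{equation*}
where $\cL$ denotes $\cL_2^\sigma(X)$ (resp.\ $\cL_1(X)$), the map $p$ factors through the $\P^1$-bundle (resp.\ small resolution) onto $\tY_{A,V_5}$, and $q$ is the evaluation map to $X$. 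The associated correspondence $\Phi := p_* q^*$ sends $H^n(X;\Z)$ to $H^2(\tY_{A,V_5};\Z)$ after the appropriate shift in cohomological degree (the fiber dimensions of $p$ and $q$ dictate the Tate twist $(-1)^{n/2-1}$). I would also need the transpose correspondence $\Psi := q_* p^*$ going the other way.

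**First** I would restrict to the vanishing/primitive parts. The key point is that the correspondence $\Phi$ annihilates the pullback $\gamma_X^*H^\bullet(\Gr(2,V_5);\Z)$ (so it lands in $H^2(\tY_{A,V_5};\Z)_0$ and descends from the ambient classes), and symmetrically $\Psi$ kills the part of $H^2(\tY_{A,V_5};\Z)$ coming from $\tilde h$. This uses the explicit geometry of the fibers: over a generic point the fiber of $p$ is a single $\P^1$ (resp.\ a point), so $\Phi$ and $\Psi$ are, up to the Tate twist, honest morphisms of Hodge structures between $H^n(X;\Z)_{00}$ and $H^2(\tY_{A,V_5};\Z)_0$. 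I would then descend from $\tY_{A,V_5}$ to $\tY_A$: since $f_A\colon\tY_A\to Y_A$ is a double cover of a sextic and $\tY_{A,V_5}=\tY_A\times_{\P(V_6)}\P(V_5)$ is a hyperplane-type section, a Lefschetz-style argument (Lemmas~\ref{lef} and~\ref{lef2}, together with the identification of primitive cohomology) should identify $H^2(\tY_A;\Z)_0$ with the relevant piece of $H^2(\tY_{A,V_5};\Z)_0$, so that the correspondence genuinely lands in $H^2(\tY_A;\Z)_0$.

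**The heart** of the argument is showing that $\Phi$ is an \emph{isomorphism} of lattices, not merely a morphism. Here I would invoke Proposition~\ref{ltf}: both $H^n(X;\Z)_{00}$ and $H^2(\tY_A;\Z)_0$ are abstractly isomorphic to $\Lambda((-1)^{n/2})$, so it suffices to show $\Phi$ is injective with the correct effect on the intersection forms. I would compute the composition $\Psi\circ\Phi\colon H^n(X;\Z)_{00}\to H^n(X;\Z)_{00}$; by the projection formula it equals multiplication by an explicit class on $X$ (the self-intersection of the correspondence, expressible via the classes of the fibers and the known Chern data of $\cU_X$), and I expect it to be $\pm\,\id$ or $\pm 2\,\id$ up to a correction supported on the $\gamma_X^*$-part, which vanishes on the vanishing lattice. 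Compatibility of the forms then follows from $q_B(\Phi x)=\tfrac12(\Phi x)^2\cdot\tilde h^2$ (formula~\eqref{eq:bb-form}) matched against $x^2$ on $X$ via the projection formula.

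**The main obstacle** I anticipate is this last lattice-theoretic step: controlling the excess-intersection contributions to $\Psi\circ\Phi$ that come from the loci where $p$ is not a genuine $\P^1$-bundle (the fibers over $Y^{\ge2}_{A,V_5}$ and over $\Sigma_1(X)$, explicitly catalogued in Theorems~\ref{theorem:f2} and~\ref{theorem:f1}), and making sure these corrections do not disturb the isomorphism on the primitive sublattice. In the $n=4$ case the small resolution $\tilde\sigma$ and the class computation of Lemma~\ref{lemma:class-q0} (together with Lemma~\ref{lemma:ux-q0-splits}) should let me pin down the relevant excess classes on the $\sigma$-quadric $Q_0$. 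Once the composition is computed to be a scalar on the primitive lattice and the forms are shown to match, Hodge-theoretic genericity (the isomorphism holds over a dense family and both sides form variations of Hodge structure) upgrades the statement to all $X$ with $Y_A^{\ge3}=\vide$, giving the isomorphism of polarized integral Hodge structures.
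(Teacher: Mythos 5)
Your overall architecture coincides with the paper's: the correspondence $\alpha = p_*q^*$ from $\cL_2^\sigma(X)$ (for $n=6$) and $\cL_1(X)$ (for $n=4$), the Lefschetz-type identification of $H^2(\tY_A;\Z)$ with the sublattice of $H^2(F;\Z)$ orthogonal to the contracted classes (Propositions~\ref{proposition:hodge-f1-tyav} and~\ref{proposition:hodge-f2-tyav}), the endgame ``injective isometry between lattices of rank 22 with discriminant group $(\Z/2\Z)^2$ is bijective'' via Proposition~\ref{ltf}, and a density argument to remove the generality assumption~\eqref{assumption-y}. But two of your steps are wrong as written. First, $\alpha$ does \emph{not} annihilate $\gamma_X^*H^\bullet(\Gr(2,V_5);\Z)$: the paper computes $\alpha(c_2(\cU_X)) = \tsi^*\iota^*\tilde{h}$ and $\alpha([Q_0]) = [R] \ne 0$ (Lemmas~\ref{lemma:alpha-c2-u} and~\ref{lemma:alpha-q}). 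What the proof actually needs is that for $x$ in the \emph{vanishing} lattice, $\alpha(x)$ is orthogonal to $[\P'],[\P'']$ (resp.\ to the fiber class $[\P]$) and to $\tilde{h}^3$, proved by pairing against $q_*p^*[\P'] = [Q_0]$ and $q_*p^*\tsi^*\iota^*\tilde{h}^2 = 6\gamma_X^*\bsi_2$. Second, your composition $\Psi\circ\Phi$ has a degree mismatch: for $n=4$, $\dim\cL_1(X) = \dim X$, so $q_*p^*$ sends $H^2(F_1(X);\Z)$ to $H^2(X;\Z)$, not $H^4$; there is no natural scalar endomorphism to compute. The working substitute is the Beauville--Donagi squaring trick the paper uses: for primitive $x$ the projective-bundle relation forces $q^*x = p^*\alpha(x)\cdot(q^*h + p^*c_1)$, and squaring together with $\deg q = 6$ (resp.\ $12$) yields $\alpha(x)^2\cdot\alpha(h^2) = -6x^2$ (resp.\ $\alpha(x)^2\cdot c_2 = 12x^2$), giving injectivity directly.

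The genuine gap is the normalization needed to convert these quadratic identities into $q_B(\beta(x)) = \mp x^2$: one must prove $\iota_*\tsi_*\alpha(h^2) = 3\tilde{h}^2$ for fourfolds (Corollary~\ref{corollary:alpha-h2}) and $\iota_*\tsi_*c_2(\cP) = 6\tilde{h}^2$ for sixfolds (Proposition~\ref{prop321}). You correctly flag the fourfold excess classes and point at Lemmas~\ref{lemma:class-q0} and~\ref{lemma:ux-q0-splits}, but even there the paper needs substantially more: the surface $R$ birational onto $S = Y_{A,V_5}\cap \cone{v_0}Q_0$, the fact that $Y^{\ge 2}_{A,V_5}$ lies on no quadric (Corollary~\ref{corollary:y2av}, resting on the Appendix~\ref{section:epw-surface} resolution of $\cO_{Y_A^{\ge 2}}$), and a specialization to very general $A$ where $H^{2,2}(\tY_A)\cap H^4(\tY_A;\Q)$ has rank 2 so that $[R]$ and $[\tau_A(R)]$ can be forced equal. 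For sixfolds you offer no mechanism at all, and the paper's proof of Proposition~\ref{prop321} is an entire section (``nearby Lagrangians'': pencils of Lagrangians through a codimension-2 $B$, degenerating $F_2^\sigma$ via a pencil of hyperplane sections $X_\omega$). Finally, in the density upgrade, ``both sides form variations of Hodge structure'' is not by itself enough to propagate a pointwise isomorphism: one needs the period-point formulation in the separated quotient $\cD = \widetilde{O}(\Lambda)\backslash\Omega$, and one must confront the $\widetilde{O}(\Lambda)$-versus-$O(\Lambda)$ ambiguity (the involution $r_\cD$), which the paper resolves by the two-closed-subsets-covering-an-irreducible-base argument and a choice of marking convention (Remark~\ref{newrem}).
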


In Sections~\ref{sec42} and~\ref{sec43}, we prove Theorem~\ref{th32} for $X$ general of dimension~4 (Theorem~\ref{theorem:alpha-x4}) or~6 (Theorem~\ref{theorem:alpha-x6}).\ In Section~\ref{sec41}, we define period points and maps and use them to deduce Theorem~\ref{th32} in full generality.

\subsection{Periods of GM fourfolds}\label{sec42}

Our aim in this section is to prove Theorem \ref{th32} for a smooth GM fourfold $X$, with associated Lagrangian data $(V_6,V_5,A)$.\ We will construct an explicit isomorphism when $X$ is general.\ More precisely, we assume  
\begin{equation}\label{assumption-y}
\bp_X \notin (Y^{\ge 2}_A)^\vee \quad\text{and}\quad Y^{\ge3}_{A} = \emptyset.
\end{equation} 
Note that $Y_{A^\perp}^{\ge 2} \subset (Y_A^{\ge 2})^\vee$ by Lemma~\ref{lemma:dual-y2}, hence for a GM fourfold $X$ satisfying~\eqref{assumption-y}, we have $\bp_X \in Y_{A^\perp}^1$, i.e., $X$ is ordinary.\ We will use this observation further on.

Note also that  $(Y^{\ge 2}_A)^\vee$ does not contain $Y_{A^\perp}$, since $Y_{A^\perp}^\vee = Y_A$ is not equal to $Y^{\ge 2}_A$; therefore, the choice of~$\bp_X$ satisfying~\eqref{assumption-y} is possible.\  By Lemma~\ref{lemma:singularities}(a), assumption~\eqref{assumption-y} implies
 \begin{equation}\label{equation:y-a-v}
Y^{\ge 2}_{A,V_5}\ \text{ is a smooth curve and }\ Y^{\ge3}_{A,V_5} = \emptyset.
\end{equation} 
Since $X$ is ordinary, $\Sigma_1(X)$ is a point, which we denote by $v_0$.\ Moreover, $\bp_X \notin \pr_{Y,2}(E)$ by Lemma~\ref{lemma:dual-y2}, hence we have, by~\eqref{eq:kernel-locus},
\begin{equation}\label{assumption-kernel-vector}
v_0 \in Y^1_{A,V_5}. 
\end{equation}
We have $\Sing(Y_{A,V_5}) = \{v_0\} \cup Y_{A,V_5}^{\ge 2}$ by Lemma~\ref{lemma:singularities}{(b)}.\  Furthermore,~\eqref{assumption-kernel-vector} and Lemma~\ref{lemma:fibers-quadric1}(b)  imply that
\begin{equation}\label{defq}
Q_0 := \pi(\rho_1^{-1}(v_0))
\end{equation}  
is a smooth  quadric surface contained in $X$, with span $\Pi := \P(v_0 \wedge V_5)\subset M_X$.

The Hilbert scheme $F_1(X)$ of lines on $X$ was described in Theorem~\ref{theorem:f1}{(c)}.\  Under our generality assumption, this description takes the following simple form.

\begin{coro}\label{corollary-f1-open}
Under assumption \eqref{assumption-y}, the map $ \tsi\colon F_1(X) \to \tY_{A,V_5}$ is an isomorphism over the complement of  {the two points of} $f_{A,V_5}^{-1}(v_0)$ and the fiber of $\tsi$ over each of these two points  is~$\P^1$.
\end{coro}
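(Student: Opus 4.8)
The plan is to combine the general description of $F_1(X)$ from Theorem~\ref{theorem:f1}(c) with the specific geometry forced by the generality assumption~\eqref{assumption-y}. Recall that Theorem~\ref{theorem:f1}(c) already produces the factorization $\sigma = f_{A,V_5} \circ \tsi$ with $\tsi \colon F_1(X) \to \tY_{A,V_5}$ birational, and it lists the non-trivial fibers of $\tsi$ as: a $\P^2$ over each point of $Y^3_{A,V_5} \setminus \Sigma_1(X)$, a $\P^2 \cup \P^2$ over each point of $Y^3_{A,V_5} \cap \Sigma_1(X)$, and a $\P^1$ over each point of $f_{A,V_5}^{-1}(\Sigma_1(X) \setminus Y^3_{A,V_5})$. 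So the corollary amounts to simplifying this list under the hypotheses~\eqref{assumption-y}, and then checking that away from the special fibers the map is genuinely an isomorphism (not merely birational).

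First I would invoke~\eqref{equation:y-a-v}, which is deduced from~\eqref{assumption-y} via Lemma~\ref{lemma:singularities}(a): it gives $Y^{\ge 3}_{A,V_5} = \emptyset$. This immediately kills the first two classes of non-trivial fibers, since both are supported over $Y^3_{A,V_5}$. Next I would use the fact, already established in the run-up to the corollary, that $X$ is ordinary (because $\bp_X \in Y^1_{A^\perp}$ by Lemma~\ref{lemma:dual-y2} applied to~\eqref{assumption-y}), so $\Sigma_1(X)$ consists of the single point $v_0$, and moreover~\eqref{assumption-kernel-vector} places $v_0 \in Y^1_{A,V_5}$. Since $v_0 \notin Y^{\ge 2}_{A,V_5}$, the point $v_0$ lies in the étale locus of $f_{A,V_5}$, so $f_{A,V_5}^{-1}(v_0)$ consists of exactly two points, and $\Sigma_1(X) \setminus Y^3_{A,V_5} = \{v_0\}$. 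Therefore the only surviving non-trivial fibers are the $\P^1$'s over the two points of $f_{A,V_5}^{-1}(v_0)$, exactly as claimed.

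It then remains to argue that over the complement of these two points $\tsi$ is an isomorphism and not just a birational map with some non-reduced or multi-point fibers hidden in the analysis. Since the fiber list in Theorem~\ref{theorem:f1}(c) is exhaustive, every fiber of $\tsi$ outside $f_{A,V_5}^{-1}(v_0)$ is a single reduced point; combined with birationality of $\tsi$ and normality of the target $\tY_{A,V_5}$ away from its singular locus, this forces $\tsi$ to be an isomorphism there. I would phrase this cleanly using Lemma~\ref{lemma:singularities}(b), which identifies $\Sing(\tY_{A,V_5}) = \Sing(Y^{\ge 2}_{A,V_5}) \cup f_{A,V_5}^{-1}(\Sigma_1(X))$; under~\eqref{equation:y-a-v} the curve $Y^{\ge 2}_{A,V_5}$ is smooth, so $\Sing(\tY_{A,V_5}) = f_{A,V_5}^{-1}(v_0)$, which is precisely the locus we are excising. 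A proper birational morphism onto a normal variety that is bijective with reduced fibers is an isomorphism (by Zariski's main theorem), so this finishes the argument.

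The main obstacle I anticipate is the last step, namely upgrading ``birational with reduced one-point fibers'' to ``isomorphism'' over the good locus: one must be careful that $F_1(X)$ has no embedded or non-reduced structure over the étale part of $\tY_{A,V_5}$, and that the target really is normal there. The cleanest route is to note that over $Y^1_{A,V_5} \setminus \{v_0\}$ the fiber $Q_v$ is a smooth conic (a pair of distinct lines), so the relative Hilbert scheme of lines is étale of degree $2$ and hence reduced, matching the double cover $\tY_{A,V_5}$; the scheme-theoretic identification with the double EPW sextic was already carried out in the proof of Theorem~\ref{theorem:f1}(c) via Proposition~\ref{proposition:f2q}, so I would simply cite that computation rather than redo it.
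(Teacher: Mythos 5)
Your proposal is correct and takes essentially the same route as the paper, which offers no separate proof and presents the corollary as the immediate specialization of Theorem~\ref{theorem:f1}(c) under~\eqref{assumption-y}: via~\eqref{equation:y-a-v} one gets $Y^{\ge 3}_{A,V_5}=\emptyset$, via ordinariness and~\eqref{assumption-kernel-vector} one gets $\Sigma_1(X)=\{v_0\}\subset Y^1_{A,V_5}$ (so $f_{A,V_5}^{-1}(v_0)$ is two points), and the scheme-theoretic isomorphism away from $\Sigma_1(X)$ is exactly the identification $\sigma_*\cO_{F_1(X)}\cong\cO_{Y_{A,V_5}\setminus\Sigma_1(X)}\oplus\cC(-3)$ already carried out in the proof of Theorem~\ref{theorem:f1}(c) via Proposition~\ref{proposition:f2q}, which you rightly cite (and which also covers the branch locus $f_{A,V_5}^{-1}(Y^{\ge 2}_{A,V_5})$, where reducedness is not obvious and your Zariski-main-theorem phrasing alone would not suffice). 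One terminological quibble: a pair of distinct lines is a corank-$1$, hence \emph{singular}, conic rather than a smooth one, though the fact you actually use---that its Hilbert scheme of lines consists of two reduced points, so the cover is \'etale there---is correct.
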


We  now prove that the threefold $F_1(X)$  is smooth.

\begin{prop}\label{proposition:f1-smooth}
Let $X$ be a smooth ordinary GM fourfold, with associated Lagrangian data $(V_6,V_5,A)$, such that~\eqref{assumption-y} holds.\ The map $ \tsi\colon F_1(X) \to \tY_{A,V_5}$ is then a small resolution of singularities of $\tY_{A,V_5}$.\ In particular, $F_1(X)$ is a smooth irreducible threefold.
\end{prop}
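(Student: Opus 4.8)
The plan is to combine the explicit description of $\tsi$ from Corollary~\ref{corollary-f1-open} with a normal-bundle computation establishing smoothness of $F_1(X)$ along the exceptional fibres. First I would record, using Lemma~\ref{lemma:singularities}(b) together with~\eqref{equation:y-a-v} and~\eqref{assumption-kernel-vector}, that $\Sing(\tY_{A,V_5}) = f_{A,V_5}^{-1}(v_0) = \{y_1,y_2\}$ consists of exactly two points (since $v_0\in Y^1_{A,V_5}$, the double cover $f_{A,V_5}$ is \'etale over $v_0$). By Corollary~\ref{corollary-f1-open}, $\tsi$ is an isomorphism over $\tY_{A,V_5}\setminus\{y_1,y_2\}$ and has fibre $\P^1$ over each $y_i$, so $\tsi$ is a proper birational morphism whose exceptional locus $\tsi^{-1}(y_1)\sqcup\tsi^{-1}(y_2)$ has codimension~$2$; hence it is small. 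Over the smooth locus the source is smooth, being isomorphic to it, so the whole task is to prove that $F_1(X)$ is smooth along the two exceptional lines. Since $F_1(X)$ is the Hilbert scheme of lines on the smooth fourfold $X$, this amounts to showing $H^1(L,N_{L/X})=0$ for every line $L$ lying in one of the two rulings of the smooth quadric surface $Q_0=\pi(\rho_1^{-1}(v_0))\subset X$ of~\eqref{defq} (these rulings are exactly the fibres $\tsi^{-1}(y_i)$).

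For such a line $L\subset Q_0$ I would compute $N_{L/X}$ in two steps. Since $Q_0=\Pi\cap X$ with $\Pi=\P(v_0\wedge V_5)\cong\P^3\subset M_X$ and $X=M_X\cap Q$, smoothness of $Q_0$ means precisely that $\Pi$ and $X$ meet transversally inside $M_X$ along $Q_0$; the transverse-intersection identity then gives $N_{Q_0/X}\cong N_{\Pi/M_X}|_{Q_0}$. As $X$ is ordinary, $M_X$ is a smooth hyperplane section of $G:=\Gr(2,V_5)$, and from the conormal sequence $0\to N_{\Pi/M_X}\to N_{\Pi/G}\to\cO_{\Pi}(1)\to 0$ I would identify $N_{\Pi/G}$ with the tautological rank-$3$ quotient bundle $Q_\Pi\cong T_{\P^3}(-1)$ on $\Pi$, by computing $T_G|_\Pi=\cU^\vee\otimes(V_5/\cU)|_\Pi\cong Q_\Pi\oplus T_\Pi$. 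Restricting to $L$ yields $N_{\Pi/G}|_L\cong\cO_L(1)\oplus\cO_L^{\oplus 2}$, so $N_{\Pi/M_X}|_L$ is the kernel of a surjection $\cO_L(1)\oplus\cO_L^{\oplus 2}\twoheadrightarrow\cO_L(1)$; whatever its splitting type ($\cO_L^{\oplus 2}$ or $\cO_L(1)\oplus\cO_L(-1)$) it has $H^1=0$. Hence $H^1(L,N_{Q_0/X}|_L)=0$, and feeding this into $0\to N_{L/Q_0}=\cO_L\to N_{L/X}\to N_{Q_0/X}|_L\to 0$ (using $H^1(\cO_L)=0$) gives $H^1(L,N_{L/X})=0$.

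This yields smoothness of $F_1(X)$ at $[L]$, and since $\deg N_{L/X}=-K_X\cdot L-2=0$ and $\rank N_{L/X}=3$, Riemann--Roch forces $\dim_{[L]}F_1(X)=h^0(N_{L/X})=3$. Combined with smoothness over the smooth locus of $\tY_{A,V_5}$, the scheme $F_1(X)$ is smooth everywhere of dimension~$3$. For irreducibility I would note that $\tsi$ is proper and surjective with connected fibres onto the irreducible threefold $\tY_{A,V_5}$ (the restriction of the connected double cover $f_A$ over the integral threefold $Y_{A,V_5}$, branched along the nonempty curve $Y^{\ge 2}_{A,V_5}$), whence $F_1(X)$ is connected, and being smooth it is therefore irreducible. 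Together with the smallness already established, this shows $\tsi$ is a small resolution of singularities.

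I expect the main obstacle to be the passage $N_{Q_0/X}\cong N_{\Pi/M_X}|_{Q_0}$ and the subsequent identification of $N_{\Pi/G}$: one must verify carefully that transversality of $\Pi$ and $X$ inside $M_X$ is genuinely equivalent to smoothness of $Q_0=\Pi\cap Q$, which I would check pointwise using $T_xX=T_xM_X\cap T_xQ$ and the observation that $T_x\Pi\subset T_xX$ holds if and only if $x$ is a singular point of $Q_0$; everything downstream is then a routine computation on $\P^1$ that is insensitive to the ambiguous splitting types, since only the vanishing of $H^1$ is needed.
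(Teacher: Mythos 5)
Your proposal is correct and follows essentially the same route as the paper: reduce smallness to Corollary~\ref{corollary-f1-open} and smoothness to the vanishing $H^1(L,\cN_{L/X})=0$ for lines in the rulings of $Q_0$, obtained from the chain $\cN_{L/Q_0}\cong\cO_L$, $\cN_{Q_0/X}\cong\cN_{\Pi/M_X}\vert_{Q_0}$, the sequence $0\to\cN_{\Pi/M_X}\to\cN_{\Pi/\Gr(2,V_5)}\to\cO_\Pi(1)\to 0$, and $\cN_{\Pi/\Gr(2,V_5)}\cong T_\Pi(-1)$, with both splitting types of $\cN_{Q_0/X}\vert_L$ having vanishing $H^1$. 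Your extra details (the tangent-bundle computation proving $\cN_{\Pi/\Gr(2,V_5)}\cong T_\Pi(-1)$, the Riemann--Roch dimension count, and the connectedness argument for irreducibility) merely make explicit what the paper leaves implicit.
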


\begin{proof}
By  Lemma~\ref{lemma:singularities}{(b)} and~\eqref{equation:y-a-v}, we have $\Sing(\tY_{A,V_5}) = f_{A,V_5}^{-1}(v_0)$.\ Since  $ \tsi$ is an isomorphism over the complement of $f_{A,V_5}^{-1}(v_0)$, it remains to show that 
$F_1(X)$ is smooth along $\tsi^{-1}(f_{A,V_5}^{-1}(v_0))$.\ In other words, we have to show that $F_1(X)$ is smooth
at points corresponding to lines $L \subset X$ such that $\sigma([L]) = v_0$.\ By deformation theory, it is enough to prove
$H^1(L,\cN_{L/X}) = 0$ for any of these lines. 

By definition of the map $\sigma$, a line $L$ with $\sigma([L]) = v_0$ lies on the 2-dimensional quadric $Q_0  = \rho_1^{-1}(v_0)$, which is smooth by Lemma~\ref{lemma:fibers-quadric1}.\ Therefore, there is an exact sequence 
\begin{equation*}
0 \to \cN_{L/Q_0} \to \cN_{L/X} \to \cN_{Q_0/X}\vert_L \to 0.
\end{equation*}
The first term is $\cO_L$ since $Q_0$ is a smooth quadric.\ It is enough to show that the last term is either $\cO_L \oplus \cO_L$ or $\cO_L(1) \oplus \cO_L(-1)$.\  Since $Q_0$ is the transversal intersection of $\Pi$ and a quadric cutting out $X$ in $M_X$, we have
\begin{equation*}
\cN_{Q_0/X} \cong \cN_{\Pi/M_X}\vert_{Q_0}.
\end{equation*}
On the other hand, since $M_X$ is a hyperplane section of $\Gr(2,V_5)$, we have an exact sequence
\begin{equation*}
0 \to \cN_{\Pi/M_X} \to \cN_{\Pi/\Gr(2,V_5)} \to \cO_\Pi(1) \to 0.
\end{equation*}
 {Finally, one easily proves}
the isomorphism $\cN_{\Pi/\Gr(2,V_5)} \cong T_\Pi(-1)$.\ Combining all these, we obtain an exact sequence
\begin{equation*}
0 \to \cN_{Q_0/X}\vert_L \to T_\Pi(-1)\vert_L \to \cO_L(1) \to 0.
\end{equation*}
Since $\Pi \isom \P^3$, the middle term   is $\cO_L(1) \oplus \cO_L \oplus \cO_L$.\ It follows that $\cN_{Q_0/X}\vert_L$
is either $\cO_L \oplus \cO_L$  or $\cO_L(1) \oplus \cO_L(-1)$.\ 
This completes the proof of the proposition.
 \end{proof}

Under our assumptions, the set $f_{A,V_5}^{-1}(v_0)$ consists of two points, which we denote by $\bp'$ and $\bp''$.\  We also denote by
\begin{equation*}
\P' := \tsi^{-1}(\bp') \subset F_1(X)
\qquad\text{and}\qquad
\P'':= \tsi^{-1}(\bp'') \subset F_1(X)
\end{equation*}
the non-trivial fibers of the map $\tsi\colon F_1(X) \to \tY_{A,V_5}$  {(each of them is isomorphic to $\P^1$).}

\begin{rema}\label{remark:involution-f1}
The involution $\tau_A$ of the double cover $f_A\colon \tY_A \to Y_A$ restricts to the involution of the double cover $f_{A,V_5}\colon \tY_{A,V_5} \to Y_{A,V_5}$.\ However, it does not extend to a regular involution of~$F_1(X)$: the small resolutions of the  two singular points $\bp'$ and $\bp''$ of $\tY_{A,V_5}$ are not compatible with this involution.
\end{rema}

 {Denote by $\iota \colon \tY_{A,V_5} \to \tY_A$ the canonical embedding.}

\begin{prop}\label{proposition:hodge-f1-tyav}
Let $X$ be a smooth ordinary GM fourfold, with associated Lagrangian data $(V_6,V_5,A)$, such that~\eqref{assumption-y} holds.\ The restriction $\iota^*\colon H^2(\tY_A;\Z) \to H^2(\tY_{A,V_5};\Z)$ is an isomorphism and the composition  
\begin{equation*}
H^2(\tY_A;\Z) \xrightarrow{\ \iota^*\ } H^2(\tY_{A,V_5};\Z) \xrightarrow{\ \tsi^*\ } H^2(F_1(X);\Z) 
\end{equation*}
induces an isomorphism of Hodge structures  between $H^2(\tY_A;\Z)$ and $\langle \P', \P'' \rangle^\perp \subset H^2(F_1(X);\Z)$.
\end{prop}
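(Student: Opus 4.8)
The plan is to prove the two assertions separately, and throughout to regard the composition $g:=\iota\circ\tsi\colon F_1(X)\to\tY_A$ as a morphism of \emph{smooth} projective varieties (recall $F_1(X)$ is smooth by Proposition~\ref{proposition:f1-smooth}). Then $g^*=\tsi^*\circ\iota^*$ is automatically a morphism of pure weight-$2$ integral Hodge structures, and it suffices to identify its image with $\langle\P',\P''\rangle^\perp$ as a subgroup.

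First, for the isomorphism $\iota^*$, I would note that $\tY_{A,V_5}=f_A^{-1}(Y_A\cap\P(V_5))$ is the zero locus in $\tY_A$ of the pullback of a linear form on $\P(V_6)$, i.e.\ an ample Cartier divisor (the pullback of $\cO_{\P(V_6)}(1)$ under the finite map $f_A$). Its complement $\tY_A\setminus\tY_{A,V_5}=f_A^{-1}\bigl(Y_A\cap(\P(V_6)\setminus\P(V_5))\bigr)$ is a smooth affine variety, being open in the smooth $\tY_A$ and equal to the preimage, under the finite morphism $f_A$, of the affine variety $Y_A\cap\mathbb A^5$. Since $\tY_A$ is a smooth fourfold with smooth affine open complement $\tY_A\setminus\tY_{A,V_5}$, the Lefschetz theorem (part (a) of Lemma~\ref{lef}, applied to the divisor $\tY_{A,V_5}$ of dimension $3$) shows that $\iota^*\colon H^k(\tY_A;\Z)\to H^k(\tY_{A,V_5};\Z)$ is bijective for $k\le2$ and injective for $k=3$; in particular it is an isomorphism in degree $2$.

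For the second assertion I would compare $F_1(X)$ and $\tY_{A,V_5}$ through their common open locus. Write $\Sigma=\{\bp',\bp''\}=f_{A,V_5}^{-1}(v_0)=\Sing(\tY_{A,V_5})$ (by Lemma~\ref{lemma:singularities}(b) and~\eqref{equation:y-a-v}) and $E=\P'\sqcup\P''=\tsi^{-1}(\Sigma)$, so that $\tsi$ restricts to an isomorphism $F_1(X)\setminus E\xrightarrow{\ \sim\ }\tY_{A,V_5}\setminus\Sigma$ (Corollary~\ref{corollary-f1-open} and Proposition~\ref{proposition:f1-smooth}). On the $F_1(X)$ side, $E$ is a disjoint union of two smooth rational curves of complex codimension $2$, so the Thom isomorphism gives $H^k\bigl(F_1(X),F_1(X)\setminus E\bigr)\cong H^{k-4}(E)=0$ for $k=2,3$, whence restriction $H^2(F_1(X);\Z)\xrightarrow{\ \sim\ }H^2(F_1(X)\setminus E;\Z)$ is an isomorphism. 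On the $\tY_{A,V_5}$ side, each point of $\Sigma$ is an isolated normal singularity whose link $L_p$ is an $S^3$-bundle over the exceptional curve $\P^1\cong S^2$ (it is also the boundary of a tubular neighbourhood of the corresponding component of $E$); the Gysin sequence gives $H^1(L_p)=0$ and $H^2(L_p)\cong H^2(\P^1)=\Z$. Hence $H^2(\tY_{A,V_5},\tY_{A,V_5}\setminus\Sigma)=\bigoplus_{p\in\Sigma}\widetilde H^1(L_p)=0$, so $\tsi^*$ is injective, and in the long exact sequence of the pair its image equals the kernel of the connecting map $\partial\colon H^2(\tY_{A,V_5}\setminus\Sigma;\Z)\to\bigoplus_{p\in\Sigma}\widetilde H^2(L_p)=\Z^2$.

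It remains to identify this kernel with $\langle\P',\P''\rangle^\perp$, which I expect to be the main technical point. One inclusion is soft: for $\beta\in H^2(\tY_{A,V_5})$ the projection formula gives $\langle\tsi^*\beta,[\P']\rangle=\langle\beta,\tsi_*[\P']\rangle=0$ (and likewise for $\P''$), since $\tsi$ contracts each $\P^1$ to a point; thus $\Im(\tsi^*)\subseteq\langle\P',\P''\rangle^\perp$. For the reverse, the component of $\partial$ at a point $p\in\Sigma$ is, up to sign, the restriction of a class to the link $L_p$; under the Gysin isomorphism $\pi^*\colon H^2(\P_p)\xrightarrow{\ \sim\ }H^2(L_p)$ (with $\pi\colon L_p\to\P_p$ the sphere-bundle projection) this restriction corresponds to $\alpha|_{\P_p}\in H^2(\P_p)=\Z$, that is, to the degree $\langle\alpha,[\P_p]\rangle$. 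Hence $\partial\alpha=0$ if and only if $\alpha\cdot[\P']=\alpha\cdot[\P'']=0$, giving $\Im(\tsi^*)=\langle\P',\P''\rangle^\perp$ exactly. The delicate part here is precisely the compatibility of this link/Gysin identification with the intersection pairing; everything else reduces to the Lefschetz theorem and the long exact sequences of the pairs. Finally, since $[\P'],[\P'']$ are algebraic classes, $\langle\P',\P''\rangle^\perp$ is a sub-Hodge structure of the pure weight-$2$ structure $H^2(F_1(X);\Z)$; as $g^*=\tsi^*\iota^*$ is an injective morphism of integral Hodge structures with image equal to it (using that $\iota^*$ is an isomorphism), $g^*$ is an isomorphism of Hodge structures onto $\langle\P',\P''\rangle^\perp$.
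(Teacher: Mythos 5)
Your proof is correct, and its overall strategy---Lefschetz for $\iota^*$, then a comparison of $F_1(X)$ and $\tY_{A,V_5}$ localized at the contracted locus---matches the paper's; but the middle step is executed by a genuinely different mechanism. The paper writes down the commutative ladder of long exact sequences in compactly supported cohomology attached to the decompositions $\tY_{A,V_5}=U\sqcup\{\bp',\bp''\}$ and $F_1(X)=U\sqcup(\P'\cup\P'')$, where $U$ is the common open set; since $H^1$ of both closed strata vanishes and the fourth column is the inclusion $0\to\Z\oplus\Z$, a short diagram chase directly yields the exact sequence~\eqref{exse}, $0\to H^2(\tY_{A,V_5};\Z)\xrightarrow{\ \tsi^*\ }H^2(F_1(X);\Z)\xrightarrow{\ r\ }\Z\oplus\Z$ with $r(x)=(x\cdot[\P'],x\cdot[\P''])$: in that ladder the final map is \emph{literally} restriction to $H^2(\P'\cup\P'';\Z)$, and restricting a degree-$2$ class to a rational curve is evaluation against its fundamental class, so the identification with intersection numbers is immediate. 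Your version instead uses the two pair sequences, with the Thom isomorphism on the resolution side and a link/Gysin computation at the two singular points. That route also closes up: the neighbourhood $\tsi(T_p)\cong T_p/\P_p$ (a disc bundle modulo its zero section, using the tubular neighbourhood theorem for the smooth curves $\P',\P''\subset F_1(X)$) is contractible with $\tsi(T_p)\setminus\{p\}\simeq L_p$, which identifies $H^k(\tY_{A,V_5},\tY_{A,V_5}\setminus\Sigma;\Z)$ with $\bigoplus_p\widetilde H^{k-1}(L_p;\Z)$ and makes your Gysin identification of $\partial$ with $\alpha\mapsto(\alpha\cdot[\P'],\alpha\cdot[\P''])$ legitimate---but the step you flag as delicate is precisely what the paper's choice of sequences renders automatic, which is what the compact-support ladder buys. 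Two minor remarks: your affine-complement argument for $\iota^*$ simply reproves Lemma~\ref{lef}(a), which the paper cites directly (note that $\tY_{A,V_5}$ is singular, but parts (a) and (b) of that lemma do not require smoothness of the section); and your closing observation that $g^*=\tsi^*\circ\iota^*$ is a morphism of Hodge structures because $g=\iota\circ\tsi$ is a morphism of smooth projective varieties, while $\langle\P',\P''\rangle^\perp$ is a sub-Hodge structure as the orthogonal of algebraic classes, is a clean way of packaging what the paper leaves implicit.
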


\begin{proof}
Since \eqref{assumption-y} holds,  $\tY_A$ is smooth, hence  $\iota^*$ is an isomorphism  by the Lefschetz Theorem (Lemma \ref{lef}).\ Set  $U := \tY_{A,V_5} \setminus \{ \bp', \bp'' \} \isom F_1(X) \setminus ( \P' \cup \P'' )$.\  We have a commutative diagram  \begin{equation*}
\xymatrix@C=3mm{
\cdots \ar[r] & 
H^1( \{ \bp', \bp'' \};\Z) \ar[r] \ar^-{\tsi^*}[d] &
H^{2}_c(U;\Z) \ar[r] \ar@{=}[d] & 
H^{2}(\tY_{A,V_5};\Z) \ar[r] \ar^-{\tsi^*}[d] &
H^{2}( \{ \bp', \bp'' \};\Z) \ar^-{\tsi^*}[d] \ar[r] &
H^{3}_c(U;\Z) \ar[r] \ar@{=}[d] & 
\cdots \\
\cdots \ar[r] & 
H^1(\P' \cup \P'';\Z) \ar[r] &
H^{2}_c(U;\Z) \ar[r] & 
H^{2}(F_1(X);\Z) \ar[r] &
H^{2}(\P' \cup \P'';\Z) \ar[r] &
H^{3}_c(U;\Z) \ar[r] & 
\cdots 
}
\end{equation*}
of exact sequences in cohomology with compact supports.\ The first column is zero and the fourth column is the inclusion $0 \to \Z \oplus \Z$.\ The third column therefore extends to an exact sequence
\begin{equation}\label{exse}
\xymatrix{
0 \ar[r] & 
H^2(\tY_{A,V_5};\Z) \ar[r]^-{\tsi^*} &  
H^2(F_1(X);\Z) \ar[r]^-r & 
\Z\oplus \Z ,
}
\end{equation}
where  $r(x):= (x\cdot [\P'],x\cdot [\P''])$.\ 
This proves the proposition.
\end{proof}

Let $p\colon \cL_1(X) \to F_1(X)$ be the universal line and let $q\colon \cL_1(X) \to X$ be the natural morphism.\ These two morphisms  define a correspondence between $X$ and $F_1(X)$  hence a map between $H^\bullet(X;\Z)$ and $H^\bullet(F_1(X);\Z)$ which we investigate.\ We extend diagram~\eqref{eq:lksigma}  to a commutative diagram
\begin{equation}\label{diagram:big4}
\vcenter{\xymatrix@M=3pt@C=12pt{
&&& \cL_1(X) \ar[dlll]_-q \ar[drrr]^p \ar@{-->}[dl]^(.4){q'} \ar@{..>}[dr]_(.4){q''}
\\
X && 
\P_X(\cU_X) \ar[ll]_{\pi} \ar[d]_{\rho_1} && 
\,\P_X(\cU_X) \times_{\P(V_5)} Y_{A,V_5} \ar[d] \ar@{_{(}->}[ll] &&
F_1(X) \ar[d]^\tsi
\\
&&
\P(V_5) &&
\,Y_{A,V_5} \ar@{_{(}->}[ll] &&
\tY_{A,V_5} \ar[ll]_{f_{A,V_5}} \ar@{^{(}->}[rr]^\iota && \tY_A,
}}
\end{equation}
 {where the map $q'$ is defined in the same way as the dashed arrow in~\eqref{eq:lksigma}  and the map $q''$ is constructed by the universal property of the fiber product.}

 {Recall that $h$ stands for the hyperplane class of $X$ and $\tilde{h}$ for the hyperplane class of~$\P(V_6)$ and  its restrictions to $\P(V_5)$ and $\tY_A$.}

\begin{lemm}\label{lem36}
The map $q''$ is  finite and birational, and
$ q'_*([\cL_1(X)]) = 6\rho_1^*\tilde{h} $ in $ H^2(\P_X(\cU_X);\Z)$.\
 In particular, the map $q$ is generically finite of degree $6$.
\end{lemm}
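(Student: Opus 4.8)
The plan is to identify the target of $q''$, prove it is finite and birational, and then read off both cohomological assertions by functoriality of pushforward. Since $Y_{A,V_5}\hookrightarrow\P(V_5)$ is a closed embedding, the fiber product $\P_X(\cU_X)\times_{\P(V_5)}Y_{A,V_5}$ is the scheme-theoretic preimage $\rho_1^{-1}(Y_{A,V_5})=\rho_1^*(Y_{A,V_5})$, an effective Cartier divisor on the fivefold $\P_X(\cU_X)$; as $Y_{A,V_5}$ has degree $6$ in $\P(V_5)$, its class is $6\rho_1^*\tilde h$. By construction $q'$ factors as $q''$ followed by the inclusion $j\colon\rho_1^{-1}(Y_{A,V_5})\hookrightarrow\P_X(\cU_X)$.

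The crux is the finite-birational claim, which I would establish fiberwise via the identification $F_1(X)\cong\Hilb^{\P^1}(\P_X(\cU_X)/\P(V_5))$ of Proposition~\ref{prop:fano-relaitve-hilbert}. Under it, the fiber of $q''$ over a point $\xi=(x,v)\in\rho_1^{-1}(Y_{A,V_5})$ is the set of projective lines contained in the quadric $Q_v=\rho_1^{-1}(v)$ and passing through $\xi$. I would then walk through the strata of $Y_{A,V_5}$ using Lemma~\ref{lemma:fibers-quadric1}. Over a general $v\in Y^1_{A,V_5}$ the fiber $Q_v$ is a rank-$2$ conic, a union of two distinct lines, so a general $\xi$ lies on exactly one of them; hence $q''$ is generically injective, i.e.\ birational. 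For finiteness the only danger is a fiber $Q_v$ that is an entire plane, where the lines through $\xi$ would sweep out a pencil; but this occurs precisely over $Y^3_{A,V_5}$, which is empty by~\eqref{equation:y-a-v} under the generality hypothesis~\eqref{assumption-y}. The remaining strata are harmless: over $Y^2_{A,V_5}$ the fiber is a double line, and over the kernel point $v_0\in\Sigma_1(X)$ (which lies in $Y^1_{A,V_5}$ by~\eqref{assumption-kernel-vector}) the fiber $Q_{v_0}$ is a smooth quadric surface carrying exactly two lines through each of its points, so only finitely many lines pass through any $\xi$. As $\cL_1(X)$ is projective, $q''$ is proper with finite fibers, hence finite, and generically injective, hence birational. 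This stratum-by-stratum control of the fiber dimension --- in particular the use of $Y^3_{A,V_5}=\emptyset$ to exclude plane fibers, together with the separate treatment of the two-dimensional fiber over $v_0$ --- is the main obstacle.

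The two conclusions then follow formally. Because $q''$ is finite and birational onto the generically reduced, irreducible divisor $\rho_1^{-1}(Y_{A,V_5})$, the projection formula gives $q'_*[\cL_1(X)]=j_*q''_*[\cL_1(X)]=[\rho_1^{-1}(Y_{A,V_5})]=6\rho_1^*\tilde h$ in $H^2(\P_X(\cU_X);\Z)$. For the degree of $q=\pi\circ q'$, I would compute $\pi_*(\rho_1^*\tilde h)=[X]$: the fiber $\P(\cU_{X,x})\cong\P^1$ of $\pi$ embeds linearly in $\P(V_5)$, so it meets a general hyperplane in a single point, and thus $\pi$ restricts to a generically one-to-one map on a divisor representing $\rho_1^*\tilde h$. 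Therefore $q_*[\cL_1(X)]=\pi_*q'_*[\cL_1(X)]=6[X]$; since this is nonzero and $\cL_1(X)$ and $X$ are both fourfolds, $q$ is dominant and generically finite of degree~$6$.
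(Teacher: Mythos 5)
Your proposal is correct and follows essentially the same route as the paper's proof: both identify the fiber of $q''$ over $(x,v)$ with the lines in $\rho_1^{-1}(v)$ through $x$, run the same case analysis of the conic fibers (two distinct lines over $Y^1_{A,V_5}$, double line over $Y^2_{A,V_5}$, the smooth quadric $Q_0$ over $v_0$, with $Y^3_{A,V_5}=\emptyset$ excluding plane fibers), and then read off $q'_*([\cL_1(X)])$ as the class of the divisor $\P_X(\cU_X)\times_{\P(V_5)}Y_{A,V_5}=\rho_1^*Y_{A,V_5}$. Your final computation $\pi_*\rho_1^*\tilde{h}=[X]$ is just an equivalent formalization of the paper's implicit degree count (a general fiber of $\pi$ is a line meeting the sextic $Y_{A,V_5}$ in six points).
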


\begin{proof}
Let $(x,v) \in \P_X(\cU_X) \times_{\P(V_5)} Y_{A,V_5} \subset \P_X(\cU_X) \subset X \times \P(V_5)$.\ If $\gamma_X(x) = [U_2]$, this  means that
$v $ is in $ \P(U_2) \cap Y_{A,V_5}$ and that $(q'')^{-1}(x,v)$ is  the Hilbert scheme of lines in $\rho_1^{-1}(v)$ passing through $x$.\ But $\rho_1^{-1}(v)$ is either a reducible conic, or a double line {if $v \ne v_0$}, or the quadric $Q_0$ defined in \eqref{defq} if $v=v_0$.\ Therefore, there is a unique line through $x$, unless $x$ is a singular point of a singular conic {or $v = v_0$}, in which case there are two lines through $x$.\ Thus $q''$ is finite and birational.

It follows that  {the pushforward} $q'_*([\cL_1(X)]) \in H^2(\P_X(\cU_X);\Z)$ is the class of  the divisor $\P_X(\cU_X) \times_{\P(V_5)} Y_{A,V_5}$, hence the pullback via $\rho_1$ of the class of $Y_{A,V_5}$, which is equal to $6\tilde{h}$.
\end{proof}
 
Geometrically, this means that for a general point $x$ of  an ordinary GM fourfold $X$, there are~6 lines passing through $x$ and contained in $X$.
 
\begin{coro}\label{corollary:qpsi-h2}
One has $q_*p^*\tsi^*\iota^*\tilde{h}^2 = 6c_2(V_5/\cU_X)=6\gamma_X^*\bsi_2$.
\end{coro}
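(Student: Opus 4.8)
The plan is to transport the computation through the commutative diagram~\eqref{diagram:big4} onto the $\P^1$-bundle $\P_X(\cU_X)$, where it becomes a Chern class computation on $X$. Two facts drive this: first, the class $\iota^*\tilde h$ pulls back along $p$ to the same class that $\rho_1^*\tilde h$ pulls back to along $q'$; second, the correspondence collapses, by Lemma~\ref{lem36}, to $6$ times the structure map $\pi$.

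First I would establish the pullback identity $p^*\tsi^*\iota^*\tilde h = q'^*\rho_1^*\tilde h$. Since $\tilde h$ on $\tY_A$ is $f_A^*$ of the hyperplane class of $Y_A$ and $\tY_{A,V_5} = f_A^{-1}(Y_{A,V_5})$, restriction gives $\iota^*\tilde h = f_{A,V_5}^*\tilde h$, where $\tilde h$ now denotes the hyperplane class of $Y_{A,V_5}\subset \P(V_5)$. Hence the map $\sigma\colon F_1(X)\to\P(V_5)$, which by Theorem~\ref{theorem:f1}(c) factors as $f_{A,V_5}\circ\tsi$ followed by the inclusion $Y_{A,V_5}\hookrightarrow\P(V_5)$, satisfies $\sigma^*\tilde h = \tsi^*\iota^*\tilde h$. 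On the other hand, the relevant square of~\eqref{diagram:big4} commutes, i.e.\ $\rho_1\circ q' = \sigma\circ p$, so
\[
p^*\tsi^*\iota^*\tilde h = p^*\sigma^*\tilde h = q'^*\rho_1^*\tilde h .
\]
Squaring (pullback is a ring homomorphism) yields $p^*\tsi^*\iota^*\tilde h^2 = q'^*\rho_1^*\tilde h^2$.

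Next I would push forward. Using $q = \pi\circ q'$ and the projection formula for $q'$ together with $q'_*[\cL_1(X)] = 6\rho_1^*\tilde h$ (Lemma~\ref{lem36}),
\[
q_*p^*\tsi^*\iota^*\tilde h^2 = \pi_*q'_*\bigl(q'^*\rho_1^*\tilde h^2\bigr) = \pi_*\bigl(\rho_1^*\tilde h^2\cdot q'_*[\cL_1(X)]\bigr) = 6\,\pi_*\bigl(\rho_1^*\tilde h^3\bigr).
\]
Finally I would evaluate $\pi_*(\rho_1^*\tilde h^3)$. Because $\rho_1$ is induced by $\cU_X\hookrightarrow V_5\otimes\cO_X$, the restriction of $\xi := \rho_1^*\tilde h$ to each fiber of $\pi$ is the hyperplane class of a line in $\P(V_5)$, so that $\xi$ is the relative hyperplane class of the $\P^1$-bundle $\pi$. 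The Grothendieck relation $\xi^2 + \pi^*c_1(\cU_X)\,\xi + \pi^*c_2(\cU_X) = 0$, together with $\pi_*(1)=0$ and $\pi_*(\xi)=1$, then gives $\pi_*(\xi^3) = c_1(\cU_X)^2 - c_2(\cU_X)$. Since $c(V_5/\cU_X) = c(\cU_X)^{-1}$ by the tautological sequence, this equals $c_2(V_5/\cU_X)$, which is $\gamma_X^*\bsi_2$ because $\cU_X = \gamma_X^*\cU$ and $\bsi_2 = c_2(V_5/\cU)$. Collecting the factors proves the corollary.

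I expect the only delicate point to be the first step: checking that the several restrictions of $\tilde h$ genuinely agree and that the pertinent square of~\eqref{diagram:big4} commutes on the nose, so that $\rho_1\circ q' = \sigma\circ p$. Once this identity is secured, the projection formula and the Grothendieck relation are routine, with the sign in the latter pinned down by the normalization $\pi_*(\xi)=1$.
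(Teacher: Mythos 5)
Your proposal is correct and follows essentially the same route as the paper: the paper likewise rewrites the class as $\pi_*q'_*{q'}^*\rho_1^*\tilde{h}^2$ (implicitly using your commutativity identity $p^*\tsi^*\iota^*\tilde h = {q'}^*\rho_1^*\tilde h$), applies the projection formula with Lemma~\ref{lem36} to extract the factor $6\rho_1^*\tilde h$, and evaluates $\pi_*\rho_1^*\tilde{h}^3 = c_1(\cU_X)^2 - c_2(\cU_X) = c_2(V_5/\cU_X)$ via the same relation~\eqref{eq:projective-bundle}. Your only addition is the explicit verification of the pullback identity along the diagram, which the paper leaves implicit, and it is carried out correctly.
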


\begin{proof}
 We need to compute the pullback of $\tilde{h}^2$ to $\cL_1(X)$ and its pushforward via $q$ to $X$.\ We can rewrite this as
$\pi_*q'_*{q'}^*\rho_1^*\tilde{h}^2$.\ By the projection formula and Lemma~\ref{lem36}, this is equal to $\pi_*(\rho_1^*\tilde{h}^2\cdot 6\rho_1^*\tilde{h}) = 6 \pi_*\rho_1^*\tilde{h}^3$.\ Since $\pi$ is the projectivization of $\cU_X$ and $\rho_1^*\tilde{h}$ is a relative hyperplane class, we have
\begin{equation}\label{eq:projective-bundle}
\rho_1^*\tilde{h}^2 + c_1(\cU_X)\rho_1^*\tilde{h} + c_2(\cU_X) = 0. 
\end{equation}
Therefore, $\rho_1^*\tilde{h}^3 = (c_1(\cU_X)^2 - c_2(\cU_X))\rho_1^*\tilde{h} + c_1(\cU_X)c_2(\cU_X)$ and 
\begin{equation*}
\pi_*\rho_1^*\tilde{h}^3 = {c_1(\cU_X)}^2 - c_2(\cU_X) = c_2(V_5/\cU_X).
\end{equation*}
The corollary follows.
\end{proof}

\begin{lemm}\label{lemm:qp-p1p2}
We have $q_*p^*(\P') = q_*p^*(\P'') = [Q_0] \in H^4(X;\Z)$. 
\end{lemm}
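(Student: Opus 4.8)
The plan is to identify the surfaces $p^{-1}(\P')$ and $p^{-1}(\P'')$ inside the universal family $\cL_1(X)$ geometrically, and to show that each of them maps birationally onto $Q_0$ under $q$. Recall from Corollary~\ref{corollary-f1-open} that $\tsi$ is an isomorphism away from $f_{A,V_5}^{-1}(v_0) = \{\bp',\bp''\}$ and that $\P' = \tsi^{-1}(\bp')$ and $\P'' = \tsi^{-1}(\bp'')$ are each a $\P^1$. These two curves together parameterize exactly the lines $L \subset X$ with $\sigma([L]) = v_0$; by Proposition~\ref{prop:fano-relaitve-hilbert} these are the lines contained in the fiber $\rho_1^{-1}(v_0)$, which via $\pi$ are precisely the lines lying on the smooth quadric surface $Q_0$ defined in~\eqref{defq}.

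First I would recall that a smooth quadric surface $Q_0 \isom \P^1 \times \P^1$ carries exactly two rulings, each a $\P^1$ of lines, so that the lines on $Q_0$ form $\P^1 \sqcup \P^1$. Since $\tsi^{-1}(\bp')$ and $\tsi^{-1}(\bp'')$ are each a single (connected) $\P^1$ and their union is this $\P^1 \sqcup \P^1$, the components $\P'$ and $\P''$ must be precisely the two rulings of $Q_0$; in particular they cannot coincide, as each of $\bp',\bp''$ has a connected $\P^1$-fiber. The smoothness of $Q_0$, guaranteed by $v_0 \in Y^1_{A,V_5}$ through~\eqref{assumption-kernel-vector} and Lemma~\ref{lemma:fibers-quadric1}(b), is what forces two honest reduced rulings matching the two reduced $\P^1$-fibers of the small resolution $\tsi$.

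Next I would analyze the restrictions of $p$ and $q$ to these components. Since $p\colon \cL_1(X) \to F_1(X)$ is the universal line, it is a flat $\P^1$-bundle, so $p^*(\P') = [p^{-1}(\P')]$, where $p^{-1}(\P')$ is the restriction of the universal line to the ruling $\P'$, a ruled surface. The morphism $q$ sends a pair $(L,x)$ with $[L]\in\P'$ and $x\in L$ to the point $x \in Q_0 \subset X$. Because through every point of $Q_0$ there passes exactly one line of each ruling, the restriction $q|_{p^{-1}(\P')}\colon p^{-1}(\P') \to Q_0$ is bijective, hence birational of degree one (indeed an isomorphism), and its image is the union of all lines in the ruling, namely all of $Q_0$. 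Therefore
\[
q_*p^*(\P') = q_*[p^{-1}(\P')] = [Q_0] \in H^4(X;\Z),
\]
and the identical argument applied to the other ruling yields $q_*p^*(\P'') = [Q_0]$.

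I expect the only genuinely delicate point to be the identification of $\P'$ and $\P''$ with the two distinct rulings (rather than both landing in one ruling, or the scheme structures being non-reduced); this is exactly what the smoothness of $Q_0$ controls. Once that identification is in place, the degree-one count for $q|_{p^{-1}(\P')}$ is the standard fact about lines through a point of a smooth quadric surface, and the flatness of the universal $\P^1$-bundle $p$ makes the equality $p^*(\P') = [p^{-1}(\P')]$ routine.
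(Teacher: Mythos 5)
Your proposal is correct and takes essentially the same route as the paper: the paper's proof simply observes that $\P' \sqcup \P''$ parameterizes the lines on the smooth quadric $Q_0$ (i.e., its two rulings) and that the lines in each component sweep out $Q_0$ once, which is exactly your birationality argument for $q|_{p^{-1}(\P')}$. Your extra verifications (that $\P'$ and $\P''$ are the two distinct reduced rulings, and that $p^*(\P') = [p^{-1}(\P')]$) are sound elaborations of what the paper leaves implicit.
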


\begin{proof}
The subscheme $\P' \sqcup \P'' \subset F_1(X)$ parameterizes lines on $X$ that are contained in the smooth quadric surface $Q_0$.\ The lines in each of the components $\P'$ and $\P''$ sweep out $Q_0$ once, hence the claim.
\end{proof}

Let $X$ be a smooth GM fourfold.\ Consider the morphism
\begin{equation}\label{eq:alpha-4}
\alpha\colon H^4(X;\Z) \lra H^2(F_1(X);\Z),
\qquad
x \longmapsto p_*(q^*x).
\end{equation} 
The classes  {(see Lemma~\ref{lemma:class-q0})} 
\begin{equation}\label{eq:relation-h4}
c_2(\cU_X) = \gamma_X^*\bsi_{1,1}\qquad{\rm and}\qquad [Q_0]  =  \gamma_X^*(\bsi_2-\bsi_{1,1})=  h^2-2 c_2(\cU_X)
\end{equation}
in $ H^4(X;\Z)$ generate the subgroup $\gamma_X^*(H^4(\Gr(2,V_5);\Z))$.\ In the following two lemmas, we compute their images by the map $\alpha$.\ We assume as before that $X$ satisfies the assumptions~\eqref{assumption-y}.

\begin{lemm}\label{lemma:alpha-c2-u}
We have $\alpha(c_2(\cU_X)) = \tsi^*\iota^*\tilde{h}$.
\end{lemm}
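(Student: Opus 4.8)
The plan is to deduce the identity formally from the commutativity of diagram~\eqref{diagram:big4} together with the Grothendieck relation~\eqref{eq:projective-bundle}, so that no geometric input beyond the setup of this section is required. First I would record two consequences of the diagram. Since $q = \pi \circ q'$, pullback factors as $q^* = {q'}^*\circ\pi^*$. Since the maps $\rho_1\circ q'$ and $\sigma\circ p$ both send a pair $(x,[L])$ to $\sigma([L])$, they agree, and therefore
\begin{equation*}
{q'}^*\rho_1^*\tilde{h} \;=\; p^*\sigma^*\tilde{h} \;=\; p^*\tsi^*\iota^*\tilde{h},
\end{equation*}
the last equality because $\sigma = f_{A,V_5}\circ\tsi$ and $\iota^*\tilde{h}$ is precisely $f_{A,V_5}^*$ of the hyperplane class of $Y_{A,V_5}\subset\P(V_5)$. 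I abbreviate $\beta := \tsi^*\iota^*\tilde{h}$.

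Next I would rewrite $c_2(\cU_X)$ using the projective bundle structure. By~\eqref{eq:projective-bundle} we have, in $H^\bullet(\P_X(\cU_X);\Z)$,
\begin{equation*}
\pi^*c_2(\cU_X) \;=\; -(\rho_1^*\tilde{h})^2 - \pi^*c_1(\cU_X)\cdot\rho_1^*\tilde{h}.
\end{equation*}
Pulling this back along $q'$ and inserting the two identities from the previous paragraph gives
\begin{equation*}
q^*c_2(\cU_X) \;=\; -(p^*\beta)^2 - q^*c_1(\cU_X)\cdot p^*\beta.
\end{equation*}

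Then I would apply $p_*$ and use the projection formula. The first term vanishes because $p$ has one-dimensional fibres, so $p_*\bigl((p^*\beta)^2\bigr) = \beta^2\cdot p_*(1) = 0$. For the second term the projection formula yields $p_*\bigl(q^*c_1(\cU_X)\cdot p^*\beta\bigr) = \beta\cdot p_*\bigl(q^*c_1(\cU_X)\bigr)$, reducing everything to the degree computation $p_*\bigl(q^*c_1(\cU_X)\bigr)\in H^0(F_1(X);\Z)=\Z$. Since $c_1(\cU_X) = -h$ and the fibre of $p$ over a point $[L]$ is carried by $q$ isomorphically onto the line $L\subset X$, which has $h$-degree $1$, I obtain $p_*(q^*h)=1$ and hence $p_*\bigl(q^*c_1(\cU_X)\bigr) = -1$. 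Combining the two terms gives
\begin{equation*}
\alpha(c_2(\cU_X)) \;=\; 0 - \beta\cdot(-1) \;=\; \beta \;=\; \tsi^*\iota^*\tilde{h},
\end{equation*}
as claimed.

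The argument is entirely formal once the two compatibilities coming from the diagram are established, so I do not expect a serious obstacle. The only points demanding care are the sign bookkeeping in the Grothendieck relation (namely checking that the $\rho_1^*\tilde{h}$ appearing in~\eqref{eq:projective-bundle} is genuinely the relative hyperplane class of $\pi$, with the sign convention giving $\pi_*(\rho_1^*\tilde h)=1$) and the verification that $q$ restricts to an isomorphism from each fibre of $p$ onto the corresponding line, which is what makes the degree count $p_*(q^*h)=1$ transparent.
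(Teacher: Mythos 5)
Your proof is correct and essentially identical to the paper's: the paper derives $\pi^*c_2(\cU_X) = \rho_1^*\tilde{h}\,(\pi^*h - \rho_1^*\tilde{h})$ from the tautological extension $0 \to \cO(-\rho_1^*\tilde{h}) \to \pi^*\cU_X \to \cO(\rho_1^*\tilde{h} - \pi^*h) \to 0$, which is exactly your Grothendieck-relation rewriting once one notes $c_1(\cU_X) = -h$, and then, as you do, pulls back along $q'$ using the commutativity of~\eqref{diagram:big4} and pushes forward along the $\P^1$-bundle $p$ with relative hyperplane class $q^*h$. Your sign bookkeeping (including $c_1(\cU_X) = -h$ and $p_*(q^*h) = 1$) is consistent with the paper's conventions, so there is no gap.
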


\begin{proof}
Consider the diagram~\eqref{diagram:big4}.\ The pullback of  {the bundle} $\cU_X$ to $\P_X(\cU_X)$ is an extension 
 {$0 \to \cO(-\rho_1^*\tilde{h}) \to \pi^*\cU_X \to \cO(\rho_1^*\tilde{h} - \pi^*h) \to 0$} of line bundles, hence
\begin{equation*}
\pi^*c_2(\cU_X) = \rho_1^*\tilde{h}(\pi^*h - \rho_1^*\tilde{h}).
\end{equation*}
Therefore, we have
\begin{equation*}
q^*c_2(\cU_X) = 
{q'}^*\pi^*c_2(\cU_X) = 
{q'}^*(\rho_1^*\tilde{h}(\pi^*h - \rho_1^*\tilde{h})) =
p^*\tsi^*\iota^*\tilde{h}\cdot q^* h - p^*\tsi^*\iota^*\tilde{h}^2
\end{equation*}
and, since $p$ is a $\P^1$-bundle with relative  hyperplane class $q^*h$,  the pushforward by $p$ of the  right  side equals $\tsi^*\iota^*\tilde{h}$.
\end{proof}

Recall that the surface $Q_0 = X \cap \P(v_0 \wedge (V_5/v_0))$ defined in \eqref{defq}  is a smooth quadric.\ To compute the class $\alpha([Q_0]) $ in $ H^2(F_1(X);\Z)$, we   need some preparation.\ 

First, the Hilbert scheme of lines on $Q_0$ is $F_1(Q_0) = \P' \sqcup \P''$ and the corresponding universal line is $\cL_1(Q_0) = Q'_0 \sqcup Q''_0$, where the first (resp.\ second) component corresponds to lines parameterized by $\P'$ (resp.\ $\P''$) and the map $\cL_1(Q_0) \subset \cL_1(X) \xrightarrow{\ q\ } X$ induces isomorphisms $Q'_0 \cong Q_0$ and $Q''_0 \cong Q_0$.

Second, we have $\cU_{Q_0}:=\cU_X\vert_{Q_0} \cong \cO_{Q_0} \oplus \cO_{Q_0}(-1)$  {by Lemma~\ref{lemma:ux-q0-splits}}, hence 
$\rho_1(\P_{Q_0}(\cU_{Q_0}))   \subset \P(V_5)$ is the quadratic cone $ \cone{v_0} Q_0 $ over $Q_0 \subset \P(V_5/v_0)$ with vertex  $v_0$.\ Set 
\begin{equation*}
S := Y_{A,V_5} \cap \cone{v_0}Q_0 \subset \P(V_5).
\end{equation*}
This is an intersection of two distinct irreducible hypersurfaces in $\P(V_5)$ (Lemma \ref{lemma:singularities}(b)) hence a Cohen--Macaulay surface containing $v_0$.

\begin{lemm}\label{lemma:alpha-q}
There is a surface $R \subset F_1(X)$ such that 
\begin{equation*} 
\alpha([Q_0]) = [R] \in H^2(F_1(X);\Z) 
\end{equation*} 
and the map $\sigma = f_A\circ \tilde \sigma \colon R \to  \P(V_5)$ is birational onto~$S$.
\end{lemm}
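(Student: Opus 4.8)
The plan is to take for $R$ the closure in $F_1(X)$ of the locus of lines that meet $Q_0$ but are not contained in it, and to establish the two assertions in turn. For the class identity $\alpha([Q_0])=[R]$, I would study the cycle $q^{-1}(Q_0)\subset\cL_1(X)$. Writing $\cL_1(Q_0)=Q'_0\sqcup Q''_0$ for the lines of $X$ lying in $Q_0$, it splits as $q^{-1}(Q_0)=\tilde R\cup Q'_0\cup Q''_0$, where $\tilde R:=\overline{q^{-1}(Q_0)\setminus\cL_1(Q_0)}$ is the ``horizontal'' part. Each of $Q'_0,Q''_0$ is the $p$-preimage of one of the curves $\P',\P''$, so its class is $p^*[\P']$ (resp.\ $p^*[\P'']$); since $p$ is a $\P^1$-bundle, $p_*(1)=0$ and hence $p_*p^*=0$, so these vertical components contribute nothing to $p_*(q^*[Q_0])$, whatever their multiplicities. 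On the horizontal part, $Q_0=X\cap\Pi$ together with $L\subset X$ gives $L\cap Q_0=L\cap\Pi$, a single transverse point for $L\not\subset\Pi$; as $q$ is generically finite of degree $6$ and unramified there (Lemma~\ref{lem36}), $\tilde R$ occurs with multiplicity one in $q^*[Q_0]$ and $\tilde R\to R$ is birational. Therefore $\alpha([Q_0])=p_*(q^*[Q_0])=[R]$.

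For the geometry of $\sigma|_R$, I would first check that $\sigma(R)\subseteq S$. If $L$ meets $Q_0$ at a point $x$, then $\gamma_X(x)\in Q_0\subset\Pi=\P(v_0\wedge V_5)$, so $v_0\in\gamma_X(x)$, while $U_1:=\sigma([L])\subset\gamma_X(x)$. Thus $\sigma([L])=[U_1]$ lies on the ruling $\P(\gamma_X(x))$ of the cone $\cone{v_0}Q_0$ through $v_0$; combined with $\sigma([L])\in Y_{A,V_5}$ this yields $\sigma(R)\subseteq Y_{A,V_5}\cap\cone{v_0}Q_0=S$.

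It then remains to see that $\sigma|_R\colon R\to S$ has degree one. For general $v=[U_1]\in S$ we have $v\in Y^1_{A,V_5}\setminus\{v_0\}$, so $\rho_1^{-1}(v)$ is a pair of distinct lines $L_1\cup L_2$ (the two lines of $X$ over $v$), and $v\in\cone{v_0}Q_0$ forces $(L_1\cup L_2)\cap Q_0\ne\emptyset$. By the observation above this intersection is the single point $x$ with $\gamma_X(x)=\langle U_1,v_0\rangle$, and $L_i$ passes through $x$ exactly when $v_0\in\tau([L_i])$, i.e.\ when the point $\bar v_0:=[v_0\bmod U_1]\in\P(V_5/U_1)$ lies on the line $L_i=\P(\tau([L_i])/U_1)$. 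Since $\bar v_0$ already lies on the conic $L_1\cup L_2$, it lies on exactly one of the two rulings provided it is not their node.

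The main obstacle is precisely this last genericity point: that for general $v\in S$ the point $\bar v_0$ is a smooth point of $\rho_1^{-1}(v)$, equivalently that it is not in the kernel of the corank-one quadratic form cutting out the conic. I would prove this by a dimension count, noting that $\bar v_0$ varies nontrivially as $v$ ranges over the surface $S$ while coincidence with the node imposes one further condition, or, by upper semicontinuity, by exhibiting a single $v\in S$ with $x\ne L_1\cap L_2$. Granting it, exactly one of $L_1,L_2$ belongs to $R$ for general $v$, so $\sigma|_R$ is generically injective; together with $\sigma(R)=S$ (every $v\in S\setminus(\{v_0\}\cup Y^{\ge2}_{A,V_5})$ is attained, and $\sigma$ is proper), this shows that $\sigma=f_A\circ\tsi\colon R\to S$ is birational onto $S$, as claimed.
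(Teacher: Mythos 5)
Your architecture is the same as the paper's: split $q^{-1}(Q_0)$ into the vertical surfaces $Q_0',Q_0''$ (whose classes die under $p_*$, exactly as in the paper, since they are $p$-preimages of $\P'$ and $\P''$) and a horizontal part lying over $S=Y_{A,V_5}\cap \cone{v_0}Q_0$, then argue that over a general $v\in S$ exactly one of the two lines of the conic $\rho_1^{-1}(v)$ passes through the marked point, giving birationality onto $S$. But the step you yourself flag as ``the main obstacle''---that for general $v\in S$ the marked point is a \emph{smooth} point of the conic---is left unproven, and neither of your suggested fixes works as stated. The ``dimension count'' is not a valid argument: the coincidence locus $S_1\subset S$ is a closed subset of a \emph{fixed} surface, and a closed condition on a fixed surface need not drop dimension---note that $S_1$ already contains all of $S\cap Y^{\ge 2}_{A,V_5}$, where the fiber is a double line and every point of it is singular, so ``the node'' is not even a single moving point there. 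The ``exhibit one point and use semicontinuity'' route is not carried out, and would in any case require producing such a point on every $2$-dimensional component of $S$ (which is only known to be Cohen--Macaulay, not irreducible).

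This genericity statement is precisely where the real content of the paper's proof lies, and it is not a soft fact. The paper rules out $\dim(S_1)\ge 2$ by constructing over $S_1$ the rank-3 bundle $\cE$ spanned by the conics, the line subbundle $\cL$ given by the marked (singular) point, and the induced quadratic form on $\cE/\cL$; its discriminant locus is a divisor in $S_1$ contained in the corank-2 locus of $\rho_1$, so $\dim (S_1)\ge 2$ would force the smooth irreducible curve $Y^{\ge2}_{A,V_5}$ to lie in the quadric $\cone{v_0}Q_0$, contradicting Corollary~\ref{corollary:y2av}---whose proof rests on the cohomological computations of Appendix~\ref{section:epw-surface} (the resolution of $\cO_{Y_A^{\ge 2}}$ plus Borel--Bott--Weil). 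Without an input of this strength your argument does not close. A secondary imprecision feeds into the same gap: your ``single transverse point'' justification for multiplicity one cannot be literally correct, since $\dim T_xL+\dim T_xQ_0=3<4=\dim T_xX$, so $L$ and $Q_0$ are never transverse in $X$. What is actually needed is that $q$ (equivalently $q''$) is a local isomorphism at the generic point of the horizontal component; but $q''$ fails to be injective exactly over pairs $(x,v)$ where $x$ is a singular point of the conic (or $v=v_0$), so your multiplicity-one claim also reduces to the unproven genericity point. The paper handles both at once by showing that $q''$ restricts to an isomorphism $R_{00}\isomto \tilde{S}_{00}$ over the locus $S_{00}$ where the marked point is smooth, and then proving $S_{00}$ is dense in $S$ via the argument just described.
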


\newcommand{\tS}{\tilde{S}}

\begin{proof}
We first describe $q^{-1}(Q_0)$.\ Since $\cL_1(X)$ is smooth of dimension 4 (Proposition~\ref{proposition:f1-smooth})  {and $q$ is dominant (Lemma~\ref{lem36})}, $q^{-1}(Q_0)$ has everywhere dimension $\ge2$.\ We see on the diagram~\eqref{diagram:big4} that the map $q \colon \cL_1(X) \to X$ factors through the map $q'' \colon \cL_1(X) \to \P_X(\cU_X) \times_{\P(V_5)} Y_{A,V_5}$; moreover, we have
\begin{equation*}
q^{-1}(Q_0) = q^{\prime\prime -1} (\P_{Q_0}(\cU_{Q_0}) \times_{\P(V_5)} Y_{A,V_5}).
\end{equation*}
The situation is summarized in the following cartesian diagram
\begin{equation*}
\xymatrix
@M=5pt{
Q_0 \ar@{_{(}->}[d] & \P_{Q_0}(\cU_{Q_0}) \ar[l]_-{\pi} \ar@{_{(}->}[d] & \P_{Q_0}(\cU_{Q_0}) \times_{\P(V_5)} Y_{A,V_5} \ar@{_{(}->}[l] \ar@{_{(}->}[d] & q^{-1}(Q_0) \ar[l]_(0.3){q''} \ar@{_{(}->}[d]
\\
X & \P_X(\cU_X) \ar[l]_-{\pi} & \P_X(\cU_X) \times_{\P(V_5)} Y_{A,V_5} \ar@{_{(}->}[l] & \cL_1(X). \ar[l]_(0.3){q''}
}
\end{equation*}
The projection $\P_{Q_0}(\cU_{Q_0}) \times_{\P(V_5)} Y_{A,V_5} \to \P(V_5)$ factors through $S$ and is an isomorphism over the dense open subscheme $S_0 := S \setminus \{v_0\}$.\ Let $\tS_0 \subset \P_{Q_0}(\cU_{Q_0}) \times_{\P(V_5)} Y_{A,V_5}$ be the preimage of~$S_0$ and set
\begin{equation*}
R_0 := q^{\prime\prime -1}( \tS_0) \subset q^{-1}(Q_0)
\subset \cL_1(X).
\end{equation*}
Let $\tS_{00} \subset \tS_0$ be the open subscheme of $\tS_0$ parameterizing pairs $(x,v)$ such that $v \ne v_0$ and~$x$ is not a singular point of the conic $\rho_1^{-1}(v)$.\ Let $S_{00}$ be its isomorphic image in $S_0 \subset S$ and let~$R_{00}$ be its preimage in $R_0$.\ By Lemma~\ref{lem36}, the map $q''\vert_{R_0}\colon R_0 \to \tS_0$ induces an isomorphism of schemes $R_{00} \isomto \tS_{00}$.\ Thus the map $\sigma\circ p\colon R_0 \to \P(V_5)$ induces an isomorphism $R_{00} \isomto S_{00}$; since we
show below that  $S_{00}$ is dense in $S$, it follows that $ R_{00}$ 
has pure dimension 2.

Consider now $R_1 := R_0 \setminus R_{00}$.\ By definition, its image in $S$ is contained in $S_1 := S_0 \setminus S_{00}$.\ We show below  $\dim(S_1) \le 1$, hence $S_{00} $ is dense in $S$ and, since the map $R_1 \to S_1$ is finite (since the Hilbert scheme of lines in any conic is finite), we also have $\dim(R_1) \le 1$.

Finally, consider $q^{-1}(Q_0) \setminus R_0  = q^{-1}(Q_0) \cap p^{-1}(\sigma^{-1}(v_0))$.
Since $\sigma^{-1}(v_0) = F_1(Q_0)$, we have $q^{-1}(Q_0) \setminus R_0 =  \cL_1(Q_0) \times_X Q_0 = Q'_0 \sqcup Q''_0$.\ All this shows that $q^{-1}(Q_0) $ has pure dimension~2 and that we can write
\begin{equation*}
q^*([Q_0]) = a'[Q_0'] + a''[Q_0''] + [\overline{R_{00}}]
\end{equation*}
for some integers $a'$ and $a''$.\ The map $p$ contracts the surfaces $Q_0'$ and $Q_0''$ onto the curves $\P'$ and $\P''$ respectively, hence $p_*$ kills their classes and we obtain
\begin{equation*}
\alpha([Q_0]) = p_*(q^*([Q_0])) = a'p_*([Q_0']) + a''p_*([Q_0'']) + p_*([\overline{R_{00}}]) = p_*([\overline{R_{00}}]).
\end{equation*}
Recall that the map $\sigma\circ p $ induces an isomorphism $R_{00} \isomto S_{00}$. 
Setting 
\begin{equation*}
R := \overline{p(R_{00})} \subset F_1(X),
\end{equation*}
we obtain $\alpha([Q_0]) = [R]$ and  the map $\sigma \colon R \to \P(V_5)$ is birational onto $\overline{S_{00}} = S$.

It remains to prove  $\dim (S_1) \le 1$.\ Let $\cE \subset W \otimes \cO_{S_1}$ be the rank-3 vector bundle over $S_1$ with fiber at a point $v $ of $ S_1$   the linear span of the conic $\rho_1^{-1}(v)$ (see the proof of Theorem~\ref{theorem:f1}(c) for its description).\ Consider the line subbundle $\cL \hookrightarrow \cE$ whose fiber over a point $v \in S_1$ which is the image of a point $(x,v) \in \tS_0$ is given by the point $x$.\ By definition of $S_1$, this is a singular point of the conic, hence the quadratic forms on $\cE$ induces a quadratic form on the quotient bundle $\cE/\cL$.\ Its discriminant locus is a divisor in $S_1$ which, by construction, is contained in the corank-2 locus of $\rho_1$.

Assume by contradiction   $\dim(S_1)\ge 2$.\ The dimension of this locus is then~$\ge 1$.\ Since, away of~$v_0$, it coincides with the smooth irreducible curve~$Y^2_{A,V_5}$, this curve is contained in $S$, hence in the quadric $ \cone{v_0}Q_0$.\ This contradicts  Corollary~\ref{corollary:y2av}.
\end{proof}

\begin{coro}\label{corollary:alpha-h2}
One has $\iota_*\tsi_*\alpha(h^2) = 3\tilde{h}^2$.
\end{coro}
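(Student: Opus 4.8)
The plan is to collapse the statement to a single divisor-class identity on $\tY_{A,V_5}$ and then to dispose of the one genuinely non-formal point by a degree computation together with a genericity/specialization argument. First I would feed the relation \eqref{eq:relation-h4}, i.e.\ $h^2 = [Q_0] + 2c_2(\cU_X)$ in $H^4(X;\Z)$, into the additive map $\alpha$. Lemma~\ref{lemma:alpha-q} gives $\alpha([Q_0]) = [R]$ and Lemma~\ref{lemma:alpha-c2-u} gives $\alpha(c_2(\cU_X)) = \tsi^*\iota^*\tilde h$, so
\[
\alpha(h^2) = [R] + 2\,\tsi^*\iota^*\tilde h \in H^2(F_1(X);\Z).
\]
Since $\tsi\colon F_1(X)\to\tY_{A,V_5}$ is birational (Proposition~\ref{proposition:f1-smooth}), the projection formula yields $\tsi_*\tsi^* = \id$, and since $\iota\colon\tY_{A,V_5}\hookrightarrow\tY_A$ is the inclusion of the divisor $f_A^{-1}(Y_{A,V_5})$, whose class is $[\tY_{A,V_5}] = \tilde h$, the projection formula gives $\iota_*\iota^*\tilde h = \tilde h^2$. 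Hence
\[
\iota_*\tsi_*\alpha(h^2) = \iota_*\tsi_*[R] + 2\tilde h^2,
\]
so the corollary is equivalent to the single identity $\iota_*\tsi_*[R] = \tilde h^2$.

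Next I would pass to $H^2$. Because $\iota^*$ is an isomorphism (Proposition~\ref{proposition:hodge-f1-tyav}) and $\iota_*\iota^* = (\,\cdot\,\tilde h)$, we may write $\iota_* = (\,\cdot\,\tilde h)\circ(\iota^*)^{-1}$, so the target reads $\tilde h\cdot(\iota^*)^{-1}\tsi_*[R] = \tilde h\cdot\tilde h$. As $\tilde h$ is ample on the hyperk\"ahler fourfold $\tY_A$, Hard Lefschetz makes multiplication $\tilde h\colon H^2(\tY_A)\to H^4(\tY_A)$ injective, so the identity is equivalent to the divisor-class statement $\tsi_*[R] = \iota^*\tilde h$ in $H^2(\tY_{A,V_5};\Z)$.

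I would then pin down $\tsi_*[R]$ up to the covering involution by a degree count. Since $\tsi|_R$ is birational onto its image, $\tsi_*[R] = [\tsi(R)]$; by Lemma~\ref{lemma:alpha-q} the composite $f_{A,V_5}\circ\tsi\colon R\to S$ is birational onto $S = Y_{A,V_5}\cap\cone{v_0}Q_0$, so $f_{A,V_5}|_{\tsi(R)}$ is birational and $f_{A,V_5*}\tsi_*[R] = [S]$ in $H^2(Y_{A,V_5};\Z)$. As $\cone{v_0}Q_0$ is a quadric, $[S] = 2\,\tilde h|_{Y_{A,V_5}}$, while $f_{A,V_5*}\iota^*\tilde h = f_{A,V_5*}f_{A,V_5}^*(\tilde h|_{Y_{A,V_5}}) = 2\,\tilde h|_{Y_{A,V_5}}$. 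Therefore $\delta := \tsi_*[R] - \iota^*\tilde h$ lies in $\ker f_{A,V_5*}$, i.e.\ it is anti-invariant for the covering involution $\tau_A$; equivalently $\tsi(R)$ and $\tau_A\tsi(R)$ are the two sheets of $f_{A,V_5}^{-1}(S)$.

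The hard part is the last step: showing this anti-invariant class $\delta$ vanishes, i.e.\ that the two sheets are cohomologous. Note $\delta$ is a difference of algebraic divisor classes, hence an integral $(1,1)$-class, and via the Hodge isomorphism $\iota^*$ it becomes an integral $(1,1)$-class on $\tY_A$ that is anti-invariant under $\tau_A$. For very general $A$ one has $\Pic(\tY_A) = \Z\tilde h$ (O'Grady), and $\tilde h$ is $\tau_A$-invariant, so there is no nonzero anti-invariant integral $(1,1)$-class and $\delta = 0$. To cover every $X$ satisfying \eqref{assumption-y}, I would spread out over the irreducible family of such fourfolds: $\delta$ is cut out by algebraic cycles varying in the family, hence is a flat integral section of the local system $R^2(f_A)_*\Z$, and a flat integral section vanishing at the very general point vanishes identically. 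This forces $\tsi_*[R] = \iota^*\tilde h$, whence $\iota_*\tsi_*[R] = \tilde h^2$ and $\iota_*\tsi_*\alpha(h^2) = 3\tilde h^2$.
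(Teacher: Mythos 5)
Your opening reduction and your endgame coincide with the paper's proof (decompose $h^2$ via~\eqref{eq:relation-h4}, apply Lemmas~\ref{lemma:alpha-c2-u} and~\ref{lemma:alpha-q} to reduce to $\iota_*\tsi_*([R])=\tilde{h}^2$, use the two-sheet decomposition of $F_1(X)\times_{\P(V_5)}S$ over the quadric section $S$ to make the defect anti-invariant, kill it for very general $A$, and specialize). But the middle step---the passage to a divisor identity $\tsi_*[R]=\iota^*\tilde{h}$ in $H^2(\tY_{A,V_5};\Z)$---has a genuine gap, and in fact its premise is demonstrably false. By adjunction and Lemma~\ref{lemm:qp-p1p2}, $\alpha(h^2)\cdot[\P']=h^2\cdot q_*p^*[\P']=h^2\cdot[Q_0]=\deg(Q_0)=2$, while classes in the image of $\tsi^*$ pair to zero with $[\P']$ by the exact sequence~\eqref{exse}; since $\alpha(h^2)=[R]+2\tsi^*\iota^*\tilde{h}$, this gives $[R]\cdot[\P']=[R]\cdot[\P'']=2\ne 0$. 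Hence $[R]$ does \emph{not} lie in $\tsi^*H^2(\tY_{A,V_5};\Z)$: the surface $\tsi(R)$ passes through the two singular points $\bp',\bp''$ of $\tY_{A,V_5}$, and its class lives only in $H_4(\tY_{A,V_5};\Z)$, not in $H^2$ of the singular threefold. Your identities ``$\tsi_*\tsi^*=\id$'', ``$\iota_*=(\cdot\,\tilde{h})\circ(\iota^*)^{-1}$ applied to $\tsi_*[R]$'', and the Hard Lefschetz injectivity step all presuppose that $\tsi_*[R]$ is an $H^2$-class on $\tY_{A,V_5}$; Poincar\'e duality is exactly what fails at $\bp',\bp''$, so this reduction collapses. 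As a consequence your very-general input, $\Pic(\tY_A)=\Z\tilde{h}$, is too weak: the honest defect $\delta':=\iota_*\tsi_*([R])-\tilde{h}^2$ lies in $H^4(\tY_A;\Z)$, and Picard rank one does not exclude a nonzero anti-invariant algebraic class in degree $4$.

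The repair is to run your anti-invariance and specialization scheme entirely in $H^4(\tY_A;\Z)$, which is what the paper does: the sheet decomposition gives $\iota_*\tsi_*([R])+\iota_*\tsi_*([\tau_A(R)])=2\iota_*\iota^*\tilde{h}=2\tilde{h}^2$ with the two summands interchanged by $\tau_A$; for very general $A$ the space $H^{2,2}(\tY_A)\cap H^4(\tY_A;\Q)$ has rank $2$, spanned by $\tilde{h}^2$ and $c_2(\tY_A)$ (\cite[Proposition~3.2]{og7}), both $\tau_A$-invariant, so both summands are $\tau_A$-invariant, hence equal, hence each equal to $\tilde{h}^2$ by torsion-freeness of $H^4(\tY_A;\Z)$ (\cite[Theorem~1]{mark}); the continuity argument you sketch then extends the equality from very general $X$ to all $X$ satisfying~\eqref{assumption-y}. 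So the stronger degree-$4$ input of O'Grady is genuinely needed where you invoked only Picard rank one.
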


\begin{proof}
By~\eqref{eq:relation-h4}, we have $\iota_*\tsi_*\alpha(h^2) = \iota_*\tsi_*\alpha([Q_0]) + 2\iota_*\tsi_*\alpha(c_2(\cU_X)) = \iota_*\tsi_*([R]) + 2\iota_*\tsi_*\tsi^*\iota^*\tilde{h}$
 (we use Lemma~\ref{lemma:alpha-c2-u} and Lemma~\ref{lemma:alpha-q} in the last equality).\ By the projection formula, the second summand equals $2\tilde{h}^2$, so it remains to show that the first summand $\iota_*\tsi_*([R])$ equals~$\tilde{h}^2$.\ Since $F_1(X) \times_{\P(V_5)} S$ is birational to the double cover $\tY_{A,V_5} \times_{\P(V_5)} S$ of $S$ and contains a surface~$R$ that maps to $S$ birationally, we have 
\begin{equation*}
F_1(X) \times_{\P(V_5)} S = R \cup \tau_A(R),
\end{equation*}
where $\tau_A$ is the birational involution of $F_1(X)$ induced 
by the involution of $\tY_A$.\ Since $S$ is cut out in $Y_{A,V_5}$ by a quadric, we have 
\begin{equation*}
\iota_*\tsi_*([R]) + \iota_*\tsi_*([\tau_A(R)]) = 2\iota_*\iota^*\tilde{h} = 2\tilde{h}^2.
\end{equation*} 
The two summands on the left side are interchanged by the involution $\tau_A$ of the double cover $f_A\colon \tY_A \to Y_A$.\ We would like to show that they are equal.\ For that, let us first assume that~$X$, hence also $A$, is very general.\ The vector space $H^{2,2}(\tY_A)\cap H^4(\tY_A;\Q)$ then has rank 2, generated by  $\tilde{h}^2$ and $c_2({\tY_A})$ (\cite[Proposition~3.2]{og7}) and both of these classes are $\tau_A$-invariant.\ Since $\iota_*\tsi_*([R]) $ and $ \iota_*\tsi_*([\tau_A(R)])$ both belong to this space,  they are also $\tau_A$-invariant, hence equal.\ Finally, since $H^4(\tY_A;\Z)$ is torsion-free (\cite[Theorem~1]{mark}),  they are both   equal to $\tilde{h}^2$.

Going back to the general case where $X$ only satisfies \eqref{assumption-y},  we note that the class $\iota_*\tsi_*([R]) - \tilde{h}^2 \in H^4(\tY_A;\Z)$  depends continuously on $X$ and  is zero for $X$ very general, as we showed above; therefore, it is zero for all $X$.
\end{proof}

We are now ready to prove Theorem \ref{th32} for  {general} GM fourfolds.\ Recall that in \eqref{diagram:big4}, the map $p\colon \cL_1(X)\to F_1(X)$  is a $\P^1$-fibration  for which $q^*h$ is a relative hyperplane class.\  Therefore, $\cL_1(X)$ is isomorphic to the projectivization of a rank-2 {vector bundle}.\ We denote by $c_1$ and $c_2$  {its} Chern classes, so that
\begin{equation}\label{eq5}
q^*h^2 + p^*c_1 \cdot q^*h + p^*c_2 = 0.
\end{equation} 
The primitive and vanishing cohomology subgroups $ H^4(X;\Z)_{00} \subset H^4(X;\Z)_0\subset  H^4(X;\Z)$  and  $H^2(\tY_A;\Z)_0\subset H^2(\tY_A;\Z)$ were defined in~\eqref{defh00} and~\eqref{defprime}, and the map $\alpha$ was defined in~\eqref{eq:alpha-4}.

\begin{theo}\label{theorem:alpha-x4}
Let $X$ be a smooth ordinary GM fourfold, with associated Lagrangian data $(V_6,V_5,A)$, such that assumption~\eqref{assumption-y} holds.\ We have
\begin{equation}\label{sta1}
\forall x \in H^4(X;\Z)_0\qquad \alpha(x)^2\cdot \alpha(h^2) = -6x^2.
\end{equation}
In particular, the restriction $\alpha_0\colon H^4(X;\Z)_0 \to H^2(F_1(X);\Z)$ of $\alpha$ is injective.\ Furthermore, it induces an isomorphism 
\begin{equation*}
\beta\colon H^4(X;\Z)_{00} \isomlra H^2(\tY_A;\Z)_0
\end{equation*}
{compatible with the Beauville--Bogomolov form}
\begin{equation*}
\forall x \in H^4(X;\Z)_{00}\qquad q_B(\beta(x)) = -x^2,
\end{equation*}
hence an isomorphism $H^4(X;\Z)_{00}(-1) \isomto H^2(\tY_A;\Z)_0$ of polarized Hodge structures.
\end{theo}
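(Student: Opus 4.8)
The plan is to establish the quadratic identity \eqref{sta1} first, to deduce injectivity of $\alpha_0$ from it, and then to construct $\beta$ and verify the Beauville--Bogomolov compatibility by feeding the pushforward computations of the previous lemmas into the projective-bundle relation \eqref{eq5}.

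For the identity \eqref{sta1}, recall that $p\colon\cL_1(X)\to F_1(X)$ is a $\P^1$-bundle with relative hyperplane class $\xi:=q^*h$, so \eqref{eq5} reads $\xi^2=-p^*c_1\,\xi-p^*c_2$ and pushing forward gives $\alpha(h^2)=p_*(\xi^2)=-c_1$. For $x\in H^4(X;\Z)$ I would write $q^*x=\xi\cdot p^*\alpha(x)+p^*b$ with $b\in H^4(F_1(X);\Z)$ (the coefficient of $\xi$ being $p_*q^*x=\alpha(x)$); applying $p_*$ to $\xi\cdot q^*x$ identifies $b=\alpha(xh)-\alpha(h^2)\alpha(x)$. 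Then $\alpha(x)^2\cdot\alpha(h^2)=\int_{\cL_1(X)}(\xi\,p^*\alpha(x))^2=\int_{\cL_1(X)}(q^*x-p^*b)^2$; since $\int_{\cL_1(X)}q^*(x^2)=\deg(q)\,x^2=6x^2$ by Lemma \ref{lem36} and $b^2=0$ because $F_1(X)$ is a threefold, this equals $6x^2-2\,\alpha(x)\cdot b=6x^2-2\,\alpha(x)\alpha(xh)+2\,\alpha(x)^2\alpha(h^2)$. The crux is that $\alpha(x)\alpha(xh)=0$ for $x\in H^4(X;\Z)_0$: pushing $\int_{\cL_1(X)}p^*\alpha(x)\cdot q^*(xh)$ forward along $q$ and using $H^2(X;\Z)=\Z h$ (Proposition \ref{intcoh}) one has $q_*p^*\alpha(x)\in\Z h$, so $\alpha(x)\alpha(xh)=q_*p^*\alpha(x)\cdot xh$ is a multiple of $x\cdot h^2=0$. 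Solving for $\alpha(x)^2\cdot\alpha(h^2)$ then yields $-6x^2$.

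Polarizing \eqref{sta1} gives $\alpha(x)\alpha(x')\alpha(h^2)=-6(x\cdot x')$ on $H^4(X;\Z)_0$, and since $h^4=10\neq0$ makes the intersection form there nondegenerate, $\alpha_0$ is injective. To define $\beta$, I would check $\alpha(x)\in\langle\P',\P''\rangle^\perp$ for $x\in H^4(X;\Z)_{00}$: indeed $\alpha(x)\cdot[\P']=x\cdot q_*p^*[\P']=x\cdot[Q_0]$ by Lemma \ref{lemm:qp-p1p2}, which vanishes because $[Q_0]\in\gamma_X^*H^4(\Gr(2,V_5);\Z)$ (Lemma \ref{lemma:class-q0}), and likewise for $\P''$. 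By Proposition \ref{proposition:hodge-f1-tyav} there is then a unique $\beta(x)\in H^2(\tY_A;\Z)$ with $\tsi^*\iota^*\beta(x)=\alpha(x)$, and $\beta$ is an injective morphism of Hodge structures. Next I would prove $\beta(x)$ is primitive and compute the form: using $\tsi^*\iota^*\tilde h=\alpha(c_2(\cU_X))$ (Lemma \ref{lemma:alpha-c2-u}), $\iota_*1=[\tY_{A,V_5}]=\tilde h$, and $\tsi_*1=1$, one gets $\beta(x)\cdot\tilde h^3=\alpha(x)\cdot(\tsi^*\iota^*\tilde h)^2=x\cdot q_*p^*\tsi^*\iota^*\tilde h^2=6\,x\cdot\gamma_X^*\bsi_2$, which is $0$ for $x\in H^4(X;\Z)_{00}$ by Corollary \ref{corollary:qpsi-h2}; hence $\beta(x)\in H^2(\tY_A;\Z)_0$ and \eqref{eq:bb-form} applies. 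Pushing $\alpha(x)^2\cdot\alpha(h^2)=\int_{F_1(X)}\tsi^*\iota^*(\beta(x)^2)\cdot\alpha(h^2)$ through $\tsi$ and $\iota$ and invoking $\iota_*\tsi_*\alpha(h^2)=3\tilde h^2$ (Corollary \ref{corollary:alpha-h2}) gives $\alpha(x)^2\cdot\alpha(h^2)=3\,\beta(x)^2\tilde h^2=6\,q_B(\beta(x))$, so $q_B(\beta(x))=-x^2$ by \eqref{sta1}. Finally, $\beta$ is an isometric embedding of $H^4(X;\Z)_{00}$ into $H^2(\tY_A;\Z)_0$, both isomorphic to $\Lambda(-1)$ (Proposition \ref{ltf} and the following remark) and so of equal rank and discriminant; an isometric embedding of finite index preserves the discriminant, forcing index $1$, so $\beta$ is an isomorphism of lattices, and since it lowers the Hodge bidegree by $(1,1)$ it is the asserted isomorphism of polarized Hodge structures up to the Tate twist.

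The main obstacle is the two vanishing statements $\alpha(x)\alpha(xh)=0$ and $\beta(x)\cdot\tilde h^3=0$: each looks like a genuine triple-intersection computation on the threefold $F_1(X)$, but both collapse once one pushes forward along $q$ and recognizes the resulting class in $H^2(X;\Z)=\Z h$, respectively in $\gamma_X^*H^4(\Gr(2,V_5);\Z)$ via Corollary \ref{corollary:qpsi-h2}. Getting the bookkeeping of the projective-bundle decomposition $q^*x=\xi\,p^*\alpha(x)+p^*b$ and the repeated projection formulas exactly right is where the care is needed; the conceptual input is entirely contained in Lemmas \ref{lem36}, \ref{lemma:alpha-c2-u}, \ref{lemm:qp-p1p2} and Corollaries \ref{corollary:qpsi-h2}, \ref{corollary:alpha-h2}.
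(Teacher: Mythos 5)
Your proposal is correct and takes essentially the same route as the paper's proof: the same Beauville--Donagi-style expansion of $q^*x$ in the $\P^1$-bundle $p\colon\cL_1(X)\to F_1(X)$ leading to $\alpha(x)^2\cdot\alpha(h^2)=-6x^2$, the same vanishing checks via Lemma~\ref{lemm:qp-p1p2} and Corollaries~\ref{corollary:qpsi-h2} and~\ref{corollary:alpha-h2}, the same construction of $\beta$ through Proposition~\ref{proposition:hodge-f1-tyav}, and the same rank-and-discriminant argument for surjectivity. The only (harmless) difference is bookkeeping: you carry a general correction term $b$ and cancel it, whereas the paper derives $q^*x=p^*\alpha(x)\cdot(q^*h+p^*c_1)$ at the outset from $x\cdot h=0$; note also that your auxiliary argument for $\alpha(x)\alpha(xh)=0$ is redundant, since $xh=0$ already by the definition of $H^4(X;\Z)_0$.
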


\begin{proof}
We follow the argument from \cite{bedo}.\  Since $p$ is a $\P^1$-bundle and $q^*h$  a relative hyperplane class, we may write
\begin{equation*}
\forall x \in H^4(X;\Z)\qquad q^*x = p^*x_1\cdot q^*h+ p^*x_2,
\end{equation*}
where $x_i \in H^{2i}(F_1(X);\Z)$ for $i\in\{1,2\}$.\  We have then
\begin{equation*}
\alpha(x) = x_1.
\end{equation*}
To see what  primitivity of $x$ means, we compute,   using~\eqref{eq5},
\begin{align*}
q^*( x \cdot h ) & =   p^*x_1\cdot q^*h^2+
  p^*x_2\cdot q^*h =
  p^*(x_2 - x_1\cdot c_1) \cdot q^*h - p^*(x_1\cdot c_2).
\end{align*}
Thus $x \cdot h  = 0$ implies  
\begin{equation*}
x_1\cdot c_2 = 0,
\qquad
x_2 = x_1 \cdot c_1,
\end{equation*} 
and   we can rewrite
\begin{equation*}
q^*x = p^*(\alpha(x)) \cdot ( q^*h+p^*c_1 ).
\end{equation*} 
Taking squares and using \eqref{eq5}, we obtain
\begin{equation*}
q^*x^2 = 
p^*(\alpha(x)^2) \cdot (q^*h^2 + 2  p^*c_1 \cdot q^*h +p^*c_1^2) =
p^*(\alpha(x)^2) \cdot (p^*c_1 \cdot  q^*h + p^*(c_1^2 - c_2)) = 
\alpha(x)^2 \cdot c_1.
\end{equation*}
On the other hand, since,  {by Lemma~\ref{lem36},} the degree of $q$ is 6, we have $q^*x^2 = 6x^2$.\ We obtain   $\alpha(h^2)=-c_1$   from \eqref{eq5} and all this proves \eqref{sta1}.

This relation implies  $\alpha_0(x_1)\alpha_0(x_2)\alpha(h^2) = - 6x_1x_2$ for all $x_1,x_2 \in H^4(X;\Z)_0$ and the injectivity of $\alpha_0$   follows from the non-degeneracy of the intersection pairing on $H^4(X;\Z)_0$.

If $x \in H^4(X;\Z)_{00}$, we have, by adjunction and Lemma~\ref{lemm:qp-p1p2},
\begin{equation*}
\alpha(x)\cdot [\P'] = x\cdot q_*p^*[\P']  =x\cdot [Q_0] = 0  
\end{equation*}
because, by~\eqref{eq:relation-h4}, the class $[Q_0]$ is in $\gamma_X^*H^4(\Gr(2,V_5);\Z)$.\  Similarly, we have $\alpha(x)\cdot [\P'']=0$.\   From~\eqref{exse}, we obtain that $\alpha$ maps $H^4(X;\Z)_{00}$ into the subgroup $\tsi^*\iota^* H^2(\tY_A;\Z) $ of $H^2(F_1(X);\Z)$.\ By Proposition~\ref{proposition:hodge-f1-tyav}, this defines an injective map $\beta\colon H^4(X;\Z)_{00} \to H^2(\tY_A;\Z)$ such that
\begin{equation*}
\forall x\in  H^4(X;\Z)_{00}\qquad\alpha(x) = \tsi^*(\iota^*(\beta(x))). 
\end{equation*}
It remains to show that the image of $\beta$ is the primitive cohomology $H^2(\tY_A;\Z)_0$  and that $\beta$ is compatible with the Beauville--Bogomolov form.\ Keeping the assumption  $x \in H^4(X;\Z)_{00}$  {and using Corollary~\ref{corollary:qpsi-h2}}, we have
\begin{multline*}
0 = 
x \cdot 6c_2(V_5/\cU_X) =
x \cdot q_*p^*\tsi^*\iota^*(\tilde{h}^2) =
\alpha(x)\cdot \tsi^*\iota^*(\tilde{h}^2) = \\ =
\tsi^*\iota^*(\beta(x))\cdot  \tsi^*\iota^*(\tilde{h}^2) = 
\iota^*(\beta(x))\cdot \iota^*(\tilde{h}^2) =
\beta(x) \cdot \iota_*\iota^*(\tilde{h}^2) = 
\beta(x) \cdot \tilde{h}^3.
\end{multline*}
This proves   $\beta(x) \in H^2(\tY_A;\Z)_0$ by the definition \eqref{defprime} of the primitive cohomology group.

For the compatibility with the Beauville--Bogomolov form $q_B$,  we observe
\begin{equation*}
-x^2 = 
\frac16\alpha(x)^2\alpha(h^2) =
\frac16\tsi^*\iota^*\beta(x)^2\alpha(h^2) = 
\frac16\beta(x)^2 \cdot \iota_*\tsi_*\alpha(h^2) = 
\frac16\beta(x)^2 \cdot 3\tilde{h}^2 =
q_B(\beta(x)).
\end{equation*}
The first equality is~\eqref{sta1}, the second is the definition of $\beta$, the third follows from   adjunction, the fourth is   Corollary~\ref{corollary:alpha-h2}, and the last is~\eqref{eq:bb-form}.

Finally,  the lattices $H^4(X;\Z)_{00}$ and $H^2(\tY_A;\Z)_0$ both have rank 22 and same discriminant group $(\Z/2\Z)^2$ (\cite[Proposition~5.1]{dims}  or Proposition \ref{ltf} for $H^4(X;\Z)_{00} $, and  \cite[(1.0.9)]{og2} for~$ H^2(\tY_A;\Z)_0$), hence the injective anti-isometry $\beta$ is a bijection.
\end{proof}

\subsection{Periods of GM sixfolds}\label{sec43}
Our aim in this section is to prove Theorem \ref{th32} for a smooth GM sixfold $X$, with associated Lagrangian data $(V_6,V_5,A)$.\ Again, we will provide an explicit isomorphism for general $X$, namely for those satisfying the same assumption~\eqref{assumption-y}---which implies~\eqref{equation:y-a-v}.\  Since $X$ is a sixfold, $\Sigma_1(X) $ is empty and, in contrast with the fourfold case, Lemma~\ref{lemma:singularities}(b) says that $Y_{A,V_5}$ is smooth away from $Y_{A,V_5}^{\ge 2}$, and $\tY_{A,V_5}$ is a smooth threefold. 

The   scheme of $\sigma$-planes on $X$ was described in Theorem~\ref{theorem:f2}(a).\ Under our generality assumption, this description takes the following simple form.

\begin{coro}\label{corollary-f2}Let $X$ be a smooth GM sixfold, with associated Lagrangian data $(V_6,V_5,A)$, such that assumption~\eqref{assumption-y} holds.\ Then,  $F_2^\sigma(X)$ is {a smooth fourfold} and the map $\tsi\colon F_2^\sigma(X) \to \tY_{A,V_5}$ is a $\P^1$-fibration.
\end{coro}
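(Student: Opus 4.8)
The plan is to deduce the corollary directly from Theorem~\ref{theorem:f2}(a), since the standing assumption~\eqref{assumption-y} supplies exactly the two hypotheses that control both the factorization and the smoothness of $F_2^\sigma(X)$. The first thing I would record is the elementary reduction that~\eqref{assumption-y} forces $Y^3_{A,V_5} = \emptyset$: by definition $Y^3_A = Y^{\ge 3}_A \setminus Y^{\ge 4}_A \subset Y^{\ge 3}_A = \emptyset$, and intersecting with the Pl\"ucker hyperplane gives $Y^3_{A,V_5} = Y^3_A \cap \P(V_5) = \emptyset$. With this vanishing in hand, the first assertion of Theorem~\ref{theorem:f2}(a) applies verbatim: the map $\sigma$ factors as
\[
F_2^\sigma(X) \xrightarrow{\ \tsi\ } \tY_{A,V_5} \xrightarrow{\ f_{A,V_5}\ } Y_{A,V_5} \lhra \P(V_5),
\]
with $\tsi$ a $\P^1$-bundle, hence in particular a $\P^1$-fibration. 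This settles the fibration claim of the corollary.

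For smoothness I would invoke the smoothness criterion already contained in Theorem~\ref{theorem:f2}(a), which asserts that $F_2^\sigma(X)$ is smooth if and only if $\bp_X \notin (Y^{\ge 2}_A)^\vee$ and $Y^3_{A,V_5} = \emptyset$. Both conditions are in force: the first is literally the first half of~\eqref{assumption-y}, and the second was just verified. Therefore $F_2^\sigma(X)$ is smooth.

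Finally, to pin down the dimension and irreducibility I would use that~\eqref{assumption-y} implies~\eqref{equation:y-a-v}, so that, as recorded at the beginning of this section (via Lemma~\ref{lemma:singularities}(b)), the double cover $\tY_{A,V_5}$ is a smooth irreducible threefold. Since $\tsi$ realizes $F_2^\sigma(X)$ as a $\P^1$-bundle over this smooth irreducible threefold, $F_2^\sigma(X)$ is itself smooth and irreducible of dimension $3+1 = 4$, i.e.\ a smooth fourfold. I expect essentially no obstacle here: the corollary is a pure specialization of Theorem~\ref{theorem:f2}(a), the only point deserving an explicit word being the set-theoretic implication $Y^{\ge 3}_A = \emptyset \Rightarrow Y^3_{A,V_5} = \emptyset$, which is what simultaneously triggers the $\P^1$-bundle factorization and verifies the second clause of the smoothness criterion.
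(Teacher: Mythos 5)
Your proposal is correct and follows exactly the route the paper intends: the corollary is stated as an immediate specialization of Theorem~\ref{theorem:f2}(a), whose $\P^1$-bundle factorization and smoothness criterion are triggered precisely by~\eqref{assumption-y} (via $Y^{\ge 3}_A=\emptyset \Rightarrow Y^3_{A,V_5}=\emptyset$), combined with the observation made just before the corollary that $\Sigma_1(X)=\emptyset$ for a sixfold and $\tY_{A,V_5}$ is a smooth threefold by Lemma~\ref{lemma:singularities} and~\eqref{equation:y-a-v}. Your write-up makes explicit the same small reductions the paper leaves implicit, so there is nothing to add.
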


We denote by $\P_y  \cong \P^1 $ the fiber of  $\tsi $ over a point $y \in \tY_{A,V_5}$.\ These fibers all have the same cohomology class which we  denote by $[\P] \in H^6(F_2^\sigma(X);\Z)$.\ As before, we let $\iota \colon \tY_{A,V_5} \to \tY_A$ be the canonical embedding.

\begin{prop}\label{proposition:hodge-f2-tyav}
Let $X$ be a smooth GM sixfold, with associated Lagrangian data $(V_6,V_5,A)$, such that~\eqref{assumption-y} holds.\  
The composition  
\begin{equation*}
H^2(\tY_A;\Z) \xrightarrow{\ \iota^*\ } H^2(\tY_{A,V_5};\Z) \xrightarrow{\ \tsi^*\ } H^2(F_2^\sigma(X);\Z) 
\end{equation*}
induces an isomorphism of Hodge structures between  $H^2(\tY_A;\Z)$ and  $[\P]^\perp \subset H^2(F_2^\sigma(X);\Z)$.
\end{prop}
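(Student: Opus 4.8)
The plan is to treat the two maps $\iota^*$ and $\tsi^*$ separately and then compose them.

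First I would check that $\iota^*$ is an isomorphism of Hodge structures by the Lefschetz theorem. Assumption~\eqref{assumption-y} gives $Y_A^{\ge 3} = \emptyset$, so $\tY_A$ is a smooth hyperk\"ahler fourfold, and $\tY_{A,V_5} = f_A^{-1}(Y_A \cap \P(V_5))$ is the zero locus of a section of the line bundle with class $\tilde{h}$, which is ample because $f_A$ is finite. As recalled at the start of Section~\ref{sec43}, $\tY_{A,V_5}$ is then a smooth threefold, hence a smooth ample divisor in the smooth fourfold $\tY_A$; Lemma~\ref{lef}(a), applied with $n=3$ and $k=2<n$, shows that $\iota^*\colon H^2(\tY_A;\Z) \to H^2(\tY_{A,V_5};\Z)$ is an isomorphism, necessarily of Hodge structures.

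Next I would analyze $\tsi^*$ using the $\P^1$-fibration structure from Corollary~\ref{corollary-f2}. Running the Leray spectral sequence $E_2^{p,q} = H^p(\tY_{A,V_5};R^q\tsi_*\Z) \Rightarrow H^{p+q}(F_2^\sigma(X);\Z)$, only the rows $q=0$ and $q=2$ survive, since $R^1\tsi_*\Z = 0$ and $R^0\tsi_*\Z \cong R^2\tsi_*\Z \cong \Z$ with trivial monodromy, and the sole relevant differential is $d_3\colon E_3^{0,2}=\Z \to E_3^{3,0}=H^3(\tY_{A,V_5};\Z)$, given by cup product with the topological obstruction class $\delta$ (the torsion Brauer class of the even Clifford algebra from Theorem~\ref{theorem:f2}(a)). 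In degree~$2$ this produces a short exact sequence $0 \to H^2(\tY_{A,V_5};\Z) \xrightarrow{\ \tsi^*\ } H^2(F_2^\sigma(X);\Z) \to E_\infty^{0,2} \to 0$, where $E_\infty^{0,2} = \ker\bigl(\Z \xrightarrow{\ 1\mapsto\delta\ } H^3(\tY_{A,V_5};\Z)\bigr) \cong \Z$ is torsion-free because $\delta$ is torsion. Thus $\tsi^*$ is injective with torsion-free rank-one cokernel, and $H^2(F_2^\sigma(X);\Z)$ is torsion-free.

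Finally I would identify $\tsi^* H^2(\tY_{A,V_5};\Z)$ with $[\P]^\perp$. Pulled-back classes restrict trivially to the fibers $\P_y$, so $\tsi^* H^2(\tY_{A,V_5};\Z) \subseteq [\P]^\perp$. The fiber-degree pairing $x \mapsto x\cdot[\P]$ kills $\tsi^* H^2(\tY_{A,V_5};\Z)$ and therefore factors through the cokernel $E_\infty^{0,2}\cong\Z$; it is nonzero there because the relative anticanonical class has fiber-degree~$2$, hence it is injective on $E_\infty^{0,2}$. Consequently $[\P]^\perp/\tsi^* H^2(\tY_{A,V_5};\Z) = 0$, so $\tsi^*\colon H^2(\tY_{A,V_5};\Z) \xrightarrow{\ \sim\ } [\P]^\perp$. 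Since $[\P]$ is the class of a fiber, it is an algebraic Hodge class of type $(3,3)$, so $[\P]^\perp$ is a sub-Hodge-structure and $\tsi^*$ is an isomorphism of Hodge structures onto it; composing with $\iota^*$ gives the proposition. The main obstacle will be this last integral identification on the nose, rather than up to a finite-index subgroup: because the $\P^1$-fibration may carry a nontrivial Brauer class, there need be no integral relative hyperplane class of fiber-degree~$1$. Over $\Q$ the statement is immediate from Leray--Hirsch, but integrally it hinges on the spectral-sequence computation that the cokernel of $\tsi^*$ is torsion-free of rank one, together with the injectivity of the fiber-degree map on that cokernel.
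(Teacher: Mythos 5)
Your proposal is correct and takes essentially the same route as the paper: $\iota^*$ is an isomorphism by the Lefschetz theorem (Lemma~\ref{lef}), and the identification $\tsi^*H^2(\tY_{A,V_5};\Z)=[\P]^\perp$ is exactly the content of the paper's (very terse) appeal to the $\P^1$-fibration structure of Corollary~\ref{corollary-f2}, which your Leray spectral sequence computation spells out. One small remark: the torsionness of the obstruction class $\delta$ is not actually needed for the identification with $[\P]^\perp$, since the quotient $H^2(F_2^\sigma(X);\Z)/\tsi^*H^2(\tY_{A,V_5};\Z)\cong E_\infty^{0,2}$ injects into $E_2^{0,2}\cong\Z$ via the fiber-degree (edge) map in any case, making your final injectivity step automatic.
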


\begin{proof}
The map $\iota^*$ is an  isomorphism by Lemma \ref{lef}.\ We then use the fact that $\tsi$ is a $\P^1$-fibration with   fiber class~$[\P]$.
\end{proof}

Let $p\colon \cL_2^\sigma(X) \to F_2^\sigma(X)$ be the universal $\sigma$-plane and let $q\colon \cL_2^\sigma(X) \to X$ be the natural morphism.\ The analogue of the diagram~\eqref{diagram:big4} is
\begin{equation}\label{diagram:big6}
\vcenter{\xymatrix@C=14pt@M=3pt{
&&& \cL_2^\sigma(X) \ar[dlll]_q \ar[drrr]^p \ar@{-->}[dl]^(.4){q'} \ar@{..>}[dr]_(.4){q''}
\\
X && 
\P_X(\cU_X) \ar[ll]_{\pi} \ar[d]_{\rho_1} && 
\,\P_X(\cU_X) \times_{\P(V_5)} Y_{A,V_5} \ar[d] \ar@{_{(}->}[ll] &&
F_2^\sigma(X) \ar[d]^\tsi
\\
&&
\P(V_5) &&
\,Y_{A,V_5} \ar@{_{(}->}[ll] &&
\tY_{A,V_5} \ar[ll]_{f_{A,V_5}} \ar@{^{(}->}[rr]^-\iota && \tY_A,
}}
\end{equation}
 where $\rho_1$ is a fibration in 3-dimensional quadrics, and 
the dashed and dotted arrows are constructed in the same way.

\begin{lemm}\label{lemma:degree-q}
The map $q''$ is   generically finite   of degree $2$ and $q'_*([\cL_2^\sigma(X)]) = 12\rho_1^*\tilde{h}$.\ 
 {In particular, the map $q$ is generically finite of degree $12$.}
\end{lemm}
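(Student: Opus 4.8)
The plan is to analyze the map $q''\colon \cL_2^\sigma(X) \to \P_X(\cU_X) \times_{\P(V_5)} Y_{A,V_5}$ fiberwise, exactly as in the proof of Lemma~\ref{lem36}, but accounting for the fact that the first quadric fibration $\rho_1$ now has three-dimensional quadric fibers. Given a point $(x,v)$ in the fiber product, where $\gamma_X(x) = [U_2]$ with $v \in \P(U_2) \cap Y_{A,V_5}$, the fiber $(q'')^{-1}(x,v)$ is the Hilbert scheme of planes in the quadric $Q_v = \rho_1^{-1}(v)$ passing through~$x$. First I would invoke Lemma~\ref{lemma:fibers-quadric1}: since $\Sigma_1(X) = \emptyset$ for a sixfold, $Q_v \subset \P^4$ has corank $\ell$ at the point $v \in Y^\ell_{A,V_5}$. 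Over the dense open stratum $Y^1_{A,V_5}$, the quadric $Q_v$ is a three-dimensional quadric of corank~$1$, i.e.\ a quadric cone; through a general point~$x$ of such a cone there pass exactly \emph{two} planes lying on the cone (the two rulings through that point). This gives the generic degree~$2$ of $q''$.

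The key computation is then the class $q'_*([\cL_2^\sigma(X)])$. Since $q''$ is generically finite of degree~$2$, the pushforward $q''_*([\cL_2^\sigma(X)])$ equals $2$ times the fundamental class of the target $\P_X(\cU_X) \times_{\P(V_5)} Y_{A,V_5}$, which is a divisor in $\P_X(\cU_X)$ cut out as the preimage under $\rho_1$ of $Y_{A,V_5} \subset \P(V_5)$. As in Lemma~\ref{lem36}, the class of $Y_{A,V_5}$ in $\P(V_5)$ is $6\tilde h$ (it is a sextic hypersurface), so its $\rho_1$-pullback is $6\rho_1^*\tilde h$. Combining, $q'_*([\cL_2^\sigma(X)]) = 2 \cdot 6\rho_1^*\tilde h = 12\rho_1^*\tilde h$ in $H^2(\P_X(\cU_X);\Z)$, since $q' = (\text{inclusion}) \circ q''$ factors the pushforward through the divisor class. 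The degree of $q$ follows formally: $q = \pi \circ q'$, and $\pi$ is the $\P^1$-bundle projection $\P_X(\cU_X) \to X$ while the generic fiber of $\rho_1$ over $x \in X$ meets the degree-$12$ cycle in $12$ points, so $q$ is generically finite of degree $12$.

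I expect the main obstacle to be the careful verification that $q''$ has degree exactly~$2$ rather than some other number. This requires knowing that over the general point of $Y_{A,V_5}$ (i.e.\ over $Y^1_{A,V_5}$) the fiber of $\rho_1$ is a rank-$4$ quadric threefold—a quadric cone with vertex a point—and counting the planes through a general point on it. The two families of planes on a smooth three-dimensional quadric degenerate, on a corank-$1$ quadric, to a single $\P^1$-family of planes (the rulings of the cone), and through a general smooth point of the cone there pass exactly two of these. One should check that the generic point of the fiber product lands in the smooth locus of the cone so the count is genuinely~$2$, and that no extra components of $(q'')^{-1}(x,v)$ appear generically; the higher-corank loci $Y^{\ge 2}_{A,V_5}$ have codimension at least~$1$ in $Y_{A,V_5}$ and so do not affect the generic degree. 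Once this local geometry is pinned down, the rest is a routine projection-formula and Schubert-class bookkeeping identical in structure to Lemma~\ref{lem36}.
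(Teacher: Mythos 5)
Your proposal is correct and follows the paper's own proof essentially step for step: the fiberwise identification of $(q'')^{-1}(x,v)$ with the planes through $x$ on the quadric $Q_v=\rho_1^{-1}(v)$, the degree-$2$ count over the dense stratum $Y^1_{A,V_5}$, and the pushforward computation $q'_*([\cL_2^\sigma(X)])=2\cdot\rho_1^*[Y_{A,V_5}]=12\rho_1^*\tilde{h}$ are exactly the paper's argument. One factual slip in your closing paragraph should be corrected, though it does not affect the count: a \emph{smooth} quadric threefold contains no planes at all, and on the corank-$1$ cone the planes form \emph{two} disjoint $\P^1$-families (the cones over the two rulings of the base quadric surface $\P^1\times\P^1$), not a single $\P^1$-family; the correct picture is the one you state first---one plane from each family through every non-vertex point, hence two in total---which matches the paper's phrasing that there is ``a unique plane of each type through $x$, unless $x$ is the vertex of the cone.''
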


\begin{proof}
Take a point $(x,v) \in \P_X(\cU_X) \times_{\P(V_5)} Y_{A,V_5} \subset \P_X(\cU_X) \subset X \times \P(V_5)$.\ If ${\gamma_X}(x) = [U_2]$, we have $v \in \P(U_2) \cap Y_{A,V_5}$ and $(q'')^{-1}(x,v)$ is the Hilbert scheme of planes in $\rho_1^{-1}(v)$ passing through $x$.\ But $\rho_1^{-1}(v)$ is either a cone over $\P^1\times\P^1$ or a cone over a conic with vertex a line. \ In the first case, there is a unique plane of each type through $x$, unless $x$ is the vertex of the cone, and in the second case, 
there is a unique plane (with multiplicity 2) through any  point of the  cone  {not lying on} the vertex.\ Thus, $q''$ is generically finite of degree 2.

It follows that the class $q'_*([\cL_2^\sigma(X)]) \in H^2(\P_X(\cU_X);\Z)$ is twice the class of the fiber product $\P_X(\cU_X) \times_{\P(V_5)} Y_{A,V_5}$,
hence twice the pullback via $\rho_1$ of the class of $Y_{A,V_5}$, which is equal to $6\rho_1^*\tilde{h}$.
\end{proof}

Geometrically, this means that for a general point $x$ of  a GM sixfold $X$, there are 12 planes passing through $x$ and contained in $X$.

For every $v \in \P(V_5)$, set 
\begin{equation*}
Q_v := \pi(\rho_1^{-1}(v))  = \gamma_X^{-1}(\P(v \wedge V_5)) \subset X.
\end{equation*}
 This is a 3-dimensional quadric.

\begin{lemm}\label{lemm:qp-p1p2a}
Let $y \in \tY_{A,V_5}$ and set $v := f_{A,V_5}(y) \in Y_{A,V_5} \subset \P(V_5)$.\  We have $q_*p^*([{\P}]) = [Q_v] = \gamma_X^*\bsi_3 \in H^6(X;\Z)$. 
 \end{lemm}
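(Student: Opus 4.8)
The plan is to compute $q_*p^*[\P]$ by a direct ``sweeping out'' argument, in the spirit of the proof of Lemma~\ref{lemm:qp-p1p2}, and then to identify the resulting class with $\gamma_X^*\bsi_3$. Since all fibers of the $\P^1$-fibration $\tsi$ are homologous, the class $[\P]$, and hence $q_*p^*[\P]$, may be computed using any fiber $\P_y = \tsi^{-1}(y)$; likewise $[Q_v]=\gamma_X^*\bsi_3$ will turn out to be independent of $v$. I would therefore take $y$ general, so that $v := f_{A,V_5}(y)$ lies in $Y^1_{A,V_5}$; this is possible because $Y^{\ge 2}_{A,V_5}$ is a curve inside the threefold $Y_{A,V_5}$, so $Y^1_{A,V_5}$ is dense.

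First I would note that $p\colon \cL_2^\sigma(X) \to F_2^\sigma(X)$ is the $\P^2$-bundle $\P(\cW_3)$ attached to the tautological rank-$3$ subbundle, hence flat, so that $p^*[\P] = [p^{-1}(\P_y)]$, where $p^{-1}(\P_y)$ is the universal plane over $\P_y \isom \P^1$, an irreducible threefold. By construction $\sigma = f_{A,V_5}\circ \tsi$, so every plane $P$ parameterized by $\P_y = \tsi^{-1}(y)$ satisfies $\sigma(P)=v$, i.e.\ $\gamma_X(P) \subset \P(v\wedge V_5)$, whence $P \subset \gamma_X^{-1}(\P(v\wedge V_5)) = Q_v$. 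Therefore $q$ maps the threefold $p^{-1}(\P_y)$ into the quadric threefold $Q_v$.

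The heart of the matter is to show that this map has degree $1$, so that $q_*[p^{-1}(\P_y)] = [Q_v]$. Via the isomorphism $\pi\colon \rho_1^{-1}(v) \isomto Q_v$, the planes in $\P_y$ become one of the two rulings of planes on $\rho_1^{-1}(v)$, which by Lemma~\ref{lemma:fibers-quadric1}(a) is a corank-$1$ quadric in $\P^4$, i.e.\ a cone with vertex a point over a smooth quadric surface $\P^1\times\P^1 \subset \P^3$. Projecting from the vertex matches one ruling of planes with one ruling of lines on $\P^1\times\P^1$; since a single ruling sweeps out the quadric surface exactly once, the corresponding planes sweep out the cone $Q_v$ exactly once, and a general point of $Q_v$ lies on a unique plane of $\P_y$. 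As both threefolds are irreducible and the map is surjective, $q\vert_{p^{-1}(\P_y)}$ has degree $1$ onto $Q_v$, giving $q_*p^*[\P] = [Q_v]$. This degree-one statement is the main obstacle, and is exactly where the reduction to $v \in Y^1_{A,V_5}$ (so that $Q_v$ is a genuine cone over $\P^1\times\P^1$) is used.

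Finally, to obtain $[Q_v]=\gamma_X^*\bsi_3$, I would recall that $\P(v\wedge V_5)$ is the $\sigma$-space of lines in $\P(V_5)$ through the point $[v]$, whose class in $H^6(\Gr(2,V_5);\Z)$ is the Schubert class $\bsi_3$. Since $X$ is a GM sixfold, it is special, so its Gushel map $\gamma_X\colon X \to \Gr(2,V_5)$ is the branched double cover, in particular a finite (hence flat) morphism of smooth varieties; as $Q_v = \gamma_X^{-1}(\P(v\wedge V_5))$ has the expected codimension $3$, we get $[Q_v] = \gamma_X^*[\P(v\wedge V_5)] = \gamma_X^*\bsi_3$. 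In particular $[Q_v]$ is independent of $v$, which retroactively justifies the reduction to general $y$ made at the outset.
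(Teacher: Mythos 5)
Your proof is correct and follows essentially the same route as the paper's: the planes parameterized by $\P_y$ sweep out $Q_v$ exactly once (a unique plane of the family through a general smooth point), so $q\colon p^{-1}(\P_y)\to Q_v$ is birational and $q_*p^*[\P]=[Q_v]$, with $[Q_v]=\gamma_X^*\bsi_3$ coming directly from $Q_v=\gamma_X^{-1}(\P(v\wedge V_5))$. The only cosmetic difference is that you reduce to general $y$ with $v\in Y^1_{A,V_5}$ (legitimate, since all fibers of $\tsi$ are homologous), whereas the paper argues uniformly for any $y$, noting that the same uniqueness of the plane through a smooth point holds also over $Y^2_{A,V_5}$.
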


\begin{proof}
By the proof of Theorem~\ref{theorem:f2}, the line $\P_y$ parameterizes planes (of  one of the two possible types) on the singular quadric $Q_v$.\ Since these planes cover the whole quadric  and there is a unique such plane through any smooth point of $Q_v$, the map $q\colon p^{-1}(\P_y) \to Q_v$ is birational, hence $q_*p^*([{\P}]) = [Q_v]$.\ By definition of $Q_v$, we have $[Q_v] = \gamma_X^*([\P(v \wedge V_5)]) = \gamma_X^*\bsi_3$.
\end{proof}

\begin{coro}\label{corollary:intersection-q-p}
For any $u\in \P(V_5)$, we have $p_*q^*[Q_u] \cdot [\P] = 2$.
\end{coro}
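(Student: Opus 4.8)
The plan is to push the entire computation off $F_2^\sigma(X)$ and onto $X$ through the correspondence $\cL_2^\sigma(X)$, after which it collapses to Schubert calculus on $\Gr(2,V_5)$.

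First I would exploit the adjointness of the two operations $q_*p^*$ and $p_*q^*$ attached to the correspondence. Since $F_2^\sigma(X)$ is a smooth fourfold (Corollary~\ref{corollary-f2}) and $p$ is the $\P^2$-bundle of the universal $\sigma$-plane, the variety $\cL_2^\sigma(X)$ is smooth of dimension $6$ and $p$ is flat; as $q$ is proper, two applications of the projection formula give
\[
p_*q^*[Q_u]\cdot[\P] \;=\; \int_{\cL_2^\sigma(X)} q^*[Q_u]\cdot p^*[\P] \;=\; [Q_u]\cdot q_*p^*[\P],
\]
the outer terms being understood as top intersection numbers on $F_2^\sigma(X)$ and on $X$ respectively. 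The right-hand factor has already been computed: choosing $y\in\tY_{A,V_5}$ representing $[\P]=[\P_y]$, Lemma~\ref{lemm:qp-p1p2a} gives $q_*p^*[\P]=[Q_v]=\gamma_X^*\bsi_3$, where $v=f_{A,V_5}(y)$.

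Next I would note that the factor $[Q_u]$ has the same shape. By definition $Q_u=\gamma_X^{-1}(\P(u\wedge V_5))$, and $\P(u\wedge V_5)$ is a $\sigma$-space of class $\bsi_3$ for every $u$, so the computation in the proof of Lemma~\ref{lemm:qp-p1p2a} yields $[Q_u]=\gamma_X^*\bsi_3$ independently of $u$. Hence the number we seek is
\[
\gamma_X^*\bsi_3\cdot\gamma_X^*\bsi_3 \;=\; \gamma_X^*(\bsi_3^2) \;=\; \gamma_X^*\bsi_{3,3},
\]
the last equality being Pieri's rule in $\Gr(2,V_5)$, where $(3,3)$ is the only partition of weight $6$ obtained from $\bsi_{3,0}$ by adding a horizontal $3$-strip inside the $2\times 3$ box.

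Finally I would evaluate $\int_X\gamma_X^*\bsi_{3,3}$. Since $\bsi_{3,3}$ is the point class on $\Gr(2,V_5)$, this number equals $\deg(\gamma_X)$. For a GM sixfold the Gushel map is a double cover of $\Gr(2,V_5)$: indeed $X$ has degree $10$ and $h=\gamma_X^*\bsi_1$, so $10=\int_X h^6=\deg(\gamma_X)\int_{\Gr(2,V_5)}\bsi_1^6=5\deg(\gamma_X)$ forces $\deg(\gamma_X)=2$. Therefore $\int_X\gamma_X^*\bsi_{3,3}=2$, which is the asserted value. I do not expect a genuine obstacle here: the only points demanding attention are the smoothness of $\cL_2^\sigma(X)$ (so that the adjunction identity is legitimate) and the determination $\deg(\gamma_X)=2$; everything else is the direct bookkeeping above.
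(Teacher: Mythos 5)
Your proposal is correct and follows essentially the same route as the paper's: adjunction via the correspondence reduces the intersection number to $[Q_u]\cdot[Q_v]$ on $X$, which both you and the paper then evaluate through the double cover $\gamma_X$ together with the Schubert identity $\bsi_3^2=\bsi_{3,3}$ in $\Gr(2,V_5)$. The only (cosmetic) difference is that you rederive $\deg(\gamma_X)=2$ from $h^6=10$ and $\deg \Gr(2,V_5)=5$, whereas the paper simply uses that a GM sixfold is special, so that $\gamma_X$ is the canonical double covering.
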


\begin{proof}
By adjunction, it is enough to prove $[Q_u]\cdot [Q_v] = 2$ for any distinct $u,v\in \P(V_5)$.
Since the quadrics are preimages of the spaces $\P(u\wedge V_5)$ and $\P(v \wedge V_5)$ under the double covering $\gamma_X\colon X \to \Gr(2,V_5)$, 
it is enough to show that the intersection of those spaces is 1, \ie, that~$\bsi_3^2 = 1$,
which follows from Schubert calculus.
\end{proof}

\begin{lemm}\label{lemma:class-pullback}
The class $q_*p^*(p_*q^*[Q_u]\cdot\tsi^*\iota^*\tilde{h}^2) \in H^6(X;\Z)$ is contained in the subgroup
$\gamma_X^*(H^6(\Gr(2,V_5);\Z)) $ of $ H^6(X;\Z)$.
\end{lemm}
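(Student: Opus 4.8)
The plan is to strip off the $\P^1$-fibration $\pi\colon \P_X(\cU_X)\to X$ and then exploit the fact that $H^2(X;\Z)$ and $H^4(X;\Z)$ are entirely pulled back from $\Gr(2,V_5)$. Write $w:=p_*q^*[Q_u]\in H^2(F_2^\sigma(X);\Z)$ and $\xi:=\rho_1^*\tilde h\in H^2(\P_X(\cU_X);\Z)$, so that~\eqref{eq:projective-bundle} reads $\xi^2+c_1(\cU_X)\xi+c_2(\cU_X)=0$ (Chern classes pulled back by $\pi$ being understood). The crucial first step is a chase in the commutative diagram~\eqref{diagram:big6}: the two composites $\cL_2^\sigma(X)\to\P(V_5)$ given by $\rho_1\circ q'$ and by $f_{A,V_5}\circ\tsi\circ p$ (followed by $Y_{A,V_5}\hookrightarrow\P(V_5)$) coincide, and $\iota^*\tilde h=f_{A,V_5}^*(\tilde h\vert_{\P(V_5)})$; pulling back the hyperplane class of $\P(V_5)$ therefore gives $p^*\tsi^*\iota^*\tilde h=q'^*\xi$, whence $p^*(\tsi^*\iota^*\tilde h^2)=q'^*(\xi^2)$.

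Since $p^*$ is a ring homomorphism and $q=\pi\circ q'$, the projection formula yields
\begin{equation*}
q_*p^*\bigl(w\cdot\tsi^*\iota^*\tilde h^2\bigr)=\pi_*q'_*\bigl(p^*w\cdot q'^*\xi^2\bigr)=\pi_*\bigl(\xi^2\cdot q'_*p^*w\bigr).
\end{equation*}
Here $q'_*p^*w$ lies in $H^4(\P_X(\cU_X);\Z)$ for dimension reasons (the proper map $q'$ raises cohomological degree by $2$), so the projective bundle formula lets me write $q'_*p^*w=\pi^*a+\xi\cdot\pi^*b$ with $a\in H^4(X;\Z)$ and $b\in H^2(X;\Z)$. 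Using $\pi_*1=0$ and $\pi_*\xi=1$ together with the Grothendieck relation, one finds $\pi_*\xi^2=-c_1(\cU_X)=\gamma_X^*\bsi_1$ and $\pi_*\xi^3=\gamma_X^*\bsi_2$, both pulled back from $\Gr(2,V_5)$. A final application of the projection formula then gives
\begin{equation*}
q_*p^*\bigl(w\cdot\tsi^*\iota^*\tilde h^2\bigr)=\gamma_X^*\bsi_1\cdot a+\gamma_X^*\bsi_2\cdot b.
\end{equation*}
By Proposition~\ref{intcoh}(b), the map $\gamma_X^*$ is bijective in degrees $2$ and $4$ (both $<6$), so $a\in\gamma_X^*H^4(\Gr(2,V_5);\Z)$ and $b\in\gamma_X^*H^2(\Gr(2,V_5);\Z)$; since $\gamma_X^*$ is a ring homomorphism, the right-hand side lies in $\gamma_X^*H^6(\Gr(2,V_5);\Z)$, as claimed. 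Note that this argument never uses the particular shape of $w$, only that $w\in H^2(F_2^\sigma(X);\Z)$.

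The only genuinely substantive step is the diagram identification $p^*\tsi^*\iota^*\tilde h=q'^*\xi$; everything afterward is formal manipulation with Gysin maps. The points that need care are therefore checking that~\eqref{diagram:big6} commutes on the nose (so that $q'$ is compatible both with $\rho_1$ and with $f_{A,V_5}\circ\tsi\circ p$), and verifying that the pushforwards $q'_*$, $\pi_*$ and the projection-formula identities are valid integrally. The latter holds because, under assumption~\eqref{assumption-y}, the fourfold $F_2^\sigma(X)$ is smooth (Corollary~\ref{corollary-f2}) and hence so is the $\P^2$-bundle $\cL_2^\sigma(X)$, while $H^\bullet(X;\Z)$ is torsion-free (Proposition~\ref{intcoh}(a)). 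The real content, once the reduction $\pi_*(\xi^2\cdot q'_*p^*w)$ is in place, is the structural input that $H^2(X;\Z)$ and $H^4(X;\Z)$ come entirely from the Grassmannian.
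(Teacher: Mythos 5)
Your proof is correct and takes essentially the same route as the paper: both use the commutativity of diagram~\eqref{diagram:big6} and the projection formula to reduce the class to $\pi_*\bigl(\rho_1^*\tilde h^2\cdot q'_*(p^*p_*q^*[Q_u])\bigr)$, decompose the degree-4 class $q'_*(p^*p_*q^*[Q_u])$ on $\P_X(\cU_X)$ via the projective bundle formula, and conclude using the Grothendieck relation~\eqref{eq:projective-bundle} together with Proposition~\ref{intcoh}(b). Your basis-free bookkeeping---computing $\pi_*\xi^2$ and $\pi_*\xi^3$ directly (with the correct sign $c_1(\cU_X)=-h$, where the paper's proof has a harmless sign slip) instead of evaluating on the basis classes $\pi^*h^2$, $\pi^*c_2(\cU_X)$, $\pi^*h\cdot\rho_1^*\tilde h$---is only a cosmetic streamlining.
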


\begin{proof}
Using   diagram~\eqref{diagram:big6}, we can rewrite the class in question as
\begin{equation*}
q_*p^*(p_*q^*[Q_u]\cdot\tsi^*\iota^*\tilde{h}^2) = 
\pi_*(q'_*(p^*p_*q^*[Q_u]) \cdot \rho_1^*\tilde{h}^2).
\end{equation*}
The class $q'_*(p^*p_*q^*[Q_u])$ belongs to $H^4(\P_X(\cU_X);\Z) \isom H^4(X;\Z) \oplus H^2(X;\Z) {\cdot \rho_1^*\tilde{h}}$, hence,  by Proposition~\ref{intcoh}(b), it can be written as a linear combination of the classes $\pi^*h^2$, $\pi^*c_2(\cU_X)$, and $\pi^*h \cdot \rho_1^*\tilde{h}$.\  It is thus enough to show that each of these classes multiplied  by $\rho_1^*\tilde{h}^2$ and pushed forward to $X$ is in the required subgroup.\  From~\eqref{eq:projective-bundle} and the equality $c_1(\cU_X) = h$, one easily obtains
$\pi_*(\pi^*h^2 \cdot \rho_1^*\tilde{h}^2) = h^3$, $\pi_*(\pi^*c_2(\cU_X) \cdot \rho_1^*\tilde{h}^2) = h\cdot c_2(\cU_X)$,
and 
\begin{equation*}
\pi_*(\pi^*h\cdot \rho_1^*\tilde{h}^3) = h^3 - h\cdot c_2(\cU_X).
\end{equation*}
This proves the lemma.
\end{proof}

Consider the morphism 
\begin{equation*} 
\alpha \colon H^6(X;\Z) \lra H^2(F_2^\sigma(X);\Z),
\qquad 
x \longmapsto p_*(q^*x).
\end{equation*} 

The map $p$ is a $\P^2$-fibration for which   $q^*h$ is a relative hyperplane class.\  Therefore, $\cL_2^\sigma(X)$   is isomorphic to the projectivization of a rank-3 {vector bundle}.\  We denote by $c_1$, $c_2$, and $c_3$ {its} Chern classes, so that
\begin{equation}\label{eq:l2-rel}
q^*h^3 + p^*c_1 \cdot q^*h^2 + p^*c_2 \cdot q^*h + p^*c_3 = 0.
\end{equation} 
Multiplying by $q^*h$, one obtains
\begin{equation}\label{eq:l2-rel2}
q^*h^4 + p^*(c_2 - c_1^2) \cdot q^*h^2 + p^*(c_3 - c_1c_2) \cdot q^*h - p^*(c_1c_3) = 0.
\end{equation} 

The primitive and vanishing cohomology subgroups $ H^6(X;\Z)_{00}=H^6(X;\Z)_0\subset  H^6(X;\Z)$    (they are equal by Lemma~\ref{lemma:primitive-vanishing}) and  $H^2(\tY_A;\Z)_0\subset H^2(\tY_A;\Z)$ were defined in \eqref{defh00} and \eqref{defprime}.

\begin{theo}\label{theorem:alpha-x6}
Let $X$ be a smooth   GM sixfold, with associated Lagrangian data $(V_6,V_5,A)$, such that assumption~\eqref{assumption-y} holds.\ We have
\begin{equation}\label{eq:alpha2-x6}
\forall x \in H^6(X;\Z)_{{00}} \qquad \alpha(x)^2\cdot c_2
= 12x^2.
\end{equation}
In particular, the restriction $\alpha_0\colon H^6(X;\Z)_{{00}} \to H^2(F_2^\sigma(X);\Z)$ is injective.\   Furthermore, it induces an isomorphism 
\begin{equation*}
\beta\colon H^6(X;\Z)_{00} \isomlra H^2(\tY_A;\Z)_0
\end{equation*}
compatible with the Beauville--Bogomolov form
\begin{equation*}
\forall x \in H^6(X;\Z)_{00}\qquad q_B(\beta(x)) = x^2,
\end{equation*}
\ie, an isomorphism of polarized Hodge structures.
\end{theo}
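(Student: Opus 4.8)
The plan is to follow the argument of Theorem~\ref{theorem:alpha-x4}, replacing the $\P^1$-bundle $p\colon\cL_1(X)\to F_1(X)$ by the $\P^2$-bundle $p\colon\cL_2^\sigma(X)\to F_2^\sigma(X)$ and exploiting the correspondence $(p,q)$ of diagram~\eqref{diagram:big6}. First I would prove the intersection formula~\eqref{eq:alpha2-x6}. Since $p$ is a $\P^2$-bundle with relative hyperplane class $q^*h$, I write $q^*x = p^*x_1\cdot q^*h^2 + p^*x_2\cdot q^*h + p^*x_3$ with $x_i\in H^{2i}(F_2^\sigma(X);\Z)$, and pushing forward by $p$ gives $\alpha(x) = x_1$. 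By Lemma~\ref{lemma:primitive-vanishing}, an element $x\in H^6(X;\Z)_{00}$ satisfies $x\cdot h = 0$, hence $q^*x\cdot q^*h = 0$; expanding this relation in the basis $\{1,q^*h,q^*h^2\}$ by means of~\eqref{eq:l2-rel} forces $x_2 = \alpha(x)c_1$, $x_3 = \alpha(x)c_2$, and $\alpha(x)c_3 = 0$, so that $q^*x = p^*\alpha(x)\cdot\eta$ with $\eta := q^*h^2 + p^*c_1\cdot q^*h + p^*c_2$. A short computation using~\eqref{eq:l2-rel} and~\eqref{eq:l2-rel2} gives $p_*(\eta^2) = c_2$, whence $p_*((q^*x)^2) = \alpha(x)^2\cdot c_2$; integrating over $F_2^\sigma(X)$ and using $q_*q^*(x^2) = 12\,x^2$ (Lemma~\ref{lemma:degree-q}) yields~\eqref{eq:alpha2-x6}. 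Polarizing to $\alpha(x)\alpha(y)c_2 = 12\,x\cdot y$ and invoking the non-degeneracy of the intersection form on $H^6(X;\Z)_{00}\cong\Lambda(-1)$ (Proposition~\ref{ltf}) gives the injectivity of $\alpha_0$.

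Next I would construct $\beta$. For $x\in H^6(X;\Z)_{00}$, adjunction for the correspondence $(p,q)$ together with Lemma~\ref{lemm:qp-p1p2a} gives $\alpha(x)\cdot[\P] = x\cdot q_*p^*[\P] = x\cdot\gamma_X^*\bsi_3 = 0$, so $\alpha(x)\in[\P]^\perp$. By Proposition~\ref{proposition:hodge-f2-tyav}, $[\P]^\perp = \tsi^*\iota^*H^2(\tY_A;\Z)$ and $\tsi^*\iota^*$ is an isomorphism of Hodge structures onto it; this produces an injective morphism of Hodge structures $\beta\colon H^6(X;\Z)_{00}\to H^2(\tY_A;\Z)$ characterized by $\alpha(x) = \tsi^*\iota^*\beta(x)$. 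To see that $\beta$ takes values in the primitive cohomology, I would compute $2\,\beta(x)\cdot\tilde h^3 = \int_{F_2^\sigma(X)}\alpha(x)\cdot(p_*q^*[Q_u])\cdot\tsi^*\iota^*\tilde h^2$, using $\tsi_*(p_*q^*[Q_u]) = 2$ (Corollary~\ref{corollary:intersection-q-p}) and the projection formula for $\iota$; by adjunction this equals $x\cdot q_*p^*(p_*q^*[Q_u]\cdot\tsi^*\iota^*\tilde h^2)$, which lies in $x\cdot\gamma_X^*(H^6(\Gr(2,V_5);\Z)) = 0$ by Lemma~\ref{lemma:class-pullback}. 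Hence $\beta(x)\in H^2(\tY_A;\Z)_0$ by definition~\eqref{defprime}.

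For the compatibility with the Beauville--Bogomolov form I would rewrite~\eqref{eq:alpha2-x6}, using $\alpha(x) = \tsi^*\iota^*\beta(x)$ and the projection formulas for $\tsi$ and $\iota$, as $12\,x^2 = \alpha(x)^2\cdot c_2 = \beta(x)^2\cdot\iota_*\tsi_*c_2$, and then establish the exact identity $\iota_*\tsi_*c_2 = 6\tilde h^2$ in $H^4(\tY_A;\Z)$. Granting this, \eqref{eq:bb-form} gives $12\,x^2 = 6\,\beta(x)^2\cdot\tilde h^2 = 12\,q_B(\beta(x))$, so $q_B(\beta(x)) = x^2$. Since $H^6(X;\Z)_{00}$ and $H^2(\tY_A;\Z)_0$ are both isometric to $\Lambda(-1)$, of rank $22$ with discriminant group $(\Z/2\Z)^2$ (Proposition~\ref{ltf} and \cite[(4.1.3)]{og6}), the injective isometry $\beta$ is a bijection; being also a morphism of Hodge structures, it is the desired isomorphism of polarized Hodge structures (up to the Tate twist of Theorem~\ref{th32}).

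The main obstacle is the exact evaluation $\iota_*\tsi_*c_2 = 6\tilde h^2$, which supplies the precise constant in the Beauville--Bogomolov comparison exactly as Corollary~\ref{corollary:alpha-h2} did in the fourfold case (note that, because $\tY_{A,V_5}$ is a threefold while $F_2^\sigma(X)$ is a fourfold, the class $c_2$ is genuinely needed: only a class with a nontrivial component in the relative direction of $\tsi$ can reach the top class $\beta(x)^2\cdot\tilde h^2$ of the fourfold $\tY_A$). I expect to handle it by the same very general plus spreading-out strategy: the class $\iota_*\tsi_*c_2$ is algebraic of type $(2,2)$, so for very general $A$ it is a rational combination of $\tilde h^2$ and $c_2(\tY_A)$ (\cite[Proposition~3.2]{og7}), both of which pair with $\beta(x)^2$ proportionally to $q_B(\beta(x))$ by the Fujiki relations; determining the resulting scalar to be $6$ by a direct degree computation and then propagating the identity to all $X$ satisfying~\eqref{assumption-y} by continuity is the delicate point of the argument.
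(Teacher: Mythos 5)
Everything in your proposal up to the Beauville--Bogomolov comparison reproduces the paper's own proof: the $\P^2$-bundle expansion of $q^*x$, the consequences $x_2=\alpha(x)c_1$, $x_3=\alpha(x)c_2$, $\alpha(x)c_3=0$ of primitivity, the computation $(q^*x)^2=\alpha(x)^2\cdot c_2$ via \eqref{eq:l2-rel} and \eqref{eq:l2-rel2} combined with $\deg q=12$, the injectivity by polarization, the construction of $\beta$ from $\alpha(x)\cdot[\P]=0$ together with Proposition~\ref{proposition:hodge-f2-tyav}, and the primitivity of $\beta(x)$ via Corollary~\ref{corollary:intersection-q-p} and Lemma~\ref{lemma:class-pullback} (your formulation $\tsi_*(p_*q^*[Q_u])=2$ is equivalent to the paper's), as is the closing lattice argument. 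The genuine gap is exactly where you flag it: the identity $\iota_*\tsi_*c_2=6\tilde{h}^2$, which the paper proves as Proposition~\ref{prop321} and for which your sketched strategy would not work as described. For very general $A$ you would get $\iota_*\tsi_*c_2=a\tilde{h}^2+b\,c_2(\tY_A)$ with two unknown rational coefficients, and what the theorem requires is the pairing constant $\beta(x)^2\cdot\iota_*\tsi_*c_2=(2a+30b)\,q_B(\beta(x))$, using \eqref{eq:bb-form} and the Fujiki-type relation $c_2(\tY_A)\cdot y^2=30\,q_B(y)$ for $K3^{[2]}$-type fourfolds. The only ``direct degree computation'' readily available, pairing against $\tilde{h}^2$, computes $12a+60b$, which is a linearly independent functional of $(a,b)$ and so cannot determine $2a+30b$; separating $a$ from $b$ would require pairing against $c_2(\tY_A)$ itself, which is not accessible by any straightforward computation. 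Note also the disanalogy with Corollary~\ref{corollary:alpha-h2}, your model: there the unknown class came with the a priori identity $\iota_*\tsi_*([R])+\iota_*\tsi_*([\tau_A(R)])=2\tilde{h}^2$ (because $S$ was a quadric section of $Y_{A,V_5}$), and the very-general argument only served to show the two $\tau_A$-conjugate summands coincide; no analogous a priori identity is produced by your sketch for $c_2$.

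The paper closes this gap by a substantial geometric argument (the ``Nearby Lagrangians'' subsection) that your proposal has no substitute for: it represents $c_2=c_2(\cP)$ by the degeneracy locus $D(B)$ of a generic pencil $\P(B^\perp\cap\bw3V_5)$ of sections of $\cP^\vee$, identifies the fibers of $\widetilde{D}(B)$ over $\omega$ with the $\sigma$-plane schemes $F_2^\sigma(X_\omega)$ of the hyperplane GM fivefolds $X_\omega$, which by Theorem~\ref{theorem:f2}(b) are double covers of the curves $Y^{\ge 2}_{A_\omega,V_5}$, and then uses the pencil of nearby Lagrangians $\{A_p\}$ (Lemmas~\ref{lemma:family-lagrangians} and~\ref{lemma:projection-two-lagrangians}) to show that these curves sweep out $Y_{A_1,V_5}\cap Y_{A_2,V_5}$, where $A_2$ is the Lagrangian of the fourfold $X_B$; since $Y_{A_2,V_5}$ is again a sextic, this yields $\iota_*\tsi_*[D(B)]=6\tilde{h}^2$, the very-general specialization entering only at the last step to resolve a residual two-case ambiguity (birational onto $\tY_{A_1,V_5}\times_{\P(V_5)}Y_{A_2,V_5}$ versus generically $2:1$ onto a section of it). Without this, or an equally effective replacement, your $\beta$ is only shown to be a similitude $q_B(\beta(x))=\mu x^2$ with undetermined $\mu$, and the final step identifying $H^6(X;\Z)_{00}$ with $H^2(\tY_A;\Z)_0$ as polarized Hodge structures does not go through.
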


\begin{proof}
We use the same argument as in the proof of Theorem~\ref{theorem:alpha-x4}.\   Since $p$ is a $\P^2$-bundle and $q^*h$ is a relative hyperplane section, we may write
\begin{equation*}
\forall x \in H^6(X;\Z)\qquad q^*x = p^*x_1\cdot q^*h^2 + p^*x_2\cdot q^*h + p^*x_3,
\end{equation*}
where $x_i \in H^{2i}(F_2^\sigma(X);\Z)$ for $i\in\{1,2,3\}$.\  We have then
$\alpha(x) = x_1$.\
To see what the  {primitivity of} $x$ means, we compute, using~\eqref{eq:l2-rel},
\begin{eqnarray*}
q^*( x \cdot h ) & = &
p^*x_1\cdot q^*h^3 + p^*x_2\cdot q^*h^2 + p^*x_3 \cdot q^*h \\ &= &
p^*(x_2 - x_1\cdot c_1) \cdot q^*h^2 + p^*(x_3 - x_1\cdot c_2) \cdot q^*h  - p^*(x_1\cdot c_3).
\end{eqnarray*}
Thus, the condition $x \cdot h  = 0$ implies  
\begin{equation*}
x_1\cdot c_3 = 0,
\qquad
x_2 = x_1 \cdot c_1,
\qquad
x_3 = x_1 \cdot c_2,
\end{equation*} 
and   we can rewrite
\begin{equation*}
q^*x = p^*(\alpha(x)) \cdot ( q^*h^2 + p^*c_1 \cdot q^*h + p^*c_2 ).
\end{equation*} 
Taking squares and using \eqref{eq:l2-rel} and~\eqref{eq:l2-rel2}, 
 we obtain
\begin{eqnarray*}
(q^*x)^2 & = &
p^*(\alpha(x)^2) \cdot (q^*h^4 + 2p^*c_1 \cdot q^*h^3 + p^*(c_1^2 + 2c_2)\cdot q^*h^2 + 2p^*(c_1c_2)\cdot q^*h + p^*c_2^2) \\ & =&
p^*(\alpha(x)^2) \cdot (p^*c_2 \cdot  q^*h^2 + p^*(c_1c_2 - c_3) \cdot  q^*h + p^*(c_2^2 - c_1c_3)) \\ & = &
\alpha(x)^2 \cdot c_2.
\end{eqnarray*}
On the other hand, by Lemma~\ref{lemma:degree-q}, we have $(q^*x)^2 = 12x^2$.\ This proves~\eqref{eq:alpha2-x6}.\ The injectivity of $\alpha_0$ then follows as in the proof of Theorem~\ref{theorem:alpha-x4}.

Since $x \in H^6(X;\Z)_{00}$, we have
\begin{equation*}
\alpha(x)\cdot [\P] = x\cdot q_*p^*[\P]  =x\cdot [Q_v] = { x \cdot \gamma_X^*\bsi_3} = 0.
\end{equation*}
A combination of this equality with Proposition~\ref{proposition:hodge-f2-tyav} shows that there is an injective   map $\beta\colon H^6(X;\Z)_{00} \to H^2(\tY_A;\Z)$ such that
\begin{equation*}
\alpha(x) = \tsi^*(\iota^*(\beta(x))). 
\end{equation*}
It remains to show that the image of $\beta$ is in the primitive cohomology $H^2(\tY_A;\Z)_0$  and that $\beta$ is compatible with the Beauville--Bogomolov form.

Let $x \in H^6(X;\Z)_{00}$.\  Set $m := \beta(x)\cdot \tilde{h}^3 = \beta(x) \cdot \iota_*\iota^*\tilde{h}^2 = \iota^*\beta(x) \cdot \iota^*\tilde{h}^2$.\  Then 
\begin{equation*}
\alpha(x) \cdot \tsi^*\iota^*\tilde{h}^2 = 
\tsi^*\iota^*\beta(x) \cdot \tsi^*\iota^*\tilde{h}^2 = m[\P].
\end{equation*}
Multiplying this by $p_*q^*[Q_u]$ and using Corollary~\ref{corollary:intersection-q-p}, we obtain
\begin{equation*}
\alpha(x) \cdot (p_*q^*[Q_u] \cdot \tsi^*\iota^*\tilde{h}^2) = 2m. 
\end{equation*}
By adjunction, this is equal to $x \cdot q_*p^*(p_*q^*[Q_u] \cdot \tsi^*\iota^*\tilde{h}^2)$ and by Lemma~\ref{lemma:class-pullback},
the latter is  zero since $x$  {is} in the vanishing cohomology; therefore, $m = 0$.\ This proves   $\beta(x) \in H^2(\tY_A;\Z)_0$.

To show   compatibility with the Beauville--Bogomolov form, we observe
\begin{equation*}
12x^2 = 
\alpha(x)^2c_2 =
\tsi^*\iota^*\beta(x)^2c_2 = 
\beta(x)^2\iota_*\tsi_*c_2.
\end{equation*}
On the other hand, by Proposition \ref{prop321} below, we have $\iota_*\tsi_*c_2 = 6\tilde{h}^2$, hence, by~\eqref{eq:bb-form},
\begin{equation*}
x^2 = \frac1{12}\beta(x)^26\tilde{h}^2 = \frac12\beta(x)^2\tilde{h}^2 = q_B(\beta(x)). 
\end{equation*}

Finally, the lattices $H^6(X;\Z)_{00}$ and $H^2(\tY_A;\Z)_0$ have same rank 22 and same discriminant group $(\Z/2\Z)^2$ (Proposition \ref{ltf} for $H^6(X;\Z)_{00} $ and \cite[(1.0.9)]{og2} for $ H^2(\tY_A;\Z)_0$),    hence the injective  isometry $\beta$ is a bijection.
\end{proof}

\subsection{Nearby Lagrangians}

Our aim here is to prove Proposition~\ref{prop321}, which was used in the proof of Theorem \ref{theorem:alpha-x6}.\ This section was inspired by~\cite[Section~6]{Fe}.

We start with some preparation.\  Let  $A_1$ and $A_2$ be  Lagrangian subspaces
in a symplectic vector space $\symv$, such that the intersection
\begin{equation*}
B := A_1 \cap A_2 
\end{equation*}
has codimension 2 in both $A_1$ and $A_2$.

\begin{lemm}\label{lemma:family-lagrangians}
If $\codim_{A_1}(B) = \codim_{A_2}(B) = 2$, the Lagrangian subspaces $A \subset \symv$ such that 
\begin{equation*}
\codim_{A_1}(A \cap A_1) = \codim_{A_2}(A \cap A_2) \le 1
\end{equation*}
are parameterized by the line $\P(A_1/B) \cong \P(A_2/B) \cong \P^1$.\
Moreover, if $A',A'' \subset \symv$ are two distinct such subspaces, $A' \cap A'' = B$.
\end{lemm}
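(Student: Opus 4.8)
The plan is to pass to the symplectic reduction of $\symv$ along the isotropic subspace $B$ and thereby reduce the whole statement to an elementary $4$-dimensional computation. Writing $2n := \dim\symv$, one has $\dim A_i = n$ and $\dim B = n-2$, and $B$ is isotropic, being contained in the Lagrangian $A_1$. I would therefore form $W := B^\perp/B$, which carries the symplectic form $\omega$ induced from that of $\symv$ and has dimension $(n+2)-(n-2) = 4$. Since $B \subseteq A_i$ gives $A_i = A_i^\perp \subseteq B^\perp$, the images $\overline A_1 := A_1/B$ and $\overline A_2 := A_2/B$ are $2$-dimensional isotropic, hence Lagrangian, planes in $W$, and they are transverse because $A_1 \cap A_2 = B$.

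The first and most important step is to show that every $A$ in the family automatically satisfies $B \subseteq A \subseteq B^\perp$ and that the common codimension equals $1$. The value $0$ is excluded at once: it would force $A \supseteq A_1$ and $A \supseteq A_2$, hence $A = A_1 = A_2$, contradicting $A_1 \ne A_2$. For the value $1$ I would run a Grassmann-dimension count: since $A \cap A_1$ and $A \cap A_2$ each have dimension $n-1$ and both lie inside the $n$-dimensional space $A$, their intersection $A \cap A_1 \cap A_2 = A \cap B$ has dimension at least $(n-1)+(n-1)-n = n-2 = \dim B$, forcing $B \subseteq A$; and then $A = A^\perp \subseteq B^\perp$ comes for free. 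This inclusion is what makes the reduction possible, so I regard it as the crux of the argument.

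With $B \subseteq A \subseteq B^\perp$ established, I would check that $A \mapsto \overline A := A/B$ is a bijection from the family onto the set $\cF$ of Lagrangian planes $L \subset W$ meeting each of $\overline A_1, \overline A_2$ in a line: the intersections $\overline A \cap \overline A_i = (A \cap A_i)/B$ are then $1$-dimensional, and conversely the preimage in $B^\perp$ of any $L \in \cF$ is isotropic of dimension $n$, hence Lagrangian, and lies in the family. To identify $\cF$ with $\P^1$ I would choose a symplectic basis $e_1,e_2,f_1,f_2$ of $W$ with $\overline A_1 = \langle e_1,e_2\rangle$, $\overline A_2 = \langle f_1,f_2\rangle$ and $\omega(e_i,f_j) = \delta_{ij}$; an element of $\cF$ is the span of lines $\langle \ell_1\rangle \subset \overline A_1$ and $\langle \ell_2\rangle \subset \overline A_2$, and isotropy reads $\omega(\ell_1,\ell_2) = 0$, which determines $\ell_2$ uniquely from $\ell_1$. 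Thus $L \mapsto L \cap \overline A_1$ gives $\cF \cong \P(\overline A_1) = \P(A_1/B) \cong \P^1$, and symmetrically $\cF \cong \P(A_2/B)$, yielding the asserted parameterization.

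For the final ``moreover'', given distinct $A', A''$ in the family I would compare their reductions $\overline A', \overline A''$, which are distinct members of $\cF$ and so meet $\overline A_1$ (resp.\ $\overline A_2$) in distinct lines. Writing $\overline A' = \langle \ell_1',\ell_2'\rangle$ and $\overline A'' = \langle \ell_1'',\ell_2''\rangle$ in the decomposition $W = \overline A_1 \oplus \overline A_2$, any $v \in \overline A' \cap \overline A''$ has $\overline A_1$-component a common multiple of $\ell_1'$ and $\ell_1''$ and $\overline A_2$-component a common multiple of $\ell_2'$ and $\ell_2''$; since these are distinct lines, both components vanish and $v = 0$. Hence $\overline A' \cap \overline A'' = 0$, and taking preimages in $B^\perp$ gives $A' \cap A'' = B$. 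I expect no serious obstacle beyond bookkeeping; the only place demanding care is verifying that the reduction genuinely lands in and covers $\cF$, i.e.\ that the Lagrangian condition upstairs and downstairs correspond, which is exactly the content of the first step.
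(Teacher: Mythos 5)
Your proof is correct and follows essentially the same route as the paper: isotropic reduction modulo $B$ to the four-dimensional symplectic space $B^\perp/B$, where the family becomes the set of Lagrangian lines meeting the two skew Lagrangian lines $\P(A_1/B)$ and $\P(A_2/B)$, parameterized via the non-degenerate pairing between $A_1/B$ and $A_2/B$. The only (harmless) divergences are that you establish $B \subseteq A \subseteq B^\perp$ by a Grassmann dimension count where the paper first notes $A = (A\cap A_1)+(A\cap A_2) \subseteq A_1+A_2 = B^\perp$ and then deduces $B \subseteq A^\perp = A$, and that you prove the final statement $A' \cap A'' = B$ by a direct componentwise computation in $A_1/B \oplus A_2/B$ where the paper invokes the disjointness of distinct lines in one ruling of a smooth quadric surface.
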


\begin{proof}
 {Since} $B = A_1 \cap A_2$,  {we have} $B^\perp = A_1 + A_2 \subset \symv$.\  
 {The vector space} $B^\perp/B$ is symplectic   of dimension 4 and for any Lagrangian subspace $A \subset \symv$, 
\begin{equation*}
\bar{A} := (A \cap B^\perp)/(A \cap B)
\end{equation*}
is a Lagrangian subspace in $B^\perp/B$
(called the $B$-isotropic reduction of $A$).\  Since the   subspaces $\bar{A}_i = A_i/B$ do not intersect, they give a Lagrangian direct sum decomposition
\begin{equation*}
B^\perp/B = \bar{A}_1 \oplus \bar{A}_2.
\end{equation*}
Assume now   $\codim_{A_i}(A \cap A_i) \le 1$.\  Note that $A \cap A_1 \ne A \cap A_2$ (since otherwise $A_1 \cap A_2$ would be at most 1-codimensional),
hence $A = (A \cap A_1) + (A \cap A_2)$.\  This implies $A \subset A_1 + A_2 = B^\perp$,   hence $B \subset A^\perp = A$.\  Thus $\bar{A} = A/B \subset B^\perp/B$
and in particular,  $A$ is determined by the space $\bar{A}$ as its preimage under the linear projection $B^\perp \to  B^\perp/B$.

The conditions $\codim_{A_i}(A \cap A_i) \le 1$ imply that the line $\P(\bar{A})$ intersects each skew line  $\P(\bar{A}_i)$ in $\P(B^\perp/B)$.\ Finally, the pairing  between $ \bar{A}_1$ and $\bar{A}_2$  induced by the symplectic form on $B^\perp/B$ is non-degenerate, hence for every point of $\P(\bar{A}_1)$,
there is a unique point in $\P(\bar{A}_2)$ such that the line joining them is Lagrangian.\  Thus, the set of $\bar{A}$ (and hence the set of $A$ as well)
is parameterized by either of the lines $\P(\bar{A}_1)$ or $\P(\bar{A}_2)$. 

It follows from the above description  that the lines $\P(\bar{A})$ form one   connected component of the scheme of lines on a smooth quadric in $\P(B^\perp/B)$
(the lines $\P(\bar{A}_1)$ and $\P(\bar{A}_2)$ being   in the other component).\  In particular, two such distinct lines do not intersect, hence their preimages
in $\P(B^\perp)$ intersect along $\P(B)$.\  This proves the second statement.
\end{proof}

Assume now that the symplectic vector space $\symv$ is $\bw3V_6$.\  Let $B \subset \bw3V_6$ be an isotropic subspace of dimension 8
(hence of codimension 2 in any Lagrangian containing it).\  Set
\begin{equation*}
Y_B := \{ v \in \P(V_6) \mid B \cap (v \wedge \bw2V_6) \ne 0 \}
 {\qquad\text{and}\qquad 
Y_{B,V_5} := Y_B \cap \P(V_5).}
\end{equation*}
\begin{rema}
A parameter count shows $\dim(Y_B) \le 2$  {for   general $B$}.\  In fact, this is even true for a general $B$ inside a given Lagrangian subspace which contains no decomposable vectors.
\end{rema}

Let $A_1,A_2 \subset \bw3V_6$ be Lagrangian subspaces with no decomposable vectors such that $B := A_1 \cap A_2$ has codimension 2 in each of them.\ Consider the family $\{A_p\}_{p \in \P^1}$ of Lagrangian subspaces discussed in Lemma~\ref{lemma:family-lagrangians} and set
\begin{equation*}
\cY^{\ge 2}_{A_1,A_2} :=  {\{(v,p) \in \P(V_6) \times \P^1 \mid v \in Y_{A_p}^{\ge 2} \}}
\subset \P(V_6) \times \P^1.
\end{equation*}
 {We denote by $\pr\colon \cY^{\ge 2}_{A_1,A_2} \to \P(V_6)$ the first projection and set $\cY^{\ge 2}_{A_1,A_2;V_5} := \cY^{\ge 2}_{A_1,A_2} \times_{\P(V_6)} \P(V_5)$.}

\begin{lemm}\label{lemma:projection-two-lagrangians}
Let $A_1,A_2 \subset \bw3V_6$ be Lagrangian subspaces with no decomposable vectors such that $B = A_1 \cap A_2$ has codimension $2$ in each of them.\  If $\dim (Y_B) \le 2$, we have 
\begin{equation*}
Y_{A_1} \cap Y_{A_2} = \pr(\cY^{\ge 2}_{A_1,A_2}) 
\end{equation*}
and the map $\pr\colon \cY^{\ge 2}_{A_1,A_2} \to Y_{A_1} \cap Y_{A_2}$ is an isomorphism  over a dense open subset of $Y_{A_1} \cap Y_{A_2}$.\ Moreover, for any $V_5 \subset V_6$ such that $\dim (Y_{B,V_5} ) \le 1$, the map 
$\pr\colon \cY^{\ge 2}_{A_1,A_2;V_5} \to Y_{A_1,V_5} \cap Y_{A_2,V_5}$ is again an isomorphism  over a dense open subset.
\end{lemm}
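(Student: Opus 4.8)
The plan is to use that for every $[v]\in\P(V_6)$ the subspace $L_v:=v\wedge\bw2{V_6}\subset\bw3{V_6}$ is itself Lagrangian — it is $10$-dimensional and isotropic, since $(v\wedge\alpha)\wedge(v\wedge\beta)=v\wedge v\wedge\alpha\wedge\beta=0$ — so that $[v]\in Y_A$ means $A\cap L_v\ne 0$ and $[v]\in Y^{\ge 2}_A$ means $\dim(A\cap L_v)\ge 2$. Throughout I would work with the pencil $\{A_p\}_{p\in\P^1}$ of Lemma~\ref{lemma:family-lagrangians}, recalling that each $A_p$ contains $B$, meets $A_1$ and $A_2$ in codimension $1$, and that distinct members meet only in $B$; writing bars for images in the symplectic reduction $B^\perp/B$, the parameter $p$ is identified with the point $\P(\bar A_p)\cap\P(\bar A_1)\in\P(A_1/B)\cong\P^1$.

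First I would dispose of the inclusion $\pr(\cY^{\ge2}_{A_1,A_2})\subset Y_{A_1}\cap Y_{A_2}$: if $\dim(A_p\cap L_v)\ge 2$, then intersecting this space with the codimension-$1$ subspace $A_p\cap A_1\subset A_p$ leaves something nonzero, so $A_1\cap L_v\supset A_p\cap A_1\cap L_v\ne 0$ and $[v]\in Y_{A_1}$; symmetrically $[v]\in Y_{A_2}$. Since $\cY^{\ge 2}_{A_1,A_2}$ is closed in $\P(V_6)\times\P^1$ and $\P^1$ is complete, the image $\pr(\cY^{\ge2}_{A_1,A_2})$ is closed.

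The heart of the argument is the reverse inclusion together with injectivity over the open locus $V$ of points $[v]\in Y_{A_1}\cap Y_{A_2}$ with $[v]\notin Y^{\ge 2}_{A_1}\cup Y^{\ge 2}_{A_2}\cup Y_B$. For such $v$ the intersections $A_i\cap L_v=\langle a_i\rangle$ are lines with $a_1,a_2\notin B$ (else $[v]\in Y_B$). The key point is that $a_1,a_2\in L_v$ and $L_v$ is Lagrangian, so $a_1\wedge a_2=0$; hence $\bar a_1,\bar a_2$ span an isotropic line in $B^\perp/B$ meeting the two skew lines $\P(\bar A_1)$ and $\P(\bar A_2)$ (at $[\bar a_1]$ and $[\bar a_2]$). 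By the description in the proof of Lemma~\ref{lemma:family-lagrangians}, this is exactly one of the lines $\bar A_p$, so the corresponding $A_p$ contains both $a_1$ and $a_2$ and $\dim(A_p\cap L_v)\ge 2$, giving $[v]\in\pr(\cY^{\ge2}_{A_1,A_2})$. This $p$ is unique: any $p'$ with $\dim(A_{p'}\cap L_v)\ge 2$ gives $A_{p'}\cap A_1\cap L_v\ne 0$ by the codimension count above, hence $a_1\in A_{p'}$; but $a_1\notin B$ lies in only one member of the pencil, namely the one with $\P(\bar A_{p'})\cap\P(\bar A_1)=[\bar a_1]$, so $p'=[\bar a_1]=p$. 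Thus $\pr$ is bijective over $V$, and the inverse $v\mapsto(v,[\bar a_1(v)])$ is a morphism (the line $A_1\cap L_v$ defines a sub-line-bundle of $A_1\otimes\cO_V$, and $a_1(v)\notin B$ lets us project to $\P(A_1/B)$), so $\pr$ is an isomorphism over $V$.

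Finally I would check that $V$ is dense and conclude. The scheme $Y_{A_1}\cap Y_{A_2}$ is a complete intersection of two integral sextics in $\P(V_6)=\P^5$, hence pure of dimension $3$, while the removed loci $Y^{\ge 2}_{A_i}$ are surfaces and $\dim Y_B\le 2$ by hypothesis; since $3>2$, $V$ is dense. The set-theoretic equality then follows because the closed set $\pr(\cY^{\ge2}_{A_1,A_2})$ contains the dense subset $V$ of $Y_{A_1}\cap Y_{A_2}$, hence its closure, and the reverse inclusion of the second paragraph gives equality; the isomorphism over $V$ is the second assertion. The $V_5$ statement is proved verbatim: $Y_{A_1,V_5}\cap Y_{A_2,V_5}$ is a complete intersection of two sextics in $\P(V_5)=\P^4$, pure of dimension $2$, the loci $Y^{\ge 2}_{A_i}\cap\P(V_5)$ have dimension $\le 1$, and $\dim Y_{B,V_5}\le 1$ by assumption, so the corresponding good locus is again dense. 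The main obstacle is the third paragraph: the crux is to recognize that the Lagrangian property of $L_v$ forces $a_1\wedge a_2=0$, which is precisely what places the line through $[\bar a_1]$ and $[\bar a_2]$ in the ruling parameterizing $\{\bar A_p\}$ rather than in the opposite ruling containing $\P(\bar A_1),\P(\bar A_2)$, and then to extract uniqueness of $p$ from the fact that distinct pencil members meet only in $B$.
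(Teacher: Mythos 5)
Your proof is correct and follows essentially the same route as the paper's: the easy inclusion via the codimension-one intersections $A_p \cap A_i$, the construction over the good locus of the unique pencil member $A_p = B \oplus \langle a_1, a_2\rangle$ (using that $v \wedge \bw2V_6$ is Lagrangian, so $a_1 \wedge a_2 = 0$), uniqueness of $p$ from the pencil structure of Lemma~\ref{lemma:family-lagrangians}, and a dimension count for density. The only minor divergence is that the paper proves that off $Y_B$ every point of $Y_{A_1} \cap Y_{A_2}$ automatically lies in $Y^1_{A_1} \cap Y^1_{A_2}$ (via an orthogonality argument showing $A_1 = B \oplus (A_1 \cap (v\wedge\bw2V_6))$ would force $A_2 \cap (v\wedge\bw2V_6) \subset B$), so its good open set is exactly the complement of $Y_B$, whereas you excise $Y^{\ge 2}_{A_1} \cup Y^{\ge 2}_{A_2}$ by hand — which is harmless, since these are surfaces (and their hyperplane sections curves) while every component of the intersection has dimension at least $3$ (resp.\ at least $2$ in $\P(V_5)$) by Krull, which is all your density argument actually needs, so your unproved purity claims are dispensable.
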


\begin{proof}
Let us prove $\pr(\cY^{\ge 2}_{A_1,A_2}) \subset Y_{A_1} \cap Y_{A_2}$.\  If $A_p$ is any of the Lagrangian spaces in the family and $v \in Y^{\ge 2}_{A_p}$,
we have $\dim(A_p \cap (v \wedge \bw2V_6)) \ge 2$.\  But $\codim_{A_i}(A_p \cap A_i)  \le 1$, hence 
$A_i \cap (v \wedge \bw2V_6) \ne 0$, so $v \in Y_{A_i}$ both for $i = 1$ and $i = 2$.

Since $\cY_{A_1,A_2}^{\ge 2}$ is proper, it remains to show  {that $\pr$ is an isomorphism over a dense open subset}.\ Since $Y_{A_1} $ and $ Y_{A_2}$ are  distinct hypersurfaces in $\P(V_6) = \P^5$, any irreducible component
of their intersection   has dimension at least 3, so it is enough to show that the map 
$\pr\colon \cY^{\ge 2}_{A_1,A_2} \to Y_{A_1} \cap Y_{A_2}$ is an isomorphism over the complement of $Y_B$.  

Let $v \in (Y_{A_1} \cap Y_{A_2}) \setminus Y_B$.\  We first show that $v$ is   a smooth point of $Y_{A_i}$.\ 
Assume to the contrary $v \in Y^{\ge 2}_{A_1}$.\  Then, $\dim(A_1 \cap (v \wedge \bw2V_6)) \ge 2$ but $B \cap (v \wedge \bw2V_6) = 0$.\
It follows that 
\begin{equation*}
A_1 = B \oplus (A_1 \cap (v \wedge \bw2V_6)) 
\end{equation*}
(and in particular, the second summand is 2-dimensional).\  On the other hand, take any non-zero   $a \in A_2 \cap (v \wedge \bw2V_6)$.\ Then, $a$ is orthogonal to both summands in the above equation (since the first summand is contained in $A_2$ and the second in $v \wedge \bw2V_6$).\ Therefore, $a \in A_1^\perp = A_1$, hence $a \in A_1 \cap A_2 = B$ and $v \in Y_B$, a contradiction.\ The same argument works for $A_2$ instead of $A_1$.

We now know that both spaces $A_i \cap (v \wedge \bw2V_6)$ are one-dimensional.\  If $a_1$ and $a_2$ are   generators, their projections  to $B^\perp/B$ are linearly independent  (otherwise, $v \in Y_B$).\ Furthermore, 
\begin{equation*}
A := B \oplus \langle a_1, a_2 \rangle
\end{equation*}
is a Lagrangian subspace in $\bw3V_6$ and its intersections with  $A_1$ and $A_2$ are both 9-dimensional.\ Therefore, $A = A_p$ for some $p \in \P^1$ and, since  $\langle a_1,a_2 \rangle \subset A \cap (v \wedge \bw2V_6)$, we obtain $(v,p) \in \cY^{\ge 2}_{A_1,A_2}$ and $v \in \pr(\cY^{\ge 2}_{A_1,A_2})$.

Now let $(v,p) \in \cY^{\ge 2}_{A_1,A_2}$ with $v \notin Y_B$.\  The space $A_p$ intersects $v \wedge \bw2V_6$ away from~$B$, hence, by the second part of Lemma~\ref{lemma:family-lagrangians}, $p$ is uniquely determined by $v$.\ This means that the map $\pr$ is an isomorphism over the complement of $Y_B$.

Finally, if a hyperplane $V_5 \subset V_6$ satisfies $\dim(Y_{B,V_5}) \le 1$, 
 the subset $(Y_{A_1,V_5} \cap Y_{A_2,V_5}) \setminus Y_{B,V_5}$ is  dense open in $Y_{A_1,V_5} \cap Y_{A_2,V_5}$ and   the map $\pr$ is an isomorphism over it.
\end{proof}

Let now   $X$ be a smooth special GM sixfold such that~\eqref{assumption-y} holds, with Lagrangian subspace $A_1 \subset \bw3V_6$   and Pl\"ucker hyperplane $V_5 \subset V_6$.\  Since $A_1 \cap \bw3V_5 = 0$, the canonical projection $\bw3V_6 \to \bw3V_6/\bw3V_5 \cong \bw2V_5$ induces an isomorphism $A_1 \cong \bw2V_5$.

Recall that $\cL_2^\sigma(X) = \P_{F_2^\sigma(X)}(\cP)$, where $\cP$ is a rank-3 vector bundle on $F_2^\sigma(X)$ 
and the map $q$ {in~\eqref{diagram:big6}} is induced by an embedding of vector bundles $\cP \to (\C \oplus \bw2V_5) \otimes \cO_{F_2^\sigma(X)}$.\ The composition of the above embedding with the projection to $\bw2V_5 \otimes \cO_{F_2^\sigma(X)}$ is still a monomorphism of vector bundles (since planes on $X$ do not pass through the vertex of the cone $\CGr(2,V_5)$).

The  vector bundle $\cP^\vee$ is  globally generated by its space $\bw3V_5 \cong \bw2V_5^\vee$ of global sections; 
therefore, for $\omega $ general in $ \P(\bw3V_5)$,  the zero-locus in $F_2^\sigma(X)$ of $\omega$ viewed as an element  of $ H^0(F_2^\sigma(X),\cP^\vee)$  has dimension 1  
and the set of $\omega$ such that this dimension jumps has codimension~2 or more.\ Thus, for a general choice of a line $\P^1 \subset \P(\bw3V_5)$, the zero-locus is 1-dimensional for every point $\omega \in \P^1$.

Choose a general codimension-2 subspace $B \subset A_1$ such that
\begin{equation*}
\begin{aligned}
& \bullet X_B := X \times_{\P(\sbw2V_5)} \P(B)
\quad\textnormal{is a smooth special fourfold},\\
& \bullet \dim (Y_{B,V_5}) \le 1, \\
&\bullet \textnormal{for any $\omega \in B^\perp \cap \bw3V_5$ the zero-locus of $\omega$ in $F_2^\sigma(X)$ is $1$-dimensional.}
 \end{aligned}
\end{equation*} 
Note that $\dim(B^\perp \cap \bw3V_5) = 2$ (the dimension is obviously at least 2 and it is at most 2 since $A_1 \cap \bw3V_5 = 0$).\
 {By~\cite[{Proposition~3.14(a)}]{DK}}, the Lagrangian subspace of the fourfold $X_B$ is 
\begin{equation*}
A_2 := B \oplus (B^\perp \cap \bw3V_5).
\end{equation*} 
 Each $\omega \in B^\perp \cap \bw3V_5$ determines a hyperplane in $\P(\bw2V_5)$ containing $\P(B)$.\  We denote by~$X_\omega$
the corresponding hyperplane section of $X$.\  For all $\omega$, we have inclusions
\begin{equation*}
X_B \subset X_\omega \subset X  
\end{equation*}
and every $X_\omega$ is a special GM fivefold  which is smooth for general $\omega$.\ 
We set
\begin{equation*}
\widetilde{D}(B) := \{ (\Pi,\omega) \mid \Pi \subset X_\omega,\ \omega \in \P(B^\perp \cap \bw3V_5) \} \subset F_2^\sigma(X) \times \P^1.
\end{equation*}
 {Let $\pr\colon \widetilde{D}(B) \to F_2^\sigma(X)$ be the projection.\  It}
 gives a birational map $\widetilde{D}(B) \to D(B)$, where $D(B) \subset F_2^\sigma(X)$ is the degeneracy locus of the morphism $(B^\perp \cap \bw3V_5) \otimes \cO_{F_2^\sigma(X)} \to \cP^\vee$.\  It satisfies
\begin{equation*}
[D(B)] = c_2(\cP)
\end{equation*}
{in $H^4(F_2^\sigma(X);\Z)$.}

\begin{prop}\label{prop321}
We have $\iota_*\tsi_*c_2(\cP) = 6\tilde{h}^2$  in $H^4( \tY_{A_1};\Z)$.
\end{prop}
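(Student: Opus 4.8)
The plan is to identify the cycle $\tsi_*c_2(\cP)$ on the threefold $\tY_{A_1,V_5}$ explicitly and then push it forward by $\iota$. Since $[D(B)]=c_2(\cP)$ and the projection $\pr\colon\widetilde{D}(B)\to D(B)$ is birational, I have $\tsi_*c_2(\cP)=(\tsi\circ\pr)_*[\widetilde{D}(B)]$, so everything reduces to the cycle identity
\begin{equation*}
\tsi_*c_2(\cP)=6\,\iota^*\tilde h\in H^2(\tY_{A_1,V_5};\Z).
\end{equation*}
Granting this, the proposition follows by the projection formula: $\iota_*(6\iota^*\tilde h)=6\tilde h\cdot\iota_*1=6\tilde h\cdot\tilde h=6\tilde h^2$, where I use that $\iota_*1=[\tY_{A_1,V_5}]=\tilde h$ because $\tY_{A_1,V_5}$ is the $f_{A_1}$-preimage of a hyperplane section of $Y_{A_1}$.

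To establish the cycle identity I would first analyse $\widetilde{D}(B)$ through the nearby Lagrangians. A point of $\widetilde{D}(B)$ is a pair $(\Pi,\omega)$ with $\Pi$ a $\sigma$-plane contained in the special fivefold $X_\omega$, whose Lagrangian is $A_\omega:=(A_1\cap\omega^\perp)\oplus\C\omega$. A direct check (using $A_1\cap\bw3V_5=0$, the fact that $\omega\in B^\perp\cap\bw3V_5\subset A_2$, and $A_1\cap A_2=B$) shows $\codim_{A_1}(A_\omega\cap A_1)=\codim_{A_2}(A_\omega\cap A_2)=1$, so $\{A_\omega\}_{\omega\in\P(B^\perp\cap\bw3V_5)}$ is exactly the family $\{A_p\}$ of Lemma~\ref{lemma:family-lagrangians} (with $A_1,A_2$ themselves in the opposite ruling). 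By Theorem~\ref{theorem:f2}(b) applied to each $X_\omega$, the fibre $\widetilde{D}(B)_\omega=F_2^\sigma(X_\omega)$ is the double cover $\tY^{\ge2}_{A_\omega,V_5}$ of the curve $Y^{\ge2}_{A_\omega,V_5}$; letting $\omega$ vary produces a $2:1$ map $\Phi\colon\widetilde{D}(B)\to\cY^{\ge2}_{A_1,A_2;V_5}$. Composing with the birational projection $\pr\colon\cY^{\ge2}_{A_1,A_2;V_5}\to Y_{A_1,V_5}\cap Y_{A_2,V_5}$ of Lemma~\ref{lemma:projection-two-lagrangians}, I find that $\widetilde{D}(B)$, hence $D(B)$, maps $2:1$ onto the surface $Y_{A_1,V_5}\cap Y_{A_2,V_5}$.

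Next I would match this with the structure coming from $\tsi$. For a $\sigma$-plane $\Pi$ the value $\sigma([\Pi])=v\in\P(V_5)$ is the same whether $\Pi$ is viewed on $X$ or on $X_\omega$, so the point $v$ obtained above equals $f_{A_1,V_5}(\tsi([\Pi]))$; thus $(f_{A_1,V_5}\circ\tsi)|_{D(B)}$ is the $2:1$ map just described. Since $f_{A_1,V_5}$ is itself a double cover, the key point is that the two $\sigma$-planes of $X_\omega$ over a general $v$ (the two components of the corank-$2$ quadric $Q_v\cap H_\omega$) lie in the two distinct rulings of planes of the corank-$1$ sixfold quadric $Q_v$, i.e.\ correspond to the two sheets of $f_{A_1,V_5}^{-1}(v)$. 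Granting this, $\tsi|_{D(B)}$ is birational onto its image and $\tsi(D(B))=f_{A_1,V_5}^{-1}(Y_{A_1,V_5}\cap Y_{A_2,V_5})$ with multiplicity one. Because $A_2$ has no decomposable vectors, $Y_{A_2}$ is a genuine sextic, so $Y_{A_1,V_5}\cap Y_{A_2,V_5}$ is cut on $Y_{A_1,V_5}$ by a degree-$6$ equation and has class $6h$; pulling back by the double cover gives $\tsi_*c_2(\cP)=f_{A_1,V_5}^*(6h)=6\iota^*\tilde h$, as wanted.

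I expect the main obstacle to be the "two distinct rulings" compatibility of the previous paragraph, equivalently the assertion that $\tsi|_{D(B)}$ has degree $1$ rather than $2$: this is precisely what fixes the constant $6$, and it amounts to matching the involution of O'Grady's double cover $f_{A_1,V_5}$ with the fivefold double covers $\tY^{\ge2}_{A_\omega,V_5}$ by a local analysis of the two planes of $Q_v\cap H_\omega$ inside the cone $Q_v$. A secondary technical point is verifying that $D(B)$, the degeneracy locus of $(B^\perp\cap\bw3V_5)\otimes\cO_{F_2^\sigma(X)}\to\cP^\vee$, is generically reduced and contains no fibre of the $\P^1$-bundle $\tsi$; this is where the genericity hypotheses on $B$ (smoothness of $X_B$, $\dim(Y_{B,V_5})\le1$, one-dimensionality of the zero loci) enter, and they also guarantee the applicability of Lemma~\ref{lemma:projection-two-lagrangians}. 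Should the exact multiplicity prove delicate, a fallback is to combine the identification of $\tsi(D(B))$ as a subvariety of $f_{A_1,V_5}^{-1}(Y_{A_1,V_5}\cap Y_{A_2,V_5})$ with the very-general plus continuity argument used in Corollary~\ref{corollary:alpha-h2}, pinning down the proportionality of $\iota_*\tsi_*c_2(\cP)$ to $\tilde h^2$ by a single intersection number.
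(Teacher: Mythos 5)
Your proposal follows the paper's proof of Proposition~\ref{prop321} almost step for step: the paper likewise reduces everything to $\widetilde{D}(B)$, uses \cite[Proposition~3.14(a)]{DK} to identify the pencil $\{A_\omega\}$ with the family of Lemma~\ref{lemma:family-lagrangians}, invokes Theorem~\ref{theorem:f2}(b) to describe the fibers $\widetilde{D}_\omega$ as the double covers $\tY^{\ge 2}_{A_\omega,V_5}$, and applies Lemma~\ref{lemma:projection-two-lagrangians} to conclude that $\widetilde{D}(B)\to\P(V_5)$ is generically finite of degree~$2$ onto $Y_{A_1,V_5}\cap Y_{A_2,V_5}$. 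The single point of divergence is exactly the step you flag as your main obstacle: whether $\tsi\vert_{\widetilde{D}(B)}$ is birational onto the full preimage $\tY_{A_1,V_5}\times_{\P(V_5)}Y_{A_2,V_5}$ or generically $2:1$ onto a section of it. The paper does \emph{not} settle this dichotomy by a rulings analysis; it simply treats both cases, observing that in the birational case one gets $\iota_*\tsi_*[D(B)]=6\tilde{h}^2$ directly, while in the second case $\iota_*\tsi_*([D(B)])+\tau_{A_1}^*\bigl(\iota_*\tsi_*([D(B)])\bigr)=12\tilde{h}^2$ and the very-general/continuity argument of Corollary~\ref{corollary:alpha-h2} (which rests on $\tau_{A_1}$-invariance of the span of $\tilde{h}^2$ and $c_2(\tY_{A_1})$ plus torsion-freeness of $H^4(\tY_{A_1};\Z)$ --- a bit more than ``a single intersection number'') again forces $6\tilde{h}^2$. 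So your fallback \emph{is} the paper's proof, and since you state it explicitly, your argument is complete modulo that substitution. It is worth noting that your primary route can in fact be finished and would even sharpen the paper's argument: for general $v\in Y_{A_1,V_5}\cap Y_{A_2,V_5}$ one has $v\in Y^1_{A_1}$ (shown in the proof of Lemma~\ref{lemma:projection-two-lagrangians}), so $Q_v$ is a corank-$1$ threefold quadric, a cone with vertex a point over $\P^1\times\P^1$; every plane of $Q_v$ contains the vertex, two planes in the same ruling meet only at the vertex, whereas the two planes of the corank-$2$ section $Q_v\cap H_\omega$ meet along a line --- hence they lie in distinct rulings and map to the two distinct points of $f_{A_1,V_5}^{-1}(v)$, establishing the birational case outright and making the specialization argument unnecessary. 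As written, however, the ``granting this'' step must either be proved along these lines or replaced by the fallback.
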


\begin{proof}
The above discussion shows that we need to describe $\iota_*\tsi_*[D(B)]$.\   Let $\P^1_0 \subset \P^1$ be the open subset of those $\omega$ such that $X_\omega$ is a smooth hyperplane section of $X$, let $\widetilde{D}_\omega \subset \widetilde{D}(B)$ be the fiber of $\widetilde{D}(B)$ over $\omega \in \P^1$, and let $\widetilde{D}_0 \subset \widetilde{D}(B)$ be the preimage of $\P^1_0$.

Choose any $\omega \in \P^1_0$ and let $A_\omega := A(X_\omega)$ be the Lagrangian subspace associated with~$X_\omega$.\ By~\cite[{Proposition~3.14(a)}]{DK}, we have
\begin{equation*}
\dim (A_\omega \cap A_1) = \dim (A_\omega \cap A_2) = 9.
\end{equation*}
This shows that the pencil $\P^1$ is the same as the pencil of Lemma~\ref{lemma:family-lagrangians}.

 By Theorem~\ref{theorem:f2}(b), the Stein factorization of the map $\sigma\colon F_2^\sigma(X_\omega) \to \P(V_5)$
is     the double covering $\tY^{\ge 2}_{A_\omega,V_5}$ of $Y^{\ge 2}_{A_\omega,V_5} \subset \P(V_5)$.\ This means that the Stein factorization of the map $\sigma\colon \widetilde{D}_0 \to \P(V_5) \times \P^1_0$ is  a double covering of the subscheme
 \begin{equation*}
\cY^{\ge 2}_{A_1,A_2;V_5} \times_{\P^1} \P^1_0.
\end{equation*}
By Lemma~\ref{lemma:projection-two-lagrangians}, its projection to $\P(V_5)$ is birational onto $Y_{A_1,V_5} \cap Y_{A_2,V_5}$.

This means that the composition $\pr \circ \sigma \colon \widetilde{D}(B) \to \P(V_5)$ is generically finite of degree 2 onto $Y_{A_1,V_5} \cap Y_{A_2,V_5}$.\ Since   it factors through   $\tY_{A_1,V_5}$,    the induced map $\tsi \colon \widetilde{D}(B) \to \tY_{A_1,V_5}$
is either birational onto $\tY_{A_1,V_5} \times_{\P(V_5)} Y_{A_2,V_5}$, or   generically surjective of degree 2 onto a section
of the double cover $\tY_{A_1,V_5} \times_{\P(V_5)} Y_{A_2,V_5} \to Y_{A_1,V_5} \cap Y_{A_2,V_5}$.\ In the first case, we have $\tsi_*[D(B)] = 6\iota^*\tilde{h}$, hence $\iota_*\tsi_*[D(B)] = 6\tilde{h}^2$.\ In the second case, we have
$\iota_*\tsi_*([D(B)]) + \tau_{A_1}^*\bigl(\iota_*\tsi_*([D(B)])\bigr) = 12\tilde{h}^2$, where $\tau_{A_1}^*$ is the action  on $H^4( \tY_{A_1};\Z)$ of the involution of the double covering $f_{A_1}$.\ In the second case,  the same arguments used at the end of the proof of~Corollary~\ref{corollary:alpha-h2} show that we also have $\iota_*\tsi_*[D(B)] = 6\tilde{h}^2$.
\end{proof}

\subsection{Period points and period maps} \label{sec41}

In this section, we discuss  period points and period maps for smooth GM varieties   of  dimensions 4 or 6 and for double EPW sextics.\ We use the notation of Section~\ref{mcl}.\ In particular, we {consider the lattices $\Gamma_4$, $\Gamma_6$, and $\Lambda$ defined by~\eqref{eq:gamma-lattices}.\  Consider the automorphism group $O(\Lambda)$ and}
the {\sf  stable orthogonal group}
\begin{equation*}
\widetilde O(\Lambda)\subset O(\Lambda)
\end{equation*}
of automorphisms of $\Lambda$ which act trivially on its discriminant group $D(\Lambda)=\Lambda^\vee/\Lambda$.\ It has index 2 in $ O(\Lambda)$. 

Another description of $\widetilde{O}(\Lambda)$ will be important.\ Consider the even lattice $\Gamma_6$.\  By choosing vectors $e_1$ and $e_2$ with square 2  in the first and  second copies of $U$  in~$\Gamma_6$, we obtain a primitive embedding of the lattice $I_{2,0}(2)$ into $\Gamma_6$.\  Furthermore, the group $O(\Gamma_6)$ acts transitively on the set of such embeddings~(\cite{jam}).\ The orthogonal sublattice $\langle e_1,e_2 \rangle^\perp   \subset \Gamma_6$  is isomorphic to~$\Lambda(-1)$ (it is even, of signature $(2,20)$,   with discriminant group~$(\Z/2\Z)^2$). 

The subgroup $O(\Gamma_6)_{\langle e_1,e_2 \rangle} \subset O(\Gamma_6)$ stabilizing the sublattice $\langle e_1,e_2 \rangle$ preserves the orthogonal $\Lambda(-1)$.\ This defines a map $O(\Gamma_6)_{\langle e_1,e_2 \rangle} \to O(\Lambda)$ which is surjective and the stable group $\widetilde{O}(\Lambda)$ is the isomorphic image under this map of   the subgroup $O(\Gamma_6)_{e_1,e_2} \subset O(\Gamma_6)_{\langle e_1,e_2 \rangle}$ of elements stabilizing both $e_1$ and $e_2$.

Analogously, in the lattice $\Gamma_4$, there are vectors $e_1$ and $e_2 $ generating a sublattice isomorphic to $I_{2,0}(2)$ such that $e_1+e_2$ is characteristic in $\Gamma_4$.\   Again by~\cite{jam}, the group~$O(\Gamma_4)$ acts transitively on the set of such embeddings, the orthogonal $\langle e_1,e_2 \rangle^\perp $ is  isomorphic to~$\Lambda$, and there are morphisms
$O(\Gamma_4)_{\langle e_1,e_2 \rangle} \thra O(\Lambda)$ and $O(\Gamma_4)_{e_1,e_2} \isomto \widetilde{O}(\Lambda)$ ({see}~\cite[Section~5.1]{dims} {for details}).

The groups $\widetilde{O}(\Lambda)$ and $ {O}(\Lambda)$ act  properly and discontinuously on the complex variety
\begin{equation}\label{defom}
\Omega:=\{\omega\in \P(\Lambda\otimes\C)\mid   \omega\cdot \omega = 0,\ \omega \cdot \bar \omega < 0\} .
\end{equation}
The quotient
 \begin{equation*}
\cD:= \widetilde O(\Lambda)\backslash\Omega 
\end{equation*}
is a quasi-projective 20-dimensional variety.\ It has a canonical involution $r_\cD$, associated with the further degree-2 quotient  $\cD\to  O(\Lambda)\backslash\Omega $. 

\begin{prop}\label{pp}
Let $X$ be a smooth GM variety  of   dimension $n= 4$ or $6$.\  The one-dimensional subspace $H^{n/2+1,n/2-1}(X)\subset  H^n(X,\C)$ gives rise to a well defined point in $\cD$.
\end{prop}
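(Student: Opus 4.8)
The plan is to construct the point of $\cD$ in two steps: first I would attach to $X$ a point of the domain $\Omega$ by means of a suitable isometry of the vanishing lattice with $\Lambda$, and then I would show that the ambiguity in this isometry is exactly the stable orthogonal group $\widetilde{O}(\Lambda)$, so that the resulting class in $\cD=\widetilde{O}(\Lambda)\backslash\Omega$ is well defined.

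For the first step I would start from the weight-$n$ Hodge structure on $H^n(X;\Z)_{00}$. By Proposition~\ref{hn} this structure is of K3 type: the only nonzero Hodge pieces of $H^n(X;\C)_{00}$ are $H^{n/2+1,n/2-1}(X)$, $H^{n/2,n/2}(X)_{00}$, and $H^{n/2-1,n/2+1}(X)$, the outer two being lines. Writing $\omega$ for the line $H^{n/2+1,n/2-1}(X)$, the Hodge--Riemann relations give $\omega\cdot\omega=0$ (since $H^{n/2+1,n/2-1}\cdot H^{n/2+1,n/2-1}\subset H^{n+2,n-2}=0$), while the real $2$-plane $\langle\mathrm{Re}\,\omega,\mathrm{Im}\,\omega\rangle$ is the definite rank-$2$ part of the intersection form on $H^n(X;\R)_{00}$. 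Since that form has signature $(20,2)$ for $n=4$ and $(2,20)$ for $n=6$ (Proposition~\ref{ltf}), a sign computation gives $\omega\cdot\bar\omega<0$ for $n=4$ and $\omega\cdot\bar\omega>0$ for $n=6$. Using the isometry $H^n(X;\Z)_{00}\cong\Lambda((-1)^{n/2})$ of Proposition~\ref{ltf}---which reverses the sign of the form when $n=6$---the image of $\omega$ in $\P(\Lambda\otimes\C)$ therefore satisfies, with respect to the form of $\Lambda$, both $\omega\cdot\omega=0$ and $\omega\cdot\bar\omega<0$ in both cases, that is, it lies in the domain $\Omega$ of~\eqref{defom}.

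For the second step the point is that this isometry is only canonical up to $O(\Lambda)$, so I must cut the ambiguity down to $\widetilde{O}(\Lambda)$ using the ambient unimodular lattice $H^n(X;\Z)\cong\Gamma_n$ (Proposition~\ref{ltf}). The Gushel map equips $X$ with a canonical \emph{ordered} pair of Schubert classes spanning $K:=\gamma_X^*(H^n(\Gr(2,V_5);\Z))$, namely $(\gamma_X^*\bsi_2,\gamma_X^*\bsi_{1,1})$ for $n=4$ and $(\gamma_X^*\bsi_{2,1},\gamma_X^*\bsi_3)$ for $n=6$. In both cases $K\cong I_{2,0}(2)$ is a primitive sublattice with $K^\perp=H^n(X;\Z)_{00}$, and (by the proof of Proposition~\ref{ltf} and \cite{dims}) its ordered basis realizes an embedding of $I_{2,0}(2)$ of the special type described in Section~\ref{sec41} (with $\gamma_X^*\bsi_2+\gamma_X^*\bsi_{1,1}$ characteristic when $n=4$). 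I would then choose a marking $\psi\colon H^n(X;\Z)\xrightarrow{\ \sim\ }\Gamma_n$ carrying this ordered pair onto the distinguished pair $(e_1,e_2)\subset\Gamma_n$; such $\psi$ exists because $O(\Gamma_n)$ acts transitively on ordered embeddings of this type (\cite{jam}). Restricting $\psi$ to $K^\perp$ yields an isometry $H^n(X;\Z)_{00}\xrightarrow{\ \sim\ }\langle e_1,e_2\rangle^\perp\cong\Lambda((-1)^{n/2})$ and hence a point $\psi(\omega)\in\Omega$ as in the first step.

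Finally I would verify independence of all choices. Since the ordered pair of Schubert classes is intrinsic to $X$, any two valid markings $\psi,\psi'$ satisfy $\psi'\circ\psi^{-1}\in O(\Gamma_n)$ and fix $e_1$ and $e_2$ \emph{individually}, so $\psi'\circ\psi^{-1}\in O(\Gamma_n)_{e_1,e_2}$; its restriction to $\langle e_1,e_2\rangle^\perp$ lies in $\widetilde{O}(\Lambda)$ under the isomorphism $O(\Gamma_n)_{e_1,e_2}\cong\widetilde{O}(\Lambda)$ recalled in Section~\ref{sec41}. Thus $\psi(\omega)$ and $\psi'(\omega)$ differ by an element of $\widetilde{O}(\Lambda)$, and their common image in $\cD$ is well defined. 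I expect the main obstacle to lie not in the Hodge theory but in the bookkeeping of this second step: one must check that the canonical Schubert basis genuinely gives an admissible ordered embedding in the sense of \cite{jam} (in particular the characteristic condition for $n=4$), and that it is the \emph{ordering}, not merely the sublattice $K$, that is intrinsic to $X$---for otherwise the transposition $e_1\leftrightarrow e_2$ would enlarge the residual ambiguity beyond $\widetilde{O}(\Lambda)$.
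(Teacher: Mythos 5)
Your overall route is exactly the paper's: choose a marking $\phi\colon H^n(X;\Z)\isomto\Gamma_n$ normalized on the Schubert sublattice, observe that any two such markings differ by an element of $O(\Gamma_n)_{e_1,e_2}\cong\widetilde{O}(\Lambda)$ (via \cite{jam}, as recalled in Section~\ref{sec41}), and check via the Hodge--Riemann relations that the complexified marking sends $H^{n/2+1,n/2-1}(X)$ into $\Omega$ --- your sign computation for $n=6$ is precisely the paper's parenthetical remark that the anti-isometry $H^6(X;\Z)_{00}\cong\Lambda(-1)$ is compensated by the sign change in the Hodge--Riemann relations. Your closing worry about the ordering of the pair $(e_1,e_2)$ is also addressed in the paper: Remark~\ref{newrem} records that exchanging $e_1$ and $e_2$ replaces $\wp(X)$ by $r_\cD(\wp(X))$, so the ordering is part of the convention, fixed once and for all by naming which (intrinsic) Schubert class goes to which $e_i$ in \eqref{eq:isometry4} and \eqref{eq:isometry6}.

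There is, however, one concrete error in your bookkeeping for $n=4$: the ordered pair you propose, $(\gamma_X^*\bsi_2,\gamma_X^*\bsi_{1,1})$, cannot be carried to $(e_1,e_2)$ by any isometry, because $e_1,e_2$ are orthogonal of square $2$, whereas Schubert calculus (using $\gamma_X^*\alpha\cdot\gamma_X^*\beta=2\,(\alpha\cdot\beta\cdot\bsi_1^2)_{\Gr(2,V_5)}$ for $X$ ordinary, hence in general by deformation invariance) gives your pair the Gram matrix $\bigl(\begin{smallmatrix}4&2\\2&2\end{smallmatrix}\bigr)$. The paper instead imposes $\phi^{-1}(e_1)=\gamma_X^*\bsi_{1,1}$ and $\phi^{-1}(e_2)=\gamma_X^*(\bsi_2-\bsi_{1,1})$, which is an orthogonal basis of square-$2$ vectors; correspondingly the characteristic vector $e_1+e_2$ is $\gamma_X^*\bsi_2$, not $\gamma_X^*\bsi_2+\gamma_X^*\bsi_{1,1}$ as you wrote (characteristic vectors form a single coset of $2\Gamma_4$, and $\gamma_X^*\bsi_{1,1}\notin 2\Gamma_4$). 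For $n=6$ your pair $(\gamma_X^*\bsi_{2,1},\gamma_X^*\bsi_3)$ is correct, with Gram matrix $\bigl(\begin{smallmatrix}2&0\\0&2\end{smallmatrix}\bigr)$ as in the proof of Proposition~\ref{ltf}. With the $n=4$ basis corrected, your argument goes through and coincides with the paper's proof.
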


This point is called the {\sf period point} of $X$ and will be denoted by $\wp(X)$.

\begin{proof}
Assume first $n=4$.\ The abelian group $ H^4(\Gr(2,V_5);\Z) $ is generated by the Schubert classes $\bsi_{1,1}$ and $\bsi_2$.\ By \cite[Section~5.1]{dims}, there exists  an isometry $\phi\colon H^4(X;\Z)\isomto  \Gamma_4$, called a {\sf marking} of $X$, such that
\begin{equation}\label{eq:isometry4}
\phi^{-1}(e_1)= \gamma_X^*\bsi_{1,1}\qquad\text{and}\qquad
\phi^{-1}(e_2)= \gamma_X^*\bsi_2-\gamma_X^*\bsi_{1,1},
\end{equation} 
where $e_1,e_2 \in \Gamma_4$ were defined above.\ Any two markings differ by the action of an element of  the group $O(\Gamma_4)_{e_1,e_2} \cong \widetilde{O}(\Lambda)$.
The marking carries the vanishing cohomology lattice $H^4(X;\Z)_{00}$ (defined in \eqref{defh00}) onto the orthogonal $\langle e_1,e_2 \rangle^\perp \cong \Lambda$.\ Its complexification $\phi_\C\colon  H^4(X;\C)\isomto  {\Gamma_4}\otimes\C$ takes the one-dimensional subspace $H^{3,1}(X)$ (see Proposition \ref{hn}), 
which is orthogonal to $\gamma_X^*H^4(\Gr(2,V_5);\C)$, to a point in the manifold $\Omega$ defined in \eqref{defom}.\ The equivalence class of this point  in  the quotient $\cD= \widetilde O(\Lambda)\backslash\Omega $ is well defined.

The situation when $n=6$ is similar:  the abelian group $ H^6(\Gr(2,V_5);\Z) $ is generated by~$\bsi_{2,1}$ and $\bsi_3$, there exists 
a marking $\phi\colon H^6(X;\Z)\isomto  \Gamma_6$ such that 
\begin{equation}\label{eq:isometry6}
\phi^{-1}(e_1)= \gamma_X^*\bsi_{2,1}\qquad\text{and}\qquad
\phi^{-1}(e_2)= \gamma_X^*\bsi_3,  
\end{equation} 
where again $e_1,e_2 \in \Gamma_6$ were defined above, and any two markings differ by the action of an element of  the group $O(\Gamma_6)_{e_1,e_2} \cong \widetilde{O}(\Lambda)$.\ The marking carries the  vanishing cohomology lattice $H^6(X;\Z)_{00}$ (defined in \eqref{defh00}) onto the orthogonal $\langle e_1,e_2 \rangle^\perp \cong \Lambda(-1)$, 
and its complexification $\phi_\C$ takes the one-dimensional subspace $H^{4,2}(X)$ (see Proposition \ref{hn})  to a point in  {the same domain} $\Omega$ 
 {(note that the \emph{anti-isometry} property of $\phi_\C$ is compensated by the change in sign in the Hodge--Riemann relations for a $(4,2)$-class on a 6-fold in comparison with a $(3,1)$-class on a 4-fold)}
whose equivalence class in $\cD $ is well defined.
\end{proof}

\begin{rema}\label{newrem}
If, in the above construction of the period point, we   replace the conditions~\eqref{eq:isometry4} and~\eqref{eq:isometry6} by similar conditions with $e_1$ and $e_2$ exchanged, we obtain  a new period point which is $ r_\cD(\wp(X))$ (this is because there is an  element of $O(\Gamma_n)_{\langle e_1,e_2 \rangle}$ which exchanges $e_1$ and $e_2$, and the image of this  isometry by the surjection $O(\Gamma_n)_{\langle e_1,e_2 \rangle} \thra  {O}(\Lambda) $ is not in $\widetilde O(\Lambda)$).
\end{rema}

An analogous construction can be made   in another situation: if $\tY_A$ is  a smooth double EPW sextic, the one-dimensional subspace $H^{2,0}(\tY_A)\subset  H^2(\tY_A;\C)_0$ gives rise to  a period point $\wwp(\tY_A)$ in {\em the same} variety $\cD$ (see \cite[Section~4.2]{og6}).\ This period point may also be defined for all   Lagrangian subspaces $A$ with no decomposable vectors (\ie, even when $Y_A^{\ge 3}\ne\vide$; see \cite[Section~5.1]{og6}).\ The main result of \cite{og2} is $\wwp(\tY_{A^\bot})=r_\cD(\wwp(\tY_A))$.

\begin{lemm}\label{lemma:periods-pointwise}
For any smooth GM variety $X$ of dimension~$4$
 or $6$, with associated Lagrangian~$A(X)$ satisfying~\eqref{assumption-y}, one has either $\wp(X) = \wwp(\tY_{A(X)})$ or $\wp(X) = r_\cD(\wwp(\tY_{A(X)}))$.
\end{lemm}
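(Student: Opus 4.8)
The plan is to read off the lemma directly from the Hodge-theoretic isomorphisms already established, comparing the two markings used to define the period points. Recall that $\wp(X)$ is constructed from a marking $\phi\colon H^n(X;\Z)\isomto\Gamma_n$ satisfying~\eqref{eq:isometry4} or~\eqref{eq:isometry6}; this $\phi$ restricts to an isometry $\phi_{00}$ of $H^n(X;\Z)_{00}$ onto $\langle e_1,e_2\rangle^\perp\cong\Lambda((-1)^{n/2})$ (Proposition~\ref{ltf}), and $\wp(X)$ is the class in $\cD$ of $\phi_\C(H^{n/2+1,n/2-1}(X))\in\Omega$, well defined because two markings differ by an element of $O(\Gamma_n)_{e_1,e_2}\cong\widetilde O(\Lambda)$. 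Symmetrically, $\wwp(\tY_A)$ is the class in $\cD$ of $\psi_\C(H^{2,0}(\tY_A))$, where $\psi$ is a marking of the primitive lattice $H^2(\tY_A;\Z)_0\cong\Lambda(-1)$, again ambiguous precisely up to $\widetilde O(\Lambda)$ (O'Grady's construction).

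First I would invoke Theorem~\ref{th32} in the precise form proved in Theorems~\ref{theorem:alpha-x4} and~\ref{theorem:alpha-x6}: under assumption~\eqref{assumption-y}, the map $\beta$ is an isomorphism of polarized Hodge structures, in particular a $\Z$-linear isomorphism $H^n(X;\Z)_{00}\to H^2(\tY_A;\Z)_0$ with $q_B(\beta(x))=(-1)^{n/2-1}x^2$, whose complexification carries the extreme holomorphic line $H^{n/2+1,n/2-1}(X)$ onto $H^{2,0}(\tY_A)$. I would then form
\begin{equation*}
g:=\psi\circ\beta\circ\phi_{00}^{-1}\colon\Lambda\lra\Lambda.
\end{equation*}
The net scaling of the quadratic form along this composition is $(-1)^{n/2}$ (from $\phi_{00}^{-1}$), times $(-1)^{n/2-1}$ (from $\beta$), times $(-1)$ (from $\psi$), which equals $+1$ for both $n=4$ and $n=6$; hence $g$ preserves $q_\Lambda$, i.e.\ $g\in O(\Lambda)$. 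By the construction of the two markings and the Hodge-structure property of $\beta$ we have
\begin{equation*}
\psi_\C\bigl(H^{2,0}(\tY_A)\bigr)=g_\C\bigl(\phi_\C(H^{n/2+1,n/2-1}(X))\bigr),
\end{equation*}
so the chosen representatives in $\Omega$ of $\wwp(\tY_A)$ and of $\wp(X)$ differ by the action of $g$.

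Finally, since $\widetilde O(\Lambda)$ is normal of index $2$ in $O(\Lambda)$, the image of $g$ in $O(\Lambda)/\widetilde O(\Lambda)\cong\Z/2\Z$ is well defined independently of the choices of $\phi$ and $\psi$ (changing either alters $g$ only by left or right multiplication by an element of $\widetilde O(\Lambda)$). If $g\in\widetilde O(\Lambda)$, the two representatives have the same class in $\cD=\widetilde O(\Lambda)\backslash\Omega$ and $\wp(X)=\wwp(\tY_A)$; if $g\in O(\Lambda)\setminus\widetilde O(\Lambda)$, then by the very definition of $r_\cD$ as the involution attached to the further degree-$2$ quotient $\cD\to O(\Lambda)\backslash\Omega$ we obtain $\wp(X)=r_\cD(\wwp(\tY_A))$. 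This establishes the dichotomy. The only genuinely delicate point is checking that $g$ is an honest isometry of $\Lambda$ rather than a mere similitude; but this is exactly guaranteed by the form-compatibilities recorded in Theorems~\ref{theorem:alpha-x4} and~\ref{theorem:alpha-x6}, and the lemma does not require us to decide which of the two cases actually occurs.
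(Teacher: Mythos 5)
Your proof is correct and coincides with the paper's own argument: the paper likewise forms $g:=\psi\circ\beta\circ\phi^{-1}\in O(\Lambda)$ from markings $\phi$, $\psi$ and the isomorphism $\beta$ of Theorems~\ref{theorem:alpha-x4} and~\ref{theorem:alpha-x6}, notes that $g$ carries $\phi_\C(H^{n/2+1,n/2-1}(X))$ to $\psi_\C(H^{2,0}(\tY_{A(X)}))$, and concludes the dichotomy according to whether $g$ lies in $\widetilde O(\Lambda)$ or in its nontrivial coset. Your explicit bookkeeping of the sign of the quadratic form through $\phi$, $\beta$, $\psi$, and of the independence of the coset of $g$ modulo the normal index-$2$ subgroup $\widetilde O(\Lambda)$ from the choice of markings, merely spells out what the paper compresses into the phrase ``compatible with polarizations.''
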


\begin{proof}
Consider first the case of fourfolds.\ Choose markings $\phi$ for $X$, and $\psi$  for $\tY_{A(X)}$, and consider the commutative diagram
\begin{equation*}
\xymatrix{
H^4(X; \Z)_{00}(-1) \ar[d]_\phi \ar[r]^-\beta &
H^2(\tY_{A(X)}; \Z)_0 \ar[d]^\psi \\
\Lambda \ar[r]^{\psi \circ \beta \circ \phi^{-1}} & \Lambda,
}
\end{equation*}
where $\beta$ is the isomorphism of Theorem~\ref{theorem:alpha-x4}.\
Since $\beta$ is compatible with polarizations, the bottom map $g := \psi \circ \beta \circ \phi^{-1}$ is in $O(\Lambda)$.\ Since $\beta$ is a morphism of Hodge structures, we have
\begin{equation*}
g(\phi_\C(H^{3,1}(X))) = \psi_\C(H^{2,0}(\tY_{A(X)}).
\end{equation*}
If $g \in \widetilde{O}(\Lambda)$, we have $\wp(X) = \wwp(\tY_{A(X)})$; otherwise $\wp(X) = r_\cD(\wwp(\tY_{A(X)}))$.

For sixfolds, we use the same argument with the isomorphism $\beta$ of  Theorem~\ref{theorem:alpha-x6}.
\end{proof}

\begin{prop}
Either for any smooth GM variety $X$ of dimension~$4$ \textup{(}resp.~$6$\textup{)} whose associated double EPW sextic $\tY_{A(X)}$ is smooth, one has $\wp(X) = \wwp(\tY_{A(X)})$, or for any such variety $X$, one has $\wp(X) = r_\cD(\wwp(\tY_{A(X)}))$.
\end{prop}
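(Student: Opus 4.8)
The plan is to upgrade the pointwise dichotomy of Lemma~\ref{lemma:periods-pointwise} to a global one by a connectedness argument on a parameter space. Fix the dimension $n\in\{4,6\}$ and let $\mathcal{M}$ denote the irreducible (hence connected) complex variety parametrizing smooth GM $n$-folds $X$ whose associated double EPW sextic $\tY_{A(X)}$ is smooth, \ie\ with $Y_{A(X)}^{\ge3}=\vide$; its irreducibility follows from the description of GM varieties as dimensionally transverse intersections in Section~\ref{reminder} (in dimension $4$ the special locus, where $\nu\in\P(W)$, is a closed subfamily of the irreducible family of all GM fourfolds, so ordinary and special fourfolds lie in one irreducible $\mathcal{M}$). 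The assignments $X\mapsto \wp(X)$ and $X\mapsto \wwp(\tY_{A(X)})$ define two continuous (indeed holomorphic) maps $\mathcal{M}\to\cD$: the first because periods of the smooth family vary holomorphically, the second because $A(X)$ depends algebraically on $X$ and $\wwp$ varies holomorphically with $A$. Write $r:=r_\cD$ and set
\[
Z_+:=\{X\in\mathcal{M}\mid \wp(X)=\wwp(\tY_{A(X)})\},\qquad
Z_-:=\{X\in\mathcal{M}\mid \wp(X)=r(\wwp(\tY_{A(X)}))\}.
\]
Both are closed, being equalizers of continuous maps into the Hausdorff variety $\cD$, and in fact are closed analytic subsets.

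First I would note that $Z_+\cup Z_-=\mathcal{M}$. Within $\mathcal{M}$ the extra condition $\bp_X\notin(Y_{A(X)}^{\ge2})^\vee$ of~\eqref{assumption-y} is open and dense, so Lemma~\ref{lemma:periods-pointwise} gives $\mathcal{M}_0\subset Z_+\cup Z_-$ for a dense open $\mathcal{M}_0\subset\mathcal{M}$; since $Z_+\cup Z_-$ is closed and $\mathcal{M}_0$ is dense, equality follows.

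Next I would show that $W:=Z_+\cap Z_-$ is a \emph{proper} closed subset of $\mathcal{M}$. On $W$ one has $\wwp(\tY_{A(X)})=r(\wwp(\tY_{A(X)}))$, that is, the EPW period point is fixed by $r$. Since $r$ is a non-trivial involution of the irreducible variety $\cD$, its fixed locus $\mathrm{Fix}(r)\subsetneq\cD$ is a proper closed subset. The period map for double EPW sextics is dominant onto $\cD$ (it is an open embedding by Verbitsky's Torelli theorem, as recalled in the introduction), and every general Lagrangian $A$ with no decomposable vectors and $Y_A^{\ge3}=\vide$ arises as $A(X)$ for some $X\in\mathcal{M}$: for $n=6$ take the special sixfold with $A\cap\bw3V_5=0$, and for $n=4$ take an ordinary fourfold by choosing $V_5$ with $\bp_X\in Y_{A^\perp}^1$. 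Hence for general $X\in\mathcal{M}$ the point $\wwp(\tY_{A(X)})$ lies off $\mathrm{Fix}(r)$, so such $X$ do not lie in $W$; thus $W\neq\mathcal{M}$. (Equivalently, by the main result of~\cite{og2} one has $r(\wwp(\tY_A))=\wwp(\tY_{A^\perp})$, and for general $A$ the sextics $Y_A$ and $Y_{A^\perp}$ have distinct periods.)

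Finally I would conclude. The complement $\mathcal{M}\setminus W$ of a proper closed analytic subset of an irreducible complex variety is connected, and it is the disjoint union of $Z_+\setminus W$ and $Z_-\setminus W$, each closed in $\mathcal{M}\setminus W$ (the $Z_\pm$ are closed in $\mathcal{M}$ and meet exactly in $W$). By connectedness one of these is empty; if $Z_-\setminus W=\vide$ then $Z_-\subset W\subset Z_+$, so $Z_+=\mathcal{M}$, and symmetrically in the other case $Z_-=\mathcal{M}$. This gives the asserted dichotomy for every $X\in\mathcal{M}$, \ie\ for every smooth GM $n$-fold with smooth associated double EPW sextic. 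The main obstacle is the input needed before the topology can run: the irreducibility of $\mathcal{M}$ and the holomorphic (hence continuous) variation of both period assignments in families, together with the properness of $W$, which itself rests on the dominance of the EPW period map and on $r_\cD\neq\id$.
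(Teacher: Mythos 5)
Your argument is correct, and its skeleton is the paper's: put all relevant GM $n$-folds in a single family over an irreducible base (the paper cites \cite[Proposition~3.4]{KP} for the construction you sketch), observe that the two period assignments are algebraic maps to the separated variety $\cD$ so that the two equalizer loci $Z_\pm$ are closed, invoke Lemma~\ref{lemma:periods-pointwise} on the dense locus where \eqref{assumption-y} holds to get $Z_+\cup Z_-=\mathcal{M}$, and conclude by connectedness. You diverge in two ways. First, your endgame is a needless detour: since $\mathcal{M}$ is irreducible and $\mathcal{M}=Z_+\cup Z_-$ with both closed, one of $Z_\pm$ equals $\mathcal{M}$ outright, because an irreducible space is never the union of two proper closed subsets --- this is exactly how the paper concludes (working with the dense irreducible subset $S^{00}$ and then passing to closures). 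Your route through the properness of $W=Z_+\cap Z_-$ --- which imports $r_\cD\ne\id$, the dominance of the EPW period map via Torelli, and O'Grady's relation $\wwp(\tY_{A^\perp})=r_\cD(\wwp(\tY_A))$ from \cite{og2} --- is logically valid but buys only the unneeded extra fact that generically exactly one of the two identities holds. Second, and more substantively, the one step where the paper does real work is the one you dispatch in a clause, namely that $X\mapsto\wwp(\tY_{A(X)})$ is holomorphic (indeed algebraic) on the parameter space: to define this map at all one needs a family of Lagrangian data $(\cV_6,\cV_5,\cA)$ over the base (\cite{DK-moduli}) and then, after trivializing these bundles on a suitable open cover, O'Grady's construction \cite[Proposition~3.1]{og4} producing genuine families $\widetilde{\cY}_\alpha\to S_\alpha$ of double EPW sextics, whose locally defined period maps glue into a single algebraic map on the open locus where the double sextics are smooth. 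Your assertion is true, but a complete write-up must supply this construction or the corresponding citations, since without a family of double EPW sextics there is no local system and no period map to speak of.
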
 

\begin{proof}
Let $\cX\to S$ be a smooth family of  GM varieties of dimension~$4$ (resp.~$6$) over an irreducible base $S$, such that every GM variety of dimension~4 (resp.~6) is isomorphic to some fiber of that family (see the proof of~\cite[Proposition~3.4]{KP} for a construction of such a family).\ It is classical that the period point construction defines a {\sf period map} $\wp_S\colon S\to \cD$ which is algebraic.

By~\cite{DK-moduli}, we have a family of Lagrangian data $(\cV_6,\cV_5,\cA)$, where $\cV_6$ is a rank-6 vector bundle on $S$,
$\cV_5 \subset \cV_6$ is a rank-5 vector subbundle, and $\cA \subset \bw3\cV_6$ is a Lagrangian subbundle.\ We choose an open covering $(S_\alpha)$ of $S$ such that   these vector bundles are all trivial on each~$S_\alpha$.\ Refining further the covering and applying~\cite[Proposition~3.1]{og4}, we construct, for each $ \alpha$, a family $\widetilde\cY_\alpha\to S_\alpha$ of (possibly singular) double EPW sextics.\ These families define period maps which fit together to define an algebraic map $\wwp_{S^0} \colon S^0 \to \cD$, where $S^0\subset S$ is the dense open subset where the double EPW sextics are smooth.

Since $\cD$ is separated, the sets 
\begin{equation*}
S^0_1 := \{ s \in S^0 \mid \wp_S(s) = \wwp_{S^0}(s) \}
\qquad\text{and}\qquad
S^0_2 := \{ s \in S^0\mid \wp_S( s) = r_\cD \circ\wwp_{S^0}(s) \}
\end{equation*}
are closed in $S^0$.\  By Lemma~\ref{lemma:periods-pointwise}, the dense   subset $S^{00} \subset S^0$ corresponding to smooth GM fourfolds (resp.\ sixfolds) satisfying~\eqref{assumption-y}  is the union of its closed subsets $S^0_1\cap S^{00}$ and $S^0_2\cap S^{00}$.\ Since $S^{00} $ is irreducible, one of them, say $S^0_i$, is  $S^{00} $.\ This means that $S^0_i$ contains $S^{00}$, hence its closure $ S^0$, and proves the lemma.
\end{proof}
 
To go from one of the possibilities of the proposition to the other, it suffices to change the convention defining the period point $\wp(X)$ (and the period map) as explained in Remark~\ref{newrem}.\ We may therefore assume that
\begin{equation}\label{eq:periods}
\wp(X) = \wwp(\tY_{A(X)})
\end{equation}
holds for any smooth GM fourfold or sixfold with smooth $\tY_{A(X)}$.\ This implies Theorem~\ref{th32} in full generality.

We end this section with   some consequences of~\eqref{eq:periods} 
based on results from \cite{DK} and~\cite{dims}.
 
\begin{rema}[Period partners]
In \cite[{Section 3.6}]{DK}, we said that smooth GM varieties of the same dimension are {\sf period partners} if they are constructed from the same Lagrangian subspace $A\subset \bw3V_6$  (with no decomposable vectors) but  possibly  different hyperplanes $V_5\subset V_6$.\ By Theorem~\ref{th32}, period partners  {of dimensions~4 or~6} have the same period point.

Conversely, since double EPW sextics have the same period point if and only if they are isomorphic (\cite[Theorem~1.3]{og6}),  smooth GM fourfolds (or sixfolds) are  period partners  if and only if they have the same period point.\  By~\cite[Theorem~3.25]{DK}, isomorphism classes of period partners of a GM fourfold are parametrized by $Y^1_{A^\bot}\sqcup Y^2_{A^\bot} $, modulo the finite group $\Aut(Y_{A^\bot})$ (for $A$ general, $Y^{\ge3}_{A^\bot}$ is empty and  $Y^1_{A^\bot}\sqcup Y^2_{A^\bot}=Y_{A^\bot}$).\ Similarly, isomorphism classes of period partners of a GM sixfold are parameterized by $\P(V_6^\vee) \setminus  {Y_{A^\bot}}$, modulo $\Aut(Y_{A^\bot})$. 
\end{rema}

\begin{rema}[Hodge-special GM {varieties}]
Pretending that  smooth GM varieties have coarse moduli spaces (see \cite{DK-moduli}), we go, following~\cite{dims}, through some geometrically defined subvarieties of these moduli spaces  and discuss, using the period map,  their relation with some natural divisors in the period domain $\cD$.\ We use the notation introduced in~\cite{dims}.
\begin{itemize}[leftmargin=8 mm]
\item {\bf Smooth GM fourfolds   containing  $\sigma$-planes} (\cite[Section~7.1]{dims}).\
They form a co\-dimension-2 family $\cX_{\sigma\text{-planes}}$ whose   period points cover a divisor   $\cD''_{10}\subset \cD$.\  A smooth GM fourfold $X$ contains a $\sigma$-plane if and only if $Y^{\ge3}_{A,V_5} \ne \emptyset$ (Theorem~\ref{theorem:f2}(c)).\ In particular, $Y^{\ge3}_A \ne \emptyset$; this means that $ A $ is in the O'Grady divisor $\Delta $ (\cite[(2.2.3)]{og6}) and implies $\overline{\wwp(\Delta)}= \cD''_{10}$. 

If $A$ is general in $\Delta$, the set $Y^{\ge3}_A$ is just one point $v$ (\cite[Section~5.4]{og8}).\ The condition $Y^{\ge3}_{A,V_5} \ne \emptyset$ is then equivalent to $v \in V_5$, \ie, to~$\bp_X \in v^\perp$.\ Thus the fiber of the period map  $\cX_{\sigma\text{-planes}} \to \cD''_{10}$ is equal to the hyperplane section of $Y^1_{A^\bot}\sqcup Y^2_{A^\bot} $ defined by $v^\bot$ (modulo automorphisms).\ This fiber was also described in \cite[Section~7.1]{dims} as a $\P^1$-bundle  over  a degree-10 K3 surface.
\item {\bf Smooth GM fourfolds   containing  $\tau$-quadratic surfaces} (\cite[Section~7.3]{dims}).\ They form a codimension-1 family $\cX_{\tau\text{-quadrics}}$   whose   period points cover the divisor  $  \cD'_{10}=r_\cD(\cD''_{10})\subset \cD$.\  A general  fiber of the period map $\cX_{\tau\text{-quadrics}}\to \cD'_{10}$  is, on the one hand, isomorphic to $Y_{A^\bot}$ (modulo automorphisms), and, on the other hand, birationally isomorphic to the quotient by  an involution of the symmetric square of a K3 surface (\cite[Section~7.3]{dims}).\ This fits with   \cite[Corollary 3.12 and Theorem 4.15]{og4}: a desingularization of~$\widetilde Y_{A^\bot}$ is the symmetric square of a K3 surface.

\item {\bf Smooth GM fourfolds   containing a cubic scroll} (\cite[Section~7.4]{dims}).\
They form a codimension-1 family  which contains the 3-codimensional family of   smooth GM fourfolds containing a ${\tau}$-plane (called a $\rho$-plane in \cite{dims}) and the period points of both families cover an $r_\cD$-invariant divisor   $\cD_{12}\subset \cD$.\ 
By Theorem \ref{th34}{(c)}, the condition to contain a ${\tau}$-plane implies $Z_A^{\ge 4}   \ne \emptyset$.\  The divisor $\cD_{12}$ is therefore contained in the closure of the image by the period map $\wwp$ of the locus of Lagrangians subspaces $A$ such that   $Z_A^{\ge 4}   \ne \emptyset$;  since this locus is an irreducible divisor (\cite[Lemma~3.6]{ikkr}), they are equal.  

\item {\bf Singular GM fourfolds} (\cite[Section~7.6]{dims}).\ The O'Grady  divisor $\Sigma $ (see \cite{og6}) corresponds to Lagrangian subspaces $A$ containing   decomposable vectors.\  The corresponding period points (under a suitable extension of the period map $\wwp$ discussed in \cite{og6}) fill out a divisor $  {\mathbb S}_2^\star\subset \cD$ (\cite[(4.3.3) and Proposition 4.12]{og6}); this is  the $r_\cD$-stable divisor $\cD_8$ of \cite{dims}, which corresponds to periods of nodal  GM fourfolds  (\cite[Section~7.6]{dims}).

\item {\bf Smooth GM sixfolds   containing a $\P^3$}.\ By Theorem~\ref{p32}, a
  smooth GM sixfold   contains a $\P^3$ if and only if $Y^{\ge3}_{A,V_5} \ne \emptyset$.\  In particular, $A$ is in $\Delta$ and the period point is   in $\cD''_{10}$.\  As above, when $A$ is general in $\Delta$,  one has $Y^{\ge3}_A=\{v\}$ and the condition $Y^{\ge3}_{A,V_5} \ne \emptyset$ is   equivalent  to~$\bp_X \in v^\perp$.\ Thus the fiber of the period map is equal to the hyperplane section of $\P(V_6^\vee)\setminus Y_{A^\bot} $ defined by $v^\bot$ (modulo automorphisms) and the codimension of the family of GM sixfolds containing a $\P^3$ is 2. 
\end{itemize}
\end{rema}

\newcommand{\Deg}{D}
  
\appendix\section{Linear spaces on families of quadrics}
\label{section:linear-spaces-quadrics}

Let $S$ be a base scheme which we assume to be Cohen--Macaulay and  irreducible.\
Let~$\cE$ be a vector bundle on $S$ of rank $m$ and let $\cL \subset \Sym^2\!\cE^\vee$ be a line subbundle.\ Consider the projectivization $\pr\colon \P_S(\cE) \to S$ and 
the relative line bundle $\cO(1)$ on $\P_S(\cE)$.\ Let $\cQ \subset \P_S(\cE)$ be the family of quadrics
defined as the zero-locus of the section of the line bundle $\pr^*\cL^\vee \otimes \cO(2)$ corresponding to the morphism $\cL \hra \Sym^2\!\cE^\vee$ via the isomorphism
\begin{equation*}
H^0(\P_S(\cE),\pr^*\cL^\vee \otimes \cO(2)) \cong 
H^0(S,\cL^\vee \otimes \Sym^2\!\cE^\vee) \cong
\Hom(\cL,\Sym^2\!\cE^\vee).
\end{equation*}
We denote by $\Deg_c(\cQ) \subset S$ the corank-$c$ degeneracy locus of the induced map $\cE \otimes \cL \to \cE^\vee$ of vector bundles and by $\cC$ the cokernel sheaf of this map; it is supported on $\Deg_1(\cQ)$.

In this appendix, we discuss the relative Hilbert scheme $F_k(\cQ) := \Hilb^{\P^{k}}(\cQ/S)$.\ We concentrate on the cases $k \in \{1,2\}$ (\ie, on the Hilbert schemes of lines and planes) and describe the Stein factorization of the canonical morphism $\varphi\colon F_k(\cQ) \to S$.\ Note that $F_k(\cQ)$ is a subscheme in the relative Grassmannian $\pi\colon\Gr_S(k+1,\cE) \to S$.\  We denote by $\cU$ the tautological subbundle of rank $k+1$ on $\Gr_S(k+1,\cE)$.

\begin{prop}\label{proposition:f2q}
Assume   $\Deg_{m-2}(\cQ) \ne S$.\  We have a  resolution
\begin{equation*}
0 \to \cL^3 \otimes  {(\det(\cU))^{\otimes 3}} \to \cL^2 \otimes \Sym^2\!\cU   \otimes \det(\cU) \to \cL \otimes \Sym^2\!\cU \to \cO_{\Gr_S(2,\cE)} \to \cO_{F_1(\cQ)} \to 0
\end{equation*}
on $\Gr_S(2,\cE)$.\  Moreover, the pushforward to $S$ of $\cO_{F_1(\cQ)}$ is given as follows
\begin{itemize} 
\item if $m = 3$, then
$\varphi_*\cO_{F_1(\cQ)} \cong \cO_{\Deg_1(\cQ)} \oplus (\cC \otimes \cL \otimes \det(\cE))$;
\item if $m = 4$, then
$\varphi_*\cO_{F_1(\cQ)} \cong \cO_S \oplus (\cL^2 \otimes \det(\cE))$;
\item if $m \ge 5$, then
$\varphi_*\cO_{F_1(\cQ)} \cong \cO_S$.
\end{itemize}
\end{prop}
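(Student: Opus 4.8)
The plan is to realise $F_1(\cQ)$ as the zero scheme of a section of an explicit rank-$3$ bundle on the relative Grassmannian, and then to run a Koszul resolution through $\pi$. A point of $\Gr_S(2,\cE)$ is a rank-$2$ subbundle $\cU\subset\pr^*\cE$, and the line $\P(\cU)$ lies in the corresponding fibre of $\cQ$ exactly when the quadratic form $\cL\hra\Sym^2\!\cE^\vee$ restricts to $0$ on $\cU$. Restriction to $\cU$ therefore defines a section $s$ of $\cF:=\cL^\vee\ot\Sym^2\!\cU^\vee$ whose zero scheme is $F_1(\cQ)\subset\Gr_S(2,\cE)$. Since $\cF$ has rank $3$, the expected codimension of $F_1(\cQ)$ is $3$, and the Koszul complex of $s$ reads $0\to\bw3{\cF^\vee}\to\bw2{\cF^\vee}\to\cF^\vee\to\cO\to\cO_{F_1(\cQ)}\to0$. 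Using the $\GL_2$-identities $\bw2{\Sym^2\!\cU}\isom\Sym^2\!\cU\ot\det(\cU)$ and $\bw3{\Sym^2\!\cU}\isom(\det\cU)^{\ot3}$ (checked on Chern roots), together with $\bw{}{}(\cL\ot-)=\cL^{\ot\bullet}\ot\bw{}{}$ for the line bundle $\cL$, the terms $\cF^\vee=\cL\ot\Sym^2\!\cU$, $\bw2{\cF^\vee}=\cL^2\ot\Sym^2\!\cU\ot\det\cU$ and $\bw3{\cF^\vee}=\cL^3\ot(\det\cU)^{\ot3}$ match the claimed resolution verbatim.

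The key step is to show that $s$ is a \emph{regular} section, i.e.\ that $F_1(\cQ)$ has codimension exactly $3$. As the lower bound is automatic, I would bound the dimension of every component from above by stratifying $S$ according to the corank $\ell$ of the quadratic form and computing the dimension of the Fano scheme of lines on a quadric of corank $\ell$ in $\P^{m-1}$ (via the cone structure with vertex the projectivised kernel). One finds this fibre dimension equals $2m-7$ for $\ell\le m-3$, giving components of codimension $3$, and jumps to $2m-6$ for $\ell\in\{m-2,m-1\}$. Here two points are crucial: because $\cL$ is a genuine \emph{subbundle}, the form is nowhere zero, so corank $\le m-1$ always (the stratum $\ell=m$ is empty); and the hypothesis $\Deg_{m-2}(\cQ)\ne S$ forces the rank-$\le2$ loci $\Deg_{m-1}\subset\Deg_{m-2}$ to be proper, hence of codimension $\ge1$ in the irreducible $S$, which exactly compensates the jump. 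Since $S$ is Cohen--Macaulay, so is the Grassmann bundle, and a section of a rank-$3$ bundle cutting out a codimension-$3$ locus is regular; thus the Koszul complex is a resolution.

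For the pushforward I would apply $R\pi_*$ to this resolution and read off $\varphi_*\cO_{F_1(\cQ)}=R^0\varphi_*\cO_{F_1(\cQ)}$ from the hypercohomology spectral sequence $E_1^{p,q}=R^q\pi_*\cK_{-p}$, the line bundle $\cL$ being pulled back and handled by projection formula. The computations of $R\pi_*\Sym^2\!\cU$, $R\pi_*(\Sym^2\!\cU\ot\det\cU)$ and $R\pi_*(\det\cU)^{\ot3}$ on the fibre $\Gr(2,m)$ are pure relative Borel--Weil--Bott. When $m\ge5$ the relevant weights are all singular, so these three terms are acyclic and only $\pi_*\cO=\cO_S$ survives. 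When $m=4$ the same vanishing holds except that $(\Sym^2\!\cU\ot\det\cU)$ has nonzero $R^2\pi_*\cong\det\cE$; a degree count shows it contributes in total degree $0$ and receives no differential (the neighbouring $E_2$-terms vanish), yielding the split $\cO_S\op(\cL^2\ot\det\cE)$.

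The delicate case, and the main obstacle, is $m=3$. Here $\Gr_S(2,\cE)\isom\P_S(\cE^\vee)$ is a $\P^2$-bundle with $\det\cU\isom\cO(-1)\ot\pr^*\det\cE$, so the generic fibre of $\cQ$ is a conic containing no lines and $F_1(\cQ)$ is supported over the discriminant $\Deg_1(\cQ)$. Now several of the $E_1^{p,q}$ with $p+q=0$ are simultaneously nonzero (from $R^0\pi_*\cK_0$, $R^1\pi_*\cK_1$ and $R^2\pi_*\cK_2$, computed again by relative Bott on the $\P^2$-bundle), and the answer is \emph{not} a naive sum of individual pushforwards: the spectral-sequence differentials must be identified with the symmetric map $\cE\ot\cL\to\cE^\vee$ defining the family, so that its cokernel $\cC$ emerges and $\cO_S$ is cut down to $\cO_{\Deg_1(\cQ)}$. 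Carefully tracking these differentials and the $\det\cE$-twists to produce $\cO_{\Deg_1(\cQ)}\op(\cC\ot\cL\ot\det\cE)$ is the part requiring the most bookkeeping; geometrically it encodes the double cover of $\Deg_1(\cQ)$ by the two lines of each degenerate conic.
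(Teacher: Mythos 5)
Your strategy coincides with the paper's own proof: realize $F_1(\cQ)$ as the zero locus of a section of the rank-$3$ bundle $\cL^\vee\otimes\Sym^2\!\cU^\vee$, use the corank stratification together with $\Deg_{m-2}(\cQ)\ne S$ to force codimension exactly $3$ (hence regularity of the section over the Cohen--Macaulay base, hence the Koszul resolution, whose terms you identify by the same rank-$2$ exterior-power identities the paper takes from Weyman), and then push forward by relative Borel--Bott--Weil. Your treatment of $m\ge 5$ and $m=4$ matches the paper exactly (the paper, like you, reads off the splitting $\cO_S\oplus(\cL^2\otimes\det(\cE))$ from the two surviving terms in total degree $0$ without further comment), and your observation that $\cL$ being a genuine subbundle kills the rank-$0$ stratum is a point the paper leaves implicit in its fibre-dimension count.

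The one concrete flaw is in your $m=3$ bookkeeping, precisely where you announce the main difficulty. Your claim that $R^2\pi_*\cK_2$ is nonzero and contributes in total degree $0$ is wrong: Bott gives $R^1\pi_*(\cL^2\otimes\Sym^2\!\cU\otimes\det(\cU))\cong\cL^2\otimes\det(\cE)\otimes\cE$ (and no $R^2$), sitting in total degree $-1$ alongside $R^2\pi_*\cK_3\cong\cL^3\otimes\det(\cE)^2$, which you omit; total degree $0$ carries only $\cO_S$ and $R^1\pi_*\cK_1\cong\cL\otimes\det(\cE)\otimes\cE^\vee$. With the degrees corrected, the ``delicate tracking of differentials'' you defer becomes immediate: since $\varphi$ is finite for $m=3$, the pushforward collapses to the four-term exact sequence
\begin{equation*}
0 \to (\cL^3 \otimes \det(\cE)^2) \oplus (\cL^2 \otimes \det(\cE) \otimes \cE) \xrightarrow{\ \alpha\ } (\cL \otimes \det(\cE) \otimes \cE^\vee) \oplus \cO_S \to \varphi_*\cO_{F_1(\cQ)} \to 0,
\end{equation*}
in which $\alpha$ is, as you correctly anticipated, the direct sum of a twist of $\alpha_1\colon\cL\otimes\cE\to\cE^\vee$ and of its determinant $\alpha_0\colon\cL^3\otimes\det(\cE)^2\to\cO_S$, with cokernels $\cC\otimes\cL\otimes\det(\cE)$ and $\cO_{\Deg_1(\cQ)}$ respectively. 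So the mechanism you guessed is exactly the paper's; your blueprint would have self-corrected on execution, but as written the $m=3$ case rests on a misplaced Bott cohomology group and an unexecuted identification of the differentials.
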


\begin{proof}
Since $F_1(\cQ)$ is the zero-locus of a section of the rank-$3$ vector bundle $\cL^\vee \otimes \Sym^2\!\cU^\vee$  on~$\Gr_S(2,\cE)$, its codimension  is at most 3.\ On the other hand, for a quadric of rank $r$ in~$\P^{m-1}$, the dimension of the Hilbert scheme of lines is equal to $2m-7$ for $r \ge 3$ and to $2m - 6$ for~$r \le 2$.\  Stratifying $F_1(\cQ)$ by the preimages of the   subsets $S \setminus \Deg_{m-2}(\cQ)$ and  $\Deg_{m-2}(\cQ)$, we see that the codimension of the first stratum is 3 and the codimension of the second stratum is $\codim(\Deg_{m-2}(\cQ)) + 2$.\ Since $\Deg_{m-2}(\cQ) \ne S$, the codimension of $F_1(\cQ)$ is at least 3.\ Since $\Gr_S(2,\cE)$ is Cohen--Macaulay, the section of $\cL^\vee \otimes \Sym^2\!\cU^\vee$ defining $F_1(\cQ)$ is regular and the Koszul complex provides a resolution of its structure sheaf.\ A standard description of the exterior powers of a symmetric square (\cite[Proposition~2.3.9]{wey}) gives the above explicit form.

For the second part, we apply the Borel--Bott--Weil Theorem to compute the derived pushforwards  to $S$ of the terms of the Koszul complex.\  The result is 
\begin{align*}
R^\bullet\pi_*\cO_{\Gr_S(2,\cE)} & {}\cong \hspace{.7em}\cO_S,\\
R^i\pi_*\Sym^2\cU & {}\cong 
\begin{cases}
\det(\cE) \otimes \cE^\vee\\
0
\end{cases}
&
\begin{aligned}
& \text{if $m = 3$ and $i = 1$},\\
& \text{otherwise},
\end{aligned}
\\
R^i\pi_*\Sym^2\cU \otimes \det(\cU) & {}\cong 
\begin{cases}
\det(\cE) \otimes \cE\\
\det(\cE)\\
0
\end{cases}
&
\begin{aligned}
& \text{if $m = 3$ and $i = 1$},\\
& \text{if $m = 4$ and $i = 2$},\\
& \text{otherwise},
\end{aligned}
\\
R^i\pi_*(\det(\cU))^{\otimes 3} & {}\cong 
\begin{cases}
\det(\cE)^2\\ 
0
\end{cases}
&
\begin{aligned}
& \text{if $m = 3$ and $i = 2$},\\
& \text{otherwise}.
\end{aligned}
\end{align*}
Therefore, the pushforward of the Koszul complex for $m = 3$ gives an exact sequence
\begin{equation*}
0 \to  (\cL^3 \otimes \det(\cE)^2) \oplus (\cL^2 \otimes \det(\cE) \otimes \cE) \xrightarrow{\ \alpha\ } (\cL \otimes \det(\cE) \otimes \cE^\vee) \oplus \cO_S \to \varphi_*\cO_{F_1(\cQ)} \to 0.
\end{equation*}
The map $\alpha$ is the direct sum of a twist of the map $\alpha_1 \colon \cL \otimes \cE \to \cE^\vee$ and of a twist of its determinant $\alpha_0 \colon \cL^3 \otimes \det(\cE)^2 \to \cO_S$.\ The cokernel of $\alpha_1$ is $\cC$ and the cokernel of $\alpha_0$ is the structure sheaf of the degeneracy locus $\Deg_1(\cQ)$.\  This gives the result for $m = 3$.

For $m = 4$, the pushforward of the Koszul complex gives $\varphi_*\cO_{F_1(\cQ))} = \cO_S \oplus (\cL^2 \otimes \det(\cE))$,
and for $m \ge 5$, just $\varphi_*\cO_{F_1(\cQ))} = \cO_S$.
\end{proof}

For $k = 2$, the computation is analogous, but more complicated, since the Koszul complex is longer.\ We denote by $\Sigma^{a,b,c}\cU$ the Schur functor of the rank-3 tautological subbundle~$\cU$ on $\Gr_S(3,\cE)$ corresponding to the highest weight $(a,b,c)$ of the group $\GL_3$.\
We also consider the composition
\begin{equation*}
\cL \otimes \cE \otimes \cE^\vee \xrightarrow{\ \ } \cE^\vee \otimes \cE^\vee \xrightarrow{\ \ } \bw2\cE^\vee,
\end{equation*}
where the first map is given by the family of quadrics and the second is  canonical.\
Denote by~$\cC_2$ its cokernel sheaf; it is supported on the degeneracy locus $\Deg_2(\cQ)$.

\begin{prop}\label{proposition:f3q}
Assume  $\Deg_{m-4}(\cQ) \ne S$ and $\codim (\Deg_{m-2}(\cQ)) \ge 3$.\  There is a resolution
 \begin{multline*}
0 \to \cL^6 \otimes {\Sigma^{4,4,4}\cU} \to \cL^5 \otimes \Sigma^{4,4,2}\cU \to \cL^4 \otimes \Sigma^{4,3,1}\cU  \\
\to(\cL^3 \otimes \Sigma^{4,1,1}\cU) \oplus (\cL^3 \otimes \Sigma^{3,3,0}\cU)  \\
\to\cL^2 \otimes \Sigma^{3,1,0}\cU \to \cL \otimes \Sym^2\!\cU \to \cO_{\Gr_S(3,\cE)} \to \cO_{F_2(\cQ)} \to 0
\end{multline*}
on $\Gr_S(3,\cE) $.\ Moreover, the pushforward to $S$ of $\cO_{F_2(\cQ)}$  is given as follows:
 \begin{itemize} 
\item if $m = 4$, we have
$\varphi_*\cO_{F_2(\cQ)} \cong \cO_{\Deg_2(\cQ)} \oplus (\cC_2 \otimes \cL \otimes \det(\cE))$;
\item if $m = 5$, we have
$\varphi_*\cO_{F_2(\cQ)} \cong \cO_{\Deg_1(\cQ)} \oplus (\cC \otimes \cL^2 \otimes \det(\cE))$;
\item if $m = 6$, we have
$\varphi_*\cO_{F_2(\cQ)} \cong \cO_S \oplus (\cL^3 \otimes \det(\cE))$;
\item if $m \ge 7$, we have
$\varphi_*\cO_{F_2(\cQ)} \cong \cO_S$.
\end{itemize}
 \end{prop}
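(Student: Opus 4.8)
The plan is to run, for planes, the exact argument used for lines in Proposition~\ref{proposition:f2q}. Here $F_2(\cQ)$ is the zero-locus on $\Gr_S(3,\cE)$ of the section of the rank-$6$ bundle $\cL^\vee\otimes\Sym^2\cU^\vee$ determined by $\cL\hra\Sym^2\cE^\vee$, where $\cU$ is the rank-$3$ tautological subbundle and $\pi\colon\Gr_S(3,\cE)\to S$ the projection. The first task is to prove that, under the hypotheses $\Deg_{m-4}(\cQ)\ne S$ and $\codim(\Deg_{m-2}(\cQ))\ge3$, this section is regular, i.e.\ $F_2(\cQ)$ has codimension $6$. Granting this, since $\Gr_S(3,\cE)$ is Cohen--Macaulay the Koszul complex resolves $\cO_{F_2(\cQ)}$, and the plethysm $\bw{p}{(\Sym^2\cU)}=\bigoplus_\lambda\Sigma^\lambda\cU$ for a rank-$3$ bundle (\cite[Proposition~2.3.9]{wey}), namely $\bw{0}{}=\Sigma^{0,0,0}$, $\bw{1}{}=\Sigma^{2,0,0}$, $\bw{2}{}=\Sigma^{3,1,0}$, $\bw{3}{}=\Sigma^{4,1,1}\oplus\Sigma^{3,3,0}$, $\bw{4}{}=\Sigma^{4,3,1}$, $\bw{5}{}=\Sigma^{4,4,2}$, $\bw{6}{}=\Sigma^{4,4,4}$, reproduces the eight terms $\cL^p\otimes\Sigma^\lambda\cU$ displayed in the statement.

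\textbf{Regularity of the section.} To bound the codimension I would stratify $F_2(\cQ)$ by the corank of the quadric, i.e.\ by the loci $\Deg_c(\cQ)$, and estimate the fibre dimension of the relative scheme of planes. Over a corank-$c$ quadric of rank $r=m-c$, an isotropic $3$-space $U$ meets the radical $R$ in a subspace of dimension $j\in\{0,1,2,3\}$ and projects to an isotropic $(3-j)$-space in the nondegenerate part; counting the choices of $U\cap R$, of the projection, and of the lift shows the stratum with $\dim(U\cap R)=j$ has dimension $3(c-j)+\dim\mathrm{OG}(3-j,r)$. Maximising over $j$ and comparing with $\dim\Gr_S(3,\cE)=\dim S+3(m-3)$, one finds that (whenever the fibre is non-empty) the intrinsic codimension \emph{deficit} equals $6$ for $r\ge5$, drops to $5$ for $r\in\{3,4\}$, and to $3$ for $r\in\{1,2\}$; the rank-$0$ case does not occur because $\cL$ is a subbundle, so the form is nowhere zero. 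Since empty fibres impose no constraint, the hypothesis that $\Deg_{m-4}(\cQ)$ is a proper subset (hence of codimension $\ge1$ in $S$) absorbs the deficit-$5$ strata, while $\codim(\Deg_{m-2}(\cQ))\ge3$ absorbs the deficit-$3$ strata; the strata of rank $\ge5$ need nothing. This dimension count is the combinatorial heart of the first part.

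\textbf{Pushforward via Borel--Bott--Weil.} Since each $\cL^p$ is pulled back from $S$, it suffices to compute $R^\bullet\pi_*\Sigma^\lambda\cU$. Using relative Serre duality with $\omega_\pi=(\det\cU)^{\otimes m}\otimes(\det\cE)^{-3}$ and relative dimension $N=3(m-3)$, one has $R^i\pi_*\Sigma^\lambda\cU\cong\bigl(R^{N-i}\pi_*\Sigma^{\lambda-(m,m,m)}\cU^\vee\bigr)^\vee\otimes(\det\cE)^3$, and the right-hand side is a standard Bott computation for the globally generated bundle $\cU^\vee$: writing $\mu+\rho=(\lambda_1-1,\lambda_2-2,\lambda_3-3,m-4,\dots,1,0)$, the cohomology is non-zero exactly when none of $\lambda_1-1,\lambda_2-2,\lambda_3-3$ lies in $\{0,\dots,m-4\}$, and then it sits in the single degree given by the number of order-reversing interleavings. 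Running this through the eight weights shows: for $m\ge7$ only $\lambda=(0,0,0)$ contributes, giving $\cO_S$; for $m=6$ only $\lambda=(0,0,0)$ and $\lambda=(4,1,1)$ contribute, the latter yielding $\cL^3\otimes\det\cE$ in degree $3$; and for $m=5$ and $m=4$ several further weights contribute, each concentrated in one cohomological degree.

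\textbf{Assembling the answer, and the main obstacle.} I would then feed this into the hypercohomology spectral sequence $E_1^{-p,q}=R^q\pi_*\bigl(\cL^p\otimes\bw{p}{(\Sym^2\cU)}\bigr)\Rightarrow R^{q-p}\pi_*\cO_{F_2(\cQ)}$ and read off $\varphi_*\cO_{F_2(\cQ)}=R^0\pi_*\cO_{F_2(\cQ)}$. For $m\ge7$ the only entry is $E_1^{0,0}=\cO_S$, so $\varphi_*\cO_{F_2(\cQ)}=\cO_S$. For $m=6$ the two surviving entries $E_1^{0,0}=\cO_S$ and $E_1^{-3,3}=\cL^3\otimes\det\cE$ both lie in total degree $0$ with no connecting differential, giving $\cO_S\oplus(\cL^3\otimes\det\cE)$. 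The cases $m=5$ and $m=4$ are where the real work lies and constitute the main obstacle: the surviving total-degree-$0$ entries are locally free, whereas the answer involves the non-locally-free sheaves $\cO_{\Deg_1},\cC$ (resp.\ $\cO_{\Deg_2},\cC_2$), so there must be non-trivial higher differentials $d_r$. I would identify these with the natural $\GL$-equivariant maps attached to the family of quadrics---the map $\cE\otimes\cL\to\cE^\vee$ together with its determinant (cutting out $\Deg_1$) for $m=5$, and the composite $\cL\otimes\cE\otimes\cE^\vee\to\bw2{\cE^\vee}$ with cokernel $\cC_2$ together with a trace/determinant contraction (cutting out $\Deg_2$) for $m=4$---the point being that equivariance determines each differential up to a scalar, exactly as in the $m=3$ analysis of Proposition~\ref{proposition:f2q}. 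Computing the resulting cokernels then gives $\cO_{\Deg_2}\oplus(\cC_2\otimes\cL\otimes\det\cE)$ for $m=4$ and $\cO_{\Deg_1}\oplus(\cC\otimes\cL^2\otimes\det\cE)$ for $m=5$, the direct-sum splitting arising from the $\Z/2$-grading of $\varphi_*\cO_{F_2(\cQ)}$ as the structure sheaf of the Stein double cover.
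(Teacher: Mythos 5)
Your proposal is correct and takes essentially the same route as the paper's proof: the identical corank stratification and fibre-dimension count (codimension deficits $6$, $5$, $3$ for ranks $\ge 5$, $\{3,4\}$, $\le 2$) showing the section of $\cL^\vee\otimes\Sym^2\!\cU^\vee$ is regular, the Koszul resolution decomposed via \cite[Proposition~2.3.9]{wey}, and Borel--Bott--Weil pushforwards assembled case by case. Your spectral-sequence formulation with differentials pinned down by equivariance is just a repackaging of the paper's direct computation, in which the map $\alpha$ on the pushed-forward Koszul complex is identified as the direct sum of a twist of $\alpha_1$ and of the determinant (for $m=5$), respectively the exterior cube (for $m=4$), of the family of quadrics, with cokernels $\cC$ and $\cO_{\Deg_1(\cQ)}$, respectively $\cC_2$ and $\cO_{\Deg_2(\cQ)}$.
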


\begin{proof}
By definition, $F_2(\cQ)$ is the zero-locus of a section of the rank-6 vector bundle $\cL^\vee \otimes \Sym^2\!\cU^\vee$  on~$\Gr_S(3,\cE)$, so its codimension is at most 6.\ On the other hand, for a quadric of rank $r$ in~$\P^{m-1}$, the dimension of the Hilbert scheme of planes is equal to $3m-15$ for $r \ge 5$, $3m - 14$ for $4 \ge r \ge 3$, and $3m - 12$ for $r \le 2$.\ Thus, stratifying $F_2(\cQ)$ by the subsets $S \setminus \Deg_{m-4}(\cQ)$, $\Deg_{m-4}(\cQ) \setminus \Deg_{m-2}(\cQ)$, and $\Deg_{m-2}(\cQ)$, we see that  under our assumption, the codimension is~6, the section of $\cL^\vee \otimes \Sym^2\!\cU^\vee$ defining $F_2(\cQ)$ is regular, and the Koszul complex provides a resolution of its structure sheaf.\ A standard description of the exterior powers of a symmetric square (\cite[Proposition~2.3.9]{wey}) gives the above explicit form.

For the second part, we apply the Borel--Bott--Weil Theorem to compute the derived pushforwards to $S$ of the terms of the Koszul complex.\ The result is the following
{\allowdisplaybreaks
\begin{align*}
R^\bullet\pi_*\cO_{\Gr_S(2,\cE)} & {}\cong \hspace{.7em}\cO_S,\\
R^i\pi_*\Sym^2\cU & {}\cong 
\begin{cases}
\det(\cE) \otimes \bw2\cE^\vee \\
0
\end{cases}
&
\begin{aligned}
& \text{if $m = 4$ and $i = 1$},\\
& \text{otherwise},
\end{aligned}
\\
R^i\pi_*\Sigma^{3,1,0}\cU & {}\cong 
\begin{cases}
\det(\cE) \otimes ((\cE \otimes \cE^\vee)/\cO_{S}) \\
\det(\cE) \otimes \cE^\vee \\
0 
\end{cases}
&
\begin{aligned}
& \text{if $m = 4$ and $i = 1$},\\
& \text{if $m = 5$ and $i = 2$},\\
& \text{otherwise},
\end{aligned}
\\
R^i\pi_*\Sigma^{3,3,0}\cU & {}\cong 
\begin{cases}
\det(\cE)^2 \otimes \Sym^2\!\cE^\vee \\
0 
\end{cases}
&
\begin{aligned}
& \text{if $m = 4$ and $i = 2$},\\
& \text{otherwise},
\end{aligned}
\\
R^i\pi_*\Sigma^{4,1,1}\cU & {}\cong 
\begin{cases}
\det(\cE) \otimes \Sym^2\!\cE \\
\det(\cE) \otimes \cE \\
\det(\cE) \\
0
\end{cases}
&
\begin{aligned}
& \text{if $m = 4$ and $i = 1$},\\
& \text{if $m = 5$ and $i = 2$},\\
& \text{if $m = 6$ and $i = 3$},\\
& \text{otherwise},
\end{aligned}
\\
R^i\pi_*\Sigma^{4,3,1}\cU & {}\cong 
\begin{cases}
\det(\cE)^2 \otimes ((\cE \otimes \cE^\vee)/\cO_{S}) \\
0
\end{cases}
&
\begin{aligned}
& \text{if $m = 4$ and $i = 2$},\\
& \text{otherwise},
\end{aligned}
\\
R^i\pi_*\Sigma^{4,4,2}\cU & {}\cong 
\begin{cases}
\det(\cE)^2 \otimes \bw2\cE \\
\det(\cE)^2 \\
0
\end{cases}
&
\begin{aligned}
& \text{if $m = 4$ and $i = 2$},\\
& \text{if $m = 5$ and $i = 4$},\\
& \text{otherwise},
\end{aligned}
\\
R^i\pi_*{\Sigma^{4,4,4}\cU} & {}\cong 
\begin{cases}
\det(\cE)^3 \\
0
\end{cases}
&
\begin{aligned}
& \text{if $m = 4$ and $i = 3$},\\
& \text{otherwise}.
\end{aligned}
\end{align*}}
Therefore, the pushforward of the Koszul complex for $m = 4$ gives the exact sequence
\begin{multline*}
\dots \to (\cL^3 \otimes \det(\cE)^2 \otimes \Sym^2\!\cE^\vee) \oplus (\cL^2 \otimes \det(\cE) \otimes ((\cE \otimes \cE^\vee)/\cO_{S}))  \\ {}\xrightarrow{\ \alpha\ } (\cL \otimes \det(\cE) \otimes \bw2\cE^\vee) \oplus \cO_S \to \varphi_*\cO_{F_2(\cQ)} \to 0.
\end{multline*}
The map $\alpha$ is the direct sum of a twist of the map $\alpha_1 \colon \cL \otimes ((\cE \otimes \cE^\vee)/\cO_{S}) \to \bw2\cE^\vee$ and of the exterior cube of the family of quadrics $\alpha_0 \colon \cL^3 \otimes \Sym^2(\bw3\cE) \to \cO_S$.\ The cokernel of $\alpha_1$ is~$\cC_2$, and that of $\alpha_0$ is $\cO_{\Deg_2(\cQ)}$.\ This gives the result for $m = 4$.

For $m = 5$, the pushforward of the Koszul complex gives 
\begin{equation*}
0 \to  (\cL^5 \otimes \det(\cE)^2) \oplus (\cL^3 \otimes \det(\cE) \otimes \cE) \xrightarrow{\ \alpha\ } (\cL^2 \otimes \det(\cE) \otimes \cE^\vee) \oplus \cO_S \to \varphi_*\cO_{F_2(\cQ)} \to 0.
\end{equation*}
The map $\alpha$ is described as in Proposition~\ref{proposition:f2q} and gives the result for $m = 5$.\ For $m = 6$, the pushforward of the Koszul complex gives $\varphi_*\cO_{F_2(\cQ))} = \cO_S \oplus (\cL^3 \otimes \det(\cE))$, and, for $m \ge 7$, just $\varphi_*\cO_{F_2(\cQ))} = \cO_S$.
\end{proof}

\section{Resolutions of EPW surfaces}
\label{section:epw-surface}

In this appendix, we discuss a resolution of the structure sheaf of an EPW surface $Y_A^{\ge 2}$ in~$\P(V_6)$ and compute some cohomology spaces related to its ideal sheaf.\ We use freely the notation and results of~\cite[Appendix~B]{DK}, especially those introduced in Proposition~B.3.\ In particular, we set
\begin{equation*}
\hY_A := \{ (v,V_5) \in  \Fl(1,5;V_6)\mid   A \cap (v \wedge \bw2V_5) \ne 0 \}
\end{equation*}
and
\begin{equation}\label{yahatp}
\hY'_A := \{ (a,v,V_5) \in \PP(A) \times \Fl(1,5;V_6)\mid a \in  \P(A \cap (v \wedge \bw2V_5)) \}.
\end{equation}
When $A$ contains no decomposable vectors, the projection $\hY'_A \to \Fl(1,5;V_6)$ induces an isomorphism $\hY'_A \isomto \hY_A$.\ We denote by $H$ and $H'$ the hyperplane classes of $\P(V_6)$ and $\P(V_6^\vee)$ and by $p \colon \hY_A \to Y_A$ and $q \colon \hY_A \to Y_{A^\perp}$ the projections 
(we switch back from the notation $\pr_{Y,1}$ and~$\pr_{Y,2}$ used in the main body of the article to the notation used in~\cite{DK}).\ We also denote by $H_A$ the pullback of the hyperplane class of $\P(A)$ to $\hY_A$ via the map $\hY_A \to \P(A)$ provided by the identification of $\hY_A$ with $\hY'_A$ (when $A$ contains no decomposable vectors).\ We begin with a simple lemma.

\begin{lemm}\label{lemma:ps-ohy}
If $A$ contains no decomposable vectors, there is an isomorphism $p_*\cO_{\hY_A} \isomto \cO_{Y_A}$ and $R^{>0}p_*\cO_{\hY_A} = 0$.
\end{lemm}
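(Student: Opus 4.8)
The plan is to split the statement into its soft half, $p_*\cO_{\hY_A}\cong\cO_{Y_A}$, which I would get from normality, and its hard half, $R^{>0}p_*\cO_{\hY_A}=0$, which I would get from an explicit resolution. For the soft half: the map $p$ is proper (it is the restriction of the projection $\Fl(1,5;V_6)\to\P(V_6)$) and birational onto $Y_A$, being an isomorphism over the dense open $Y_A^1=Y_A\setminus Y_A^{\ge2}$ by \cite[Proposition~B.3]{DK}; moreover $\hY_A\cong\hY'_A$ is integral. Since $Y_A$ is normal, the Stein factorization $\hY_A\to\Spec_{Y_A}(p_*\cO_{\hY_A})\to Y_A$ has finite second arrow which is birational, hence an isomorphism, so $p_*\cO_{\hY_A}\cong\cO_{Y_A}$.

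For the vanishing I would compute the whole complex $Rp_*\cO_{\hY_A}$ at once. Factor $p$ as $\hY_A\hookrightarrow\Fl(1,5;V_6)\xrightarrow{\ \pi\ }\P(V_6)$; since a flag $\cU_1\subset\cU_5\subset V_6$ with fixed line $\cU_1$ is a hyperplane in $V_6/\cU_1$, the map $\pi$ is the $\P^4$-bundle $\P_{\P(V_6)}((V_6/\cU_1)^\vee)$, and $Rp_*\cO_{\hY_A}=R\pi_*\cO_{\hY_A}$. Inside $\Fl(1,5;V_6)$, the subvariety $\hY_A$ is the locus where
\[
\phi\colon A\otimes\cO\lra\bw3V_6\otimes\cO\lra\bigl(\bw3V_6/(\cU_1\wedge\bw2\cU_5)\bigr)\otimes\cO
\]
fails to be injective, a map of bundles of ranks $10$ and $14$; its expected degeneracy codimension is $14-10+1=5$, which matches $\dim\Fl(1,5;V_6)-\dim\hY_A=9-4=5$. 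Granting that this expected codimension is attained, $\hY_A$ is Cohen--Macaulay and $\cO_{\hY_A}$ is resolved by the Eagon--Northcott (Buchsbaum--Rim) complex of $\phi$, a finite complex of locally free sheaves on $\Fl(1,5;V_6)$ built from Schur functors of $\cU_1,\cU_5$. Applying $R\pi_*$ term by term via the Borel--Bott--Weil theorem along $\pi$---exactly the mechanism used in the proofs of Propositions~\ref{proposition:f2q} and~\ref{proposition:f3q}---I would check that the pushed-forward complex on $\P(V_6)$ has cohomology $\cO_{Y_A}$ in degree $0$ and nothing else, which yields both assertions simultaneously.

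The main obstacle is twofold. First, one must secure scheme-theoretically that $\hY_A$ has the expected codimension $5$ (equivalently, is Cohen--Macaulay), and this is precisely where the hypothesis that $A$ has no decomposable vectors enters: it forces $Y_A^{\ge4}=\emptyset$ and bounds the dimensions of the strata $Y_A^\ell$, hence of the fibres of $p$, ruling out excess components of the degeneracy locus. Second, the Borel--Bott--Weil bookkeeping---verifying that all higher pushforwards of the resolution terms cancel in the correct degrees---is the genuinely computational part. As a consistency check one can describe the fibres of $p$ over $Y_A^1$, $Y_A^2$, $Y_A^3$ directly from the description of $A\cap(v\wedge\bw2V_6)$: they are a reduced point, a curve in $\P^4$, and a plane, all with vanishing higher structure-sheaf cohomology, so the conclusion $R^{>0}p_*\cO_{\hY_A}=0$ is at least compatible with the theorem on formal functions.
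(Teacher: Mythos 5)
Your first half is fine, but your second half is not yet a proof: the entire content of the assertion $R^{>0}p_*\cO_{\hY_A}=0$ is packed into the unexecuted sentence ``I would check that the pushed-forward complex \dots has cohomology $\cO_{Y_A}$ in degree $0$ and nothing else.'' The setup before that is sound --- the ranks $10$ and $14$ are correct, and the expected corank-one codimension $14-10+1=5$ is attained (your fibre-dimension count over the strata $Y^\ell_A$, using $\dim Y^{\ge 2}_A\le 2$ and $Y^{\ge 3}_A$ finite from \cite[Proposition~B.2]{DK}, does rule out excess components), so the Eagon--Northcott complex of $\phi$ does resolve the determinantal structure sheaf, which agrees with $\hY_A$ by Cohen--Macaulayness plus generic reducedness. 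But the pushforward step is substantially harder than you allow: the terms of that complex involve $\bw{10+j}G^\vee$ for the rank-$14$ bundle $G=\bw3V_6/(\cU_1\wedge\bw2\cU_5)$, which carries a three-step filtration with graded pieces built from $\cU_1$ and $\cU_5/\cU_1$, so each term must be decomposed before relative Borel--Bott--Weil along the $\P^4$-bundle $\pi$ even applies; and nothing guarantees term-by-term vanishing of the higher direct images --- if nonzero entries appear on the $E_1$ page of the hypercohomology spectral sequence, concentration of $Rp_*\cO_{\hY_A}$ in degree $0$ requires controlling differentials, which is exactly the assertion being proved. So as written you have established $p_*\cO_{\hY_A}\cong\cO_{Y_A}$ (your Stein-factorization argument via normality of $Y_A$ and birationality of $p$, both available from \cite{DK}, is correct and more elementary than anything the paper does for this half) but only deferred the vanishing.

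The paper avoids all of this by changing the ambient space, a trick your factorization through $\Fl(1,5;V_6)$ misses. Since $A$ has no decomposable vectors, the hyperplane $V_5$ is recovered uniquely from $(a,v)$ with $a=v\wedge\eta$ (the $2$-form $\eta$ has rank $4$), so $\hY_A\cong\hY'_A$ embeds in $\P(A)\times\P(V_6)$ as the zero locus of a section of the rank-$10$ bundle $\cO_{\P(A)}(H_A)\boxtimes\bw3T_{\P(V_6)}(-3H)$; the codimension equals the rank, so $\hY_A$ is a \emph{complete intersection} there, resolved by the Koszul complex rather than by Eagon--Northcott. Pushing forward along the factor $\P(A)\cong\P^9$ then uses only $H^\bullet(\P^9,\cO(-k))$: the terms with $1\le k\le 9$ die outright, only $k=0$ and $k=10$ survive, and one reads off at once the exact sequence $0\to\cO_{\P(V_6)}(-6H)\to\cO_{\P(V_6)}\to p_*\cO_{\hY_A}\to 0$ together with $R^{>0}p_*\cO_{\hY_A}=0$, the sextic being identified with $Y_A$ by support. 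No Borel--Bott--Weil, no spectral-sequence differentials, and both assertions come out simultaneously. If you want to salvage your route you must actually carry out the decomposition-and-cancellation computation; replacing $\Fl(1,5;V_6)$ by $\P(A)\times\P(V_6)$ is what reduces the lemma to a three-line pushforward.
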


\begin{proof}
By definition, $\hY_A = \hY'_A$ is the zero-locus of the composition
\begin{equation*}
\cO_{\P(A)}(-H_A) \boxtimes \cO_{\P(V_6)} \to A \otimes \cO_{\P(A) \times \P(V_6)} \to \bw3V_6 \otimes \cO_{\P(A) \times \P(V_6)} \to \cO_{\P(A)} \boxtimes \bw3T_{\P(V_6)}(-3H)
\end{equation*}
on $\P(A) \times \P(V_6)$, hence equals the zero-locus of the corresponding section of the vector bundle $\cO_{\P(A)}(H_A) \boxtimes \bw3T_{\P(V_6)}(-3H)$.\ Since the codimension of $\hY_A$ in $\P(A) \times \P(V_6)$ equals the rank of that vector bundle, we have a Koszul resolution
\begin{multline}\label{eq:koszul-ya}
\dots \to \cO_{\P(A)}(-2H_A) \boxtimes \bw2(\bw2T_{\P(V_6)})(-6H)  
\\ \to \cO_{\P(A)}(-H_A) \boxtimes \bw2T_{\P(V_6)}(-3H) \to \cO_{\P(A) \times \P(V_6)} \to \cO_{\hY_A} \to 0.
\end{multline}
Pushing it forward to $\P(V_6)$, we obtain an exact sequence
\begin{equation*}
0 \to \det(\bw2T_{\P(V_6)}(-3H)) \to \cO_{\P(V_6)} \to p_*\cO_{\hY_A} \to 0
\end{equation*}
and deduce vanishing of higher pushforwards.\ The first non-zero term of this sequence is isomorphic to $\cO_{\P(V_6)}(-6H)$, hence $p_*\cO_{\hY_A}$ is the structure sheaf of a sextic hypersurface, which clearly coincides with $Y_A$.
\end{proof}

The crucial observation on which the results of this appendix are based is the following.

\begin{lemm}
If $A$ contains no decomposable vectors, there is a linear equivalence of divisors
\begin{equation}\label{eq:hhh}
2H_A \lin H + H'.
\end{equation}
\end{lemm}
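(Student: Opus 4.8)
The plan is to produce the equivalence as the first-Chern-class identity of a natural isomorphism of line bundles on $\hY_A \cong \hY'_A$, built from the tautological data. Write $a$ for the tautological vector generating the fiber of $\cO(-H_A) \subset A$, let $U_1 = \cO(-H) \subset V_5 \subset V_6$ be the tautological line with generator $v$, and recall that the tautological hyperplane $V_5$ gives $\cO(H') \cong V_6/V_5$, whence $\det V_5 \cong \det V_6 \otimes \cO(-H')$.

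First I would split off the factor $v$. By definition of $\hY'_A$ one has $a \in A \cap (U_1 \wedge \bw2{V_5}) \subset U_1 \wedge \bw2{V_5}$. The wedge map $U_1 \otimes \bw2{(V_5/U_1)} \to \bw3{V_5}$, $u \otimes \bar\eta \mapsto u \wedge \tilde\eta$, is well defined (independent of the lift $\tilde\eta$, since $u \wedge U_1 \wedge V_5 = 0$) and injective with image $U_1 \wedge \bw2{V_5}$, giving an isomorphism $U_1 \wedge \bw2{V_5} \cong U_1 \otimes \bw2{(V_5/U_1)}$ of subbundles of $\bw3{V_6}$. Under it the sub-line-bundle $\cO(-H_A) = \langle a \rangle$ corresponds to $U_1 \otimes L$, where $L := \cO(-H_A) \otimes U_1^\vee \hookrightarrow \bw2{(V_5/U_1)}$ is the line subbundle spanned fiberwise by the image $\omega$ of $a$; thus $c_1(L) = H - H_A$ (using $U_1^\vee = \cO(H)$).

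Next I would invoke the hypothesis that $A$ has no decomposable vectors: were $\omega$ decomposable at some point, then $a = v \wedge \omega$ would be decomposable there, which is excluded. Hence $\omega$ is a fiberwise-nondegenerate alternating form on the rank-$4$ bundle $V_5/U_1$, so $\omega \wedge \omega$ is a nowhere-vanishing map $L^{\otimes 2} \to \det(V_5/U_1)$, \ie, an isomorphism $L^{\otimes 2} \cong \det(V_5/U_1)$. From the tautological sequences $0 \to U_1 \to V_5 \to V_5/U_1 \to 0$ and $0 \to V_5 \to V_6 \otimes \cO \to V_6/V_5 \to 0$ (with $\det V_6$ constant) one computes $c_1(\det(V_5/U_1)) = c_1(\det V_5) + H = -H' + H$. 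Comparing $2c_1(L) = H - H'$ with $c_1(L) = H - H_A$ yields $2H_A \lin H + H'$.

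The computation is short, so the only real point to watch — and where the no-decomposable-vectors hypothesis does all the work — is the globalization of these pointwise statements: one must check that $\cO(-H_A) \hookrightarrow U_1 \wedge \bw2{V_5}$ and $L \hookrightarrow \bw2{(V_5/U_1)}$ are genuine subbundles and that $\omega \wedge \omega$ vanishes nowhere on $\hY_A$, all of which follow from $a \neq 0$ and $\omega$ nondegenerate at every point. Granting this, the determinant bookkeeping is routine.
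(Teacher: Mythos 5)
Your proof is correct, and it reaches the conclusion by a route that differs from the paper's at the decisive step. Both arguments pivot on the same key input: writing the tautological vector fiberwise as $a = v \wedge \tilde\omega$ and using the absence of decomposable vectors in $A$ to guarantee that $\omega \wedge \omega$ vanishes nowhere. But the class $H'$ is extracted differently. The paper works only with bundles pulled back from $\P(V_6)$: it embeds $\cO_{\hY_A}(-H_A)$ into $p^*(\bw2{T_{\P(V_6)}}(-3H))$, takes the wedge square, twists, and obtains an embedding of line bundles $\cO_{\hY_A}(H-2H_A) \hookrightarrow V_6^\vee \otimes \cO_{\hY_A}$ given by $(a,v,V_5) \mapsto v\wedge\eta\wedge\eta$; it must then identify the induced morphism $\hY_A \to \P(V_6^\vee)$ with the projection $q$ (this identification is imported from the proof of \cite[Proposition~B.3]{DK}), after which $H-2H_A \lin -H'$ follows from $q^*\cO_{\P(V_6^\vee)}(-1) \cong \cO_{\hY_A}(-H')$. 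You instead exploit the flag structure of $\hY'_A$, where the tautological rank-$5$ bundle $V_5$ is available with $V_6/V_5 \cong \cO(H')$: the condition $a \in v \wedge \bw2{V_5}$ lets you descend to a line subbundle $L = \cO(H-H_A) \hookrightarrow \bw2{(V_5/U_1)}$ of a rank-$6$ bundle, and the wedge square becomes an isomorphism $L^{\otimes 2} \isomto \det(V_5/U_1) \cong \cO(H-H')$ onto a line bundle whose class is computed by routine determinant bookkeeping from the two tautological sequences --- no identification of any morphism with $q$ is required. Your version is thus more self-contained (it does not invoke the geometric description of $q$, though your isomorphism $L^{\otimes 2}\cong\det(V_5/U_1)$ secretly encodes the same fact, namely that $v\wedge\tilde\omega\wedge\tilde\omega$ spans $\bw5{V_5}$), while the paper's version has the side benefit of realizing $q$ bundle-theoretically, a description it reuses. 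Your globalization checks (subbundle inclusions from fiberwise nonvanishing, constancy of rank of $U_1\wedge\bw2{V_5}$) are all sound; the only cosmetic slip is calling $\omega \in \bw2{(V_5/U_1)}$ an alternating form on $V_5/U_1$ --- it is a bivector, i.e., a form on the dual --- but the operative statement, that $\omega\wedge\omega\colon L^{\otimes 2} \to \det(V_5/U_1)$ is nowhere zero because a $2$-vector on a rank-$4$ space with vanishing wedge square is decomposable, is exactly right.
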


\begin{proof}
The definition~\eqref{yahatp} implies that the image of the tautological embedding $\cO_{\hY_A}(-H_A) \hookrightarrow A \otimes \cO_{\hY_A}$ is contained 
in the kernel of the composition 
\begin{equation*}
{A \otimes \cO_{\hY_A} \to \bw3V_6 \otimes \cO_{\hY_A} \to p^*(\bw3T_{\P(V_6)}(-3H)), }
\end{equation*}
itself identified with the kernel of the morphism $p^*(\bw2T_{\P(V_6)}(-3H)) \to (\bw3V_6/A) \otimes \cO_{\hY_A}$.\ Thus we have a natural embedding 
\begin{equation*}
\cO_{\hY_A}(-H_A) \hookrightarrow p^*(\bw2T_{\P(V_6)}(-3H))
\end{equation*}
of vector bundles: over a point $(a,v,V_5)$ of $\hY_A$ with $a = v \wedge \eta$, it is given by $\eta \in \bw2(V_5/v) \subset \bw2(V_6/v)$.\ Its wedge square is a map 
\begin{equation*}
\cO_{\hY_A}(-2H_A) \to p^*(\bw4T_{\P(V_6)}(-6H)) \cong p^*(\Omega_{\P(V_6)})
\end{equation*}
that sends a point $(a,v,V_5)$ of $\hY_A$, with $a = v \wedge \eta$, to $\eta \wedge \eta \in \bw4(V_5/v) \subset \bw4(V_6/v)$
which is non-zero since $a$ is indecomposable.\ This map is therefore an embedding of vector bundles.\ Twisting it by~$\cO(H)$ and composing with the canonical embedding gives a map 
\begin{equation*}
\cO_{\hY_A}(H-2H_A) \hookrightarrow p^*(\Omega_{\P(V_6)}(H)) \hookrightarrow V_6^\vee \otimes \cO_{\hY_A}
\end{equation*}
which defines a map $\hY_A \to \P(V_6^\vee)$, $(a ,v,V_5) \mapsto v \wedge \eta \wedge \eta$.\ In the proof of \cite[Proposition~B.3]{DK}, it was shown that this map is the projection $q \colon \hY_A \to Y_{A^\perp}$.\ Therefore, we have an isomorphism of line bundles $\cO_{\hY_A}(H-2H_A) \cong \cO_{\hY_A}(-H')$, hence~\eqref{eq:hhh}.
\end{proof}

This allows us to find a simple resolution of the ideal sheaf $\cI_{Y_A^{\ge 2},\,Y_A}$ of the EPW surface~$Y_A^{\ge 2}$ in the EPW sextic $Y_A$.

\begin{lemm}
If $A$ contains no decomposable vectors, there is an exact sequence 
\begin{multline}\label{eq:resolution-y2}
0 \to \bw2(\bw2T_{\P(V_6)})(-12H) \to A^\vee \otimes \bw2T_{\P(V_6)}(-9H)  
\\
 \to \Sym^2\!A^\vee \otimes \cO_{\P(V_6)}(-6H) \to \cI_{Y_A^{\ge 2},\,Y_A} \to 0
\end{multline}
of sheaves on $\P(V_6)$.
\end{lemm}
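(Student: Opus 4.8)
The plan is to realize the desired complex as the pushforward to $\P(V_6)$ of the Koszul resolution~\eqref{eq:koszul-ya}, suitably twisted. Combining the linear equivalence~\eqref{eq:hhh} with the relation $E\lin 5H-H'$ established in \cite[proof of Lemma~B.5]{DK}, where $E$ denotes the exceptional divisor of $p\colon\hY_A\to Y_A$, gives $E\lin 6H-2H_A$, hence an isomorphism $\cO_{\hY_A}(2H_A-6H)\isom\cO_{\hY_A}(-E)=\cI_{E,\hY_A}$. First I would tensor the resolution~\eqref{eq:koszul-ya}, whose $j$-th term is $\cO_{\P(A)}(-jH_A)\boxtimes\bw{j}{(\bw2T_{\P(V_6)})}(-3jH)$, by the line bundle $\cO_{\P(A)}(2H_A)\boxtimes\cO(-6H)$. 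This produces a locally free resolution of $\cO_{\hY_A}(2H_A-6H)$ on $\P(A)\times\P(V_6)$ whose $j$-th term is $\cO_{\P(A)}((2-j)H_A)\boxtimes\bw{j}{(\bw2T_{\P(V_6)})}(-(3j+6)H)$.

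Next I would push this resolution forward along the projection $\P(A)\times\P(V_6)\to\P(V_6)$, which restricts to $p$ on $\hY_A$, using the Künneth formula: the $j$-th term contributes $H^\bullet(\P(A),\cO(2-j))\otimes\bw{j}{(\bw2T_{\P(V_6)})}(-(3j+6)H)$. Since $A$ is Lagrangian in the $20$-dimensional space $\bw3V_6$, we have $\P(A)\isom\P^9$, and $H^\bullet(\P^9,\cO(2-j))$ is nonzero only for $j\in\{0,1,2\}$, where it is concentrated in degree $0$ and equals $\Sym^2\!A^\vee$, $A^\vee$, and $\C$ respectively; for $3\le j\le 10$ the twist $2-j$ lies in the acyclic range $-8,\dots,-1$, and $H^9$ never appears. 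Thus the hypercohomology spectral sequence computing $R^\bullet p_*\cO_{\hY_A}(2H_A-6H)$ has exactly three nonzero $E_1$-terms, namely $\Sym^2\!A^\vee\otimes\cO(-6H)$, $A^\vee\otimes\bw2T_{\P(V_6)}(-9H)$, and $\bw2(\bw2T_{\P(V_6)})(-12H)$, lying in total degrees $0,-1,-2$. Because the higher direct images of a sheaf vanish in negative cohomological degree, the $d_1$-differentials are automatically injective on the left and exact in the middle, while there is nothing in positive degree; this yields the exact sequence
\begin{equation*}
0\to\bw2(\bw2T_{\P(V_6)})(-12H)\to A^\vee\otimes\bw2T_{\P(V_6)}(-9H)\to\Sym^2\!A^\vee\otimes\cO(-6H)\to R^0p_*\cO_{\hY_A}(-E)\to 0,
\end{equation*}
so that it only remains to identify the cokernel $R^0p_*\cO_{\hY_A}(-E)=p_*\cI_{E,\hY_A}$ with $\cI_{Y_A^{\ge2},\,Y_A}$.

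For this last identification I would apply $p_*$ to the sequence $0\to\cI_{E,\hY_A}\to\cO_{\hY_A}\to\cO_E\to 0$. By Lemma~\ref{lemma:ps-ohy} one has $p_*\cO_{\hY_A}=\cO_{Y_A}$ and $R^1p_*\cO_{\hY_A}=0$, while the spectral sequence above also forces $R^1p_*\cO_{\hY_A}(-E)=0$; hence the sequence
\begin{equation*}
0\to p_*\cI_{E,\hY_A}\to\cO_{Y_A}\xrightarrow{\ \rho\ }p_*\cO_E\to 0
\end{equation*}
is exact, exhibiting $p_*\cI_{E,\hY_A}$ as the ideal sheaf of the scheme-theoretic image $Z$ of $E$. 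Since $p$ is an isomorphism over $Y_A^1$ and contracts the reduced exceptional divisor $E=p^{-1}(Y_A^{\ge2})$ onto the integral surface $Y_A^{\ge2}$, the sheaf $p_*\cO_E$ has no nilpotents, so $Z$ is reduced; being supported on the integral $Y_A^{\ge2}$, it therefore equals $Y_A^{\ge2}$, and $p_*\cI_{E,\hY_A}=\cI_{Y_A^{\ge2},\,Y_A}$, which completes the argument.

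The main obstacle is precisely this final step: the spectral sequence only delivers the cokernel as $p_*\cI_{E,\hY_A}$, and one must pin down its scheme structure. The crucial inputs are the surjectivity of $\rho$ — which hinges on the vanishing $R^1p_*\cO_{\hY_A}(-E)=0$ produced by the spectral sequence — together with the reducedness of $E$ and the integrality of $Y_A^{\ge2}$, which together guarantee that the scheme-theoretic image carries the reduced structure. By contrast, the exactness of the three-term complex on the left and in the middle requires no separate verification of the differentials, as it is forced by the vanishing of higher direct images of a single sheaf in negative degree.
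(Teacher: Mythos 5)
Your computational core coincides with the paper's proof: the paper twists the Koszul resolution~\eqref{eq:koszul-ya} by the very same line bundle $\cO_{\hY_A}(2H_A-6H)\cong\cO_{\hY_A}(-E)$ (obtained, as you do, from~\eqref{eq:hhh} together with $E\lin 5H-H'$), pushes forward to $\P(V_6)$ with exactly your Borel--Bott--Weil/K\"unneth bookkeeping for the twists $\cO_{\P(A)}(2-j)$ on $\P(A)\cong\P^9$, and likewise extracts the surjection $\cO_{Y_A}\to p_*\cO_E$ from the vanishing of higher direct images and Lemma~\ref{lemma:ps-ohy}. Where you genuinely diverge is the endgame. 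The paper first proves the \emph{scheme-theoretic} equality $E=p^{-1}(Y_A^{\ge 2})$: it realizes $p^{-1}(Y_A^{\ge 2})$ as the degeneracy locus of a morphism of rank-$9$ bundles, hence as a Cartier divisor cut out by a section of $\cO_{\hY_A}(6H-2H_A)\cong\cO_{\hY_A}(E)$, and uses that all global sections of $\cO_{\hY_A}(E)$ are proportional; it then deduces $p(E)=Y_A^{\ge 2}$ as schemes from the reducedness and normality of $Y_A^{\ge 2}$ and obtains $p_*\cO_E\cong\cO_{Y_A^{\ge 2}}$ from connectedness of the fibers. You instead identify the kernel of $\cO_{Y_A}\to p_*\cO_E$ directly as the ideal of the scheme-theoretic image $Z$ of $E$ and pin $Z$ down by reducedness plus the integrality of $Y_A^{\ge 2}$ (\cite[Theorem~B.2]{DK}). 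This is more economical: you need neither normality of $Y_A^{\ge 2}$ nor connectedness of the fibers of $p\vert_E$ nor the full isomorphism $p_*\cO_E\cong\cO_{Y_A^{\ge 2}}$, only the set-theoretic surjectivity $p(E)=Y_A^{\ge 2}$, for which you should cite \cite[Proposition~B.3]{DK}.

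The one soft spot is your unargued assertion that $E$ is reduced (your parenthetical claim $E=p^{-1}(Y_A^{\ge 2})$ is likewise unproved, but your argument never actually uses it). Everything in your last step hinges on $p_*\cO_E$ having no nilpotent sections, and this is precisely the point the paper's degeneracy-locus computation is designed to control: a priori the effective divisor in $|6H-2H_A|$ whose ideal sheaf is your $\cO_{\hY_A}(2H_A-6H)$ could carry a non-reduced structure, in which case the scheme-theoretic image of it would be a thickening of $Y_A^{\ge 2}$ and the cokernel of your three-term complex would be the ideal of that thickening rather than $\cI_{Y_A^{\ge 2},\,Y_A}$. If one takes the standard convention that the exceptional divisor $E$ of the birational morphism $p$ carries its reduced structure---consistent with the way the class $E\lin 5H-H'$ is imported from \cite[proof of Lemma~B.5]{DK}---then your argument closes; you should make that input explicit, or else substitute the paper's argument that $h^0(\cO_{\hY_A}(E))=1$ identifies the divisor cut out by the degeneracy-locus section with $E$ itself.
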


\begin{proof}
Denote by $E$ the exceptional divisor of the birational morphism $p \colon \hY_A \to Y_A$.\ We show first that $E$ coincides with the scheme-theoretic preimage of the EPW surface $Y_A^{\ge 2}$.\ For this, recall that $Y_A^{\ge 2}$ is by definition the corank-2 degeneracy locus of the composition
\begin{equation*}
A \otimes \cO_{\P(V_6)} \to \bw3V_6 \otimes \cO_{\P(V_6)} \to \bw3T_{\P(V_6)}(-3H).
\end{equation*}
When pulled back to $\hY_A$, it extends to a complex
\begin{equation*}
\cO_{\hY_A}(-H_A) \hookrightarrow A \otimes \cO_{\hY_A} \to p^*(\bw3T_{\P(V_6)}(-3H)) \thra \cO_{\hY_A}(H_A),
\end{equation*}
hence the preimage of $Y_A^{\ge 2}$ is the degeneracy locus of the induced map 
\begin{equation*}
(A \otimes \cO_{\hY_A})/\cO_{\hY_A}(-H_A) \to \Ker(p^*(\bw3T_{\P(V_6)}(-3H)) \thra \cO_{\hY_A}(H_A)).
\end{equation*}
This is a morphism between two vector bundles of rank 9, hence the preimage of $Y_A^{\ge 2}$ is the Cartier divisor in $\hY_A$ defined by a section of the line bundle
\begin{equation*}
\det(\Ker(p^*(\bw3T_{\P(V_6)}(-3H)) \thra \cO_{\hY_A}(H_A))) \otimes \det((A \otimes \cO_{\hY_A})/\cO_{\hY_A}(-H_A))^\vee \cong \cO_{\hY_A}(6H - 2H_A).
\end{equation*}
But $6H - 2H_A \lin 6H - H - H' \lin 5H - H'$ and this is linearly equivalent to $E$ by a computation in~\cite{DK} (a paragraph before Lemma~B.6).\ All global sections of the line bundle $\cO_{\hY_A}(E)$ are proportional (since $E$ is the exceptional divisor of a birational morphism), hence the {scheme-theoretic} preimage of~$Y_A^{\ge 2}$ equals~$E$.

Since $E = p^{-1}(Y_A^{\ge 2})$, there is an embedding of schemes $p(E) \subset Y_A^{\ge 2}$.\ On the other hand, $p(E)$ and $Y_A^{\ge 2}$ coincide set-theoretically (\cite[Proposition~B.3]{DK}) and the scheme $Y_A^{\ge 2}$ is reduced and normal (\cite[Theorem~B.2]{DK}), hence $p(E) = Y_A^{\ge 2}$.\ Since the fibers of the map $p \colon E \to Y_A^{\ge 2}$ are connected, it also follows that there is an isomorphism $p_*\cO_E \isomto \cO_{Y_A^{\ge 2}}$.

We now compute $p_*\cO_E$.\ We use the linear equivalence $6H - 2H_A \lin E$ shown above and compute the derived pushforward of the line bundle $\cO_{\hY_A}(-E) \cong \cO_{\hY_A}(2H_A - 6H)$.\ Twisting the Koszul resolution~\eqref{eq:koszul-ya} by $\cO_{\hY_A}(2H_A - 6H)$ and pushing forward to $\P(V_6)$, we obtain an exact sequence
\begin{equation*}
0 \to \bw2(\bw2T_{\P(V_6)})(-12H) \to A^\vee \otimes \bw2T_{\P(V_6)}(-9H) \to \Sym^2\!A^\vee \otimes \cO_{\P(V_6)}(-6H) \to p_*\cO_{\hY_A}(-E) \to 0,
\end{equation*}
and vanishing of higher pushforwards.\ Using this and Lemma~\ref{lemma:ps-ohy} and applying pushforward to the standard exact sequence $0 \to \cO_{\hY_A}(-E) \to \cO_{\hY_A} \to \cO_E$, we obtain an exact sequence
\begin{equation*}
0 \to p_*\cO_{\hY_A}(-E) \to \cO_{Y_A} \to p_*\cO_E \to 0.
\end{equation*}
The right term is isomorphic to $\cO_{Y_A^{\ge 2}}$, as we have shown above, hence the left term is $\cI_{Y_A^{\ge 2},\,Y_A}$.\ This proves the lemma.
\end{proof}

 {\begin{rema}
The equality $E = p^{-1}(Y_A^{\ge 2})$ shown in the proof means that $\hY_A$ is the blowup of~$Y_A$ along $Y_A^{\ge 2}$.
\end{rema}}

The sequence~\eqref{eq:resolution-y2} can be merged with the standard resolution of~$Y_A$ to give an exact sequence
\begin{multline}\label{eq:resolution-y2-p5}
0 \to \bw2(\bw2T_{\P(V_6)})(-12H) \to A^\vee \otimes \bw2T_{\P(V_6)}(-9H)  
\\
\to (\Sym^2\!A^\vee \oplus \C) \otimes \cO_{\P(V_6)}(-6H) \to \cO_{\P(V_6)} \to \cO_{Y_A^{\ge 2}} \to 0
\end{multline}
which can be used to compute the cohomology of line bundles on $Y_A^{\ge 2}$.

\begin{coro}\label{corollary:h-oy2}
If $A$ contains no decomposable vectors, the following table computes the cohomology spaces for some twists of the sheaf $\cO_{Y_A^{\ge 2}}$
\begin{equation*}
 \setlength{\extrarowheight}{1ex}
\begin{array}{|c|c|c|c|c|c|c|c|}
\hline 
& t = 0 & t = 1 & t = 2 & t = 3 & t = 4 & t = 5 & t = 6 
\\
\hline 
H^2(Y_A^{\ge 2},\cO_{Y_A^{\ge 2}}(tH)) & \bw2A & 0 & 0 & 0 & 0 & 0 & 0
\\
H^1(Y_A^{\ge 2},\cO_{Y_A^{\ge 2}}(tH)) & 0 & 0 & \bw2{{V_6}}^\vee & A^\vee & 0 & 0 & 0
\\
H^0(Y_A^{\ge 2},\cO_{Y_A^{\ge 2}}(tH)) & \C & {{V_6}}^\vee & \Sym^2\!{{V_6}}^\vee & \Sym^3\!{{V_6}}^\vee & \Sym^4\!{{V_6}}^\vee & \Sym^5\!{{V_6}}^\vee / {{V_6}} & \Sym^6\!{{V_6}}^\vee / (\Sym^2\!A^\vee \oplus \C)
\\[5pt]
\hline
\end{array}
\end{equation*}

Moreover, $H^0(\P(V_6),\cI_{Y_A^{\ge 2},\, \P(V_6)}(2H)) = H^1(\P(V_6),\cI_{Y_A^{\ge 2},\, \P(V_6)}(H)) = 0$.
\end{coro}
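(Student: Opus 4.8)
The plan is to compute the hypercohomology of the four‑term resolution~\eqref{eq:resolution-y2-p5}, twisted by $\cO_{\P(V_6)}(tH)$, through the spectral sequence
\[
E_1^{p,q}=H^q\bigl(\P(V_6),\cF^p(tH)\bigr)\Longrightarrow H^{p+q}\bigl(Y_A^{\ge 2},\cO_{Y_A^{\ge 2}}(tH)\bigr),
\]
where $\cF^\bullet$ denotes the locally free complex on the left of~\eqref{eq:resolution-y2-p5}, placed in degrees $-3,\dots,0$, so that its only cohomology sheaf is $\cO_{Y_A^{\ge 2}}$ in degree $0$. Every term of $\cF^\bullet$ is an irreducible homogeneous bundle on $\P(V_6)=\P^5$: besides twists of $\cO$, one has $\bw2 T_{\P(V_6)}\cong\Omega^3_{\P(V_6)}(6)$, and, via the plethysm $\bw2(\bw2\cE)\cong\Sigma^{2,1,1}\cE$, also $\bw2(\bw2 T_{\P(V_6)})\cong\Sigma^{2,1,1}T_{\P(V_6)}$. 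Hence the $E_1$ page is computed termwise by the Borel--Bott--Weil theorem, exactly as in Appendix~\ref{section:linear-spaces-quadrics}; for the top term I would pass to Serre duality and use $\Sigma^{2,1,1}\Omega^1_{\P(V_6)}\cong\bw2\Omega^2_{\P(V_6)}$, whose twists $\bw2\Omega^2_{\P(V_6)}(k)$ for $0\le k\le 6$ carry cohomology in a single degree, and none at all for $k\in\{0,2,3,5\}$.

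First I would record the $E_1$ page for each $t\in\{0,\dots,6\}$. A short Bott computation shows that only a handful of entries survive: $\cO(tH)$ contributes $\Sym^t V_6^\vee$ in $E_1^{0,0}$; the term $(\Sym^2\!A^\vee\oplus\C)\otimes\cO((t-6)H)$ contributes only at $t=6$ (in $H^0$) and $t=0$ (in $H^5$); the term $A^\vee\otimes\Omega^3_{\P(V_6)}((t-3)H)$ only at $t=3$ (giving $A^\vee$ in $H^3$) and $t=0$ (giving $A^\vee\otimes\bw3 V_6$ in $H^5$); and $\Sigma^{2,1,1}T_{\P(V_6)}((t-12)H)$ only at $t\in\{0,2,5\}$. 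Since $Y_A^{\ge 2}$ is a surface, the abutment vanishes outside degrees $0,1,2$, which already forces the overall shape of the table. For $t\in\{1,4\}$ the only surviving entry is $E_1^{0,0}$; for $t\in\{2,3\}$ there is in addition a single isolated entry (the class $\bw4 V_6\cong\bw2 V_6^\vee$ in $E_1^{-3,4}$, resp.\ $A^\vee$ in $E_1^{-2,3}$) which receives and emits no differential, so the sequence degenerates and gives the stated $H^1$.

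The substance of the argument is the analysis of the differentials, which are induced by the maps of~\eqref{eq:resolution-y2-p5} and therefore carry the dependence on the Lagrangian $A\subset\bw3 V_6$; this is how the representations $\bw2 A$, $A^\vee$ and $\Sym^2\!A^\vee$ enter an answer otherwise built from $GL(V_6)$‑representations. Three cases demand work. For $t=0$ the nonzero entries lie in the rows $q=0$ (just $E_1^{0,0}=\C$) and $q=5$, where the $d_1$‑complex reads
\[
\Sigma^{2,2,1,1}V_6\lra A^\vee\otimes\bw3 V_6\lra \Sym^2\!A^\vee\oplus\C,
\]
with $\dim\Sigma^{2,2,1,1}V_6=189$. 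No differential connects the two rows, so one must show this complex is exact except at the left, where its homology is $\bw2 A$; the Euler characteristic $189-200+56=45=\dim\bw2 A$ is the consistency check, and proving the exactness together with this identification is the heart of the matter. For $t=6$ the map $d_1\colon\Sym^2\!A^\vee\oplus\C\to\Sym^6 V_6^\vee$ — literally the map sending the distinguished vector to the sextic equation of $Y_A$ and $\Sym^2\!A^\vee$ to the further forms cutting out $Y_A^{\ge 2}$ — must be injective, yielding $H^0=\Sym^6 V_6^\vee/(\Sym^2\!A^\vee\oplus\C)$. For $t=5$ the entry $E_1^{-3,2}\cong V_6$ sits in total degree $-1$ and so must die; since all lower differentials vanish this forces injectivity of the third differential $d_3\colon V_6\to\Sym^5 V_6^\vee$, whence $H^0=\Sym^5 V_6^\vee/V_6$.

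Finally, the two vanishings follow from the table. Tensoring $0\to\cI_{Y_A^{\ge 2},\,\P(V_6)}(tH)\to\cO_{\P(V_6)}(tH)\to\cO_{Y_A^{\ge 2}}(tH)\to 0$ and taking cohomology, the computation above shows that for $t=1,2$ the edge map of the spectral sequence identifies the restriction $H^0(\P(V_6),\cO(tH))\to H^0(Y_A^{\ge 2},\cO(tH))$ with an isomorphism (both sides being $\Sym^t V_6^\vee$), while $H^1(\P(V_6),\cO(tH))=0$; hence $H^0(\cI_{Y_A^{\ge 2},\,\P(V_6)}(2H))=0$ and $H^1(\cI_{Y_A^{\ge 2},\,\P(V_6)}(H))=0$. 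The main obstacle is precisely the differential analysis of the three exceptional values $t\in\{0,5,6\}$ — above all the exactness of the $q=5$ complex at $t=0$ and the identification of its left homology with $\bw2 A$ — because those maps are governed by $A$ and are not formal consequences of the Borel--Bott--Weil computation; I expect to establish them by making the embedding $A\hookrightarrow\bw3 V_6$ explicit on the relevant cohomology groups, as in the proofs of the preceding lemmas of this appendix.
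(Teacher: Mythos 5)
Your proposal is correct and is precisely the paper's own (unexpanded) argument: the paper's proof consists exactly of the ``straightforward computation'' you carry out, namely the hypercohomology of \eqref{eq:resolution-y2-p5} twisted by $\cO_{\P(V_6)}(tH)$ computed termwise by Borel--Bott--Weil, and your $E_1$-pages check out (the last term $\bw2(\bw2T_{\P(V_6)})(-12H)$ contributes only for $t\in\{0,2,5\}$, the isolated entries at $t=2,3$ give the stated $H^1$'s, and at $t=0$ the $q=5$ row is $189\to 200\to 56$ with Euler characteristic $45=\dim\bw2A$). The one simplification you could make is that the exactness and injectivity you propose to prove at $t\in\{0,5,6\}$ are automatic from convergence, since the abutment vanishes in negative degrees and, $Y_A^{\ge 2}$ being a projective surface, in degrees above $2$; so the only genuinely $A$-dependent step is the equivariant identification of $\ker\bigl(E_1^{-3,5}\to E_1^{-2,5}\bigr)$ with $\bw2A$ (and of the subspaces $V_6\subset\Sym^5V_6^\vee$ and $\Sym^2\!A^\vee\oplus\C\subset\Sym^6V_6^\vee$), exactly the point you flag and propose to settle by making the maps of the resolution explicit.
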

\begin{proof}
It consists of a straightforward computation using~\eqref{eq:resolution-y2-p5} and the Borel--Bott--Weil theorem.
\end{proof}

In Section~\ref{section:periods}, we used the following simple consequence of these computations.

\begin{coro}\label{corollary:y2av}
If $A$ contains no decomposable vectors, the curve $Y^{\ge 2}_{A,V_5} \subset \P(V_5)$ is not contained in a quadric.
\end{coro}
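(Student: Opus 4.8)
The plan is to reformulate the statement cohomologically and then reduce it, via the hyperplane restriction sequence for $\P(V_5)\subset\P(V_6)$, to the two vanishings already recorded in Corollary~\ref{corollary:h-oy2}. Indeed, saying that the curve $Y^{\ge 2}_{A,V_5}$ lies on a quadric of $\P(V_5)$ is the same as saying that $H^0\bigl(\P(V_5),\cI_{Y^{\ge 2}_{A,V_5},\,\P(V_5)}(2H)\bigr)\ne 0$, so I want to prove that this group vanishes.

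First I would restrict the ideal sheaf of the EPW surface to the hyperplane. Write $S:=Y^{\ge 2}_A\subset\P(V_6)$: it is an integral surface which, by \cite[Lemma~B.6]{DK}, is not contained in any hyperplane. Hence for every hyperplane $\P(V_5)\subset\P(V_6)$ the linear form $s$ cutting it out is a non-zero-divisor both on $\cO_S$ and on the subsheaf $\cI_{S,\,\P(V_6)}$ of $\cO_{\P(V_6)}$. Consequently $\Tor_1(\cO_S,\cO_{\P(V_5)})=0$, the scheme-theoretic intersection $S\cap\P(V_5)$ is exactly the curve $Y^{\ge 2}_{A,V_5}$ (matching the definition $Y^{\ge 2}_{A,V_5}=Y^{\ge 2}_A\times_{\P(V_6)}\P(V_5)$), and multiplication by $s$ yields the short exact sequence
\[
0\to \cI_{Y^{\ge2}_A,\,\P(V_6)}(-H)\to \cI_{Y^{\ge2}_A,\,\P(V_6)}\to \cI_{Y^{\ge2}_{A,V_5},\,\P(V_5)}\to 0
\]
of sheaves on $\P(V_6)$ (the last term being pushed forward from $\P(V_5)$).

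Twisting this sequence by $\cO(2H)$ and passing to cohomology then produces the exact piece
\[
H^0\bigl(\P(V_6),\cI_{Y^{\ge2}_A,\,\P(V_6)}(2H)\bigr)\to H^0\bigl(\P(V_5),\cI_{Y^{\ge2}_{A,V_5},\,\P(V_5)}(2H)\bigr)\to H^1\bigl(\P(V_6),\cI_{Y^{\ge2}_A,\,\P(V_6)}(H)\bigr).
\]
Both outer terms vanish by the final assertion of Corollary~\ref{corollary:h-oy2}, so the middle term vanishes, which is precisely the desired conclusion that no quadric of $\P(V_5)$ contains $Y^{\ge 2}_{A,V_5}$.

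The genuine work is already packaged into Corollary~\ref{corollary:h-oy2} and the resolution~\eqref{eq:resolution-y2-p5} behind it; granting those, the only point needing care here is the exactness of the restriction sequence, that is, the $\Tor$-vanishing $\Tor_1(\cO_S,\cO_{\P(V_5)})=0$. I expect this to be the main (and only mild) obstacle: it must hold for \emph{every} hyperplane $V_5$, not merely a general one, and this is guaranteed precisely by the integrality and linear non-degeneracy of $Y^{\ge 2}_A$ established earlier. Once that is in place, the corollary is a formal consequence of the two cohomology vanishings.
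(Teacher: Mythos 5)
Your proof is correct and takes essentially the same route as the paper: the identical hyperplane restriction sequence for the ideal sheaf of $Y_A^{\ge 2}$, twisted by $\cO(2H)$, with both outer cohomology groups killed by the final assertion of Corollary~\ref{corollary:h-oy2}. The only difference is that you explicitly justify the exactness of the restriction sequence via the $\Tor$-vanishing coming from the integrality and linear non-degeneracy of $Y_A^{\ge 2}$, a point the paper's proof leaves implicit.
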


\begin{proof}
We have an exact sequence 
\begin{equation*}
0 \to \cI_{Y_A^{\ge 2},\, \P(V_6)}(-H) \to \cI_{Y_A^{\ge 2},\, \P(V_6)} \to \cI_{Y_{A,V_5}^{\ge 2},\, \P(V_5)} \to 0
\end{equation*}
of sheaves on $\P(V_6)$.\ The cohomology sequence of its twist by $\cO_{\P(V_6)}(2H)$ gives an exact sequence
\begin{equation*}
H^0(\P(V_6),\cI_{Y_A^{\ge 2},\, \P(V_6)}(2H)) \to H^0(\P(V_5),\cI_{Y_{A,V_5}^{\ge 2},\, \P(V_5)}(2H)) \to H^1(\P(V_6),\cI_{Y_A^{\ge 2},\, \P(V_6)}(H)).
\end{equation*}
By Corollary~\ref{corollary:h-oy2}, the spaces at both ends vanish, hence so does the middle space.
\end{proof}


\begin{thebibliography}{IKKR}

\bibitem[B]{bea}  Beauville, A.,  Vari\'et\'es K\"ahleriennes dont la premi\`ere classe de Chern est nulle, {\it J.~Differential  Geom.}  {\bf 18} (1983), 755--782.

 \bibitem[BD]{bedo} Beauville, A., Donagi, R., La vari\'{e}t\'{e} des droites d'une hypersurface cubique de dimension 4, {\it C.\  R.\  Acad.\  Sci.\  Paris S\'er.\  I Math.}  {\bf 301}  (1985),  703--706.

 \bibitem[C]{cor} Cornalba, M.,  Una osservazione sulla topologia dei rivestimenti ciclici di varieta algebriche, {\it
Boll. Unione Mat. Ital.} {\bf18} (1981), 323--328.  

 \bibitem[DIM]{dims} Debarre, O., Iliev, A., Manivel, L., Special prime Fano fourfolds of degree 10 and index 2, {\it  Recent Advances in Algebraic Geometry,} 123--155,  C.\  Hacon, M.\  Musta\c t\u a, and M.\  Popa editors, London Mathematical Society Lecture Notes Series {\bf417}, Cambridge University Press, 2014.
  
\bibitem[DK1]{DK}  Debarre, O., Kuznetsov, A., Gushel--Mukai varieties: classification and birationalities, to appear in {\it Algebr. Geom.} 

\bibitem[DK2]{DK-6fold}  \bysame, On the cohomology of Gushel--Mukai sixfolds, eprint {\tt arXiv:1606.09384.}

\bibitem[DK3]{DK-moduli}  \bysame, Gushel--Mukai varieties: moduli, 
to appear.

 \bibitem[Di]{di}  Dimca, A., {\it Singularities and topology of hypersurfaces,} Universitext, Springer-Verlag, New York, 1992.

\bibitem[EV]{ev} Esnault, H., Viehweg, E., Lectures on Vanishing Theorems, DMV Seminar, Band {\bf 20}, Birkh\"auser Verlag, 1992.

\bibitem[F]{Fe} Ferretti, A., Special subvarieties of EPW sextics, {\it Math.\  Z.} {\bf272} (2012), 1137--1164. 
 
\bibitem[HBJ]{hbj} Hirzebruch, F.,  Berger, T., Jung, R., {\it Manifolds and modular forms}, Friedr.\ Vieweg \& Sohn, Braunschweig, 1992.

  \bibitem[IKKR]{ikkr} Iliev, A.,    Kapustka, G., Kapustka, M., Ranestad, K.,   EPW cubes, {\it J. reine angew. Math.} {\bf} (2016).

  \bibitem[IM]{im} Iliev, A., Manivel, L., Fano manifolds of degree 10 and EPW sextics,
{\it  Ann.\  Sci.\  \'Ecole Norm.\  Sup.}  {\bf 44} (2011), 393--426.

 \bibitem[J]{jam} James, D.G., On Witt's theorem for unimodular quadratic forms, {\it Pacific J. Math.}  {\bf 26} (1968), 303--316. 

\bibitem[K]{K10}
Kuznetsov, A.,
Derived categories of cubic fourfolds, in {\em Cohomological and Geometric Approaches to Rationality Problems. New Perspectives,} 
F.\ Bogomolov and Yu.\ Tschinkel editors,  Progress in Mathematics {\bf282}, Birkh\"auser, 2010.

\bibitem[KP]{KP}
Kuznetsov, A., Perry, A.,
Derived categories of Gushel--Mukai varieties,
eprint {\tt arXiv:1605.06568}.

      
\bibitem[L]{lo} Logachev, D., Fano threefolds of genus 6, {\it Asian J. Math.}  {\bf 16} (2012), 515--560.


   
\bibitem[M]{mark} Markman, E., Integral generators for the cohomology ring of moduli spaces of sheaves
over Poisson surfaces, {\em Adv. Math.} {\bf208} (2007), 622--646.

 \bibitem[N]{nag}  Nagel, J., The generalized Hodge conjecture for the quadratic complex of lines in projective four-space, {\it  Math. Ann.}  {\bf 312} (1998),  387--401.

 \bibitem[Ni]{nik}      Nikulin, V.,     Integral symmetric bilinear forms and some of their geometric applications,   {\it   Izv. Akad. Nauk SSSR Ser. Mat.}  {\bf   43}  (1979), 111--177.
English transl.: {\it Math.\ USSR Izv.} {\bf 14} (1980), 103--167.

  \bibitem[O1]{og1}	O'Grady,  K.,  Irreducible symplectic 4-folds and Eisenbud-Popescu-Walter sextics, {\it Duke Math. J.}  {\bf 134} (2006),  99--137.

   \bibitem[O2]{og2}	\bysame,      Dual double EPW-sextics and their periods, {\it  Pure Appl.\ Math.\ Q.}  {\bf  4}  (2008),   427--468.

  
 \bibitem[O3]{og4}	\bysame,    Double covers of EPW-sextics, {\it Michigan Math.\ J.} {\bf 62} (2013), 143--184. 

 \bibitem[O4]{og6}	\bysame,    Periods of double EPW-sextics,  {\it Math. Z.} {\bf 280} (2015),  485--524.

  \bibitem[O5]{og7}	\bysame, Irreducible symplectic 4-folds numerically equivalent to $(K3)^{[2]}$, {\it Commun. Contemp. Math.} {\bf10} (2008),  553--608.
 
 \bibitem[O6]{og8}	\bysame, Involutions and linear systems on holomorphic 
symplectic manifolds, {\it Geom. Funct. Anal.} {\bf 15} (2005),  1223--1274.

\bibitem[W]{wey} Weyman, J., {\it Cohomology of Vector Bundles and Syzygies,}  Cambridge Tracts in Mathematics {\bf149}, Cambridge University Press, Cambridge, 2003.

 \end{thebibliography}
\end{document}